\documentclass[11pt,letterpaper]{amsart}
\usepackage{amssymb,amstext,amscd,amsthm,amsfonts,mathdots,mathrsfs}
\usepackage{pdflscape}
\usepackage{hyperref}
\usepackage{pdflscape}
\usepackage{wasysym}
\usepackage{setspace}
\usepackage[all]{xy}
\usepackage{enumerate}
\usepackage{indentfirst}
\usepackage{ytableau}
\usepackage{color}
\usepackage{colortbl}
\usepackage{amsmath}
\usepackage{tikz}
\usetikzlibrary{shapes.geometric, arrows}
\usetikzlibrary{calc}
\usepackage{tikz-cd}

\numberwithin{equation}{section}

\usepackage[thicklines]{cancel}

\usepackage{cleveref}
\crefname{theorem}{Theorem}{Theorems}
\crefname{definition}{Theorem}{Definitions}
\crefname{proposition}{Theorem}{Propositions}
\crefname{corollary}{Corollary}{Corollaries}
\crefname{alphatheorem}{Theorem}{Theorems}

\newtheorem{theorem}{Theorem}[section]
\newtheorem{lemma}[theorem]{Lemma}
\newtheorem{proposition}[theorem]{Proposition}
\newtheorem{definition}[theorem]{Definition}

\newtheorem{corollary}[theorem]{Corollary}
\newtheorem{example}[theorem]{Example}

\theoremstyle{remark}
\newtheorem{rem}[theorem]{Remark}

\newtheorem{alphatheorem}{\bf Theorem}

\newcommand{\clr}{rgb:black,3;blue,2;red,0}

\tikzset{anchorbase/.style={baseline={([yshift=-0.5ex]current bounding box.center)}}}
\tikzset{ 
    centerzero/.style={>=To,baseline={([yshift=-0.5ex](#1))}},
    centerzero/.default={0,0}
}
\tikzset{wipe/.style={white,line width=4pt}}
\newcommand{\cred}{rgb:black,.5;blue,0;red,1.5}

\DeclareFontFamily{OT1}{pzc}{}
\DeclareFontShape{OT1}{pzc}{m}{it}{ <-> s*[1.2] pzcmi7t }{}
\DeclareMathAlphabet{\mathpzc}{OT1}{pzc}{m}{it}

\newcommand{\HH}{\mathpzc{H}}
\newcommand{\qW}{\mathpzc{WeB}}  
 
\newcommand{\qAW}{\mathpzc{WeB}^\bullet} 

\newcommand{\qAH}{\mathpzc{H}^\bullet} 
\newcommand{\qH}{\mathpzc{H}} 
\newcommand{\qSch}{\mathpzc{SchuR}}
\newcommand{\qASch}{\mathpzc{SchuR}^{{\hspace{-.03in}}\bullet}}

\newcommand{\qSchu}{\mathpzc{SchuR}_{\bfu}}
 
\newcommand{\bT}{\mathbf{T}}
\newcommand{\bS}{\mathbf{S}}

\newcommand{\Mat}{\text{Mat}}
\newcommand{\PMat}{\text{ParMat}}
\newcommand{\RPMat}{\text{RParMat}}
\newcommand{\Par}{\text{Par}}

\newcommand{\RPar}{\text{RPar}}
\newcommand{\SST}{\text{SST}}
\newcommand{\bfB}{\mathbf{B}}
\newcommand{\bfc}{\mathbf{c}}
\newcommand{\bfd}{\mathbf{d}}
\newcommand{\bfH}{\mathbf{H}}
\newcommand{\bfi}{\mathbf{i}}
\newcommand{\bfj}{\mathbf{j}}
\newcommand{\bfk}{\mathbf{k}}

\newcommand{\bfm}{\mathbf{m}}
\newcommand{\bfn}{\mathbf{n}}
\newcommand{\bfs}{\mathbf{s}}
\newcommand{\bfS}{\mathbf{S}}
\newcommand{\bfu}{\mathbf{u}}
\newcommand{\bfW}{\mathbf{W}}
\newcommand\cA{\mathcal{A}}
\newcommand\cB{\mathcal{B}}
\newcommand{\cF}{\mathcal{F}}
\newcommand{\cH}{\mathcal{H}}
\newcommand{\cM}{\mathcal{M}}
\newcommand{\cN}{\mathcal{N}}
\newcommand{\cO}{\mathcal{O}}
\newcommand{\cS}{\mathcal{S}}
\newcommand{\cZ}{\mathcal{Z}}

\newcommand{\Ind}{\operatorname{Ind}\nolimits}

\newcommand{\iInd}{\mathbf{i}\operatorname{-Ind}\nolimits}
\newcommand{\Res}{\operatorname{Res}\nolimits}
\newcommand{\iRes}{\mathbf{i}\operatorname{-Res}\nolimits}
\newcommand{\res}{\operatorname{res}\nolimits}

\newcommand{\modf}{\operatorname{-mod}\nolimits}
\newcommand{\prmod}{\operatorname{-pmod}\nolimits} 

\newcommand{\eps}{\varepsilon}
\newcommand{\gl}{\mathfrak{gl}}
\newcommand{\sll}{\mathfrak{sl}}
\newcommand{\qq}{\texttt{q}}
\newcommand\C{\mathbb{C}}
\newcommand\Z{\mathbb{Z}}

\newcommand\N{\mathbb{N}}
\newcommand{\st}{\text{st}}
\newcommand{\AHe}{\dot{\bf H}}
\newcommand{\ASch}{\dot{\bf S}}
\newcommand{\ASym}{\widehat{\mathfrak S}}

\def\bfla{\lambda}
\def\bfmu{\mu}

\newcommand\wS{{}^{\bold{w}}\bold{S}}
\newcommand\hwS{{}^{\bold{w}}\dot{\bold{S}}}
\newcommand\K{\bf{K}}
\newcommand\KS{{}^{\bold{k}}\dot{\bold{S}}}
\newcommand\KH{{}^{\bold{k}}\dot{\bold{H}}}
\newcommand\KBi{{}^{\bold{k}}\bold{T}}

\newcommand{\Uglzhalf}{\U_\Z^-(\widehat\gl_p)}

\newcommand\U{\bold{U}}
\newcommand\UA{\bold{U}\!_\cA}
\newcommand\Hall{\bold{Hall}_{p}}
\newcommand\HallZ{\bold{Hall}_{p,\Z}}
\newcommand\Heis{\mathfrak{Heis}}

\newcommand\bbF{\mathbb{F}}
\newcommand\hatT{\dot{\mathbf{T}}}
\newcommand\I{\mathbb{I}}
\newcommand\pen{\Large{\pentagon}}
\newcommand\penp{\Large{\pentagon}\!_p}
\newcommand\peni{\Large{\pentagon}\!_\infty}

\newcommand{\qbinom}[2]{\begin{bmatrix} #1\\#2 \end{bmatrix} }
\newcommand{\qbinomsm}{\textstyle\genfrac{[}{]}{0pt}{}}

\newcommand\kk{\Bbbk}
\newcommand\la{\lambda}
\newcommand{\Hom}{{\rm Hom}}
\newcommand{\End}{{\rm End}}

\newcommand{\arxiv}[1]{\href{http://arxiv.org/abs/#1}{\tt arXiv:\nolinkurl{#1}}}

\def\t{\mathfrak t}

\newcommand{\stra}{\begin{tikzpicture}[baseline = 10pt, scale=0.4, color=\clr]
            \draw[-,line width=1.2pt] (0,0.7)to[out=up,in=down](0,1.8);
            \draw (0,0.49) node{$\scriptstyle a$};
\end{tikzpicture} 
}

\newcommand{\stru}{\begin{tikzpicture}[baseline = 10pt, scale=0.4, color=\cred]
\draw[-,line width=1pt] (0,0.4) to (0,1.7);
\draw(0,0.1) node {$\scriptstyle u$};
\end{tikzpicture}
}

\newcommand{\zdot}{ node[circle,fill=white,draw,   
thin, inner sep=0pt, minimum width=3.2pt]{}
} 

\newcommand{\bdot}{ node[circle, draw, fill=\clr, thick, inner sep=0pt, minimum width=2.8pt]{}
}

\newcommand{\xdota}{
\begin{tikzpicture}[baseline = 3pt, scale=0.4, color=\clr]
\draw[-,line width=1.2pt] (0,0) to[out=up, in=down] (0,1.4);
\draw(0,0.6) \bdot;
\node at (0,-.22) {$\scriptstyle a$};
\end{tikzpicture}
}

\newcommand{\zdota}{
\begin{tikzpicture}[baseline = 3pt, scale=0.4, color=\clr]
\draw[-,line width=1.2pt] (0,0) to[out=up, in=down] (0,1.4);
\draw(0,0.6) \zdot;
\node at (0,-.22) {$\scriptstyle a$};
\end{tikzpicture}
}

\newcommand{\merge}
{\begin{tikzpicture}[baseline = -.5mm,scale=.8, color=\clr]
	\draw[-,line width=1pt] (0.28,-.3) to (0.08,0.04);
	\draw[-,line width=1pt] (-0.12,-.3) to (0.08,0.04);
	\draw[-,line width=1.5pt] (0.08,.4) to (0.08,0);
        \node at (-0.18,-.4) {$\scriptstyle a$};
        \node at (0.35,-.4) {$\scriptstyle b$};
        \node at (0.05,.55){$\scriptstyle a+b$};
        \end{tikzpicture} }
        
\newcommand{\splits}
{\begin{tikzpicture}[baseline = -.5mm,scale=.8, color=\clr]
	\draw[-,line width=1.5pt] (0.08,-.3) to (0.08,0.04);
	\draw[-,line width=1pt] (0.28,.4) to (0.08,0);
	\draw[-,line width=1pt] (-0.12,.4) to (0.08,0);
        \node at (-0.2,.5) {$\scriptstyle a$};
        \node at (0.36,.5) {$\scriptstyle b$};
        \node at (0.1,-.41){$\scriptstyle a+b$};
\end{tikzpicture}}

\newcommand{\crossingpos}{
\begin{tikzpicture}[baseline=-1mm, scale=.8, color=\clr]
	\draw[-,line width=1pt] (0.3,-.3) to (-.3,.4);
	\draw[-,line width=4pt,white] (-0.3,-.3) to (.3,.4);
	\draw[-,line width=1pt] (-0.3,-.3) to (.3,.4);
        \node at (-0.3,-.45) {$\scriptstyle a$};
        \node at (0.26,-.45) {$\scriptstyle b$};
\end{tikzpicture}
}

\newcommand{\crossingneg}{
\begin{tikzpicture}[baseline=-1mm, scale=.8, color=\clr]
 \draw[-,line width=1pt] (-0.3,-.3) to (.3,.4);
	\draw[-,line width=4pt,white] (0.3,-.3) to (-.3,.4);
 \draw[-,line width=1pt] (0.3,-.3) to (-.3,.4);
        \node at (-0.33,-.45) {$\scriptstyle b$};
        \node at (0.3,-.45) {$\scriptstyle a$};
\end{tikzpicture}
}

\newcommand{\rightcrossing}{\begin{tikzpicture}[baseline = 1mm, scale=.8, color=\clr]
 \draw[-,line width=1.2pt] (-0.3,0) to (.3,.7);
\draw[-,line width=1pt,color=\cred] (0.3,0) to (-.3,.7);
\draw(-.3,-0.1) node{$\scriptstyle a$};
\draw (.3, -0.1) node{$\scriptstyle \red{u}$};
\end{tikzpicture}}

\newcommand{\leftcrossing}{\begin{tikzpicture}[baseline = 1mm, scale=.8, color=\clr]
 \draw[-,line width=1pt,color=\cred] (-0.3,0) to (.3,.7);
\draw[-,line width=1.2pt] (0.3,0) to (-.3,.7);
\draw(-.3,-.1) node{$\scriptstyle \red{u}$};
\draw (.3, -.1) node{$\scriptstyle a$};
\end{tikzpicture}}

\newcommand{\wkdotaa}{\begin{tikzpicture}[baseline = 3pt, scale=0.4, color=\clr]
\draw[-,line width=1.2pt] (0,0) to[out=up, in=down] (0,1.4);
\draw(0,0.6) \bdot; 
\node at (0,-.22) {$\scriptstyle a$};
\end{tikzpicture} }

\newcommand{\red}[1]{{\color{red}#1}}

\newcommand{\unit}[1]{1_{#1}}

\begin{document}
\setlength{\baselineskip}{17pt}
\title{Decomposition matrices of cyclotomic $q$-web categories}

\author{Linliang Song}
\address{School of Mathematical Science, Tongji University, Shanghai, 200092, China} \email{llsong@tongji.edu.cn}

\author{Weiqiang Wang}
 \address{Department of Mathematics, University of Virginia, Charlottesville, VA 22904, USA} \email{ww9c@virginia.edu}

\subjclass[2020]{Primary 20C08, 20G42, 20G43.}

\keywords{Canonical basis, affine and cyclotomic $q$-web categories, Hecke algebras, and $q$-Schur algebras.}

\begin{abstract}
We develop the cellular structures for the endomorphism algebras of cyclotomic $q$-webs, which form a new family of quantum algebras sitting in between cyclotomic Hecke algebras and cyclotomic $q$-Schur algebras. We show that the Grothendieck group of a module category of the cyclotomic $q$-webs for $q$ generic or a root of unity is isomorphic to an integrable highest weight module over quantum affine  $\mathfrak{gl}_p$; moreover, the isomorphism maps the classes of projective indecomposable modules to the canonical basis. This substantially generalizes the classic works of Lascoux-Leclerc-Thibon, Ariki, and Varagnolo-Vasserot.
\end{abstract}

\maketitle

\setcounter{tocdepth}{1}
\tableofcontents

%
\section{Introduction}
\label{sec:intr}

\subsection{$q$-Web categories}

Two monoidal categories, the affine web category and the affine Schur category, were introduced by the authors \cite{SW25web, SW24Schur}, and their cyclotomic quotients were also constructed; also see \cite{DKM25} for a generalization of the affine and cyclotomic web categories. Two strict $\kk$-linear monoidal categories, the $q$-deformed affine web and Schur categories, $\qAW$ and $\qASch$, were subsequently formulated in \cite{SSW25}, and we will assume $\kk=\C[q,q^{-1}]$ in this paper. The thin strand full subcategory of $\qAW$ is identified with the affine Hecke algebra category $\qAH$.

These three monoidal categories and their corresponding cyclotomic quotient categories can be assembled into the following commutative diagram:
\begin{equation} \label{CD:categories}
\begin{tikzcd}
\qAH \ar[r,hook]\ar[d,two heads] & \qAW\ar[r,hook]\ar[d,two heads] & \qASch\ar[d,two heads]\\
\qH_\bfu  \ar[r,hook]&\qW_\bfu \ar[r,hook]&\qSch_\bfu
\end{tikzcd} 
\end{equation}
where $\bfu=(u_1,\ldots, u_\ell) \in (\kk^*)^\ell$, for $\ell \in\N$. These categories have provided (often new) diagrammatic presentations of old and new quantum algebras. In this paper, we are mostly focused on the algebras arising from the middle column and their relations to those from the left column.

The path algebra (i.e., the direct sum of all hom-spaces) of $\qAW$ is 
\[
\hwS:=\bigoplus_{n\ge 0} \hwS(n), 
\]
where it follows by \cite{SSW25} that $\hwS(n)$ is Morita equivalent to the affine Schur algebras $\ASch(n,n)$ from \cite{Gre99}.
The path algebra of $\qW_\bfu$ is 
\[
\wS_\bfu:=\bigoplus_{n\ge 0} \wS_\bfu(n); 
\]
we propose to call these new algebras $\wS_\bfu(n)$ {\em cyclotomic $q$-W-Schur algebras} (of level $\ell$ if $\bfu$ is an $\ell$-tuple); recall that the degenerate variants were identified in \cite{SW25web} with Schur algebras for finite W-algebras. 
The level one cyclotomic $q$-web category appeared in \cite{Bru25} (called $q$-$\mathbf{Schur}$ category) and these 2 level one cyclotomic $q$-(W-)Schur algebras are identical and Morita equivalent to Dipper-James $q$-Schur algebras $\bfS(n,n)$. 

We also recall the well-known fact that the path algebra of $\qAH$ is identified with the affine Hecke algebras: $\AHe :=\oplus_{n\ge 0} \AHe(n)$, while the path algebra of the cyclotomic category $\qH_\bfu$ is identified with the (Ariki-Koike) cyclotomic Hecke algebras: $\bfH_\bfu =\oplus_{n\ge 0} \bfH_\bfu(n)$.

\subsection{Works of \cite{LLT96, Ar96, LT96, VV99} }

Let $\bfs=(s_1,\ldots, s_\ell)\in \Z^\ell$. Let $\eps \in \C$ be generic or a root of $1$, and let $p$ be the order of $\eps^2$. Denote by $\bfH_{\eps^{2\bfs}}(n)$ the cyclotomic Hecke algebras by specializing $q=\eps$ and $\bfu=\eps^{2\bfs}$. We denote by $[\mathcal C]$ the Grothendieck group of an abelian category $\mathcal C$. Denote by $V(\omega_\bfs)$ the integrable highest weight module $V(\omega_\bfs)$ over the affine quantum group $\U_v(\widehat\sll_p)$, and denote by $\varpi_\bfs: \U^-_v(\widehat\sll_p) \rightarrow V(\omega_\bfs)$ the $\U^-_v(\widehat\sll_p)$-module homomorphism sending $1$ to the highest weight vector $\eta_{\omega_\bfs}$. The surjection $\qH \rightarrow \qH_{\eps^{2\bfs}}$ induces a natural functor $\pi_\bfs: \qH_{\eps^{2\bfs}}\modf \rightarrow \qH\modf$ which preserves the simples, which in turn induces a surjective map on the duals, $\pi_\bfs^*: [\qH\modf]^* \rightarrow [\qH_{\eps^{2\bfs}}\modf]^*$. Proving and extending Lascoux-Leclerc-Thibon conjecture \cite{LLT96}, Ariki \cite{Ar96} established the following commutative diagram of $\U^-(\widehat\sll_p)$-module homomorphisms: 
\begin{equation}  
\label{eq:Ariki}
\begin{tikzcd}
\U^-_\Z(\widehat\sll_p) \ar[r,"\cong"]\ar[d,two heads,"\varpi_\bfs"]&{[}\qH\modf_\eps{]}^* \ar[d,two heads,"\pi_\bfs^*"]\\
V(\omega_\bfs)_\Z  \ar[r,"\cong"]&{[} \qH_{\eps^{2\bfs}}\modf{]}^* 
\end{tikzcd} 
\end{equation}
where the horizontal isomorphisms send the canonical basis \cite{Lus90, Lus91} (and \cite{Kas91}) specialized at $v=1$ to the dual basis of simple modules, and the Chevalley generators $F_i, E_i$ of $\U_v(\widehat\sll_p)$ are realized as $i$-restriction and $i$-induction functors on $\qH_{\eps^{2\bfs}}\modf$. Here and below we use the subscript $\Z$ to denote the specialization at $v=1$. We refer to the survey of Kleshchev \cite{Kle10} for numerous subsequent exciting developments on representations of symmetric groups, Hecke algebras, and categorification.

Varagnolo-Vasserot \cite{VV99} showed that the decomposition matrices of the $q$-Schur algebras for $q=\eps$ being a root of 1 are given by the transition matrices between the canonical basis and Schur basis on the level one $\U_v(\widehat\gl_p)$-Fock space $\cF$, establishing the Leclerc-Thibon conjecture \cite{LT96}; this generalizes the level one result in \cite{LLT96, Ar96}. Their results can be paraphrased as a $\Z$-linear isomorphism:
\begin{align}  \label{eq:VV}
     \cF_\Z \stackrel{\cong}{\longrightarrow} \bigoplus_{n\ge 0} [\bfS(n,n)\modf]^*,
\end{align}
which maps the Schur basis and the canonical basis to the dual bases to Weyl modules and simple modules, respectively. 

\subsection{The main results}

This paper is a first step on categorification based on our diagrammatic web and Schur categories as outlined in \cite{SW25web, SW24Schur}. In the setting of the middle column of the diagram \eqref{CD:categories}, we show that the cyclotomic $\eps$-webs with $\eps^2$ being a primitive $p$th root of $1$ categorify the integrable highest weight modules over quantum affine $\gl_p$, which admit canonical bases exhibiting positivity (see \cite{Sch00}); this generalizes the main results of Ariki and Varagnolo-Vasserot. Similar results hold in the degenerate web setting. To that end, it is crucial and fruitful for us to develop new connections of affine $q$-webs to geometric convolution algebras and to Hall algebra of the cyclic quiver. 

The results of this paper will be a building block of the categorification of additional diagrammatic web/Schur categories of other types. 

\subsubsection{}

One advantage of diagrammatic categories is to allow us to construct restriction and induction functors on $\qW_{\bfu} \modf$, which naturally interact with combinatorics of semi-standard tableaux. It is shown in \cite{SSW25} (see \cite{SW24Schur}) that the path algebra of $\qSch_\bfu$ is isomorphic to (Dipper-James-Mathas) cyclotomic $q$-Schur algebras \cite{DJM98}: $\bfS_\bfu=\oplus_{n\ge 0} \bfS_\bfu(n)$. The three families of algebras arising from the diagram \eqref{CD:categories} are related: $\bfH_\bfu(n)\subset \wS_\bfu(n) \subset \bfS_\bfu(n)$. We obtain the cellular structure on $\wS_\bfu(n)$ by applying a Schur functor to $\bfS_\bfu(n)\modf$; note that $\bfS_\bfu(n)$ is quasi-hereditary \cite{SSW25} and provides a diagrammatic realization of the cyclotomic $q$-Schur algebras \cite{DJM98}. Denote by $\Par^\ell(n)$ the set of $\ell$-multipartitions of $n$.

\begin{alphatheorem} [Theorem \ref{thm:cellularforwebu}, Proposition \ref{prop:IndRes}] 
\label{thm:A}
\begin{enumerate}
    \item The algebra $\wS_\bfu(n)$ admits a cellular basis and cell modules $\Delta^\la$, for $\la \in \Par^\ell(n)$.
    \item 
    There exist cell module filtrations on $\Res^n_{n-a}(\Delta^\la)$ and $\Ind_n^{n+a} (\Delta^\la)$, for $\la \in \Par^\ell(n)$. 
\end{enumerate}
\end{alphatheorem}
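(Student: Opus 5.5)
The strategy is to realize $\wS_\bfu(n)$ as an idempotent truncation $e\,\bfS_\bfu(n)\,e$ of the cyclotomic $q$-Schur algebra and then transport its cellular structure through the Schur functor $F=e(-)\colon \bfS_\bfu(n)\modf\to \wS_\bfu(n)\modf$. Concretely, the objects of $\qW_\bfu$ are compositions (thick strands without red/cyclotomic labels), and these form a subset of the objects of $\qSch_\bfu$. The sum $e$ of the corresponding identity morphisms is therefore an idempotent in $\bfS_\bfu(n)$. By the full embedding $\qW_\bfu\hookrightarrow\qSch_\bfu$ from \eqref{CD:categories}, we have $\wS_\bfu(n)=e\,\bfS_\bfu(n)\,e$.

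For part (1), I would take the Dipper--James--Mathas-type cellular (indeed quasi-hereditary) basis of $\bfS_\bfu(n)$ from \cite{SSW25}. It has the form $\{\varphi^\la_{\bfS\bfT}\}$, where $\la$ runs over the poset of $\ell$-multipartitions of $n$ and $\bfS,\bfT$ are semistandard tableaux of shape $\la$ whose types are the objects of $\qSch_\bfu$. Each $\varphi^\la_{\bfS\bfT}$ lies in $1_{\mathrm{type}(\bfS)}\bfS_\bfu(n)1_{\mathrm{type}(\bfT)}$, and the anti-involution fixes the idempotents $1_\mu$. Consequently the elements $e\varphi^\la_{\bfS\bfT}e$ are either $0$ or again basis elements. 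The surviving ones, namely those where both tableaux have types that are objects of $\qW_\bfu$, give a cellular basis of $e\bfS_\bfu(n)e$: multiplication rules and the anti-involution restrict directly, since $e$ is a sum of weight idempotents. The cell modules are $\Delta^\la:=e\,W^\la$, where $W^\la$ is the Weyl module.

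The main obstacle in part (1) is showing that \emph{every} $\la\in\Par^\ell(n)$ survives, i.e.\ that $\Delta^\la\neq0$. For this I must exhibit, for each $\la$, a semistandard tableau of shape $\la$ whose type is an allowed web object. The natural candidate is the tableau of type given by the composition formed by concatenating the components of $\la$, with the cyclotomic (red) data placed so that it is absorbed into the web object. The task is then to check that the semistandard condition relative to the cyclotomic ordering is met for that type. If the web objects impose a stricter constraint (for instance, all red strands on the left), I would instead use the tableau filling row $i$ of component $j$ with the appropriate entry, and verify semistandardness directly from the definition in \cite{SSW25}.

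For part (2), the diagrammatic categories give restriction and induction as tensoring with a thin strand (or a thick strand of thickness $a$) on the right. These functors are defined compatibly on $\qSch_\bfu$ and $\qW_\bfu$, and they commute with the Schur functor $F$, since $F$ is multiplication by an idempotent that is compatible with adding a strand. I would first prove that $\Res^n_{n-a}W^\la$ and $\Ind_n^{n+a}W^\la$ have Weyl filtrations over $\bfS_\bfu$, indexed by removing or adding horizontal $a$-strips of boxes. This follows from the quasi-hereditary structure: the induced module of a standard module is filtered by standards, via a basis of semistandard tableaux ordered by the position of the largest entries (in the style of the Pieri rule). Finally, since $F$ is exact and sends $W^\mu$ to $\Delta^\mu$, applying $F$ yields the required cell filtrations on $\Res^n_{n-a}\Delta^\la$ and $\Ind_n^{n+a}\Delta^\la$.
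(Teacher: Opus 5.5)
Your part (1) is the paper's argument. Identify $\wS_\bfu(n)$ with $e_n\bfS_\bfu(n)e_n$, where $e_n={\bf 1}_n$ sums the idempotents of the objects $(\emptyset^{\ell-1},\mu)$ (all red strands on the left). Then keep the basis elements $[\bT]\circ[\bS]^\div$ whose two types are of this form. Your non-vanishing worry is settled by your first candidate. Web types only involve entries $i_\ell$, and the Dipper--James--Mathas condition only asks that an entry $i_k$ in component $c$ satisfy $k\ge c$. So filling the successive rows of $\la$, component by component, with $1_\ell,2_\ell,3_\ell,\ldots$ gives a semistandard tableau of web type. Restriction also works along your route. $\Res$ is truncation by an idempotent and $e_{n-a}\otimes 1_a=\mathbf{1}_{a,n}\le e_n$, so $F\circ\Res=\Res\circ F$. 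The $\bfS_\bfu$-level filtration of $\Res W^\la$ by the shape of $\bT^a$ is then the same tableau argument the paper runs directly in $\wS_\bfu(n)$.

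The gap is induction. Saying $F$ is ``compatible with adding a strand'' does not give $F\circ\Ind\cong\Ind\circ F$, because induction is a tensor product. Let $\mathbf{1}'_{a,n+a}$ be the sum of the idempotents of $\qSchu$-objects ending in a black strand of thickness $a$, and put $X=e_{n+a}\bfS_\bfu(n+a)\mathbf{1}'_{a,n+a}$. Then $F\Ind N=X\otimes_{\bfS_\bfu(n)}N$, whereas $\Ind FN=Xe_n\otimes_{e_n\bfS_\bfu(n)e_n}e_nN$. These agree for all $N$ only if, as a right $\bfS_\bfu(n)$-module, $X$ is a direct sum of projectives $1_\nu\bfS_\bfu(n)$ with $\nu$ web objects. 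Equivalently, morphisms from $\nu'\otimes a$ (black strands of $\nu'$ interleaved with red ones) into web objects must factor freely through web objects tensored with $a$. This is an analogue of Lemma \ref{Lem:Projective} inside $\qSchu$, and you would have to prove it. It has real content: whenever $\wS_\bfu(n)$ has fewer simples than cell modules, some $e_nL^\la$ vanish. Then $W^\la$ is not generated by $e_nW^\la$, and $\Ind\circ F$ only sees part of $W^\la$.

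A second point: ``induced standard modules are filtered by standards'' does not follow from quasi-heredity alone. You need either a spanning set plus a dimension count, or exactness of $\Ind$ on $\bfS_\bfu$ together with $\mathrm{Ext}^1(\Ind W^\la,\nabla^\mu)\cong\mathrm{Ext}^1(W^\la,\Res\nabla^\mu)=0$. Here $\nabla^\mu$ is costandard, and $\Res\nabla^\mu$ is $\nabla$-filtered by the dual of the restriction result.

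The paper avoids both issues by staying inside $\wS_\bfu$. It writes $\Ind\Delta^\la=M^\la/N^\la$ and shows it is spanned by the classes of $[\bT_1]\circ(\bT_a^{\la})^\div\otimes[\bS]^\div$ with $\bT_1$ of shape in $\bar{\mathcal A}(\la)_a$. It then proves these form a basis by a dimension count, using Lemma \ref{Lem:Projective}, a semisimple specialization, and Frobenius reciprocity with Proposition \ref{prop:restriction}. Once you supply the factorization lemma, your route becomes a genuine alternative. Its payoff is access to highest-weight Ext arguments, which are unavailable for $\wS_\bfu(n)$ since it need not be quasi-hereditary.
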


\subsubsection{}
Fix $\I =\Z$ or $\Z/p\Z$ when $q=\eps$ is generic or a root of 1 with order of $\eps^2$ equal to $p$, and choose the parameter $\bfu =\eps^{2\bfs}$. The quotient functor $\qAW \to\qW_{\eps^{2\bfs}}$ induces a functor 
\[
\pi_\bfs: \qW_{\eps^{2\bfs}}\modf \longrightarrow \qAW\modf_\eps
\]
and then a surjection $\pi_\bfs^*: [\qAW\modf_\eps]^* \rightarrow [\qW_{\eps^{2\bfs}}\modf]^*$. Denote by $\bfm\in\cM\cS$ the set of all multi-segments \eqref{cMcS} over $\I$. By definition \eqref{eq:std}, the standard modules $\Delta_\bfm$, for $\bfm\in\cM\cS$, are $\hwS(n)$-modules of the form $\pi_\bfs(\Delta^\la)$, for some distinguished $\la \in \Par^\ell(n)$ associated to $\bfm$. 

The geometric affine $q$-Schur algebra, $\KS(N,n) =K^G(T^*\cF \times_{\cN} T^*\cF)$, is the convolution algebra on the Steinberg variety of $N$-step flags in $\C^n$; cf. \eqref{KSchur}. Denote by 
\[
{\K} :=\bigoplus_{n=0}^\infty [\KS(n,n)\modf]
\]
and by $\K^*$ its restricted dual; cf. \cite{GV93, V98} or \eqref{eq:KK}. Denote by $M_\bfm$, for $\bfm\in\cM\cS$,  the standard modules in $\K$ defined via Borel-Moore homology; cf. \cite{CG97, VV99}. The theorem below can be viewed as a generalization of the induction theorem of Kazhdan-Lusztig \cite{KL87} for affine Hecke algebras as adapted by Ariki \cite{Ar96}.

\begin{alphatheorem} [Theorem \ref{thm:induced}, Proposition \ref{prop:Morita} and Theorem \ref{thm:ind=std}]
\label{thm:B}
\qquad
    \begin{enumerate}
        \item 
        For $\la \in \Par^\ell(n)$, the $\hwS(n)$-module $\pi_\bfs (\Delta^\la)$ admits an induced module structure. 
        \item The algebras $\hwS(n)$ and $\KS(n,n)$ are Morita equivalent, inducing an isomorphism $\natural: [\qAW\modf_\eps] \stackrel{\cong}{\longrightarrow}\K$.
        \item $\natural$ matches the standard modules:  $\natural([\Delta_\bfm] )= [M_\bfm]$, for all $\bfm\in\cM\cS$. In particular, $\{[{\Delta}_\bfm]\mid \bfm\in\cM\cS\}$ forms a $\Z$-linear basis for $[\qAW\modf_\eps].$
    \end{enumerate}
\end{alphatheorem}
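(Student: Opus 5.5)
We prove the three parts of Theorem~\ref{thm:B} in order. Part (2) supplies the dictionary needed to compare (1) with the geometric side in (3).

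\emph{Part (1).} By Theorem~\ref{thm:A}, $\Delta^\la$ is obtained from the cyclotomic $q$-Schur Weyl module by a Schur functor. On the Schur side, Weyl modules of $\bfS_\bfu(n)$ have a Dipper--James--Mathas description as $\bfS_\bfu(n)\otimes_{\bfS(\la^{(1)})\otimes\cdots\otimes\bfS(\la^{(\ell)})}(\text{tensor of level-one Weyl modules})$, twisted by the cyclotomic factors $\prod(x-u_i)$.
\begin{itemize}
\item Transporting this to $\qAW$, we show that $\pi_\bfs(\Delta^\la)$ is isomorphic to $\Ind^{n}_{|\la^{(1)}|,\ldots,|\la^{(\ell)}|}\bigl(\pi_{s_1}\Delta^{\la^{(1)}}\boxtimes\cdots\boxtimes\pi_{s_\ell}\Delta^{\la^{(\ell)}}\bigr)$, with induction given by horizontal composition in the monoidal category $\qAW$.
\item Each level-one factor is a $q$-Schur Weyl module on which the dots act by scalars determined by $\eps^{2s_i}$ and contents. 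It is therefore itself induced from one-dimensional segment modules, i.e.\ modules on which thick strands act by evaluation at the segment.
\item To prove the isomorphism, we construct a surjection from the induced module, using the universal property of induction and a generating highest-weight vector. We then compare dimensions via the PBW/basis theorem of \cite{SSW25} for $\qAW$, checking that both sides have rank equal to the number of semistandard tableaux of shape $\la$.
\end{itemize}

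\emph{Part (2).} By \cite{SSW25}, $\hwS(n)$ is Morita equivalent to Green's affine $q$-Schur algebra $\ASch(n,n)$. It is then standard (Vasserot, Lusztig's affine Schur–Hecke comparison) that $\ASch(n,n)$, specialized at $q=\eps$, is isomorphic to or Morita equivalent with $\KS(n,n)$. Both are of the form $\End_{\AHe(n)}\bigl(\bigoplus_{\mu}\text{parabolic permutation modules}\bigr)$, possibly after adding enough summands to make the idempotent full. The K-theoretic convolution realization of $\AHe(n)$ by Kazhdan--Lusztig and Ginzburg extends to the partial flag case. Composing the two equivalences gives $\natural$.

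\emph{Part (3).} For $\bfm\in\cM\cS$, the geometric standard module $M_\bfm$ is induced from a product of segment modules. This is the Kazhdan--Lusztig / Ariki induction theorem in the Schur setting, as in \cite{VV99}, proved via Borel--Moore homology of fibers and a Künneth property for induction (a fibration over a partial flag variety). By part (1), $\Delta_\bfm=\pi_\bfs(\Delta^\la)$ is induced from the same segment data, and the Morita equivalence of part (2) intertwines the two induction functors. It also matches the one-dimensional segment modules, which can be checked by comparing central characters and dimensions. This gives $\natural[\Delta_\bfm]=[M_\bfm]$. Since $\{[M_\bfm]\}$ is a $\Z$-basis of $\K$ (unitriangularity with simples $L_\bfm$ in the orbit-closure order), $\{[\Delta_\bfm]\}$ is a basis.

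The main obstacle is the compatibility of induction functors under the Morita equivalence of part (2). We expect to verify it on the Hecke level, where KL--Ginzburg already gives it, and then propagate it through the idempotent truncations.
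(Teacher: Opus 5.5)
Your treatment of (1) and (2) is essentially the paper's. For (1), the paper also uses Frobenius reciprocity to produce a map $\Phi(f)$ from $\Ind^n_{n_1,\ldots,n_\ell}(\pi_{s_1}\Delta^{\la^{(1)}}\otimes\cdots\otimes\pi_{s_\ell}\Delta^{\la^{(\ell)}})$ to $\pi_\bfs\Delta^\la$. It then checks that $\Phi(f)$ carries the basis $A\otimes(T_1\otimes\cdots\otimes T_\ell)$, $A\in C^\lambda_{\eta^{(1)},\ldots,\eta^{(\ell)}}$, of Lemma~\ref{lem:dimofinduction} bijectively onto $\SST(\la,\lambda)$. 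Your surjection-plus-dimension count is the same argument. For (2), the paper combines \cite{SSW25} with $\KS(N,n)\cong\ASch(N,n)$ from \cite{LXY26}, which is exactly the folklore identification you sketch.

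You should drop two side claims in (1). First, Weyl modules are not presented in \cite{DJM98} as induced modules; the induced structure exists only on the pullbacks $\pi_\bfs\Delta^\la$. Second, a level-one cell module is not in general induced from segment modules. For $\la=(2,1)$ the Hecke part $1_{(1^3)}\Delta^\la$ is the $2$-dimensional Specht module. By Lemma~\ref{lem:dimofinduction}, inducing one-dimensional modules in degree $3$ gives Hecke parts of dimension $0,1,3$ or $6$, never $2$. Neither claim is needed, since the components of $\lambda_\bfm$ are columns.

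The genuine gap is in (3). Your main route needs two things:
\begin{enumerate}
\item[(a)] an induction theorem presenting $M_\bfm$ as a $\KS(n,n)$-module induced from segment modules;
\item[(b)] the fact that $\natural$ intertwines this geometric induction with the diagrammatic $\Ind$.
\end{enumerate}
The paper uses neither. Statement (a) is not among the inputs it takes from \cite{GV93, VV99}, where $M_\bfm$ is defined via Borel--Moore homology of $\cF^s_x$. Statement (b) is what you yourself call the main obstacle. Matching segment modules ``by central characters and dimensions'' also fails across a Morita equivalence, which does not preserve dimensions.

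Your fallback, proving compatibility on the Hecke level and propagating through the idempotent $1_{(1^n)}$, is the right instinct, but it breaks for $p<\infty$. There the Schur functor is not injective on Grothendieck groups. The simple modules in $\AHe(n)\modf_\eps$ are indexed only by aperiodic multisegments, whereas those of $\hwS(n)$ are indexed by all of $\cM\cS_n$. So knowing $1_{(1^n)}[\Delta_\bfm]$ and $1_{(1^n)}[M_\bfm]$ does not determine $\natural([\Delta_\bfm])$.

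The paper closes this in two steps, working only with Grothendieck groups rather than functors.
\begin{enumerate}
\item[(i)] At $p=\infty$ the Schur functors on both sides of $\natural$ are isomorphisms on $K_0$. The paper then combines three inputs:
\begin{itemize}
\item $1_{(1^n)}[\Delta_\bfm]=[\texttt{S}_\bfm]$, from Corollary~\ref{cor:idemonindu}; this is where part (1) and Proposition~\ref{prop:induhe} enter;
\item $1_{(1^n)}[M_\bfm]=[N_\bfm]$, from \cite[Proposition 18]{V98};
\item the Kazhdan--Lusztig/Ariki induction theorem $\natural([\texttt{S}_\bfm])=[N_\bfm]$ \cite{KL87, Ar96}, with the sign-versus-trivial discrepancy handled by the twist $\tau$ (Remark~\ref{rem:sign}).
\end{itemize}
\item[(ii)] The case $p<\infty$ is then deduced by specializing $q\mapsto\eps$, since $\Delta_\bfm$ and $M_\bfm$ are defined integrally.
\end{enumerate}
Your proposal is missing the reduction to $p=\infty$, this specialization step, and the Hecke-level input $1_{(1^n)}[M_\bfm]=[N_\bfm]$.
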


There is another version of affine $q$-Schur algebra $\ASch(N,n)$ defined in \eqref{def:affineSchur} as an endomorphism algebra \cite{Gre99}. It is known that (see \cite{LXY26}) 
\[
\KS(N,n) \cong \ASch(N,n)
\]
for $N\ge n$, and all $\ASch(N,n)$ for $N\ge n$ are Morita equivalent to $\ASch(n,n)$. On the other hand, it was shown in \cite{SSW25} that $\ASch(n,n)$ is Morita equivalent to $\hwS(n)$, whence Part (2). Part (3) follows from the induction theorem for the affine Hecke algebra \cite{KL87, Ar96} by applying the Schur functor and Part (1) (=Theorem \ref{thm:induced}). 

\subsubsection{}
Lusztig \cite{Lus91, Lus98} used the composition subalgebra of the generic Hall algebra $\Hall$ of a cyclic quiver $\pen_p$ and its IC basis to realize $\U^-_v(\widehat\sll_p)$ and its canonical basis. The generic Hall algebra $\Hall$ is isomorphic to half the affine quantum group $\U^-_\cA(\widehat\gl_p)$ with $\cA=\Z[v,v^{-1}]$, and its IC basis is regarded as the canonical basis of the latter; cf. \cite{LT96, VV99}. The Drinfeld double of $\Hall$ (i.e., double Hall algebra) over $\C(v)$ can be naturally identified with $\U_v(\widehat\gl_p)$; cf. \cite{X97, DF15, DX17}. 

In \cite[\S12.3]{VV99} (building on \cite{CG97, GV93, GRV94}), it was pointed out that there exists a $\Z$-linear isomorphism (see Theorem \ref{thm:Hall=Schursimple}):
\begin{align}  \label{Hall=K}
    \imath_p: \HallZ \stackrel{\cong}{\longrightarrow} \K^*,
\end{align}
which sends a Hall basis $f_\bfm$ (corresponding to characteristic functions on orbits in the Hall algebra over finite fields) to the basis of standard modules $[M_\bfm]$, for $\bfm\in\cM\cS$. Moreover, $\imath_p$ sends the canonical basis to the basis dual to the simple modules. 

The Hall algebra $\Hall$ is generated by the semisimple generators (corresponding to the semisimple quiver representations), and a closed multiplication formula for $f_\bfi f_\bfm$ between a semisimple generator $f_\bfi$ (for $\bfi\in\N\I$) and any Hall basis element is obtained by Du-Fu \cite{DF15}; also see \cite{FL19} for a geometric proof. We view $\Hall$ as a left regular $\U_\cA^-(\widehat\gl_p)$-module via left multiplication and the identification $\U^-_\cA(\widehat\gl_p)\cong\Hall$. 

On the other hand, we define the $\bfi$-restriction and $\bfi$-induction functors diagrammatically which give rise to the linear operators $f_\bfi$ on $[\qAW \modf_\eps]^*$ and $f_\bfi$ and $e_\bfi$ on $[\qW_{\eps^{2\bfs}} \modf]^*$. 

\begin{alphatheorem}
     [Theorem \ref{thm:Cat_affine}]
     \label{thm:C}
 Let $p\le \infty$.
 \begin{enumerate}
 \item The $f_\bfi$ via $\bfi$-restriction functors provide an action of $\U^-_\Z(\widehat\gl_p)$ on $[\qAW\modf_\eps]^*.$
     \item 
     There exists a $\U^-_\Z(\widehat\gl_p)$-module isomorphism
\[
\jmath_p: \HallZ \longrightarrow [\qAW\modf_\eps]^*, 
\qquad \jmath_p(f_\bfm) =[{\Delta}_\bfm]^*.
\]
\item 
The map $\jmath_p$ sends the canonical basis to the basis dual to the simple modules.
 \end{enumerate} 
\end{alphatheorem}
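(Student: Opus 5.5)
The plan is to get Theorem \ref{thm:C} by transporting the Varagnolo--Vasserot isomorphism \eqref{Hall=K} along the Morita equivalence of Theorem \ref{thm:B}. Let $\natural^*:\K^*\to[\qAW\modf_\eps]^*$ be the transpose of $\natural$. We set
\[
\jmath_p:=\natural^*\circ\imath_p .
\]
By Theorem \ref{thm:B}(3), $\natural([\Delta_\bfm])=[M_\bfm]$. By \eqref{Hall=K}, $\imath_p(f_\bfm)$ is the standard basis element attached to $M_\bfm$ in $\K^*$. Hence $\jmath_p(f_\bfm)=[\Delta_\bfm]^*$, and $\jmath_p$ is a $\Z$-linear isomorphism, because $\{[\Delta_\bfm]\}$ is a basis.

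For part (3), note that a Morita equivalence sends simple modules to simple modules. So $\natural$ matches the two bases of simples, and $\natural^*$ matches their dual bases. Since $\imath_p$ sends the canonical basis to the basis dual to simples, so does $\jmath_p$. Some care is needed with the identification of labels: the simples of $\hwS(n)$ should be indexed by $\cM\cS$ as heads of $\Delta_\bfm$, compatibly with $M_\bfm$. This holds because the equivalence preserves heads.

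The substance lies in parts (1) and (2). First we check that each operator $f_\bfi$, defined as the transpose of the class of the $\bfi$-restriction functor, is well defined on Grothendieck groups. This needs the $\bfi$-restriction functor to be exact; it is an idempotent truncation of a restriction along a split/merge, hence exact.

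Next we show that these operators act on the standard classes exactly as left multiplication by $f_\bfi$ in $\HallZ$. Concretely, we would use the cell filtration of $\Res^{n}_{n-a}(\Delta^\la)$ from Theorem \ref{thm:A}(2), applied through $\pi_\bfs$ and cut down to the generalized $\bfi$-eigenspace of the dots. This should give a formula for $[\iRes\,\Delta_{\bfm'}]$ as a combination of the $[\Delta_\bfm]$. Dualizing, we would compare the result with the Du--Fu multiplication formula \cite{DF15} for $f_\bfi f_\bfm$ specialized at $v=1$.

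Once $\jmath_p(f_\bfi x)=f_\bfi\,\jmath_p(x)$ is established for the semisimple generators $f_\bfi$, both (1) and (2) follow. Indeed, $\HallZ\cong\U^-_\Z(\widehat\gl_p)$ is generated by the $f_\bfi$, and $\jmath_p$ is bijective, so the $f_\bfi$ on $[\qAW\modf_\eps]^*$ satisfy all the relations of $\HallZ$.

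An alternative route to this compatibility avoids combinatorics. On the geometric side, restriction for $\KS(n,n)$ corresponds under $\imath_p$ to Hall multiplication by $f_\bfi$, via the convolution and hyperbolic-localization description in \cite{VV99,GV93}. It would then suffice to check that the Morita equivalence of Theorem \ref{thm:B}(2) intertwines the diagrammatic $\bfi$-restriction with the geometric one. I would try the combinatorial comparison first, using Theorem \ref{thm:B}(1): since $\Delta_\bfm$ is induced, a Mackey-type decomposition of $\Res$ of an induced module gives the coefficients directly as counts of $\bfi$-removable configurations.

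I expect the main obstacle to be matching these coefficients, including the $\eps$-specialization of the Du--Fu structure constants and the generic versus root-of-unity cases ($p=\infty$ versus $p<\infty$).
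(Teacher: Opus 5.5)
Your plan follows the paper's proof for $p=\infty$ and for part (3). As in the paper, $\jmath_p$ is the basis-matching map $f_\bfm\mapsto[\Delta_\bfm]^*$, which equals $\natural^*\circ\imath_p$ by Theorem~\ref{thm:ind=std}. The identity $\jmath_p(f_\bfi f_\bfm)=f_\bfi[\Delta_\bfm]^*$ comes from comparing the count in Proposition~\ref{prop:IndRes}(1), applied to $\lambda_\bfm$ (one column per segment, so only bottom nodes are removable), with the Du--Fu coefficients $\prod\binom{a_{ij}+t_{ij}-t_{i-1,j}}{t_{ij}}$ at $v=1$. Part (3) then follows from Theorem~\ref{thm:Hall=Schursimple} together with $\natural$ matching simples.

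Some smaller points on that part:
\begin{enumerate}
\item You need Remark~\ref{rem:donotorder}. The sections of the filtration carry an inherited ordering of the columns, and the remark is what lets you identify their classes with $[\Delta_\bfm]$.
\item The paper treats only $\bfi=\lambda_i\alpha_i$ directly and passes to general $\bfi$ via $f_\bfi=\prod_i f_{\lambda_i\alpha_i}$ \cite[Remark 6.1]{VV99}. Your comparison for arbitrary $\bfi$ at once is also fine, since the matrix $T$ is determined by the pair $(\bfm',\bfm)$.
\item No Mackey formula for $\hwS$ is needed.
\item The geometric alternative is not available: the paper stresses that no geometric $\U^-_\Z(\widehat\gl_p)$-action on $\K^*$ is known, which is precisely why it works with webs.
\item For (3) you need not identify simples as heads of the $\Delta_\bfm$. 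Theorem~\ref{thm:ind=std} is only an identity of classes, so it does not justify that identification anyway. Simply define $D_\bfm$ by $\natural([D_\bfm])=[L_\bfm]$, as the paper does.
\end{enumerate}

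The genuine gap is the case $p<\infty$, which you only flag as an obstacle. First, no $\eps$-specialization of the Du--Fu constants is involved: the Hall side is always taken at $v=1$, and $\eps$ enters only through residues in $\I=\Z/p\Z$. Second, the factorization $f_\bfi=\prod_i f_{\lambda_i\alpha_i}$ used at $p=\infty$ is not available in general on a cyclic quiver.

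The paper closes the gap by reducing to $p=\infty$:
\begin{enumerate}
\item Let $\bfW$ and $\bfW_\infty$ be the spans of the classes $[\Delta_\bfm]$ for $\I=\Z/p\Z$ and $\I=\Z$, and let $\varpi:\bfW_\infty\to\bfW$ be $[\Delta_\bfm]\mapsto[\Delta_{\bar\bfm}]$.
\item Residues mod $p$ are reductions of residues in $\Z$, so Proposition~\ref{prop:IndRes} and Theorem~\ref{thm:induced} give $[\iRes\,\Delta_{\bar\bfm}]=\sum_{\bar\bfj=\bfi}\varpi[\bfj\text{-Res}\,\Delta_\bfm]$.
\item Hence the injection $\varpi_*:\bfW^*\to\bfW_\infty^*=\mathbf{Hall}_{\infty,\Z}$ intertwines $f_\bfi$ with $\sum_{\bar\bfj=\bfi}f_\bfj$.
\item View $\HallZ$ inside $\mathbf{Hall}_{\infty,\Z}$ via $f_\bfi\mapsto\sum_{\bar\bfj=\bfi}f_\bfj$. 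Cyclicity gives $\HallZ\subset\varpi_*(\bfW^*)$, and a graded rank count gives equality.
\end{enumerate}

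Alternatively, your direct route should also go through at $p<\infty$ if you use Du--Fu's formula for the cyclic quiver (periodic matrices, arbitrary semisimple $\bfi$). The count coming from Proposition~\ref{prop:IndRes} is uniform in $p$, because each column of $\lambda_\bfm$ still has a single removable node. What you must not do there is reduce to single colors.
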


Theorem \ref{thm:C} (1)-(2) strengthens the basis-preserving linear isomorphism $\imath_p$ of Ginzburg, Varagnolo and Vasserot in \eqref{Hall=K}. We compute $\iRes$ on ${\Delta}_\bfm$ via Theorem \ref{thm:A} and show that it matches exactly with the Du-Fu multiplication formula on $\Hall$, whence (1)--(2). Part (3) follows from the following commutative diagram of $\Z$-linear isomorphisms (as there is no known geometric $\U^-_\Z(\widehat\gl_p)$-action on $\K^*$): 
\begin{equation}   \label{eq:ijk}
\begin{tikzcd}
\HallZ\ar[r,"\imath_p"]\ar[dr,"\jmath_p"] &\K^*\ar[d,"\natural^*"]
\\
 & {[}\qAW\modf_\eps{]}^*
\end{tikzcd} 
\end{equation}

\subsubsection{}
Denote by $\Lambda(\omega_\bfs)$ the integrable  $\U_v(\widehat\gl_p)$-module of highest weight $\omega_\bfs$. Denote by $\varpi_\bfs: \Hall \rightarrow \Lambda(\omega_\bfs)\!_\cA$ the $\U^-_\cA(\widehat\gl_p)$-module homomorphism sending $1$ to the highest weight vector $v^+_{\omega_\bfs}$. It was proved by Schiffmann \cite{Sch00} that the canonical basis on $\Hall$ descends via 
$\varpi_\bfs$ to a canonical basis on $\Lambda (\omega_\bfs)$, the latter being part of Uglov's canonical basis on a higher level Fock space \cite{Ugl00}; moreover, the canonical basis on $\Lambda (\omega_\bfs)$ admits a positive integral expansion into the Schur basis of the Fock space. This generalizes the level one conjecture in \cite{VV99} building on \cite{LT96}. The following provides a generalization of the main results of Ariki and Varagnolo-Vasserot.

\begin{alphatheorem}  [Proposition \ref{prop:lieaction}, Theorem \ref{thm:cyc_cat}]
    \label{thm:D}  
We have the following commutative diagram of $\U^-_\Z(\widehat\gl_p)$-module homomorphisms: 
\begin{equation}  
\label{diag:comm}
\begin{tikzcd}
{\HallZ} \ar[r,"\jmath_p"]\ar[d,two heads,"\varpi_\bfs"]&{[}\qAW \modf_\eps{]}^* \ar[d,two heads,"\pi_\bfs^*"]\\
\Lambda (\omega_\bfs)_\Z  \ar[r,"\jmath_\bfs"]&{[}\qW_{\eps^{2\bfs}} \modf{]}^* 
\end{tikzcd} 
\end{equation}
Moreover, the map $\jmath_\bfs$ is a $\U_\Z(\widehat\gl_p)$-module isomorphism and sends the canonical basis to the basis dual to simple modules. 
\end{alphatheorem}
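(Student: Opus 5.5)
We will first realize the operators $f_\bfi$ and $e_\bfi$ on $[\qW_{\eps^{2\bfs}}\modf]^*$ as the duals of the $\bfi$-restriction and $\bfi$-induction functors. These functors come from the diagrammatic inclusions $\qW_{\eps^{2\bfs}}(n)\hookrightarrow \qW_{\eps^{2\bfs}}(n+a)$ together with the generalized eigenspace decomposition of the dots (polynomial generators) with eigenvalues $\eps^{2i}$, $i\in\I$. Since the quotient functor $\qAW\to\qW_{\eps^{2\bfs}}$ is compatible with adding strands on the right, restriction commutes with $\pi_\bfs$. Dually, $\pi_\bfs^*$ intertwines the $f_\bfi$-actions, and by Theorem~\ref{thm:C}(1) it is a $\U^-_\Z(\widehat\gl_p)$-module map. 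The image of $1\in\HallZ$ is the class dual to the trivial module of $\wS_{\eps^{2\bfs}}(0)=\kk$. Hence $\pi_\bfs^*\circ\jmath_p$ is a $\U^-_\Z$-module map from the regular module whose image is generated by this vector. The map $\pi_\bfs^*$ is surjective because $\pi_\bfs$ sends simples to simples. It remains to show that $\ker(\pi_\bfs^*\circ\jmath_p)=\ker\varpi_\bfs$; then $\jmath_\bfs$ exists and is a bijection.

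For the kernel, we will use $e_\bfi$ together with the highest weight property. On $[\qW_{\eps^{2\bfs}}\modf]^*$, the Chevalley operators $e_i,f_i$ (for $\bfi=\alpha_i$) satisfy the $\widehat\sll_p$ relations. By Theorem~\ref{thm:A}(2), the induction and restriction functors admit cell filtrations. Their effect on cell classes $[\Delta^\la]^*$ is therefore given by adding or removing residue-$i$ boxes, with $\eps$-powers matching the level-$\ell$ Fock space action of Uglov or Jimbo--Misra--Miwa--Okado type with charge $\bfs$. Concretely, we will build the map $\Lambda(\omega_\bfs)_\Z\subset \cF_{\bfs,\Z}\to [\qW\modf]^*$ by sending Schur basis elements to the duals of cell modules. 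The Schur functor from $\bfS_\bfu(n)$, which is quasi-hereditary, and the branching rule from Theorem~\ref{thm:A}(2) together show that this map intertwines $e_\bfi$ and $f_\bfi$. The map $\jmath_\bfs$ is then the restriction of this map to the submodule generated by the vacuum. It is a $\U_\Z(\widehat\gl_p)$-module map, including the Heisenberg part, because the Du--Fu type formulas for the semisimple $f_\bfi$ match the Fock space formulas. Injectivity follows because $\Lambda(\omega_\bfs)_\Z$ is irreducible, so any nonzero module map from it is injective after base change to $\C(v)$ and then to $v=1$ by lattice considerations. Surjectivity follows from the surjectivity of $\pi_\bfs^*\circ\jmath_p$. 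Commutativity of the diagram \eqref{diag:comm} then follows since both composites are $\U^-_\Z$-module maps that agree on $1$.

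For the canonical basis, Theorem~\ref{thm:C}(3) shows that $\jmath_p$ sends the canonical basis $\{b\}$ of $\HallZ$ to the duals $[L]^*$ of simple $\qAW$-modules. The dual map $\pi_\bfs^*$ sends $[L]^*$ to $[L]^*$ when $L$ factors through $\qW_{\eps^{2\bfs}}$, and to $0$ otherwise. By Schiffmann's theorem \cite{Sch00}, $\varpi_\bfs(b)$ is either a canonical basis element of $\Lambda(\omega_\bfs)$ or zero, and the nonzero ones form a basis. Commutativity of the diagram therefore forces $\jmath_\bfs$ to send canonical basis elements to the duals of simples. Both sets are bases, so the correspondence is bijective.

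The main obstacle is identifying the combinatorics. We must check that the $\bfi$-restriction of cell modules over $\wS_{\eps^{2\bfs}}(n)$ reproduces exactly the Fock space action with the correct charge $\bfs$ and $\eps$-twists, so that the highest weight vector generates the correct submodule and the kernel equals $\ker\varpi_\bfs$. We would first attempt this via Theorem~\ref{thm:B}(1), realizing $\pi_\bfs(\Delta^\la)$ as a standard module $\Delta_\bfm$, thereby reducing it to the Hall algebra computation already carried out for Theorem~\ref{thm:C}.
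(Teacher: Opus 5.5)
\textbf{Verdict: there is a genuine gap, in the step that is supposed to prove the kernel equality.} Your reduction in the first paragraph is sound: $\pi_\bfs^*\circ\jmath_p$ is a surjective $\U^-_\Z(\widehat\gl_p)$-map with $1\mapsto[\Delta^\emptyset]^*$, so everything hinges on $\ker(\pi_\bfs^*\circ\jmath_p)=\ker\varpi_\bfs$. Your canonical-basis argument is also the paper's.

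The problem is the map $\cF_{\bfs,\Z}\to[\qW_{\eps^{2\bfs}}\modf]^*$, $|\la,\bfs\rangle\mapsto[\Delta^\la]^*$, which is not well defined. The classes $[\Delta^\la]$, $\la\in\Par^\ell(n)$, span $[\wS_{\eps^{2\bfs}}(n)\modf]$ but are in general linearly dependent. For example, take $\ell=2$, $s_1=s_2$, $n=1$. Then $\wS_{\eps^{2\bfs}}(1)=\bfH_{\eps^{2\bfs}}(1)\cong\C[X]/(X-\eps^{2s_1})^2$ has one simple module but two cell modules with equal classes. So there is no ``dual basis'' $\{[\Delta^\la]^*\}$, and no natural map from the Fock space into $[\qW_{\eps^{2\bfs}}\modf]^*$ exists in that direction. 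Defining $\jmath_\bfs$ as ``the restriction to the vacuum submodule'' of such a map presupposes exactly what has to be proved. Your irreducibility argument for injectivity then has nothing to act on. The fallback via Theorem~\ref{thm:B}(1) does not help directly either: $\pi_\bfs(\Delta^\la)$ is a standard module $\Delta_\bfm$ only for the column multipartitions $\la_\bfm$ with matched parameters.

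\textbf{The fix is to reverse the direction, as the paper does.} The decomposition map $\psi:\cF_{\bfs,\Z}\to[\qW_{\eps^{2\bfs}}\modf]$, $|\la,\bfs\rangle\mapsto[\Delta^\la]$, is surjective, since the cell modules span the Grothendieck group. By Proposition~\ref{prop:IndRes}, which is where your branching-rule comparison belongs, $\psi$ intertwines $\iRes,\iInd$ with $e_\bfi,f_\bfi$. Consider its transpose $\psi^*:[\qW_{\eps^{2\bfs}}\modf]^*\to\cF_{\bfs,\Z}$, $\phi\mapsto\sum_\la\phi([\Delta^\la])\,|\la,\bfs\rangle$. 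Because $e_\bfi$ and $f_\bfi$ are adjoint for the Schur form at $v=1$, $\psi^*$ is an \emph{injective} module map with $[\Delta^\emptyset]^*\mapsto|\emptyset,\bfs\rangle$.

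Now $\psi^*\circ\pi_\bfs^*\circ\jmath_p$ and $\HallZ\xrightarrow{\varpi_\bfs}\Lambda(\omega_\bfs)_\Z\hookrightarrow\cF_{\bfs,\Z}$ are $\U^-_\Z(\widehat\gl_p)$-maps agreeing on $1$, hence equal. Injectivity of $\psi^*$ then gives $\ker(\pi_\bfs^*\circ\jmath_p)=\ker\varpi_\bfs$ immediately. Equivalently, $\psi^*$ identifies $[\qW_{\eps^{2\bfs}}\modf]^*$ with the cyclic submodule $\Lambda(\omega_\bfs)_\Z$. Then $\jmath_\bfs=(\psi^*)^{-1}|_{\Lambda(\omega_\bfs)_\Z}$ is automatically a $\U_\Z(\widehat\gl_p)$-isomorphism, with no separate irreducibility argument needed. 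The paper phrases this last step via semisimplicity of $\cF_{\bfs,\C}$, which forces the cyclic highest weight submodule to be the simple module $\Lambda(\omega_\bfs)$.
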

Theorem \ref{thm:D} is consistent with (and recovers via Schur functors) the classic results of Ariki \cite{Ar96}; see the diagram \eqref{CD:HeckeWeb}.

Denote by $\qW_{\eps^{2\bfs}} \prmod$ the category of finite-dimensional projective $\qW_{\eps^{2\bfs}}$-modules. Via the Cartan pairing, one can replace $[\qW_{\eps^{2\bfs}}\modf]^*$ and the duals of the simple modules in Theorem \ref{thm:D}  by $[\qW_{\eps^{2\bfs}}\prmod]$ and indecomposable projective modules $P^\la$. Then Theorem \ref{thm:D} can be rephrased as computing the cell filtration multiplicities of $P^\la$ in terms of $v$-decomposition matrix (specialized at $v=1$) from  canonical basis to Schur basis as well as providing a classification of simple modules over the cyclotomic  $q$-$W$-Schur algebras; see Corollaries \ref{cor:PIM}--\ref{cor:classifysimple}.


\subsubsection{}
The clockwise orientation of a cyclic quiver (i.e. $i\mapsto i+1$, for $i\in\I$) is adopted in \cite{VV99, Sch00, DF15}, while the opposite orientation is used in \cite{Ar96, LTV99}. In this paper, we follow the clockwise convention mainly because we need to match our computation in $\qAW$ exactly with (the $v=1$ specialization of) the multiplication formula in $\Hall$ with semisimple generators from \cite{DF15}; see Remarks \ref{rem:sign}--\ref{rem:opposite}. 

The constructions of $\bfi$-restriction and $\bfi$-induction functors for cyclotomic $q$-web categories formulated in this paper work equally well for the cyclotomic $q$-Schur categories $\qSch_{\eps^{2\bfs}}$, which categorify Uglov's Fock space \cite{Ugl00}. This follows from 
\cite{RSVV} thanks to the identification of the path algebra of cyclotomic $q$-Schur categories with cyclotomic $q$-Schur algebras. Our diagrammatic constructions of $\bfi$-restriction and $\bfi$-induction functors are more general and conceptual than Wada's \cite{Wad14}.

\subsubsection{}
The paper is organized as follows. In Section \ref{sec:web}, we introduce the restriction and induction functors for the $q$-web categories. In Section \ref{sec:cellular}, we construct and study the cellular modules $\Delta^\la$ of $\wS_\bfu(n)$. A semisimplicity criterion for $\wS_\bfu(n)$ is given. In Section~ \ref{sec:standard}, we define the standard modules of $\hwS(n)$ as the pullback of cell $\wS_\bfu(n)$-modules, and show that they are always induced modules. Theorem \ref{thm:C} is the key categorification result of this paper and will be established in Section \ref{sec:affineSchur_Hall}. Theorem \ref{thm:D} is to a large extent a consequence of Theorem \ref{thm:C} and will be proved in Section \ref{sec:Cat}. We end the paper with a discussion of some open problems in \S\ref{ssec:open}.

\vspace{2mm}
%
{\bf Acknowledgement.} 
We thank Ming Lu and Changjian Su for helpful discussions on Hall algebra and geometric representation theory. We thank Department of Mathematics and IMS at University of Virginia and at National University of Singapore, Academia Sinica and the NSTC in Taipei, as well as University of Hong Kong for support and facilitating this collaboration. L.S. is partially supported by NSFC (Grant No. 12071346). W.W. is partially supported by DMS--2401351. 

%
\section{Restriction and induction functors for $q$-web categories}
\label{sec:web}

In this section, we recall from \cite{SSW25} the affine $q$-web category $\qAW$ and its cyclotomic quotients $\qW_\bfu$, which are a quantum version of the affine and cyclotomic web categories introduced in \cite{SW25web}. We determine the center of $\qAW$. We formulate the restriction functors on $\qAW$ and $\qW_\bfu$ as well as the induction functors on $\qW_\bfu$.

\subsection{Definition of $\qAW$}

Let $\kk =\C[q,q^{-1}]$.
For $n, a \in \N$, the $ q$-numbers and $q$-binomial coefficients are
\begin{align}
\label{qbinom}
    [n]_q =\frac{q^n -q^{-n}}{q-q^{-1}},
    \qquad
    \qbinom{n}{a}_q =\frac{[n]_q[n-1]_q\cdots [n-a+1]_q}{[a]_q!}.
\end{align}

\begin{definition}
  \label{def:qAWeb}
The affine $q$-web category $\qAW$ is the strict $\kk$-linear monoidal category generated by objects $a\in \mathbb Z_{\ge1}$. The object $a$ and its identity morphism  will be drawn as a vertical strand labeled by $a$,
$\stra.$
The generating morphisms are the merges, splits, (thick) positive crossings and thick dots depicted as
\begin{align}
\label{merge+split+crossing}
\merge 
&:(a,b) \rightarrow (a+b),&
\splits
&:(a+b)\rightarrow (a,b),&
\crossingpos
&:(a,b) \rightarrow (b,a), &
\wkdotaa , 
\end{align}
for $a,b \in \Z_{\ge 1}$, where $\crossingpos$ is invertible with inverse being the negative crossing denoted $\crossingneg := \crossingpos^{-1}$ and $\xdota$ is invertible with inverse denoted by $\zdota$. 
These generating morphisms are subject to the following relations \eqref{webassoc}--\eqref{dotmovesplits+merge}, for $a,b,c,d \in \Z_{\ge 1}$ with $d-a=c-b$:
\begin{align}
\label{webassoc}
\begin{tikzpicture}[baseline = 0,color=\clr]
	\draw[-,thick] (0.35,-.3) to (0.08,0.14);
	\draw[-,thick] (0.1,-.3) to (-0.04,-0.06);
	\draw[-,line width=1pt] (0.085,.14) to (-0.035,-0.06);
	\draw[-,thick] (-0.2,-.3) to (0.07,0.14);
	\draw[-,line width=1.5pt] (0.08,.45) to (0.08,.1);
        \node at (0.45,-.41) {$\scriptstyle c$};
        \node at (0.07,-.4) {$\scriptstyle b$};
        \node at (-0.28,-.41) {$\scriptstyle a$};
\end{tikzpicture}
&=
\begin{tikzpicture}[baseline = 0, color=\clr]
	\draw[-,thick] (0.36,-.3) to (0.09,0.14);
	\draw[-,thick] (0.06,-.3) to (0.2,-.05);
	\draw[-,line width=.8pt] (0.07,.14) to (0.19,-.06);
	\draw[-,thick] (-0.19,-.3) to (0.08,0.14);
	\draw[-,line width=1.5pt] (0.08,.45) to (0.08,.1);
        \node at (0.45,-.41) {$\scriptstyle c$};
        \node at (0.07,-.4) {$\scriptstyle b$};
        \node at (-0.28,-.41) {$\scriptstyle a$};
\end{tikzpicture}\:,
\qquad
\begin{tikzpicture}[baseline = -1mm, color=\clr]
	\draw[-,thick] (0.35,.3) to (0.08,-0.14);
	\draw[-,thick] (0.1,.3) to (-0.04,0.06);
	\draw[-,line width=1pt] (0.085,-.14) to (-0.035,0.06);
	\draw[-,thick] (-0.2,.3) to (0.07,-0.14);
	\draw[-,line width=1.5pt] (0.08,-.45) to (0.08,-.1);
        \node at (0.45,.4) {$\scriptstyle c$};
        \node at (0.07,.42) {$\scriptstyle b$};
        \node at (-0.28,.4) {$\scriptstyle a$};
\end{tikzpicture}
=\begin{tikzpicture}[baseline = -1mm, color=\clr]
	\draw[-,thick] (0.36,.3) to (0.09,-0.14);
	\draw[-,thick] (0.06,.3) to (0.2,.05);
	\draw[-,line width=1pt] (0.07,-.14) to (0.19,.06);
	\draw[-,thick] (-0.19,.3) to (0.08,-0.14);
	\draw[-,line width=1.5pt] (0.08,-.45) to (0.08,-.1);
        \node at (0.45,.4) {$\scriptstyle c$};
        \node at (0.07,.42) {$\scriptstyle b$};
        \node at (-0.28,.4) {$\scriptstyle a$};
\end{tikzpicture}\:,
\\
\label{mergesplit}
\begin{tikzpicture}[baseline = 7.5pt,scale=.8, color=\clr]
	\draw[-,line width=1pt] (0,0) to (.28,.3) to (.28,.7) to (0,1);
	\draw[-,line width=1pt] (.6,0) to (.31,.3) to (.31,.7) to (.6,1);
        \node at (0,1.13) {$\scriptstyle b$};
        \node at (0.63,1.13) {$\scriptstyle d$};
        \node at (0,-.1) {$\scriptstyle a$};
        \node at (0.63,-.1) {$\scriptstyle c$};
\end{tikzpicture}
&=
\sum_{\substack{0 \leq s \leq \min(a,b)\\0 \leq t \leq \min(c,d)\\t-s=d-a}}
q^{st}  
\begin{tikzpicture}[baseline = 7.5pt,scale=.8, color=\clr]
\draw[-,thick] (0.58,0) to (0.58,.2) to (.02,.8) to (.02,1);
	\draw[-,line width=4pt,white] (0.02,0) to (0.02,.2) to (.58,.8) to (.58,1);
	\draw[-,thick] (0.02,0) to (0.02,.2) to (.58,.8) to (.58,1);
	\draw[-,thick] (0,0) to (0,1);
	\draw[-,thick] (0.6,0) to (0.6,1);
        \node at (0,1.13) {$\scriptstyle b$};
        \node at (0.6,1.13) {$\scriptstyle d$};
        \node at (0,-.1) {$\scriptstyle a$};
        \node at (0.6,-.1) {$\scriptstyle c$};
        \node at (-0.1,.5) {$\scriptstyle s$};
        \node at (0.75,.5) {$\scriptstyle t$};
\end{tikzpicture},
\\
  \label{splitbinomial}
\begin{tikzpicture}[baseline = -1mm,scale=.8,color=\clr]
\draw[-,line width=1.5pt] (0.08,-.8) to (0.08,-.5);
\draw[-,line width=1.5pt] (0.08,.3) to (0.08,.6);
\draw[-,thick] (0.1,-.51) to [out=45,in=-45] (0.1,.31);
\draw[-,thick] (0.06,-.51) to [out=135,in=-135] (0.06,.31);
\node at (-.33,-.05) {$\scriptstyle a$};
\node at (.45,-.05) {$\scriptstyle b$};
\end{tikzpicture}
&= 
\qbinomsm{a+b}{a}_q\:
\begin{tikzpicture}[baseline = -1mm,scale=.6,color=\clr]
	\draw[-,line width=1.5pt] (0.08,-.8) to (0.08,.6);
        \node at (.08,-1) {$\scriptstyle a+b$};
\end{tikzpicture}, 
\\
\label{dotmovecrossing}
\begin{tikzpicture}[baseline=-1mm, scale=.8, color=\clr]
	\draw[-,line width=1pt] (0.3,-.3) to (-.3,.4);
	\draw[-,line width=4pt,white] (-0.3,-.3) to (.3,.4);
	\draw[-,line width=1pt] (-0.3,-.3) to (.3,.4);
        \node at (-0.3,-.4) {$\scriptstyle a$};
        \node at (0.3,-.4) {$\scriptstyle b$};
     \draw(-0.15,-0.12)   \bdot;
\end{tikzpicture}
 & =
\begin{tikzpicture}[baseline=-1mm, scale=.8, color=\clr]
 \draw[-,line width=1pt] (-0.3,-.3) to (.3,.4);
	\draw[-,line width=4pt,white] (0.3,-.3) to (-.3,.4);
  \draw[-,line width=1pt] (0.3,-.3) to (-.3,.4);
        \node at (-0.3,-.4) {$\scriptstyle a$};
        \node at (0.3,-.4) {$\scriptstyle b$};
     \draw(0.15,0.23)   \bdot;
\end{tikzpicture}
,  
    \qquad
\begin{tikzpicture}[baseline=-1mm, scale=.8, color=\clr]
	\draw[-,line width=1pt] (0.3,-.3) to (-.3,.4);
	\draw[-,line width=4pt,white] (-0.3,-.3) to (.3,.4);
	\draw[-,line width=1pt] (-0.3,-.3) to (.3,.4);
        \node at (-0.3,-.4) {$\scriptstyle a$};
        \node at (0.3,-.4) {$\scriptstyle b$};
     \draw(-0.15,0.23)   \bdot;
\end{tikzpicture}
  =
\begin{tikzpicture}[baseline=-1mm, scale=.8, color=\clr]
 \draw[-,line width=1pt] (-0.3,-.3) to (.3,.4);
	\draw[-,line width=4pt,white] (0.3,-.3) to (-.3,.4);
  \draw[-,line width=1pt] (0.3,-.3) to (-.3,.4);
        \node at (-0.3,-.4) {$\scriptstyle a$};
        \node at (0.3,-.4) {$\scriptstyle b$};
     \draw(0.15,-0.12)   \bdot;
\end{tikzpicture}
,
\\
 \label{dotmovesplits+merge}
\begin{tikzpicture}[baseline = -.5mm,scale=.8,color=\clr]
\draw[-,line width=1.5pt] (0.08,-.5) to (0.08,0.04);
\draw[-,line width=1pt] (0.34,.5) to (0.08,0);
\draw[-,line width=1pt] (-0.2,.5) to (0.08,0);
\node at (-0.22,.6) {$\scriptstyle a$};
\node at (0.36,.65) {$\scriptstyle b$};
\draw (0.08,-.2) \bdot;
\end{tikzpicture} 
& =
\begin{tikzpicture}[baseline = -.5mm,scale=.8,color=\clr]
\draw[-,line width=1.5pt] (0.08,-.5) to (0.08,0.04);
\draw[-,line width=1pt] (0.34,.5) to (0.08,0);
\draw[-,line width=1pt] (-0.2,.5) to (0.08,0);
\node at (-0.22,.6) {$\scriptstyle a$};
\node at (0.36,.65) {$\scriptstyle b$};
\draw (-.05,.24) \bdot;
\draw (.22,.24) \bdot;
\end{tikzpicture},
   \quad 
\begin{tikzpicture}[baseline = -.5mm, scale=.8, color=\clr]
\draw[-,line width=1pt] (0.3,-.5) to (0.08,0.04);
\draw[-,line width=1pt] (-0.2,-.5) to (0.08,0.04);
\draw[-,line width=1.5pt] (0.08,.6) to (0.08,0);
\node at (-0.22,-.6) {$\scriptstyle a$};
\node at (0.35,-.6) {$\scriptstyle b$};
\draw (0.08,.2) \bdot;
\end{tikzpicture}
 =~
\begin{tikzpicture}[baseline = -.5mm,scale=.8, color=\clr]
\draw[-,line width=1pt] (0.3,-.5) to (0.08,0.04);
\draw[-,line width=1pt] (-0.2,-.5) to (0.08,0.04);
\draw[-,line width=1.5pt] (0.08,.6) to (0.08,0);
\node at (-0.22,-.6) {$\scriptstyle a$};
\node at (0.35,-.6) {$\scriptstyle b$};
\draw (-.08,-.3) \bdot; \draw (.22,-.3) \bdot;
\end{tikzpicture} .
\end{align}
\end{definition}
The following relations hold in $\qAW$:
\begin{align}
  \label{intergralballon}
\begin{tikzpicture}[baseline = 1.5mm, scale=.5, color=\clr]
\draw[-, line width=1.2pt] (0.5,2) to (0.5,2.5);
\draw[-, line width=1.2pt] (0.5,0) to (0.5,-.4);
\draw[-,thin]  (0.5,2) to[out=left,in=up] (-.5,1)
to[out=down,in=left] (0.5,0);
\draw[-,thin]  (0.5,2) to[out=left,in=up] (0,1)
 to[out=down,in=left] (0.5,0);      
\draw[-,thin] (0.5,0)to[out=right,in=down] (1.5,1)
to[out=up,in=right] (0.5,2);
\draw[-,thin] (0.5,0)to[out=right,in=down] (1,1) to[out=up,in=right] (0.5,2);
\node at (0.5,.7){$\scriptstyle \cdots$};
\draw (-0.5,1) \bdot; 
\draw (0,1) \bdot; 
\node at (0.5,-.6) {$\scriptstyle a$};
\draw (1,1) \bdot;
\draw (1.5,1) \bdot; 
\node at (-.22,0) {$\scriptstyle 1$};
\node at (1.2,0) {$\scriptstyle 1$};
\node at (.3,0.3) {$\scriptstyle 1$};
\node at (.7,0.3) {$\scriptstyle 1$};
\end{tikzpicture}
   & =
   [a]_q!  
\begin{tikzpicture}[baseline = 1.5mm, scale=.5, color=\clr]
\draw[-,line width=1.2pt] (0,-0.1) to[out=up, in=down] (0,1.4);
\draw(0,0.6) \bdot; 
\node at (0,-.3) {$\scriptstyle a$};
\end{tikzpicture} ,
\qquad
\begin{tikzpicture}[baseline = 1.5mm, scale=.5, color=\clr]
\draw[-, line width=1.2pt] (0.5,2) to (0.5,2.5);
\draw[-, line width=1.2pt] (0.5,0) to (0.5,-.4);
\draw[-,thin]  (0.5,2) to[out=left,in=up] (-.5,1)
to[out=down,in=left] (0.5,0);
\draw[-,thin]  (0.5,2) to[out=left,in=up] (0,1)
 to[out=down,in=left] (0.5,0);      
\draw[-,thin] (0.5,0)to[out=right,in=down] (1.5,1)
to[out=up,in=right] (0.5,2);
\draw[-,thin] (0.5,0)to[out=right,in=down] (1,1) to[out=up,in=right] (0.5,2);
\node at (0.5,.7){$\scriptstyle \cdots$};
\draw (-0.5,1) \zdot; 
\draw (0,1) \zdot; 
\node at (0.5,-.6) {$\scriptstyle a$};
\draw (1,1) \zdot;
\draw (1.5,1) \zdot; 
\node at (-.22,0) {$\scriptstyle 1$};
\node at (1.2,0) {$\scriptstyle 1$};
\node at (.3,0.3) {$\scriptstyle 1$};
\node at (.7,0.3) {$\scriptstyle 1$};
\end{tikzpicture}
    ~=~ [a]_q!  
\begin{tikzpicture}[baseline = 1.5mm, scale=.5, color=\clr]
\draw[-,line width=1.2pt] (0,-0.1) to[out=up, in=down] (0,1.4);
\draw(0,0.6) \zdot; 
\node at (0,-.3) {$\scriptstyle a$};
\end{tikzpicture} .
\end{align}
The $a$-fold merges (and splits) used in \eqref{intergralballon} are defined by using \eqref{webassoc} via iterated merges (and splits), and then the relations \eqref{intergralballon} follow from \eqref{webassoc}, \eqref{splitbinomial} and \eqref{dotmovesplits+merge}. 

We define the following morphisms in $\qAW$
\begin{equation}
\label{eq:wkdota}
\omega_{a,r} :=
\begin{tikzpicture}[baseline = -1mm,scale=.7,color=\clr]
\draw[-,line width=1.2pt] (0.08,-.7) to (0.08,.5);
\node at (.08,-.9) {$\scriptstyle a$};
\draw(0.08,-0.1) \bdot;
\draw(.5,0)node {$\scriptstyle \omega_r$};
\end{tikzpicture}
:=
\begin{tikzpicture}[baseline = -1mm,scale=.7,color=\clr]
\draw[-,line width=1.5pt] (0.08,-.8) to (0.08,-.5);
\draw[-,line width=1.5pt] (0.08,.3) to (0.08,.6);
\draw[-,line width=1pt] (0.1,-.51) to [out=45,in=-45] (0.1,.31);
\draw[-,line width=1pt] (0.06,-.51) to [out=135,in=-135] (0.06,.31);
\draw(-.1,-0.1) \bdot;
\node at (-.25,-.35) {$\scriptstyle r$};
\node at (.08,-1.1) {$\scriptstyle a$};
\end{tikzpicture} \; ,
\qquad
\omega_{a,-r} :=
\begin{tikzpicture}[baseline = -1mm,scale=.7,color=\clr]
\draw[-,line width=1.2pt] (0.08,-.7) to (0.08,.5);
\node at (.08,-.9) {$\scriptstyle a$};
\draw(0.08,-0.1) \bdot;
\draw(.6,0)node {$\scriptstyle \omega_{-r}$};
\end{tikzpicture}
:=
\begin{tikzpicture}[baseline = -1mm,scale=.7,color=\clr]
\draw[-,line width=1.2pt] (0.08,-.7) to (0.08,.5);
\node at (.08,-.9) {$\scriptstyle a$};
\draw(0.08,-0.1) \zdot;
\draw(.5,0)node {$\scriptstyle \omega_r$};
\end{tikzpicture}
:=
\begin{tikzpicture}[baseline = -1mm,scale=.7,color=\clr]
\draw[-,line width=1.5pt] (0.08,-.8) to (0.08,-.5);
\draw[-,line width=1.5pt] (0.08,.3) to (0.08,.6);
\draw[-,line width=1pt] (0.1,-.51) to [out=45,in=-45] (0.1,.31);
\draw[-,line width=1pt] (0.06,-.51) to [out=135,in=-135] (0.06,.31);
\draw(.27,-0.1) \zdot;
\node at (.4,-.35) {$\scriptstyle r$};
\node at (.08,-.9) {$\scriptstyle a$};
\end{tikzpicture}  
\qquad\quad (0 \le r\le a).
\end{equation}
In particular, we have 
$
\begin{tikzpicture}[baseline = 3pt, scale=0.4, color=\clr]
\draw[-,line width=1.2pt] (0,0) to[out=up, in=down] (0,1.4);
\draw(0,0.6) \bdot;
\draw(0.7,0.6)node {$\scriptstyle \omega_a$};
\node at (0,-.22) {$\scriptstyle a$};
\end{tikzpicture}
=\xdota$
and 
$\begin{tikzpicture}[baseline = 3pt, scale=0.4, color=\clr]
\draw[-,line width=1.2pt] (0,0) to[out=up, in=down] (0,1.4);
\draw(0,0.6) \zdot;
\draw(0.7,0.6)node {$\scriptstyle \omega_a$};
\node at (0,-.22) {$\scriptstyle a$};
\end{tikzpicture}
=\zdota$.

For any small $\kk$-linear category $\mathcal C$,  its path algebra is the locally unital algebra  $\oplus_{i,j\in \text{ob}\mathcal C}\Hom_{\mathcal C}(i,j)$ with multiplication induced by composition of morphisms. 

Recall a (resp. strict) composition $\lambda=(\lambda_1,\ldots,\lambda_k)$ of $n$ is a tuple of positive (resp. non-negative) integers such that $\sum_{k}\lambda_k=n$; denote $|\lambda|=n$. Let $\Lambda(n)$ (resp. $\Lambda_\st(n)$) be the set of all (resp. strict) compositions of $n$.
The set of objects of $\qAW$ can be identified with the set of  all strict compositions (including $\emptyset$ which is identified with the unit object).   Let $\qAW(n) $ be the full subcategory 
of $\qAW$ with object set $\Lambda_\st(n)$. Denote by  $\hwS(n)$ and $\hwS$ the path algebras of $\qAW(n) $ and $\qAW$, respectively. Then $\hwS=\oplus_{n\in \N}\hwS(n)$. 
Moreover, it is proved in \cite{SSW25} that $\hwS(n)$ is Morita equivalent to the usual affine $q$-Schur algebra $\ASch(n,n)$; cf. \eqref{def:affineSchur}.

Let $\qAH$ be the full ``thin strand" subcategory of $\qAW$ with object set $\{(1^n)\mid n\in \N\}$. Then by the basis theorem given in \cite{SSW25}, $\qAH$ is the well-known strict monoidal category for the affine Hecke algebras, i.e., the endomorphism algebra $\End_{\qAH} ((1^n)) =\AHe(n)$. Here the affine Hecke algebra $\AHe(n)$ of type $GL_n$  is the unital associative $\kk$-algebra generated by $H_i^{\pm 1}$, $X_j^{\pm 1}$, for $1\le i\le n-1$ and $1\le j\le n$, subject to the relations:
\begin{align*}
    H_iH_i^{-1} &=1 =H_i^{-1}H_i, \qquad  (H_i+q)(H_i-q^{-1}) =0,
    \\
    H_iH_{i+1}H_i &=H_{i+1}H_iH_{i+1}, \qquad H_iH_j =H_jH_i \quad (|i-j|>1),
    \\
     X_iX_i^{-1} &=1 =X_i^{-1}X_i, \qquad\;  X_iX_j =X_jX_i,
    \\
    H_iX_iH_i &=X_{i+1}, \qquad\qquad\quad H_iX_j =X_jH_i  \quad (j\neq i,i+1).
\end{align*}
Diagrammatically, $H_i$ is the positive crossing between the $i$th and $(i+1)$st strands and $X_i$ is the thin dot on the $i$th strand. Moreover, by the basis theorem of the affine  $q$-webs in \cite{SSW25}, 
\begin{equation}
\label{equ:affwebhecke}
    \AHe(n)\cong 1_{(1^n)} \hwS(n) 1_{(1^n)}.
\end{equation}

\subsection{The center of $\qAW$}

The center $Z\big(\qAW(n)\big)$ of the small linear category $\qAW(n) $ is the commutative subalgebra of $\Pi_{\mu \in \Lambda_\st(n)} 1_\mu \qAW(n) 1_\mu$
consists of $(z_\mu)_{\mu\in \Lambda_\st(n)}$ such that $\theta z_\lambda =z_\mu \theta
$ for any $ \theta\in 1_\mu \qAW(n)1_\lambda$.
Then the center $\cZ_{\hwS(n)}$ of the algebra $\hwS(n)$ is 
  \[
  \cZ_{\hwS(n)}= \Big\{\sum_{\mu\in \Lambda_\st(n)}z_\mu\mid (z_\mu)_{\mu\in\Lambda_\st(n)}\in \cZ\big(\qAW(n)\big) \Big\}.
  \]
  The following is well known and it follows from the double centralizer property for $\hwS(n)$ and $\AHe(n)$; cf. \eqref{T:module}--\eqref{def:affineSchur}, for $N\ge n$.
\begin{proposition}
\label{prop:center}
    We have $\cZ_{\hwS(n)}\cong \cZ_{\AHe(n)}= \kk[X_1^{\pm},\ldots,X_n^{\pm}]^{\mathfrak S_n}$, where the isomorphism is given by $z=\sum_\mu z_\mu \mapsto z_{(1^n)}$.
\end{proposition}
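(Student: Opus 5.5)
The argument runs through the double centralizer property. Write $e=1_{(1^n)}$, so that $e\hwS(n)e\cong\AHe(n)$ by \eqref{equ:affwebhecke}. The map $z=\sum_\mu z_\mu\mapsto z_{(1^n)}=eze$ lands in $\cZ_{\AHe(n)}$: for any $h\in e\hwS(n)e$ we have $hz_{(1^n)}=z_{(1^n)}h$ by the defining property of the center of $\qAW(n)$. It is clearly an algebra homomorphism. The identification $\cZ_{\AHe(n)}=\kk[X_1^{\pm},\ldots,X_n^{\pm}]^{\mathfrak S_n}$ is Bernstein's classical theorem, which I will quote.

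\emph{Injectivity.} Every object $\mu\in\Lambda_\st(n)$ is a direct summand of $(1^n)$ up to scalars. Concretely, let $\mathrm{spl}_\mu:\mu\to(1^n)$ be the iterated split and $\mathrm{mer}_\mu:(1^n)\to\mu$ the iterated merge. By \eqref{intergralballon} without dots, which follows from \eqref{splitbinomial}, we get $\mathrm{mer}_\mu\circ\mathrm{spl}_\mu=[\mu]_q!\,1_\mu$ with $[\mu]_q!=\prod_i[\mu_i]_q!$. Hence
\[
[\mu]_q!\,z_\mu=\mathrm{mer}_\mu\, z_{(1^n)}\,\mathrm{spl}_\mu .
\]
Since $[\mu]_q!$ is a nonzero element of the domain $\kk$, and the hom spaces of $\qAW$ are free $\kk$-modules by the basis theorem of \cite{SSW25}, $z_\mu$ is determined by $z_{(1^n)}$. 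So the map is injective.

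\emph{Surjectivity.} This is the main obstacle, because $[\mu]_q!$ is not invertible in $\kk$. Given a symmetric Laurent polynomial $f$, set $z_{(1^n)}=f(X)$. For each $\mu$ we need $z_\mu\in\End(\mu)$ with $z_\mu\,\mathrm{mer}_\mu=\mathrm{mer}_\mu\, f(X)$, and then naturality with respect to all generators. I would construct $z_\mu$ directly. Since $f$ is symmetric, it lies in $\kk[X^{\pm}]^{\mathfrak S_{\mu}}$, so it can be written as a polynomial in elementary symmetric functions of each block $\mu_i$ of variables together with the inverses of the block determinants. By \eqref{dotmovesplits+merge} and the definition \eqref{eq:wkdota}, these block-symmetric elements slide through merges and splits. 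The elementary symmetric function $e_r$ of a block of size $a$ corresponds to $\omega_{a,r}$; showing this requires a computation with \eqref{mergesplit} analogous to the degenerate case in \cite{SW25web}. Letting $z_\mu$ be the corresponding polynomial in the $\omega_{\mu_i,r}$ on each strand gives an element with $z_\mu\,\mathrm{mer}_\mu=\mathrm{mer}_\mu f(X)$ and $\mathrm{spl}_\mu z_\mu=f(X)\mathrm{spl}_\mu$.

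\emph{Naturality.} It remains to check $\theta z_\lambda=z_\mu\theta$ for the generating morphisms. Multiply both sides by $[\mu]_q!$ and use the displayed identity to reduce to $(1^n)$. There the claim holds because the morphism $\mathrm{spl}_\mu\theta\,\mathrm{mer}_\lambda$ lies in $\AHe(n)$ and commutes with the central element $f(X)$. Freeness of the hom spaces over $\kk$ then lets us cancel $[\mu]_q!$.

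Alternatively, the whole statement follows at once from $\hwS(n)\cong\End_{\AHe(n)}(\bigoplus_\mu T_\mu)$, the double centralizer statement cited via \eqref{T:module}--\eqref{def:affineSchur}. The center of an endomorphism algebra of a faithful module containing $\AHe(n)$ as a summand, namely $T_{(1^n)}$, equals the center of $\AHe(n)$, and I would present this as the primary route.
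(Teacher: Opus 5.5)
Your proposal is correct, and the route you designate as primary is the paper's route: the paper simply cites the double centralizer property for $\hwS(n)$ and $\AHe(n)$ via \eqref{T:module}--\eqref{def:affineSchur}. Your observation that the center of $\End_{\AHe(n)}(\bigoplus_\mu T_\mu)$, with $T_{(1^n)}=\AHe(n)$, consists exactly of right multiplications by elements of $\cZ_{\AHe(n)}$ (compatibly with $z\mapsto z_{(1^n)}$) is what that citation unpacks to.

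Your diagrammatic alternative also goes through, with two small corrections. First, the sliding identity carries a power of $q$: $\omega_{a,r}\circ\mathrm{mer}_a=q^{c_{a,r}}\,\mathrm{mer}_a\circ e_r(X_1,\ldots,X_a)$, where $c_{a,1}=a-1$, as in the paper's Example. Second, the naturality check must clear the factor $[\lambda]_q![\mu]_q!$, not just $[\mu]_q!$. This uses $z_\lambda\,\mathrm{mer}_\lambda=\mathrm{mer}_\lambda f(X)$ and $z_\mu\,\mathrm{mer}_\mu=\mathrm{mer}_\mu f(X)$, together with the fact that $\mathrm{spl}_\mu\theta\,\mathrm{mer}_\lambda\in\AHe(n)$ commutes with $f(X)$.
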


\begin{example}
    A central element $z \in \cZ_{\hwS(n)}$ is determined by its thin strand summand $z_{(1^n)}$. Take 
$z_{(1^n)}=q^{n-1}(X_1+X_2+\ldots+ X_n)$. Let     $z_{\lambda}=\sum_{i=1}^kL^\lambda_i$, where    
    $$L^\lambda_i=  q^{n-\lambda_i} \begin{tikzpicture}[baseline = 3pt, scale=0.5, color=\clr]
\draw[-,line width=1pt] (-1,-.2) to (-1,1.2);
\node at (-1,-.5) {$\scriptstyle \lambda_1$};
\node at (-.4,.4) {$\ldots$};
\draw[-,line width=1pt] (1,-.2) to (1,1.2);
\draw(1,0.5) \bdot; \node at (1.6,0.6) {$\scriptstyle \omega_{1}$};
\node at (1,-.5) {$\scriptstyle \lambda_{i}$};
\node at (2.1,.3) {$\ldots$};
\draw[-,line width=1pt] (3,-.2) to (3,1.2);
\node at (3,-.5) {$\scriptstyle \lambda_k$};
\end{tikzpicture}  $$ 
for any $\lambda=(\lambda_1,\lambda_2,\ldots, \lambda_k)\in \Lambda_\st(n)$.
Then $z=\sum _{\lambda\in\Lambda_\st(n)}z_\lambda$.
\end{example}

\subsection{Restriction functors for $\qAW$}

For any $a\le n$, denote
\begin{align} \label{eq:1an}
\begin{split}
\Lambda^a(n) &=\{\mu\in \Lambda_\st(n)\mid \mu=(\mu_1,\mu_2,\ldots,\mu_k) \text{ such that } \mu_k=a\},
\\
\mathbf{1}_{a,n} &= \sum_{\mu\in \Lambda^a(n)}1_\mu. 
\end{split}
\end{align}
Then $\mathbf{1}_{a,n}$ is an idempotent of $\hwS(n)$. Moreover, by the basis theorem of $\qAW$ in \cite[Theorem 4.16]{SSW25},  there is an obvious embedding
\begin{equation}
\label{equ:embedding}
 \iota_{a,n}: \hwS(n-a) \longrightarrow \mathbf{1}_{a,n}\hwS(n)\mathbf{1}_{a,n}   
\end{equation}
sending any diagram $D$ to $D\stra$, the diagram obtained from $D$ by adding the  strand with thickness $a$ on the right.  
Then via the embedding $\iota_{a,n}$ we have a restriction functor
\[ \text{Res}^{n}_{n-a}=\mathbf{1}_{a,n}\hwS(n)\otimes _{\hwS(n)}?: \hwS(n)\modf\longrightarrow \hwS(n-a)\modf .\]

For any $f(X_1,\ldots,X_n)\in \kk[X_1^\pm,\ldots,X_n^\pm]^{\mathfrak S_n}$, let $z_f\in \cZ_{\hwS(n)}$ be the inverse of the isomorphism given in Proposition \ref{prop:center}. Given $\mathbf b=(b_1,b_2,\ldots,b_n)\in \kk^n$, we define the central character 
\[ 
\chi_{\mathbf b}: \cZ_{\hwS(n)}\longrightarrow \kk , \quad z_f\mapsto f(b_1,\ldots,b_n).
\]
Then $\chi_{\mathbf a}=\chi_{\mathbf b}$ if and only if $\mathbf a\sim \mathbf b$, i.e., $\mathbf a$ and $\mathbf b$ lie in the same orbit of $\mathfrak S_n$.

For any finite dimensional $M\in \hwS(n)\modf$ and $\gamma\in \kk^n/\sim$, let 
\[
M[\gamma]= \{v\in M\mid (z-\chi_{\gamma}(z))^kv=0 \text{ for } z\in \cZ_{\hwS(n)}, k\gg0\}. 
\]

For any $\gamma\in \kk^n/\sim$, let 
$\hwS(n)\modf_\gamma$ be the full category of $\hwS(n)\modf$ consisting of all modules $M$ with $M[\gamma]=M$.
Then $ \hwS(n)\modf=\oplus_{\gamma\in \kk^n/\sim}\hwS(n)\modf_\gamma $. 
 
For any multiset $\bfc=\{c_1,\ldots,c_a\}$ in $\kk$, we denote $|\bfc|=a$, and define the $\bfc$-restriction functor 
\begin{align}  \label{iRes_affine_n}
\bfc\text{-Res}^n_{n-|\bfc|}: \hwS(n)\modf\longrightarrow \hwS(n-a)\modf
\end{align}
which sends $M \in \hwS(n)\modf_\gamma$,
for each $\gamma$, to $ \text{ Proj}_{\gamma\setminus\bfc}\circ\text{Res}^n_{n-a}(M)$, where $ \text{ Proj}_{\gamma\setminus \bfc}$ denotes the projection from $\hwS(n-a)\modf $ to $\hwS(n-a)\modf_{\gamma\setminus \bfc}$, and  $\gamma\setminus \bfc $ is the orbit obtained from $\gamma$ by deleting $\bfc$ from $\gamma$.
 This leads to a functor 
\begin{align}  \label{iRes_affine}
\bfc\text{-Res} := \sum_{n} \bfc\text{-Res}^n_{n-|\bfc|}: \hwS\modf \longrightarrow \hwS\modf, 
\end{align}
which in turn gives rise to a linear operator $f_\bfc$ on $[\hwS\modf]$.

\subsection{Cyclotomic $q$-web categories $\qW_\bfu$}
Let
\begin{equation}
\label{equ:parau}
    \bfu=(  u_1, \ldots, u_\ell)\in (\kk^*)^\ell.
\end{equation}

\begin{definition}\cite[Definition 5.2]{SSW25}
 \label{def:cycWeb}
    Suppose  $\ell\ge 1$. For any $\bfu=(u_1,\ldots, u_\ell) $ in \eqref{equ:parau}, the cyclotomic $q$-web category $\qW_\bfu$ is the quotient category of $\qAW$ by the right tensor ideal generated by $\prod_{1\le j\le \ell }g_{r}(u_j)$, for $r\ge 1$, where
    \begin{equation*}
 g_{r}(u):= 
\sum_{0\le i\le r} (-u)^i q^{-i(r-i)} \:\:
 \begin{tikzpicture}[anchorbase,scale=.7,color=\clr]
\draw[-,line width=1.5pt] (0.08,-.6) to (0.08,.5);
\node at (.08,-.8) {$\scriptstyle r$};
\draw(0.08,0) \bdot;
\draw(.7,0)node {$\scriptstyle \omega_{r-i}$};
\end{tikzpicture}.
\end{equation*}
\end{definition}

Let $ \wS_\bfu(n)$ be the path algebra of the subcategory of $\qW_\bfu$ with the object set $\Lambda_\st(n)$. Then $\wS_\bfu:=\oplus _{n\in\N}\wS_\bfu(n)$ is the path algebra of $ \qW_\bfu$. Let $\qH_\bfu$ be the full subcategory of $\qW_\bfu$ with the object set $\{(1^n)\mid n\in \N\}$. Then the path algebra of the endomorphism algebra of $(1^n)$ in $\qH_\bfu$ (or in $\qW_\bfu$)
is the cyclotomic Hecke algebra associated to $\bfu$, $\bfH_\bfu(n):=\AHe(n)\big/\langle \prod_{a=1}^\ell (X_1-u_a)\rangle$. The path algebra of  $\qH_\bfu$ is $\bfH_\bfu:=\oplus_{n\in\N}\bfH_\bfu(n) $.
In particular, 
\begin{equation}
\label{equ:cycwebhecke}
    \bfH_\bfu(n)=1_{(1^n)} \wS_\bfu(n) 1_{(1^n)}.
\end{equation}
  
For any $\lambda\in \Lambda_\st(n)$, denote by $l(\lambda)$ the length of $\la$. 
For $\lambda,\mu\in \Lambda_{\text{st}}(n)$, let 
$\Mat_{\lambda,\mu}$ be the set of all $l(\lambda)\times l(\mu)$
non-negative integer matrices $A=(a_{ij})$ with row sum vector $\la$ and column sum vector $\mu$, i.e.,  $\sum_{1\le h\le l(\mu)}a_{ih}=\lambda_i$ and $\sum_{1\le h\le l(\lambda)}a_{hj}=\mu_j$ for all $1\le i\le l(\lambda) $ and $1\le j\le l(\mu)$.  Moreover, each matrix $A\in \Mat_{\lambda,\mu}$ can be identified with a reduced chicken foot ribbon  diagram \cite[\S 4.1]{SSW25}.  Denote  
\begin{equation*}
 \PMat_{\lambda,\mu}:=\{ (A, P)\mid  A=(a_{ij})\in \Mat_{\lambda,\mu}, P=(\nu_{ij}), \nu_{ij}\in \Par_{a_{ij}} \}.   
 \end{equation*} 
 For a matrix  $ P=(\nu_{ij})_{l(\lambda)\times l(\mu)}$ of partitions, denote 
$l(P)=\max\{l(\nu_{ij})\mid 1\le i\le l(\lambda), 1\le j\le l(\mu)\}.$
We define
\begin{equation}
\label{Def-spansetof-cycqweb}    
\PMat_{\lambda,\mu}^\ell:=\{(A,P)\in\PMat_{\lambda,\mu}\mid l(P)\le \ell-1\}.
\end{equation}

For any $a\le n$ and $\lambda\in \Lambda_\st(n)$, let 
\begin{align}
\mathbf R(\lambda)_a =\Big\{(\lambda_1-a_1, \ldots, \lambda_k-a_k)\in \Lambda(n-a)~\Big |~ \sum_{j=1}^ka_j=a \Big\}.
\end{align}
Here any $\mu \in \mathbf R(\lambda)_a$ can be viewed as a strict composition by omitting zero parts.  
For any $\mu=(\lambda_1-a_1, \ldots, \lambda_k-a_k) \in \mathbf R(\lambda)_a $, let $A_\mu=(a_{ij})\in\Mat_{\lambda,\mu\cup a}$
such that 
\[ a_{i,j}=\begin{cases} \lambda_i-a_i & \text{ if $i=j$}\\
    a_i &  \text{ if $j=k+1$}\\
    0& \text{ otherwise}
\end{cases}, 
\]
where $\mu\cup a=(\lambda_1-a_1, \ldots, \lambda_k-a_k,a) $. Define a subset of $\PMat_{\lambda,\mu\cup a}^\ell$ (cf. \eqref{Def-spansetof-cycqweb}):
\begin{align}
R(\lambda,\mu)=\Big\{(A_\mu,P)\in \PMat_{\lambda,\mu\cup a}^\ell \mid P=(\nu_{ij}) \text{ with } \nu_{ij}= \emptyset \text{ unless } j=k+1
\Big\}. 
\end{align}
Denote by  $c_{\lambda,\mu} $ be the cardinality of $R(\lambda,\mu)$.
Recall $\mathbf{1}_{a,n}= \sum_{\mu\in \Lambda^a(n)}1_\mu$ from \eqref{eq:1an}.

\begin{lemma}
\label{Lem:Projective}
For any $ \eta\in \Lambda_{\text{st}}(n)$, we have an isomorphism of right $\wS_\bfu(n-a)$-modules
\[
1_\eta \wS_\bfu(n)\mathbf {1}_{a,n}\cong \bigoplus_{\lambda\in \mathbf R(\eta)_a} \big(1_\lambda \wS_\bfu(n-a)\big)^{\oplus c_{\eta,\lambda}}. 
\]
In particular, $\wS_\bfu(n)\mathbf {1}_{a,n}$ is a right projective    $\wS_\bfu(n-a)$-module.  
\end{lemma}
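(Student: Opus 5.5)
We prove the isomorphism by explicit diagrammatic morphisms, using the basis theorem for $\qW_\bfu$ from \cite{SSW25}. That theorem says that $1_\eta\wS_\bfu(n)1_\nu$ has a basis indexed by $\PMat^\ell_{\eta,\nu}$. Each basis element is a reduced chicken foot diagram attached to $A$, with dot polynomials indexed by the partitions $\nu_{ij}$ (with at most $\ell-1$ parts) placed on the thin ribbons.

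\textbf{Step 1: the generating maps.} For each $\lambda\in\mathbf R(\eta)_a$ and each $(A_\lambda,P)\in R(\eta,\lambda)$, form the diagram $D_{\lambda,P}\in 1_\eta\wS_\bfu(n)1_{\lambda\cup a}$. In it, each strand $\eta_i$ splits into $\eta_i-a_i$, which goes straight up to position $i$, and $a_i$. The $a_i$ pieces merge into the last strand $a$. The dots $\nu_{i,k+1}$ sit on the ribbon labelled $a_i$. Left multiplication by $D_{\lambda,P}$ gives a right $\wS_\bfu(n-a)$-module map
\[
1_\lambda\wS_\bfu(n-a)\to 1_\eta\wS_\bfu(n)\mathbf 1_{a,n},\qquad x\mapsto D_{\lambda,P}\,\iota_{a,n}(x),
\]
and summing these gives $\Phi$ from the right-hand side to the left-hand side.

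\textbf{Step 2: spanning.} Take a basis diagram in $1_\eta\wS_\bfu(n)1_{\mu\cup a}$ with $\mu\in\Lambda_\st(n-a)$. Using the basis theorem, rewrite it in a normal form that factors as $D_{\lambda,P}\circ(D'\,\stra)$, where $D'$ is a basis diagram of $1_\lambda\wS_\bfu(n-a)1_\mu$ and $\lambda$ records how many strands of $\eta$ do not go to the last strand $a$. The crossings and merges among the remaining $n-a$ strands, together with their dots, are pushed down into $D'$ using \eqref{webassoc}, \eqref{mergesplit}, \eqref{dotmovecrossing} and \eqref{dotmovesplits+merge}. Dots on ribbons ending in the last strand stay in $D_{\lambda,P}$. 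Since the last strand is a single column, the data on the ribbons feeding into it are exactly the matrix entries $a_{i,k+1}=a_i$ with partitions $\nu_{i,k+1}$, which is the set $R(\eta,\lambda)$.

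\textbf{Step 3: injectivity by counting.} Compare dimensions on each $1_\mu$ component; over $\kk$ these are free modules of finite rank, so compare ranks. We need a bijection
\[
\PMat^\ell_{\eta,\mu\cup a}\;\longleftrightarrow\;\bigsqcup_{\lambda\in\mathbf R(\eta)_a}R(\eta,\lambda)\times\PMat^\ell_{\lambda,\mu}.
\]
Given $(A,P)$, its last column determines $(a_1,\dots,a_k)$ and hence $\lambda$ and the last-column partitions. The remaining $l(\eta)\times l(\mu)$ block, together with its partitions, is an element of $\PMat^\ell_{\lambda,\mu}$ after reading the rows as $\lambda_i=\eta_i-a_i$. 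When $\lambda$ has zero parts we drop those rows; the maps remain compatible. So ranks agree, and with Step 2 this shows $\Phi$ is an isomorphism. Projectivity follows because each $1_\lambda\wS_\bfu(n-a)$ is projective, and $\wS_\bfu(n)\mathbf 1_{a,n}=\bigoplus_\eta 1_\eta\wS_\bfu(n)\mathbf 1_{a,n}$.

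\textbf{Main obstacle.} The hard part is Step 2, the factorization into normal form. The basis elements in \cite{SSW25} are defined with a specific placement of dots and crossings. One must check that moving crossings below the split onto the $\lambda$-part only produces lower terms: diagrams with fewer crossings or a different $A$. A triangularity argument with respect to that ordering then gives spanning. I would handle this by induction on the number of crossings, using \eqref{mergesplit} to straighten.
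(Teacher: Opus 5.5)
Your proposal is correct and takes essentially the paper's route: use the basis theorem \cite[Theorem 5.17]{SSW25} to write each basis element of $1_\eta\wS_\bfu(n)1_{\mu\cup a}$ as a composite $(A_\lambda,P)\circ((B,Q)\otimes 1_{(a)})$, with $(A_\lambda,P)\in R(\eta,\lambda)$ and $(B,Q)\in\PMat^\ell_{\lambda,\mu}$, and then read off the decomposition into copies of $1_\lambda\wS_\bfu(n-a)$. Your Step 3 bijection makes explicit the linear independence that the paper's direct-sum claim uses tacitly, and the factorization you flag as the main obstacle is exact rather than triangular: the dot packets sit at the bottoms of legs, so the only move needed is sliding the crossings of the $a_i$-legs down through the merges into the $\lambda_{i'}$, and no lower-order terms arise.
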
  

\begin{proof}
  By \cite[Theorem 5.17]{SSW25},  $\Hom_{\qW_{\bfu}}(\mu,\lambda)$ has a basis given by $\PMat_{\lambda,\mu}^\ell$ \eqref{Def-spansetof-cycqweb}, for any $\lambda,\mu \in\Lambda_{\text{st}}(n)$. It follows that 
     any elements in $1_\eta \wS_\bfu(n)\mathbf{1}_{a,n}$ can be written as a linear combination of elements 
     \[ 
     (A_\lambda, P)\circ ((B,Q)\otimes 1_{(a)}) 
     \]
     with $ (A_\lambda, P)\in R(\eta,\lambda)  $ and $(B,Q)\in \PMat_{\lambda,\mu}^\ell $, for $\mu \in \Lambda_{\text{st}}(n-a)$.
     Hence 
     \[
     1_\eta \wS_\bfu(n)\mathbf{1}_{a,n}=\bigoplus_{\lambda\in \mathbf R(\eta)_a, (A_\lambda, P)\in R(\eta,\lambda) } (A_\lambda, P)(\wS_\bfu(n-a)\otimes 1_{(a)}).
     \]
     Now the result follows since 
     \[ (A_\lambda, P)(\wS_\bfu(n-a)\otimes 1_{(a)})  \cong 1_\lambda \wS_\bfu(n-a) \]
     as a right $\wS_\bfu(n-a)$-module, which is projective. 
 \end{proof}

\subsection{Restriction functors and induction functors for $\qW_\bfu$}

Recall $\iota_{a,n}$  in \eqref{equ:embedding} for $\qAW$. Similarly, by the basis theorem of $\qW_\bfu$ in \cite[Theorem 5.17]{SSW25},  there is an obvious embedding from $\wS_\bfu(n-a) \rightarrow \mathbf{1}_{a,n}\wS_\bfu(n)\mathbf{1}_{a,n}$ for any $a\le n$. Then   
we introduce the induction functors and restriction functors 
\begin{align}  \label{eq:IndRes}
\begin{split}
\Ind_{n-a}^{n}:= \wS_\bfu(n)\mathbf{1}_{a,n}\otimes _{\wS_\bfu(n-a)} ? & : \wS_\bfu(n-a)\modf \longrightarrow \wS_\bfu(n)\modf. 
\\
\text{Res}^{n}_{n-a}=\mathbf{1}_{a,n}\wS_\bfu(n)\otimes _{\wS_\bfu(n)}? &: \wS_\bfu(n)\modf\longrightarrow \wS_\bfu(n-a)\modf.
\end{split}
\end{align}

\begin{lemma}
 The functors $\Ind^n_{n-a}$ and $\Res^{n}_{n-a}$ are exact.
\end{lemma}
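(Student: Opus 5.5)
Both functors are tensor functors, so exactness reduces to flatness of the bimodule being tensored with. In each case I will show that this bimodule is projective on the side where the tensor product is taken.

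For $\Ind^n_{n-a}=\wS_\bfu(n)\mathbf{1}_{a,n}\otimes_{\wS_\bfu(n-a)}?$, I use Lemma \ref{Lem:Projective}. It shows that $\wS_\bfu(n)\mathbf{1}_{a,n}=\bigoplus_{\eta\in\Lambda_\st(n)}1_\eta\wS_\bfu(n)\mathbf{1}_{a,n}$ is a direct sum of right modules of the form $1_\lambda\wS_\bfu(n-a)$. Each such summand is a direct summand of the regular right module, since $1_\lambda$ is an idempotent. The set $\Lambda_\st(n)$ is finite, the sets $\mathbf R(\eta)_a$ are finite, and the multiplicities $c_{\eta,\lambda}$ are finite, so this is a finite direct sum. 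Hence $\wS_\bfu(n)\mathbf{1}_{a,n}$ is a finitely generated projective right $\wS_\bfu(n-a)$-module, and in particular flat. Tensoring over $\wS_\bfu(n-a)$ with a flat right module is exact, so $\Ind^n_{n-a}$ is exact.

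For $\Res^n_{n-a}=\mathbf{1}_{a,n}\wS_\bfu(n)\otimes_{\wS_\bfu(n)}?$, the relevant bimodule is $\mathbf{1}_{a,n}\wS_\bfu(n)$, viewed as a right $\wS_\bfu(n)$-module. Since $\mathbf{1}_{a,n}$ is an idempotent of $\wS_\bfu(n)$, we have $\wS_\bfu(n)=\mathbf{1}_{a,n}\wS_\bfu(n)\oplus(1-\mathbf{1}_{a,n})\wS_\bfu(n)$, so $\mathbf{1}_{a,n}\wS_\bfu(n)$ is a direct summand of the regular right module and hence projective. Here $1$ is the unit $\sum_{\mu\in\Lambda_\st(n)}1_\mu$, which makes sense because $\Lambda_\st(n)$ is finite. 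Concretely, $\mathbf{1}_{a,n}\wS_\bfu(n)\otimes_{\wS_\bfu(n)}M\cong\mathbf{1}_{a,n}M$, with $\wS_\bfu(n-a)$ acting through the embedding $\wS_\bfu(n-a)\to\mathbf{1}_{a,n}\wS_\bfu(n)\mathbf{1}_{a,n}$. The functor $M\mapsto\mathbf{1}_{a,n}M$ is visibly exact, since multiplication by an idempotent preserves exact sequences of vector spaces.

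I do not expect a real obstacle; the statement is essentially formal. The only substantive input is the projectivity of $\wS_\bfu(n)\mathbf{1}_{a,n}$ over $\wS_\bfu(n-a)$, and that is supplied by Lemma \ref{Lem:Projective} via the basis theorem. The one point to check is that the relevant sums are finite, so that we work with genuinely unital algebras and finitely generated projectives. This holds because $\wS_\bfu(n)$ has the finite set of objects $\Lambda_\st(n)$.
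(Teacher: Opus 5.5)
Your proof is correct and follows the same route as the paper: exactness of $\Ind^n_{n-a}$ comes from the right projectivity of $\wS_\bfu(n)\mathbf{1}_{a,n}$ over $\wS_\bfu(n-a)$ supplied by Lemma \ref{Lem:Projective}, and $\Res^n_{n-a}$ is exact because it is just $M\mapsto \mathbf{1}_{a,n}M$. Your extra checks (finiteness of $\Lambda_\st(n)$, unitality, and the idempotent splitting of $\mathbf{1}_{a,n}\wS_\bfu(n)$) fill in details that the paper leaves implicit.
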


\begin{proof}
  The exactness of $\Ind^n_{n-a}$ follows from Lemma  \ref{Lem:Projective} since    $\wS_\bfu(n)\mathbf{1}_{a,n}$ is a right projective $\wS_\bfu(n-a)$-module. The restriction functor is clearly exact.
\end{proof}

\section{Cellular structures on cyclotomic $q$-W-Schur algebras}
\label{sec:cellular}

In this section, we establish the cellular structure for $\wS_\bfu(n)$ via the corresponding cellular structure of the cyclotomic $q$-Schur category. We also study the cell filtration of the restriction and induction of the cell modules of $\wS_\bfu$.

 \subsection{Cellular bases and cell modules}   

\begin{definition} [\cite{SW24Schur, SSW25}]
\label{def:qSchur}
    The affine $q$-Schur category  $\qASch$ is a strict $\kk$-linear monoidal category with generating objects $a\in \Z_{\ge 1}$ and $u\in \kk$. We denote the object $a\in \N$ by $\stra$ and denote the object $u \in \kk$ by a red strand labeled by $u$ as $\stru .$
The morphisms are generated by 
\begin{equation} 
\label{generator-affschur}
  \begin{tikzpicture}[anchorbase, scale=0.4, color=\clr] 
                \draw[-,line width=1pt] (-2,0.2) to (-1.5,1);
	\draw[-,line width=1pt ] (-1,0.2) to (-1.5,1);
	\draw[-,line width=1.5pt] (-1.5,1) to (-1.5,1.8);
                \draw (-1,0) node{$\scriptstyle {b}$};
                \draw (-2,0) node{$\scriptstyle {a}$};
                \draw (-1.5,2.2) node{$\scriptstyle {a+b}$};
                \end{tikzpicture}, 
                \quad 
  \begin{tikzpicture}[anchorbase, scale=0.4, color=\clr]
                \draw[-,line width=1pt] (-2,1.8) to (-1.5,1);
	\draw[-,line width=1pt ] (-1,1.8) to (-1.5,1);
	\draw[-,line width=1.5pt] (-1.5,1) to (-1.5,0.2);
                \draw (-1,2.2) node{$\scriptstyle {b}$};
                \draw (-2,2.2) node{$\scriptstyle {a}$};
                \draw (-1.5,0) node{$\scriptstyle {a+b}$};
                \end{tikzpicture}, 
    \quad  \;
    \crossingpos , 
        \quad \;
    ~\xdota ,
 \end{equation}
and
\begin{equation}
\label{redcrossing}
\rightcrossing 
\;\; (\text{traverse-up}), 
                \qquad
                \leftcrossing 
 \;\; (\text{traverse-down}),    
\end{equation}
subject to relations \eqref{webassoc}--\eqref{intergralballon} for the generators in \eqref{generator-affschur} and additional relations \eqref{redslider}--\eqref{redbraid} involving also the traverse-ups and traverse-downs \eqref{redcrossing}:
\begin{align}
\label{redslider}
\begin{tikzpicture}[anchorbase,scale=.6,color=\clr]
	\draw[-,line width=1pt,color=\cred] (0.4,0) to (-0.6,1);
  \draw (-.6,1.2) node{$\scriptstyle \red{u}$};
	\draw[-,line width=1pt] (0.08,0) to (0.08,1);
	\draw[-,line width=1pt] (0.1,0) to (0.1,.6) to (.5,1);
        \node at (0.6,1.18) {$\scriptstyle a$};
        \node at (0.1,1.2) {$\scriptstyle b$};
\end{tikzpicture}
& =
\begin{tikzpicture}[anchorbase,scale=.6,color=\clr]
	\draw[-,line width=1pt,color=\cred] (0.7,0) to (-0.3,1);
  \draw (-.3,1.16) node{$\scriptstyle \red{u}$};
	\draw[-,line width=1pt] (0.08,0) to (0.08,1);
	\draw[-,line width=1pt] (0.1,0) to (0.1,.2) to (.9,1);
        \node at (0.9,1.18) {$\scriptstyle a$};
        \node at (0.1,1.2) {$\scriptstyle b$};
\end{tikzpicture},
\quad
\begin{tikzpicture}[anchorbase,scale=.6,color=\clr]
	\draw[-,line width=1pt,color=\cred] (-0.4,0) to (0.6,1);
  \draw (.6,1.16) node{$\scriptstyle \red{u}$};
	\draw[-,line width=1pt] (-0.08,0) to (-0.08,1);
	\draw[-,line width=1pt] (-0.1,0) to (-0.1,.6) to (-.5,1);
         \node at (-0.1,1.18) {$\scriptstyle b$};
        \node at (-0.6,1.2) {$\scriptstyle a$};
\end{tikzpicture}
\!\!=\!\!
\begin{tikzpicture}[anchorbase,scale=.6,color=\clr]
	\draw[-,line width=1pt,color=\cred] (-0.7,0) to (0.3,1);
 \draw (.3,1.16) node{$\scriptstyle \red{u}$};
	\draw[-,line width=1pt] (-0.08,0) to (-0.08,1);
	\draw[-,line width=1pt] (-0.1,0) to (-0.1,.2) to (-.9,1);
         \node at (-0.1,1.2) {$\scriptstyle b$};
        \node at (-0.95,1.18) {$\scriptstyle a$};
\end{tikzpicture}, 
\quad
\begin{tikzpicture}[baseline=-3.3mm,scale=.6,color=\clr]
\draw[-,line width=1pt,color=\cred] (0.4,.2) to (-0.6,-.8);
\draw (-.6,-1.05) node{$\scriptstyle \red{u}$};
\draw[-,line width=1pt] (0.08,0.2) to (0.08,-.75);
\draw[-,line width=1pt] (0.1,0.2) to (0.1,-.4) to (.5,-.8);
\node at (0.6,-1.05) {$\scriptstyle c$};
\node at (0.07,-1.05) {$\scriptstyle b$};
\end{tikzpicture}
\!\!=\!\!
\begin{tikzpicture}[baseline=-3.3mm,scale=.6,color=\clr]
\draw[-,line width=1pt,color=\cred] (0.7,0.2) to (-0.3,-.8);
\draw (-.3,-1.0) node{$\scriptstyle \red{u}$};
\draw[-,line width=1pt] (0.08,0.2) to (0.08,-.75);
\draw[-,line width=1pt] (0.1,0.2) to (0.1,0) to (.9,-.8);
\node at (1,-1.0) {$\scriptstyle c$};
\node at (0.1,-1.0) {$\scriptstyle b$};
\end{tikzpicture} ,
\quad
\begin{tikzpicture}[baseline=-3.3mm,scale=.6,color=\clr]
\draw[-,line width=1pt,color=\cred] (-0.4,0.2) to (0.6,-.8);
\draw (.6,-.91) node{$\scriptstyle \red{u}$};
\draw[-,line width=1pt] (-0.08,0.2) to (-0.08,-.75);
\draw[-,line width=1pt] (-0.1,0.2) to (-0.1,-.4) to (-.5,-.8);
\node at (-0.1,-1.0) {$\scriptstyle b$};
\node at (-0.6,-1.0) {$\scriptstyle a$};
\end{tikzpicture}
\!\!=\!\!
\begin{tikzpicture}[baseline=-3.3mm,scale=.6,color=\clr]
\draw[-,line width=1pt,color=\cred] (-0.7,0.2) to (0.3,-.8);
\draw (.3,-1) node{$\scriptstyle \red{u}$};
\draw[-,line width=1pt] (-0.08,0.2) to (-0.08,-.75);
\draw[-,line width=1pt] (-0.1,0.2) to (-0.1,0) to (-.9,-.8);
\node at (-0.1,-1.0) {$\scriptstyle b$};
\node at (-0.95,-1.0) {$\scriptstyle a$};
\end{tikzpicture} ,
\\
\label{redcross2}
\begin{split}
\begin{tikzpicture}[baseline = 7pt, scale=0.3, color=\clr]
\draw[-,line width=1pt,color=\cred](0.1,-.22)to (0,2.5);
\draw (0.1,-.5) node{$\scriptstyle \red{u}$};
\draw[-,line width=1.2pt](-.5,-.1) to[out=45, in=down] (.5,1.4);
\draw[-,line width=1.2pt](.5,1.4) to[out=up, in=300] (-.5,2.4);
\draw (-.5,-0.5) node{$\scriptstyle {a}$};
\end{tikzpicture}
 & =
\sum_{0\le t \le a} (-1)^t q^{-t(a-t)} u^t
\begin{tikzpicture}[baseline = 7pt, scale=0.3, color=\clr]
\draw[-,line width=1pt,color=\cred](-.5,-.25)to (-.5,2.15);
\draw (-.5,-.6) node{$\scriptstyle \red{u}$};
\draw[-,line width=1pt] (-1.3,2.15) to (-1.3,-0.2);   
\draw (-1.3,1) \bdot;
\draw (-2.5,1.4) node{$\scriptstyle \omega_{a-t}$};
\draw (-1.3,-.6) node{$\scriptstyle {a}$};
\end{tikzpicture} ~, 
\\    
\begin{tikzpicture}[baseline = 7pt, scale=0.3, color=\clr]
\draw[-,line width=1pt,color=\cred](-0.1,-.6)to (-0.1,2.2);
\draw (-0.1,-.9) node{$\scriptstyle \red{u}$};
\draw[-,line width=1.2pt](.5,-.5) to[out=135, in=down] (-.5,1);
\draw[-,line width=1.2pt](-.5,1) to[out=up, in=270] (.5,2.2);
\draw (.5,-0.9) node{$\scriptstyle {a}$};
\end{tikzpicture}
&=
\sum_{0\le t \le a} (-1)^t q^{-t(a-t)} u^t
\begin{tikzpicture}[baseline = 7pt, scale=0.3, color=\clr]
\draw[-,line width=1pt,color=\cred](-1.8,-.25)to (-1.8,2.15);
\draw (-1.8,-.6) node{$\scriptstyle \red{u}$};
\draw[-,line width=1pt] (-1,2.15) to (-1,-0.25);          
\draw (-1,1) \bdot;
\draw (0.4,1)  node{$\scriptstyle \omega_{a-t}$};
\draw (-1,-.6) node{$\scriptstyle {a}$};
\end{tikzpicture} ,
\end{split}
\\
 \label{redbraid}
\begin{tikzpicture}[anchorbase, scale=0.35, color=\clr]
\draw[-,line width=1pt](0,2) to (1.5,-.35);
\draw[-,line width=4pt, white](0,-.35) to (1.5,2); to (1.5,-.35);
\draw[-,line width=1pt] (0,-.35) to (1.5,2);
\draw[-,line width=1pt,color=\cred](.8,2) to[out=down,in=90] (0.2,1) to [out=down,in=up] (.8,-.4);
\draw (.8,-.8) node{$\scriptstyle \red{u}$};
\node at (0, -.7) {$\scriptstyle a$};
\node at (1.5, -.7) {$\scriptstyle b$};
\end{tikzpicture}
& = 
\sum_{s=0}^{\min (a,b)} (-1)^s q^{s(s-1)/2} (q^{-1} -q)^s [s]_q! 
\begin{tikzpicture}[anchorbase, scale=0.35, color=\clr]
\draw[-,line width=1.5pt](0,-.35) to   (0,0);
\draw[-,line width=1.5pt](0,1.65) to   (0,2);
\draw[-,line width=1.5pt](1.5,-.35) to   (1.5,0);
\draw[-,line width=1.5pt](1.5,1.65) to   (1.5,2);
\draw[-,thick](0,0) to   (0,1.65);
\draw[-,thick](1.5,0) to   (1.5,1.65);
\draw[-,line width=1pt](0,1.65) to   (1.5,0);
\draw[-,line width=4pt, white] (0.2,0.2) to (1.3,1.35);
\draw[-,line width=1pt] (0,0) to (1.5,1.65);
\draw[-,line width=1pt,color=\cred](.8,2) to
[out=-60,in=60]
(.8,-.4);
\draw (.8,-.7) node{$\scriptstyle \red{u}$};
\node at (0, -.7) {$\scriptstyle a$};
\node at (1.5, -.7) {$\scriptstyle b$};
\node at (-0.3, 1.1) {$\scriptstyle s$};
\node at (1.8, 1.1) {$\scriptstyle s$};
\draw (1.5, .7) \bdot;
\end{tikzpicture} . 
\end{align}
\end{definition}

Let $\Lambda_{\text{st}}^{1+\ell}(n)$ be the set of all strict $(1+\ell)$-multicompositions of $n$. Write 
\[
\Lambda_{\text{st}}^{1+\ell}:=\bigcup_{m\in \N}\Lambda_{\text{st}}^{1+\ell}(m).
\]
Thus, the set of objects of $\qASch$
can be identified with $ \bigcup_{\ell\in \N} \big(\Lambda_{\text{st}}^{1+\ell}\times \kk^{\ell} \big)$.

Below we will identify 
$\lambda^{(0)}\red{u_1}\ldots \red{u_\ell} \lambda^{(\ell)}$ with $\lambda=(\lambda^{(0)},\ldots, \lambda^{(\ell)})$ once $\bfu$ is fixed in \eqref{equ:parau}.
For any fixed $\bfu$, the cyclotomic $q$-Schur category is the quotient of the $\kk$-linear category $\qSchu$
    by the relations 
    \begin{equation}
    \label{eq:unitlambda}
       \unit{\lambda}=0 \text{ for all } \lambda =(\la^{(0)},\la^{(1)},\ldots,\la^{(\ell)}) \in \Lambda_{\text{st}}^{1+\ell} \text{ with }
\lambda^{(0)}\neq \emptyset.
\end{equation}

By \eqref{eq:unitlambda}, the set of objects of $\qSchu$ can be identified with $\cup_{n\in \N}\Lambda_{\text{st}}^{\ell}(n)$ by deleting the leftmost $\emptyset$.
Let $\Par^\ell(n)$  be the set of $\ell$-multipartitions  of $m$ with usual dominance order $\lhd$. For $\la\in \Par^\ell(n)$, we denote $|\la|=n$. 

Recall for any $\lambda\in\Par^\ell(n)$ a $\lambda$-tableau $\mathbf S$ is obtained from the Young diagram $Y(\lambda)$ by inserting ordered pairs $(i,p)$, also denoted by $i_p$, into $Y(\lambda)$, where $1\le p\le \ell$ and $i$ is a positive integer.
For $\lambda\in\Par^\ell(n)$ and $\mu\in \Lambda_{st}^\ell(n)$, let $\SST(\lambda,\mu)$ be the set of all semistandard $\lambda$-tableaux of type $\mu$ as defined in \cite{DJM98}. Associated to a semistandard tableau $\bT\in \SST(\lambda,\mu)$  we can construct a matrix $A_{\bT}\in \Mat_{\overline{\mu},\overline{\lambda}}$ as in \cite[\S 4.5]{SW24Schur} such that $a_{(p,i),(q,j)}$ is the number of $i_p$ in $j$th row of $\mathbf T^{(q)}$, where $\bar \lambda $ is the strict composition obtained from the multi-composition  $\lambda$ by a forgetful map. Thus, we have a reduced chicken foot ribbon diagram $[A_{\bT}]$  of shape $A_{\bT}$. Let $[\bT]$ denote the ornamentation associated with $(A_{\bT},(\emptyset))$. We refer to \cite[Example 4.8]{SW24Schur} for an explicit example for the $A_\bT$ and the diagram $[\bT]$, where each crossing therein is now chosen to be positive.

The notion of cellular bases was introduced by Graham-Lehrer \cite{GL96}. Let $\bfS_\bfu(n)$ be the path algebra of the subcategory  $\qSchu(n)$ with object set $\Lambda_{\text{st}}^{\ell}(n)$. 
We have the following diagrammatic description of a cellular basis on $\bfS_\bfu(n)$, which is compatible with its counterpart on the cyclotomic $q$-Schur algebra \cite{DJM98} via the algebra isomorphism established in \cite{SSW25}. 

\begin{proposition}  \cite[Theorems 5.12, 5.13]{SSW25}
    The algebra $\bfS_\bfu(n)$ has a cellular  basis given by 
\begin{align}  \label{doubleSST}
\bigcup_{\overset{\lambda\in \Par^\ell(n)}{\mu,\nu\in \Lambda_{\text{st}}^\ell(n)} }
\big\{ [\bT]\circ [\bS]^\div \mid \bT\in \SST(\lambda,\nu), \bS\in \SST(\lambda,\mu) \big\}, 
\end{align}
where the required anti-involution $\div$ is defined by rotating a string diagram by 180 degrees around a horizontal axis.
\end{proposition}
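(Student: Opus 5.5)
The plan is to verify the Graham--Lehrer axioms directly for the proposed basis, using the basis theorem of $\qSchu$ in \cite{SSW25} and transporting the known cellular structure of the Dipper--James--Mathas cyclotomic $q$-Schur algebra \cite{DJM98} through the isomorphism of \cite{SSW25}.

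\emph{Step 1: the anti-involution.} First we check that $\div$, rotation by 180 degrees about a horizontal axis, is a well-defined $\kk$-linear anti-involution of $\qSchu$. On generators, $\div$ swaps merges with splits and traverse-ups with traverse-downs, and fixes positive crossings and dots. We then check that each defining relation \eqref{webassoc}--\eqref{redbraid} and the cyclotomic relation \eqref{eq:unitlambda} is sent to a relation. For most relations this is a visual symmetry. For \eqref{redcross2} and \eqref{redbraid}, the two relations in each pair are exchanged under the flip, using that $\omega_r$ is $\div$-invariant.

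\emph{Step 2: linear basis.} Next we show that the set \eqref{doubleSST} is a basis of $\bfS_\bfu(n)$. Both sides are free over $\kk$, so it suffices to prove two things.

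\begin{enumerate}
\item \emph{Spanning.} Every diagram in $\Hom(\mu,\nu)$ can be rewritten, by pushing all dots and red strands into a standard position, as a combination of $[\bT]\circ[\bS]^\div$. Here we use the straightening relations \eqref{redslider}, \eqref{redcross2} and \eqref{redbraid}, and \eqref{eq:unitlambda} kills every term whose middle object has nonempty $\lambda^{(0)}$.
\item \emph{Rank count.} The number of pairs $(\bT,\bS)$ with $\bT\in\SST(\lambda,\nu)$ and $\bS\in\SST(\lambda,\mu)$, summed over $\lambda$, equals the rank of $\Hom(\mu,\nu)$. That rank is known from the basis theorem of \cite{SSW25}, or equivalently from the DJM semistandard basis dimension.
\end{enumerate}

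\emph{Step 3: cellularity.} We must show that for $x\in\bfS_\bfu(n)$,
\[
x\,[\bT]\circ[\bS]^\div \equiv \sum_{\bT'} r_x(\bT',\bT)\,[\bT']\circ[\bS]^\div \pmod{\bfS_\bfu(n)^{\rhd\lambda}},
\]
where $\bfS_\bfu(n)^{\rhd\lambda}$ is the span of basis elements indexed by multipartitions strictly more dominant than $\lambda$. The cleanest route is to use the algebra isomorphism $\bfS_\bfu(n)\cong\mathcal S(\Lambda)$ of \cite{SSW25} and check that it sends $[\bT]\circ[\bS]^\div$ to the DJM basis element $\varphi_{\bT\bS}$, modulo higher terms. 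This is done by evaluating both on the generator $m_\mu$ of the permutation module $M^\mu$.

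Once this triangularity is established, cellularity of the diagrammatic basis follows from cellularity of the DJM basis \cite[Theorem 6.6]{DJM98}. The reason is that a unitriangular change of basis within each cell, which is compatible with $\div$, preserves the cellular axioms. It remains to note that $\div$ matches the DJM involution $*$.

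\emph{Main obstacle.} The hard step is the comparison in Step 3: showing that $[\bT]\circ[\bS]^\div$ agrees with $\varphi_{\bT\bS}$ up to terms of strictly higher dominance. This requires tracking how the ribbon diagram $[A_\bT]$ with positive crossings acts on $m_\mu$ through the explicit isomorphism. I would attack it by factoring $[\bT]$ as a product of merges, splits and traverse-downs. Each factor is then matched with a standard DJM map: a row-symmetrizer for the merges and splits, and a Jucys--Murphy factor $\prod(L_i-u_j)$ for the traverse-downs. Finally, the leading term is identified by induction on the number of boxes of $\lambda$.
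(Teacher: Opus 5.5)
The paper gives no argument here beyond citing \cite[Theorems 5.12, 5.13]{SSW25}, so your plan has to stand on its own. As written it has a gap at linear independence, and it puts the hard work in the wrong step.

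\textbf{Linear independence.} In Step 2 the rank count is not available to you. The rank of $\Hom_{\qSchu}(\mu,\nu)$ ``from the basis theorem of \cite{SSW25}'' is precisely the statement you are proving. The DJM count only bounds that rank from below once you have a homomorphism $\bfS_\bfu(n)\to\mathcal S(\Lambda)$ whose image on $1_\nu\bfS_\bfu(n)1_\mu$ has the full DJM dimension. Constructing this is the real content of the basis theorem. You need a functor $\qSchu\to\bfH_\bfu(n)\modf$ with $\mu\mapsto M^\mu$, checked on all defining relations (including \eqref{redslider}--\eqref{redbraid} and \eqref{eq:unitlambda}). You also need a proof that the images of the elements $[\bT]\circ[\bS]^\div$ are linearly independent. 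Your Step 3 computation, $[\bT]\circ[\bS]^\div\mapsto\varphi_{\bT\bS}$ modulo terms of shape $\rhd\lambda$, is exactly such a proof. So it belongs in Step 2, where it is indispensable. Quoting ``the isomorphism of \cite{SSW25}'' in Step 3 presupposes the conclusion that Step 2 needs.

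\textbf{Cellularity.} Once spanning holds for all $\Hom$-spaces between objects of $\Lambda^\ell_{\text{st}}(n)$, including those with a multipartition $\lambda$ as source, cellularity needs no comparison with DJM. The element $[\bT]\circ[\bS]^\div$ factors through $1_\lambda$. For any $x$, expand $x\circ[\bT]\in\Hom(\lambda,\nu')$ as a combination of $[\bT'']\circ[\bS'']^\div$ with $\bS''\in\SST(\lambda'',\lambda)$. Two facts finish this step:
\begin{enumerate}
\item $\SST(\lambda'',\lambda)\neq\emptyset$ forces $\lambda''\unrhd\lambda$;
\item $\SST(\lambda,\lambda)=\{\bT^\lambda\}$ with $[\bT^\lambda]=1_\lambda$.
\end{enumerate}
Hence $x\circ[\bT]=\sum_{\bT'}r_x(\bT',\bT)[\bT']$ plus terms factoring through some $1_{\lambda''}$ with $\lambda''\rhd\lambda$, and $r_x$ is visibly independent of $\bS$.

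It remains to see that such terms lie in the span of basis elements of shape $\rhd\lambda$. Use a downward induction on dominance: expand the upper half of a morphism through $1_{\lambda''}$ directly, and the lower half after applying $\div$. This shows that the two-sided ideal generated by the $1_{\lambda''}$ with $\lambda''\rhd\lambda$ is spanned by basis elements of shape $\rhd\lambda$. This is the same mechanism the paper itself uses, via \cite[Proposition 5.11]{SSW25}, in the proof of Proposition \ref{prop:restriction}.

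So what you call the main obstacle is not needed for cellularity. A leading-term comparison with DJM, or any faithful realization, matters only for linear independence.

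A minor point on Step 1: under $\div$ each relation in \eqref{redcross2} is sent to itself, not to its partner. This does not affect your conclusion there.
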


By \cite[Theorem~ 5.17]{SSW25}, we have a fullly faithful functor from $\qW_\bfu$ to $\qSchu$ which sends any diagram $f$ to 
 $\begin{tikzpicture}[baseline = 10pt, scale=0.5, color=\clr]
\draw[-,line width =1pt,color=\cred] (-4.2,.2) to (-4.2,1.6);
\draw (-4.2,0) node{$\scriptstyle \red{u_1}$};    
\draw[-,line width =1pt,color=\cred] (-3.5,.2) to (-3.5,1.6);
\draw (-3.4,0) node{$\scriptstyle \red{u_2}$};
\draw(-2.6,.8) node{$\ldots$};
\draw[-,line width =1pt,color=\cred] (-2,.2) to (-2,1.6);
\draw (-1.9,0) node{$\scriptstyle \red{u_{\ell}}$};
\draw (-1.2,1) node{$f$};
\end{tikzpicture}.
$
The set $\Lambda_\st(n)$ of objects is sent to 
\[
\bar\Lambda^\ell(n):=\{ (\emptyset^{\ell-1}, \mu)\mid \mu\in \Lambda_\st(n)\}.
\]
The above functor induces an isomorphism of algebras 
\[
\phi_n: \wS_\bfu(n) \cong  {\bf1}_n \bfS_\bfu(n){\bf1}_n = \bigoplus_{\nu,\mu\in \bar \Lambda^\ell(n)} \Hom_{\qSchu}(\nu,\mu)  \] 
where  ${\bf1}_n=\oplus_{\nu\in \bar\Lambda^{\ell}(n)}1_\nu$ .
Below we shall identify $\wS_\bfu(n)$ with ${\bf1}_n \bfS_\bfu(n){\bf1}_n$ via the isomorphism $\phi_n$ and construct a cellular basis of $\wS_\bfu(n)$. 

Note that for any $\bT\in \SST(\lambda,\nu), \bS\in \SST(\lambda,\mu)$, 
\begin{equation}
  {\bf1}_n [\bT]\circ [\bS]^\div {\bf1}_n= \begin{cases}
    [\bT]\circ [\bS]^\div, & \text{ if }\mu,\nu \in \bar\Lambda^\ell(n)\\
    0,& \text{otherwise}.
\end{cases} 
\end{equation}
This implies the cellular basis for $\wS_\bfu$ as follows. 
\begin{theorem}
  \label{thm:cellularforwebu}
The algebra $\wS_\bfu(n)$ admits a cellular basis  
\begin{align}  \label{doubleSSTweb}
\bigcup_{\overset{\lambda\in \Par^\ell(n)}{\mu,\nu\in \bar\Lambda^\ell(n)} }
\big\{ [\bT]\circ [\bS]^\div \mid \bT\in \SST(\lambda,\mu), \bS\in \SST(\lambda,\nu) \big\}. 
\end{align}
\end{theorem}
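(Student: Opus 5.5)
The proof is the standard idempotent truncation of a cellular algebra. Set $e={\bf1}_n$, an idempotent of $\bfS_\bfu(n)$, and identify $\wS_\bfu(n)$ with $e\bfS_\bfu(n)e$ via $\phi_n$. We check directly that the elements in \eqref{doubleSSTweb} satisfy the Graham--Lehrer axioms, with the same poset $(\Par^\ell(n),\unrhd)$, the same index sets now restricted to tableaux of types in $\bar\Lambda^\ell(n)$, and the same anti-involution $\div$.

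First, \eqref{doubleSSTweb} is a basis of $e\bfS_\bfu(n)e$. The cellular basis \eqref{doubleSST} is adapted to the object decomposition: $[\bT]\circ[\bS]^\div$ with $\bT\in\SST(\lambda,\nu)$, $\bS\in\SST(\lambda,\mu)$ lies in $\Hom_{\qSchu}(\mu,\nu)=1_\nu\bfS_\bfu(n)1_\mu$. Hence $\bfS_\bfu(n)=\bigoplus_{\mu,\nu}1_\nu\bfS_\bfu(n)1_\mu$ with each summand spanned by the basis elements of types $(\nu,\mu)$. By the displayed identity, multiplying on both sides by $e$ keeps exactly the basis elements with $\mu,\nu\in\bar\Lambda^\ell(n)$ and kills the rest. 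So these elements form a basis of $e\bfS_\bfu(n)e$. Some $\lambda$ may have empty index sets; we simply drop them from the poset.

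Second, $\div$ restricts to $e\bfS_\bfu(n)e$ and fixes $e$. Rotation fixes each identity diagram $1_\nu$, so $e^\div=e$ and $(exe)^\div=ex^\div e$. On basis elements it acts by $([\bT]\circ[\bS]^\div)^\div=[\bS]\circ[\bT]^\div$, as required. Note that the statement lists $\bT\in\SST(\lambda,\mu)$, $\bS\in\SST(\lambda,\nu)$ with $\mu,\nu$ both ranging over $\bar\Lambda^\ell(n)$, so the labelling is symmetric.

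Third, the multiplication axiom. Take $x\in e\bfS_\bfu(n)e$ and a basis element $C^\lambda_{\bT,\bS}$ of $e\bfS_\bfu(n)e$. Cellularity of $\bfS_\bfu(n)$ gives
\[
xC^\lambda_{\bT,\bS}\equiv\sum_{\bT'}r_x(\bT',\bT)\,C^\lambda_{\bT',\bS}\pmod{\bfS_\bfu(n)^{\rhd\lambda}},
\]
where $\bfS_\bfu(n)^{\rhd\lambda}$ is the span of basis elements with shape strictly more dominant than $\lambda$, and $r_x(\bT',\bT)$ does not depend on $\bS$. Multiply both sides on the left by $e$; the left side is unchanged since $x=ex$. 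On the right, $eC^\lambda_{\bT',\bS}$ equals $C^\lambda_{\bT',\bS}$ if the type of $\bT'$ lies in $\bar\Lambda^\ell(n)$ and is zero otherwise. Also $e\,\bfS_\bfu(n)^{\rhd\lambda}\,e=(e\bfS_\bfu(n)e)^{\rhd\lambda}$, by the basis compatibility from the first step. The resulting coefficients are still independent of $\bS$, which is exactly the axiom.

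The only point that needs care is confirming that the truncated ideals $(e\bfS_\bfu(n)e)^{\rhd\lambda}$ agree with the ideals spanned by the truncated basis. This follows from the object-wise compatibility established in the first step. The rest is bookkeeping, which is why the paper can say this ``implies'' the theorem.
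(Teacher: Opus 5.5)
Your proposal is correct and is the same argument as the paper's: identify $\wS_\bfu(n)$ with ${\bf1}_n\bfS_\bfu(n){\bf1}_n$ via $\phi_n$, and use that ${\bf1}_n$ either fixes or kills each element of the cellular basis \eqref{doubleSST}, so the surviving elements inherit the Graham--Lehrer axioms. You have simply written out the routine checks that the paper leaves implicit in the word ``implies'': that $\div$ fixes ${\bf1}_n$, and that ${\bf1}_n\bfS_\bfu(n)^{\rhd\lambda}{\bf1}_n$ is spanned by the surviving basis elements of shape $\rhd\lambda$.
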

 For any $\lambda\in \Par^\ell(n)$, the (left) cell module   $\Delta^\lambda$ for $\wS_\bfu(n)$ is the module with basis 
 \[ 
 \{[\bT]\circ [\bS]^\div + \wS_\bfu(n)^{\rhd\lambda} \mid \mu\in \bar\Lambda^\ell(n), \bT \in \SST(\lambda,\mu)\} 
 \]
for any fixed $\bS\in \SST(\lambda, \nu)$, where $\wS_\bfu(n)^{\rhd\lambda}$ is the span of elements $[\bT']\circ [\bS']^\div $ in \eqref{doubleSSTweb}
such that $\bT'\in \SST(\lambda',\nu') $, $\bS\in \SST(\lambda',\mu')$ and $\lambda'\rhd \lambda$.
Note that the cell module does not depend on the choice of $\bf S$. So, we even can write
$[\bT]\circ [\bS]^\div + \wS_\bfu(n)^{\rhd\lambda}$ as $[\bT]$ if there is no confusion.
\begin{rem}
\label{rem:cellularforcychecke}
Multiplying both sides of \eqref{doubleSSTweb} by the idempotent $1_{(1^n)}$, we obtain the well-known  cellular basis of the cyclotomic Hecke algebra $\bfH_\bfu(n)$ (e.g., \cite{DJM98}) 
\begin{align}  \label{doubleSSTheck}
\bigcup_{ \lambda\in \Par^\ell(n) }
\big\{ [\bT]\circ [\bS]^\div \mid \bS, \bT\in \SST(\lambda,(\emptyset^{\ell-1},(1^n))  \big\}. 
\end{align}
Then for any $\lambda\in  \Par^\ell(n)$ we have the cell (Specht) module $ S^\lambda$ of $\bfH_{\bfu}(n)$ with respect to the cellular basis above. Obviously, 
\begin{equation}
\label{equ:idemoncell}
 1_{(1^n)} \Delta^\lambda \cong  S^\lambda, \qquad
 \text{ for } \lambda \in  \Par^\ell(n).   
\end{equation}
\end{rem}

\subsection{Cell filtrations on $\Res \Delta^\lambda$ and $\Ind \Delta^\lambda$}

We identify a multi-partition $\lambda=(\lambda^{(1)}, \ldots, \lambda^{(\ell)})\in \Par^\ell(n)$ with its Young diagram $[\lambda]$. We write a node $x\in [\lambda]$ as $ (i,j,k)$ if $x$ is at $i$th row and $j$th column of the $k$-th component. We say $x$ is  removable (resp. addable) of $\lambda$ if $\lambda\backslash x$ (resp. $\lambda\cup x$) is a multi-partition of $n-1$ (resp. $n+1$). 

A set of nodes $\{x_1,x_2,\ldots,x_a\}$ is called removable (resp., addable) if $\lambda\backslash \{x_1,x_2,\ldots,x_a\}$ (resp. $\lambda\cup \{x_1,x_2,\ldots,x_a\}$) is a multi-partition of $n-a$ (resp. $n+a$). Let
\begin{align}
\begin{split}
    \mathcal R(\lambda)_a &=\{ \text{$\ell$-multipartitions  obtained from $\lambda$ by removing $a$ removable nodes}\},
    \\
    \bar {\mathcal R}(\lambda)_a &=\{\mu \in \mathcal R(\lambda)_a\mid \mu =\lambda\backslash \{x_1, \ldots,x_a\} \text{ with all
$x_i$ in distinct columns}\}.
\end{split}
\end{align}

Similarly, we have the set $\mathcal A(\lambda)_a$ of $\ell$-multipartitions obtained by adding $a$ addable nodes to $\lambda$, and 
\begin{align}
    \bar {\mathcal A}(\lambda)_a &=\{\mu \in \mathcal A(\lambda)_a\mid \mu =\lambda\cup \{x_1, \ldots,x_a\} \text{ with all
$x_i$ in distinct columns}\}.
\end{align}
In particular, we have $\bar {\mathcal R}(\lambda)_1=  \mathcal R(\lambda)_1$
and $\bar {\mathcal A}(\lambda)_1=  \mathcal A(\lambda)_1$.

Recall that a   $\lambda$-tableau $\mathbf S$ in $\SST(\lambda, \mu)$ is obtained from Young diagram $Y(\lambda)$ by inserting ordered pairs $(i,p)$, also denoted by $i_p$, into $Y(\lambda)$ satisfying certain conditions,  where $1\le p\le \ell$ and $i$ is a positive integer (cf. \cite[Definition (4.4)]{DJM98} or \cite[Ex. 4.2]{SW24Schur}). In particular, the number of times of $(i,p)$ as an entry in $\mathbf S$ is $\mu^{(p)}_i$, for all $i,p$. Let $\bar \Lambda^\ell(n)_a$ be the subset of $\bar\Lambda^\ell(n)$ consisting of all  elements such that  $\mu^{(\ell)}=(\mu_1,\ldots,\mu_k)$ with  $\mu_k=a$.
For any $\bT\in \SST(\lambda, \mu)$ with $\mu\in \bar\Lambda^{\ell}(n)$ 
we have $\textbf{1}_{a,n}[\bT]\neq 0$ only if $\mu\in \bar \Lambda^\ell(n)_a$.
In this case, we define $\bT^a$ to be the tableaux obtained from $\bT$ by deleting all $a$ of $k_{\ell}$'s. In other words, the diagram $[\bT^a]$ is obtained from $[\bT]$ by deleting the strands that connect to $\mu_k$.  

Since there are $a$ nodes $R=\{x_1,\ldots, x_a\}$ filled by $k_{\ell}$, we see that  $\bT^a\in \SST(\lambda\backslash R, \mu\backslash a ) $, where $\mu\backslash a\in \bar\Lambda^{\ell}(n-a)$ such that $(\mu\backslash a)^{(\ell)}=(\mu_1,\ldots,\mu_{k-1})$. In particular, 
since $\bT$ is semi-standard, $x_i$ and $x_j$ are not in the same column and hence $\lambda\backslash R\in \bar {\mathcal R}(\lambda)_a$. Moreover, any $\bS\in \SST(\lambda\backslash R, \mu\backslash a )$
can be obtained in this way such that $\bS=\bT^a$.

Recall there is a unique $\bT^{\lambda\backslash R}\in \SST(\lambda\backslash R, \lambda\backslash R) $ such that $ [\bT^{\lambda\backslash R}]=1_{\lambda\backslash R}$.
Let $\bT^{\lambda\backslash R}_a \in \SST(\lambda,\lambda\backslash R)$ be the tableaux obtained from $\bT^{\lambda\backslash R}$  by adding the nodes  $R$ back. 
Then we see that 
\[
[\bT]=[\bT^a] \circ [\bT_a^{\lambda\backslash R}]. 
\]
Recall the restriction functor $\Res^n_{n-a}: \wS_\bfu(n)\modf \rightarrow\wS_\bfu(n-a)\modf$ from \eqref{eq:IndRes}.

\begin{proposition}  \label{prop:restriction}
    For any $\lambda\in \Par^\ell(n)$, there exists a filtration of $\wS_\bfu(n-a)$-modules 
    \[0=M_{k+1}\subset M_k\subset \ldots \subset M_1=\Res^n_{n-a} (\Delta^\lambda) \]
    such that $M_i/M_{i+1}\cong \Delta^{\mu^i}$, 
    where $\{\mu^1\lhd \mu^2\lhd\ldots\lhd\mu^k \}=\bar {\mathcal R}(\lambda)_a$. 
\end{proposition}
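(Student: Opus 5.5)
We will work directly with the cellular basis of Theorem \ref{thm:cellularforwebu} and the description of $\Delta^\lambda$ as the span of the images of the $[\bT]$, for $\bT\in\SST(\lambda,\mu)$ with $\mu\in\bar\Lambda^\ell(n)$. Since $\Res^n_{n-a}(\Delta^\lambda)=\mathbf 1_{a,n}\Delta^\lambda$, it has basis $\{[\bT] \mid \bT\in\SST(\lambda,\mu),\ \mu\in\bar\Lambda^\ell(n)_a\}$. The map $\bT\mapsto(R,\bT^a)$ is a bijection from this index set onto $\bigsqcup_{R}\SST(\lambda\backslash R,\mu\backslash a)$. Here $R$ runs over removable sets of $a$ nodes in distinct columns, that is, over $\lambda\backslash R\in\bar{\mathcal R}(\lambda)_a$. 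This bijection was recorded just before the statement: deleting the entries $k_\ell$ is injective, and every $\bS\in\SST(\lambda\backslash R,\mu\backslash a)$ arises. So as a vector space $\Res^n_{n-a}(\Delta^\lambda)$ already has the right size, namely $\sum_{\mu'\in\bar{\mathcal R}(\lambda)_a}\dim\Delta^{\mu'}$.

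Order $\bar{\mathcal R}(\lambda)_a=\{\mu^1\lhd\cdots\lhd\mu^k\}$. Strictly we refine dominance to a total order, which is harmless because different $R$ give incomparable or comparable shapes. Define $M_i$ to be the span of those $[\bT]$ whose $\bT^a$ has shape $\mu^j$ with $j\ge i$. The claim is that each $M_i$ is a $\wS_\bfu(n-a)$-submodule and that
\[
\psi_i:\Delta^{\mu^i}\to M_i/M_{i+1},\qquad [\bS]\mapsto [\bS]\circ[\bT^{\mu^i}_a]=[\bT]
\]
is an isomorphism. Here $\mu^i=\lambda\backslash R_i$, and we use the factorization $[\bT]=[\bT^a]\circ[\bT^{\lambda\backslash R}_a]$.

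The key computation is the action of $x\in\wS_\bfu(n-a)$, embedded as $x\otimes 1_{(a)}$. We have $(x\otimes1_{(a)})[\bT]=(x\circ[\bS])\circ[\bT^{\mu^i}_a]$ with $\bS=\bT^a$. In $\Delta^{\mu^i}$ write $x[\bS]=\sum_{\bS'} r_{\bS'}[\bS']+y$, where $y$ lies in $\wS_\bfu(n-a)^{\rhd\mu^i}$, working in $\wS_\bfu(n-a)$ with a fixed right factor $[\bS_0]^\div$. Composing with $[\bT^{\mu^i}_a]$ sends $\sum r_{\bS'}[\bS']$ to the same combination of basis vectors of $M_i$ with $\bT'^a$ of shape $\mu^i$. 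The task is then to show that $y\circ[\bT^{\mu^i}_a]$, viewed inside $\wS_\bfu(n)$ modulo $\wS_\bfu(n)^{\rhd\lambda}$, lies in the span of the $[\bT']$ with $\bT'^a$ of shape $\mu^j\rhd\mu^i$.

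This last point is the main obstacle. We would handle it by passing to $\bfS_\bfu(n)$, where the analogous restriction statement for Dipper--James--Mathas cyclotomic $q$-Schur algebras is known: the Specht/Weyl filtration of restriction in \cite{DJM98}-type arguments. Concretely, write $y$ as a combination of $[\bT_1]\circ[\bS_1]^\div$ with shape $\nu\rhd\mu^i$. Then $[\bS_1]^\div\circ[\bT^{\mu^i}_a]$ is an element of $1_\nu\bfS_\bfu(n)1_{\lambda'}$ with $\lambda'$ the multicomposition $\lambda\backslash R$ plus the strand $a$. Re-expanding it in the cellular basis of $\bfS_\bfu(n)$, via the straightening relations of \cite[Theorem 5.12]{SSW25}, only produces terms whose cell index $\rho$ satisfies $\rho\unrhd\lambda$, with equality forcing a tableau whose removed shape dominates $\nu\rhd\mu^i$. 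Terms with $\rho\rhd\lambda$ vanish in $\Delta^\lambda$.

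Given the submodule property, $\psi_i$ is a module homomorphism by the computation above and bijective on bases by the bijection, so $M_i/M_{i+1}\cong\Delta^{\mu^i}$. Finally, $1_{(1^n)}$ applied to everything recovers the classical Hecke branching of \eqref{equ:idemoncell}, which we would use as a consistency check on the ordering convention: $\mu^1$ is the smallest shape and sits at the top of the filtration.
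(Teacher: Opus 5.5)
Your proposal follows the paper's proof essentially step for step. It uses the same filtration by the shape of $\bT^a$ and the same map $[\bT^a]\mapsto[\bT]$ coming from $[\bT]=[\bT^a]\circ[\bT_a^{\lambda\backslash R}]$. The key step is also the same: expand $([\bS_1]^\div\otimes 1_a)\circ[\bT_a^{\mu^i}]$ in the cellular basis (the paper cites \cite[Proposition 5.11]{SSW25}), discard cell indices $\rhd\lambda$, and observe that index $\lambda$ forces a removed shape $\lambda\backslash R'\unrhd\nu\rhd\mu^i$. Note that this composite lies in $1_{\nu\cup a}\bfS_\bfu(n)1_\lambda$, not in the space you wrote; that is exactly what gives $\rho\unrhd\lambda$.

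One point should be made explicit, and the paper glosses it as well. When you then compose with $[\bT_1]\otimes 1_a$, you need the higher pieces to be already stable. You get this by downward induction on $i$, run in the $\bfS_\bfu$-cell module, since the objects $\nu\cup a$ are not in $\qW_\bfu$. The base case $\mu^k$ is immediate: there every error contribution already lies in the $\rhd\lambda$ ideal.
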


\begin{proof}
    Define 
    \[M_j=\{[\bT] \mid \bT \in \SST(\lambda,\mu), \mu\in \bar\Lambda^\ell(n)_a,  
    \bT^a\in \SST(\mu^i,\mu\backslash a), i\ge j  \} .\]
    It suffices to show that the linear map 
    \begin{align*}
        \psi : \Delta^{\mu^j} \longrightarrow M_j/M_{j+1}, \qquad
      [\bT^a] \mapsto  [\bT] + M_{j+1},
    \end{align*}
    is a homomorphism (and hence an isomorphism) of $\wS_\bfu(n-a)$-modules.

For any $h\in \wS_\bfu(n-a)$, we have 
\[ h [\bT^a]=\sum_{\bS^a\in \SST(\mu^j,\mu\backslash a)} c_{\bT^a,\bS^a}
[\bS^a] +\sum_{\nu\rhd \mu^j, \bT_1\in \SST(\nu,\gamma),\bS_1\in \SST(\nu,\mu^j)} (*) \,[\bT_1]\circ [\bS_1]^\div \]
It suffices to establish the claim that 
\[([\bT_1]\circ [\bS_1]^\div)\otimes 1_a \circ [\bT_a^{\mu^j}]\in M^{j+1}.\]
 
  Note that $( [\bS_1]^\div\otimes 1_a) \circ [\bT_a^{\mu^j}]\in \Hom_{\qW_\bfu}(\nu\cup a, \lambda)$.
 By \cite[Proposition 5.11]{SSW25}, 
  \[ ( [\bS_1]^\div\otimes 1_a) \circ [\bT_a^{\mu^j}]= \sum_{\bT_2,\bS_2} (*)\,[\bT_2]\circ [\bS_2]^\div   \]
  where the sum is over $\bT_2\in \SST(\tilde \nu,\nu\cup a)$ and $\bS_2\in \SST(\tilde \nu,\lambda)$.
  Note that $ \SST(\tilde \nu,\lambda)\neq \emptyset$ only if $ \lambda \unlhd \tilde\nu$. 
  If   $\tilde \nu \ntrianglelefteq \lambda$, then 
  $[\bT_2]\circ [\bS_2]^\div=0$ in $\Delta^\lambda$ and the claim trivially holds. 
  So, it remains to consider the case 
  \[ \mu^j\cup a \unlhd \nu\cup a \unlhd\tilde \nu = \lambda.\]
  This condition implies $\nu\in \{ \mu^{j+1},\ldots,\mu^k\}$.
  Furthermore, $ ([\bS_1]^\div \otimes 1_a) \circ \bT^{\mu^j}_a =\bT^{\mu^h}_a$ for some $ \nu=\mu^h$ with $j+1\le h\le k$.
  Hence the claim holds. This completes the proof.
\end{proof}

\begin{proposition}
\label{prop:induction}
    For any $\lambda\in \Par^\ell(n)$, there exists a filtration of $\wS_\bfu(n+a)$-modules 
    \[0=M_{k+1}\subset M_k\subset \ldots \subset M_1=\Ind^{n+a}_{n} (\Delta^\lambda) \]
    such that $M_i/M_{i+1}\cong \Delta^{\mu^i}$, 
    where $\{\mu^1\lhd \mu^2\lhd\ldots\lhd\mu^k \}=\bar {\mathcal A}(\lambda)_a$. 
\end{proposition}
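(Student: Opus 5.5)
I would construct the filtration explicitly from a basis of $\Ind^{n+a}_n(\Delta^\lambda)=\wS_\bfu(n+a)\mathbf 1_{a,n+a}\otimes_{\wS_\bfu(n)}\Delta^\lambda$, mirroring the proof of Proposition \ref{prop:restriction} but in the opposite direction. The starting point is Lemma \ref{Lem:Projective}. It decomposes $1_\eta\wS_\bfu(n+a)\mathbf 1_{a,n+a}$ as a right $\wS_\bfu(n)$-module into $\bigoplus_{\gamma\in\mathbf R(\eta)_a}(A_\gamma,P)\bigl(\wS_\bfu(n)\otimes 1_{(a)}\bigr)$, and each summand is free on $1_\gamma\wS_\bfu(n)$. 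Tensoring with $\Delta^\lambda$ therefore gives
\[
1_\eta\Ind^{n+a}_n(\Delta^\lambda)\cong\bigoplus_{\gamma\in\mathbf R(\eta)_a,\,(A_\gamma,P)\in R(\eta,\gamma)}1_\gamma\Delta^\lambda ,
\]
which has the spanning set $(A_\gamma,P)\circ([\bT]\otimes 1_{(a)})\otimes 1$ with $\bT\in\SST(\lambda,\gamma)$. Exactness of $\Ind$ is already available, so the first step is a dimension count. I want to show that $\sum_{\eta}\dim 1_\eta\Ind(\Delta^\lambda)$ equals $\sum_{\mu\in\bar{\mathcal A}(\lambda)_a}\sum_\eta|\SST(\mu,\eta)|$. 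I expect this to follow by pairing the data $\bigl((A_\gamma,P),\bT\bigr)$ bijectively with pairs (an added horizontal strip of $a$ nodes, a tableau in $\SST(\mu,\eta)$). Alternatively, I would deduce it from Proposition \ref{prop:restriction} via the adjunction between $\Ind$ and $\Res$ together with a standard Brauer-reciprocity argument.

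The second step is to produce the submodules. Mirroring the restriction case, I would use the elements $[\bT^{\mu}_a]\in\Hom(\mu\backslash a\text{-shape}\cup a,\ \mu)$ built from the added horizontal strip $R=\mu\backslash\lambda$. For each $\mu^i\in\bar{\mathcal A}(\lambda)_a$ I would define a map
\[
\Delta^{\mu^i}\longrightarrow \Ind^{n+a}_n(\Delta^\lambda)\big/M_{i+1},\qquad [\bT]\mapsto [\bT^{a}]\text{-type element},
\]
more precisely $[\bT]\mapsto [\bT]\circ[\bT^{\mu^i}_a]^{\div}\otimes v_\lambda$ with $v_\lambda=[\bT^\lambda]$ the generator of $\Delta^\lambda$. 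Then I would set $M_j$ to be the span of the images for $i\ge j$, ordered by dominance, with the most dominant $\mu^k$ at the bottom. This matches the orientation $M_i/M_{i+1}\cong\Delta^{\mu^i}$ and the convention that the cell ideal $\wS^{\rhd\mu}$ is killed.

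The third step is well-definedness, i.e. a homomorphism check as in Proposition \ref{prop:restriction}. For $h\in\wS_\bfu(n+a)$, the product $h[\bT]$ expands by cellularity into the span of the $[\bS]$ with $\bS$ of shape $\mu^j$, plus terms $[\bT_1]\circ[\bS_1]^\div$ with $\nu\rhd\mu^j$. I must show that the latter lands in $M_{j+1}$. To do this, I would rewrite $[\bS_1]^\div\circ[\bT]\circ[\bT^{\mu^j}_a]^\div$ using \cite[Proposition 5.11]{SSW25} and the dominance constraint $\SST(\tilde\nu,\lambda\cup a)\neq\emptyset\Rightarrow \lambda\unlhd\tilde\nu$. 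Terms not of the form $\lambda\otimes a$ are killed by $\Delta^\lambda$. Surviving terms force $\nu\in\{\mu^{j+1},\dots,\mu^k\}$.

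Finally, I would check that the images of the standard bases are linearly independent modulo $M_{j+1}$, which gives injectivity; the dimension count from the first step then yields surjectivity, since together these images exhaust $\Ind(\Delta^\lambda)$. The main obstacle is this last independence-and-spanning step. I would try first to relate these elements triangularly, with respect to the dominance order on shapes, to the explicit basis from Lemma \ref{Lem:Projective}. If that fails, the fallback is to obtain the quotient multiplicities by computing $\Hom$ and Ext via adjunction with the restriction filtration from Proposition \ref{prop:restriction}.
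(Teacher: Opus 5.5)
Your architecture matches the paper's. You filter $\Ind^{n+a}_n(\Delta^\lambda)$ by spans of the elements $[\bT_1]\circ[\bT^{\lambda}_a]^\div\otimes v_\lambda$ with $\bT_1$ of shape $\mu^i\in\bar{\mathcal A}(\lambda)_a$, $i\ge j$. You check the homomorphism property modulo higher cell ideals. You get $\dim\Ind^{n+a}_n(\Delta^\lambda)=\sum_{\mu\in\bar{\mathcal A}(\lambda)_a}\dim\Delta^\mu$ from Frobenius reciprocity with Proposition \ref{prop:restriction}; you should say explicitly that this passes to a semisimple parameter, which is legitimate because the dimension read off from Lemma \ref{Lem:Projective} is parameter-independent.

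However, the step you flag as the main obstacle is left open, and it is not optional. Without it you know neither that $M_1=\Ind^{n+a}_n(\Delta^\lambda)$ nor that $\Delta^{\mu^j}\to M_j/M_{j+1}$ is injective. Neither proposed route is carried out. The Hom/Ext fallback is also not available in general: $\wS_\bfu(n+a)$ has in general fewer simples than cell modules (cf. Corollary \ref{cor:classifysimple}), so there is no Ext-criterion for cell filtrations to invoke.

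The missing idea is that spanning, not independence, is the easy direction, and it comes straight from the cellular basis of $\wS_\bfu(n+a)$.
\begin{itemize}
\item The induced module is spanned by $x\otimes[\bS]^\div$ with $x\in\wS_\bfu(n+a)1_{\lambda\cup a}$. Hence it is spanned by $[\bT_1]\circ[\bT_2]^\div\otimes[\bS]^\div$ with $\bT_1\in\SST(\mu,\nu)$ and $\bT_2\in\SST(\mu,\lambda\cup a)$.
\item In $\bT_2$ the $a$ entries equal to $k_\ell$, where $k=l(\lambda^{(\ell)})+1$, are the largest. So they fill a removable set $R$ of nodes in distinct columns, and $\bT_2^a=\bT_2\backslash R\in\SST(\mu\backslash R,\lambda)$, forcing $\mu\backslash R\unrhd\lambda$.
\item If $\mu\backslash R\rhd\lambda$, factor $[\bT_2]^\div=[\bT_a^{\mu\backslash R}]^\div\circ([\bT_2^a]^\div\otimes 1_a)$. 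Moving $[\bT_2^a]^\div$ across the tensor product lands in $\wS_\bfu(n)^{\rhd\lambda}$, so the element vanishes.
\item If $\mu\backslash R=\lambda$, then $\mu\in\bar{\mathcal A}(\lambda)_a$ and $\bT_2=\bT^\lambda_a$.
\end{itemize}
So your elements span. Their number equals the dimension from your first step, so they form a basis, and $\Delta^{\mu^j}\to M_j/M_{j+1}$ carries a basis to a basis. No separate independence argument is needed.

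This dichotomy is also what your homomorphism check actually needs. The bare constraint $\lambda\cup a\unlhd\tilde\nu$ borrowed from the restriction proof does not show that the higher terms of shape $\mu'\rhd\mu^j$ vanish or lie in $M_{j+1}$. Applying the dichotomy above to $\mu'$ does.
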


\begin{proof}
    By definition, we have 
    \begin{align*}
        \Ind^{n+a}_{n} (\Delta^\lambda)&= \wS_{\bfu}(n+a) \textbf{1}_{a,n+a}\otimes _{\wS_\bfu(n)} \Delta^\lambda\\
        &=\wS_{\bfu}(n+a) \textbf{1}_{a,n+a}\otimes _{\wS_\bfu(n)}  \wS_\bfu(n) 1_\lambda\circ [\bS]^\div / \wS_\bfu^{\rhd\lambda }\\
        &= M^\lambda/N^\lambda,
    \end{align*} 
    where, for any fixed $\bS\in \SST(\lambda, (\emptyset^{\ell-1},\bar \lambda))$ for some $\bar\lambda\in\Lambda_\st(n)$,
   \begin{align*}
       M^\lambda&= \wS_{\bfu}(n+a) \textbf{1}_{a,n+a}\otimes _{\wS_\bfu(n)}  \wS_\bfu(n) 1_\lambda\circ [\bS]^\div, \\  N^\lambda&=\wS_{\bfu}(n+a) \textbf{1}_{a,n+a}\otimes _{\wS_\bfu(n)}\wS_\bfu^{\rhd\lambda }.
   \end{align*} 
    We write $\lambda\cup a=(\lambda^{(1)}, \lambda^{(2)},\ldots ,\lambda^{(\ell)}\cup a)$, where $\lambda^{(\ell)}\cup a $ is obtained by attaching the part $a$ to the end of $\lambda^{(\ell)}$. Using the cellular basis of $\wS_\bfu(n+a)$ in Theorem \ref{thm:cellularforwebu}, we see that 
    $M^\lambda$ is spanned by 
\[\{ [\bT_1]\circ [\bT_2]^\div \otimes [\bS]^\div \mid \bT_1\in \SST(\mu, \nu), \bT_2\in \SST(\mu, \lambda\cup a), \mu\in \Par^\ell(n+a), \nu \in \bar\Lambda^{\ell}(n+a)\}.\]
Let $R=\{x_1,\ldots,x_a\}$ be the nodes in $\bT_2$ filled by $k_\ell$ where $k=l(\lambda^{(\ell)})+1$. 
Recall that  $\bT_2^a=\bT_2\backslash R\in \SST(\mu\backslash R, \lambda)$.  If this set is not $\emptyset$, we must have $ \mu\backslash R\unrhd \lambda$. 

If $ \mu \backslash R\rhd \lambda$, then $ [\bT_2 ]^\div= [\bT_a^{\mu\backslash R}]\div [\bT_2^a]^\div $ 
and hence $ [\bT_1]\circ [\bT_2]^\div \otimes [\bS]^\div=[\bT_1]\circ[\bT_a^{\mu\backslash R}]\div\otimes [\bT_2^a]^\div[\bS]^\div \in N^\lambda  $ since $ \mu \backslash R\rhd \lambda$. 

If $\mu \backslash R =\lambda$, then we must have $\mu\in \bar{\mathcal A}(\lambda)_a$  and $[\bT_2^a]=1_\lambda$ and $\bT_2=\bT_a^{\mu\backslash R} $.
This implies that $ \Ind^{n+a}_{n} (\Delta^\lambda)$ is spanned by 
\[
B^\lambda :=\Big\{[\bT_1]\circ (\bT_a^{\lambda})^\div \otimes [\bS]^\div  +N^\lambda ~\Big |~ \bT_1\in \SST(\mu,\nu),\mu\in  \bar{\mathcal A}(\lambda)_a , \nu \in \bar\Lambda^{\ell}(n+a) \Big\}.
\] 

Note that the dimension of $\Ind^{n+a}_{n} (\Delta^\lambda) $ is independent of the field and parameters by the proof of  Lemma \ref{Lem:Projective}, and so we can assume that $\wS_\bfu$ is seimisimple when we compute its dimension. In this case, using the result on the restriction functor in Proposition~ \ref{prop:restriction} and the fact that $\Ind_n^{n+a}$ is left adjoint to  
$ \Res_n^{n+a}$, we see that this dimension is $|B^\lambda|$ by the Frobenius reciprocity. Therefore $B^\lambda $ is a basis of $\Ind^{n+a}_{n} (\Delta^\lambda)$.

Now we define 
\[
M_j= \Big\{[\bT_1]\circ (\bT_a^{\lambda})^\div \otimes [\bS]^\div  +N^\lambda ~\Big |~\bT_1\in \SST(\mu^i,\nu), i\ge j , \nu \in \bar\Lambda^{\ell}(n+a) \Big\}.
\]
The above arguments also show that  the linear map 
    \begin{align*}
        \psi : \Delta^{\mu^j}&\longrightarrow M_j/M_{j+1}, \qquad
      [\bT^a] \mapsto  [\bT] + M_{j+1}
    \end{align*}
    is a homomorphism (and hence an isomorphism) of $\wS_\bfu(n+a)$-modules.
\end{proof}

\subsection{Functors $\iInd$ and $\iRes$}

We specialize at $v=\eps$, where $\eps$ is either generic (i.e., not a root of 1) or $\eps=\exp(2\pi i/p')$, a primitive $p'$-th root of 1. We set $p=p'$ if $p'$ is odd and $p=p'/2$ if $p'$ is even. In other words, 
\begin{align}
    \text{$\text{order}(\eps^2)=p$   in all cases (including $p=\infty$ for $\eps$ generic). }
\end{align}
Set 
\begin{align} \label{eq:I}
\I =\Z \text{ or } \Z/p\Z, 
    \end{align}
corresponding to the case for $\eps$ generic or when $\text{order}(\eps^2)=p$. We denote $\Z/p\Z =\{\overline{0}, \overline{1}, \ldots, \overline{p-1}\}$. 

Let $\bfs =(s_1, \ldots, s_\ell) \in \I^\ell$ and $\bfu =(u_1,\ldots, u_\ell) \in (\kk^\times)^\ell$. We shall make the assumption on parameters that 
\begin{align} \label{u:parameter}
(u_1,\ldots, u_\ell)= (\eps^{2s_1}, \ldots, \eps^{2s_\ell}), \qquad \text{ or } \bfu =\eps^{2\bfs} \text{ for short}. 
\end{align}
Note this is well defined by \eqref{eq:I}. The arbitrary parameter cases can be reduced to this by Morita equivalences (cf. \cite{DM02}). 
For any node $x=(i,j,k)\in [\lambda]$, we define the residue of $x$ by 
\[
\res(x)=u_k\eps^{-2(j-i)} = \eps^{s_k-2(j-i)}
.\]

It is known that the central elements 
$f(X_1,\ldots,X_n)\in \kk[X_1^\pm,\ldots,X_n^\pm]^{\mathfrak S_n} $ acts on $1_{(1^n)}\Delta^\lambda$ (the corresponding cell module of the cyclotomic Hecke algebra) as the scalar
\[f(\res(x_1), \ldots, \res(x_n))\]
where $x_1,\ldots, x_n$ are all the nodes of $[\lambda]$. 
Let $\gamma_\lambda=(\res(x_1), \ldots, \res(x_n))$. Then 
$\Delta^\lambda\in \wS_\bfu(n)\modf_{\gamma_\lambda}$ and we can identify $\gamma_\lambda$ with $\sum_{j=1}^n\alpha_{t_j}$ if $ \gamma_\lambda=(\eps^{-2t_1}, \ldots, \eps^{-2t_n} )$. Here $\{\alpha_i|i\in \I\}$ denote the simple roots of $\widehat {\mathfrak {sl}}_p$.

Similar to $\gamma$, we can identify the multiset $\bfi=\{i_1,\ldots, i_a\}$ with $ \alpha_{i_1}+\ldots+ \alpha_{i_a}$, and denote the multiset $\eps^{-2\bfi}:=\{\eps^{-2i_1},\ldots, \eps^{-2i_a}\}$.  We denote $|\bfi|=a$ and let 
\begin{align*}
    \bar{ \mathcal R}(\lambda)_{\bfi} &=\Big\{\mu=\lambda\backslash\{x_1,\ldots,x_a\} \in \bar {\mathcal R}(\lambda)_{|\bfi|} ~\Big |~ 
    \{\res(x_1),\ldots, \res(x_a) \}= \eps^{-2\bfi} 
    \Big\}.
\end{align*}
Similarly, we have the subset 
$\bar{ \mathcal A}(\lambda)_{\bfi} $ of $\bar{ \mathcal A}(\lambda)_{|\bfi|}$.

Similar to $\bfc\text{-}\Res^n_{n-|\bfk|}$ from \eqref{iRes_affine_n} in the affine setting (now with $\bfc=\eps^{-2\bfi}$), we define a functor $\eps^{-2\bfi}\text{-}\Res^n_{n-|\bfi|}$ (and simply denoted by $\iRes^n_{n-|\bfi|}$ from now on) sending any $M\in \wS_\bfu(n)\modf_\gamma$ to $\text{Proj}_{\gamma\setminus\eps^{-2\bfi}}\circ \text{Res}^n_{n-|\bfi|}(M) \in \wS_\bfu(n-|\bfi|)\modf_{\gamma\setminus\eps^{-2\bfi}} $.
Similarly, we have a functor $\iInd^{n+|\bfi|}_n$ (which is a simplified notation for $\eps^{-2\bfi}\text{-}\Ind^{n+|\bfi|}_n$), which sends $M\in \wS_\bfu(n)\modf_\gamma$ to a module in $\wS_\bfu(n+a)\modf_{\gamma\cup\eps^{-2\bfi}} $. Similarly, we will denote the linear operators $f_\bfc$ with $\bfc={\eps^{-2\bfi}}$ by $f_\bfi$. The following result follows from  Propositions \ref{prop:restriction}--\ref{prop:induction}.

\begin{proposition}
\label{prop:IndRes}
Let $\bfi \in \Z \I$. 
\begin{enumerate}
    \item     For any $\lambda\in \Par^\ell(n)$, there exists a filtration of $\wS_\bfu(n-|\bfi|)$-modules 
    \[0=M_{k+1}\subset M_k\subset \ldots \subset M_1=\iRes^n_{n-|\bfi|} (\Delta^\lambda) \]
    such that $M_i/M_{i+1}\cong \Delta^{\mu_i}$, 
    where $\{\mu_1\lhd \mu_2\lhd\ldots\lhd\mu_k \}=\bar {\mathcal R}(\lambda)_\bfi$.
    \item For any $\lambda\in \Par^\ell(n)$, there exists a filtration of $\wS_\bfu(n+|\bfi|)$-modules 
    \[0=M_{k+1}\subset M_k\subset \ldots \subset M_1=\iInd^{n+|\bfi|}_{n} (\Delta^\lambda) \]
    such that $M_i/M_{i+1}\cong \Delta^{\mu_i}$, 
    where $\{\mu_1\lhd \mu_2\lhd\ldots\lhd\mu_k \}=\bar {\mathcal A}(\lambda)_\bfi$. 
\end{enumerate}    
\end{proposition}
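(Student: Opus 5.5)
We will deduce Proposition \ref{prop:IndRes} from Propositions \ref{prop:restriction} and \ref{prop:induction} by projecting onto generalized central-character blocks. The key facts are that $\iRes^n_{n-|\bfi|}$ and $\iInd^{n+|\bfi|}_n$ are obtained from $\Res^n_{n-|\bfi|}$ and $\Ind^{n+|\bfi|}_n$ by composing with the exact projection functors $\text{Proj}_{\gamma\setminus\eps^{-2\bfi}}$ and $\text{Proj}_{\gamma\cup\eps^{-2\bfi}}$, and that each cell module lies in a single block: $\Delta^\mu\in\wS_\bfu(m)\modf_{\gamma_\mu}$.

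For part (1), take $M=\Delta^\lambda\in \wS_\bfu(n)\modf_{\gamma_\lambda}$. Apply the exact functor $\text{Proj}_{\gamma_\lambda\setminus\eps^{-2\bfi}}$ to the filtration of $\Res^n_{n-a}(\Delta^\lambda)$ from Proposition \ref{prop:restriction}, where $a=|\bfi|$. Its subquotients become $\text{Proj}_{\gamma_\lambda\setminus\eps^{-2\bfi}}(\Delta^{\mu^j})$. This equals $\Delta^{\mu^j}$ if $\gamma_{\mu^j}=\gamma_\lambda\setminus\eps^{-2\bfi}$ and is $0$ otherwise. Deleting the zero layers leaves a filtration indexed by the $\mu^j$ that satisfy this condition, still in increasing dominance order.

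It then remains to show that this condition picks out exactly $\bar{\mathcal R}(\lambda)_\bfi$. Writing $\mu=\lambda\setminus\{x_1,\dots,x_a\}$, we have $\gamma_\mu=\gamma_\lambda\setminus\{\res(x_1),\dots,\res(x_a)\}$ as multisets. So $\gamma_\mu=\gamma_\lambda\setminus\eps^{-2\bfi}$ holds if and only if $\{\res(x_t)\}=\eps^{-2\bfi}$ as multisets, which is the defining condition of $\bar{\mathcal R}(\lambda)_\bfi$. Here we use that multisets in $\kk^\times$ cancel, and the identification $\gamma\leftrightarrow\sum\alpha_{t_j}$.

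Part (2) is identical. Apply $\text{Proj}_{\gamma_\lambda\cup\eps^{-2\bfi}}$ to the filtration of Proposition \ref{prop:induction}. The surviving layers are the $\Delta^{\mu}$ with $\mu\in\bar{\mathcal A}(\lambda)_{|\bfi|}$ whose added nodes have residue multiset $\eps^{-2\bfi}$, which is precisely $\bar{\mathcal A}(\lambda)_\bfi$.

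The main point requiring care is that $\Delta^\mu$ lies in the block $\gamma_\mu$. We get this by letting a central element $z_f$ act on $\Delta^\mu$. Since $z_f$ is central and acts on $1_{(1^n)}\Delta^\mu\cong S^\mu$ by the scalar $f(\gamma_\mu)$ (by \eqref{equ:idemoncell} and the known result for Specht modules), we need $\Delta^\mu$ to be generated by $1_{(1^n)}\Delta^\mu$. If that generation fails, we fall back on a direct argument: the thin dots act on the cellular basis triangularly with residue eigenvalues, via the Jucys–Murphy type elements of \cite{DJM98}, so $\Delta^\mu$ is at least contained in $\wS_\bfu(n)\modf_{\gamma_\mu}$, and that containment is all the argument needs.
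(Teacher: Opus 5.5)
Your reduction is the paper's own argument. The paper derives Proposition~\ref{prop:IndRes} from Propositions~\ref{prop:restriction}--\ref{prop:induction} by applying the exact block projections, taking $\Delta^\lambda\in\wS_\bfu(n)\modf_{\gamma_\lambda}$ as known from the scalar action on $1_{(1^n)}\Delta^\lambda$. Your multiset-cancellation step, which identifies the surviving layers with $\bar{\mathcal R}(\lambda)_\bfi$ and $\bar{\mathcal A}(\lambda)_\bfi$, is fine.

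You were right to flag block membership of $\Delta^\mu$, but neither of your justifications works as written.

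The primary route, that $\Delta^\mu$ is generated by $1_{(1^m)}\Delta^\mu$, is false exactly when $p<\infty$. By \eqref{splitbinomial}, merging after fully splitting gives $1_\nu$ only up to the scalar $\prod_i[\nu_i]_\eps!$, which vanishes once some $\nu_i\ge p$. Concretely, $\wS_\bfu(m)$ can have simple modules $L$ with $1_{(1^m)}L=0$. This already happens at level one, where $\wS_\bfu(m)$ is Morita equivalent to the $q$-Schur algebra and has more simples than $\bfH_\bfu(m)$ once $m\ge p$. If $\Delta^\mu\twoheadrightarrow L$, then $\wS_\bfu(m)1_{(1^m)}\Delta^\mu$ lies in the kernel, so it is a proper submodule.

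The fallback does not repair this. Thin dots act only on the summand $1_{(1^m)}\Delta^\mu$. On $1_\nu\Delta^\mu$ the central element $z_f$ acts through the thick-dot expression $z_\nu$, and you give no triangularity for that.

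The missing idea is the standard fact that a central element of a cellular algebra acts on each cell module by a scalar. Write $C_{\bT,\bS}=[\bT]\circ[\bS]^\div$ for semistandard $\mu$-tableaux, and let $z$ be central. Modulo $\wS_\bfu(m)^{\rhd\mu}$, cellularity gives
$z\,C_{\bT,\bS}\equiv\sum_{\bT'}r(\bT',\bT)\,C_{\bT',\bS}$ and $C_{\bT,\bS}\,z\equiv\sum_{\bS'}r'(\bS',\bS)\,C_{\bT,\bS'}$.
Here $r$ does not depend on $\bS$, and $r'$ does not depend on $\bT$. Equate the two expressions and use linear independence modulo $\wS_\bfu(m)^{\rhd\mu}$. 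This forces $r(\bT',\bT)=0$ for $\bT'\neq\bT$, and $r(\bT,\bT)=r'(\bS,\bS)$, which is a constant independent of $\bT$.

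Since standard $\mu$-tableaux exist, $1_{(1^m)}\Delta^\mu\cong S^\mu\neq0$, and $z_f$ acts there by $f(\gamma_\mu)$. So the constant equals $f(\gamma_\mu)$, and $\Delta^\mu\in\wS_\bfu(m)\modf_{\gamma_\mu}$ for every $\mu$ and every $\eps$. It is even an honest eigenspace. With this substituted, your proof is complete.
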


We then define functors
\[
\iRes:= \oplus_{n\in \N}\iRes^n_{n-|\bfi|}, \qquad \iInd:= \oplus_{n\in\N}\iInd^{n+|\bfi|}_n
\] 
on $\qW_\bfu\modf.$

\subsection{A semisimplity criterion}

\begin{proposition}
\label{prop:semisimple} 
Suppose $\bfu=(u_1,\ldots,u_\ell)\in (\kk^*)^\ell$. Then $\wS_\bfu(n)$ is semisimple if and only if the cyclotomic Hecke algebra $\bfH_{\bfu}(n)$ is semisimple. 
\end{proposition}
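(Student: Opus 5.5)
The plan is to combine the cellular theory of Graham--Lehrer with the identification $1_{(1^n)}\wS_\bfu(n)1_{(1^n)}=\bfH_\bfu(n)$ in \eqref{equ:cycwebhecke} and the relation $1_{(1^n)}\Delta^\lambda\cong S^\lambda$ in \eqref{equ:idemoncell}. For a cellular algebra over a field, semisimplicity is equivalent to each cell module $\Delta^\lambda$ having nondegenerate bilinear form $\phi_\lambda$, that is, $\operatorname{rad}\Delta^\lambda=0$ for all $\lambda$. Equivalently, the algebra is semisimple iff the number of simple modules equals the number of cell modules, or iff the Gram determinants are all nonzero. Both $\wS_\bfu(n)$ (by Theorem \ref{thm:cellularforwebu}) and $\bfH_\bfu(n)$ (by Remark \ref{rem:cellularforcychecke}) are cellular, with the same poset $\Par^\ell(n)$. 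So it suffices to compare the radicals of $\Delta^\lambda$ and $S^\lambda$. We work over the specialized field throughout.

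For ($\Leftarrow$), assume $\bfH_\bfu(n)$ is semisimple. Then every $S^\lambda$ is simple, and nonzero since each multipartition admits a standard tableau. Consider the Schur functor $M\mapsto 1_{(1^n)}M$. Let $R=\operatorname{rad}\Delta^\lambda$. I will show that if $R\neq0$ then $1_{(1^n)}R\neq0$, which contradicts simplicity of $S^\lambda=1_{(1^n)}\Delta^\lambda$ together with the cellular form restricted to it.

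The cleaner route goes through dimensions. Semisimplicity of $\bfH_\bfu(n)$ for generic-type parameters is equivalent to Ariki's criterion: the product $\prod_{i}[i]_q\prod_{i<j,\,|d|<n}(q^{2d}u_i-u_j)$ is nonzero. In that case every $\lambda$ has a distinct content multiset across nodes that could interact, so the central character $\gamma_\lambda$ separates the cell modules. Concretely, the cellular form on $\Delta^\lambda$ can be computed via Jucys--Murphy elements, whose eigenvalues on $1_{(1^n)}\Delta^\lambda$ are residues. Each thick strand is a merge of thin strands, so by \eqref{intergralballon} every $1_\mu$ with $\mu\in\bar\Lambda^\ell(n)$ is a summand-type idempotent: $1_\mu$ factors through $1_{(1^n)}$ up to the invertible scalar $\prod[\mu_i]_q!$, which is nonzero because $[i]_q\neq0$ under the criterion. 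Hence $1_{(1^n)}$ is a full idempotent of $\wS_\bfu(n)$, giving a Morita equivalence $\wS_\bfu(n)\sim\bfH_\bfu(n)$, and the latter is semisimple.

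For ($\Rightarrow$), assume $\wS_\bfu(n)$ is semisimple. Then $\bfH_\bfu(n)=1_{(1^n)}\wS_\bfu(n)1_{(1^n)}$ is an idempotent truncation of a semisimple algebra, and such a truncation is semisimple. This direction is routine.

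The main obstacle is ($\Leftarrow$). There one must make sure that semisimplicity of $\bfH_\bfu(n)$ forces $[\mu_i]_q!\neq0$ for every composition $\mu$ of $n$, so that the balloon relation \eqref{intergralballon} shows $1_\mu\in\wS_\bfu(n)1_{(1^n)}\wS_\bfu(n)$. Since $[\mu_i]_q!$ divides $[n]_q!$, this reduces to checking $[n]_q!\neq0$. Ariki's semisimplicity criterion (or the Dipper--James--Mathas Gram determinant) provides $\prod_{i=1}^n(1+q^2+\dots+q^{2(i-1)})\neq0$, which up to units is $[n]_q!$, when $\ell\ge1$. I will cite that criterion rather than reprove it. With fullness of $1_{(1^n)}$ established, the Morita equivalence finishes the argument.
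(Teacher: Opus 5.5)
Your proof is correct, but for ($\Leftarrow$) it takes a different route from the paper. Your ($\Rightarrow$) direction is the paper's: $\bfH_\bfu(n)=1_{(1^n)}\wS_\bfu(n)1_{(1^n)}$ is an idempotent truncation of a semisimple algebra.

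\emph{The paper's route for ($\Leftarrow$).} It never uses the web relations. It uses that $\bfS_\bfu(n)$ is the Dipper--James--Mathas cyclotomic $q$-Schur algebra, i.e.\ the endomorphism algebra of a direct sum of permutation modules over $\bfH_\bfu(n)$. An endomorphism algebra of a module over a semisimple algebra is semisimple, and then $\wS_\bfu(n)=e\,\bfS_\bfu(n)\,e$ is semisimple by truncation.

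\emph{Your route.} You stay inside $\qW_\bfu$. Iterating \eqref{splitbinomial} gives $1_\mu=\big(\prod_i[\mu_i]_q!\big)^{-1}M_\mu\circ 1_{(1^n)}\circ S_\mu$, where $S_\mu:\mu\to(1^n)$ is the full split and $M_\mu:(1^n)\to\mu$ the full merge. Hence $1_{(1^n)}$ is a full idempotent and $\wS_\bfu(n)$ is Morita equivalent to $\bfH_\bfu(n)$. Citing \eqref{intergralballon}, as you did, also works because thick dots are invertible. The price is that you must import Ariki's semisimplicity criterion to deduce $[n]_q!\neq 0$ from semisimplicity of $\bfH_\bfu(n)$.

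\emph{What each buys.} The paper's argument is shorter and needs no criterion, but it leans on the identification of $\bfS_\bfu(n)$ with the DJM algebra as an endomorphism algebra. Yours is intrinsic to the web presentation. It also proves more: $\wS_\bfu(n)$ and $\bfH_\bfu(n)$ are Morita equivalent whenever $[n]_q!\neq0$, for every choice of $\bfu$. That is stronger than equivalence of semisimplicity; for instance, it transfers decomposition numbers.

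\emph{Cleanup.} Delete the Graham--Lehrer/radical paragraph and the remarks on central characters separating cell modules and Jucys--Murphy eigenvalues. None of them enters the argument you actually run, and the separation claim is asserted without justification. Also, $\bar\Lambda^\ell(n)$ should be $\Lambda_\st(n)$ when you speak of idempotents in $\wS_\bfu(n)$.
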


\begin{proof}
   The only if part holds  since $\bfH_{\bfu}(n)= 1_{(1^n)}\wS_{\bfu}(n) 1_{(1^n)}$.

   If $\bfH_{\bfu}(n)$ is semisimple, then $ \bfS_\bfu(n)$ is semisimple since the latter is the endomorphism algebra of a direct sum of permutation modules of $\bfH_{\bfu}(n)$. Now $\wS_{\bfu}(n)$ is semisimple since 
   $ \wS_{\bfu}(n)=e\bfS_\bfu(n)e$, where the idempotent $e=\sum_\mu 1_\mu$ sums over all $\ell$-multicompositions $\mu$ of $n$ such that $\mu^{(k)}=\emptyset$ for $k<\ell$.
\end{proof}

The criterion for semisimplicity of the cyclotomic Hecke algebra $\bfH_{\bfu}(n)$ was worked out by Ariki; see \cite{Ar02}.

\section{Standard modules for affine $q$-Web categories}
\label{sec:standard}

In this section, we construct the standard modules over the affine $q$-Schur algebras $\hwS(n)$ and show that they are induced modules. We relate them to their counterparts over affine Hecke algebras via a Schur functor. We also review two more constructions of affine $q$-Schur algebras. The constructions and results in this section will play a supporting role in Section \ref{sec:affineSchur_Hall}. 

\subsection{Standard $\hwS(n)$-modules as induced modules}
\label{ssec:induced}

In this subsection, we consider the induction functor
$\Ind^{m+n}_{m,n}:= \hwS(m+n)\otimes_{\hwS_{m,n}} ?$
where $ \hwS_{m,n}=\hwS(m)\otimes \hwS(n)$. We will show that this is a functor from $\hwS_{m,n}$-mod to $\hwS(m+n)$-mod
(of finite-dimensional modules).
Similarly, for non-negative integers $n_i$ such that $n=\sum_{i=1}^\ell n_i$, we can define the induction functors 
\begin{align} \label{eq:Ind}
\Ind_{n_1,\ldots,n_\ell}^n := \hwS(n)\otimes_{\hwS_{n_1,\ldots,n_\ell}} ?.
\end{align}

Recall that for $\lambda,\mu\in \Lambda_{\text{st}}(n)$, each matrix $A\in \Mat_{\lambda,\mu}$ can be identified with a reduced Chicken foot ribbon diagram.

    For $\lambda\in \Lambda_{\text{st}}(n)$, $\mu^{(i)}\in \Lambda_{\text{st}}(n_i)$, $1\le i\le \ell$, we define 
  $C^\lambda_{\mu^{(1)}, \ldots,\mu^{(\ell)}}$ to be the subset of $\Mat_{\lambda, (\mu^{(1)}, \ldots, \mu^{(\ell)})}$
  consisting of matrices $A$ which satisfy Conditions (1)--(3): 
\begin{enumerate}
    \item each column of $A$ has exactly one non-zero entry; that is, there is no split in $A$. 
    \item 
    For $1\le k\le \ell$, and $1\le i\le l(\lambda)$, $|\text{row}(A)_{i,k}|\le 1$, where 
    \[
    \text{row}(A)_{i,k}:= 
    \Big\{ a_{ij}\neq 0 \Big | \sum_{1\le h\le k-1}l(\mu^{(h)}) +1 \le j\le\sum_{1\le h\le k}l(\mu^{(h)}) \Big\}. 
    \]
    \item If $a_{i_1,j_1}\neq 0$ and $a_{i_2,j_2}\neq 0$ for some 
        $i_1<i_2$ and $\sum_{1\le h\le k-1}l(\mu^{(h)}) +1 \le j_1,j_2\le\sum_{1\le h\le k}l(\mu^{(h)}) $, $1\le k\le \ell$, then we must have $j_1<j_2$.
\end{enumerate}

Recall $\omega_{a,r}=
\begin{tikzpicture}[baseline = -1mm,scale=.7,color=\clr]
\draw[-,line width=1.2pt] (0.08,-.7) to (0.08,.5);
\node at (.08,-.9) {$\scriptstyle a$};
\draw(0.08,-0.1) \bdot;
\draw(.5,0)node {$\scriptstyle \omega_r$};
\end{tikzpicture}$ in \eqref{eq:wkdota}. For any $a\in \Z_{>0}$, let
    \begin{equation}
    \label{eq:RPar}
          \RPar_a:=\{\lambda=(\lambda_1,\ldots,\lambda_k)\in \Z^k\mid \lambda_1\ge \lambda_2\ge \ldots\ge \lambda_k, \lambda_1-\lambda_k\le a, \lambda_i|\le a, \forall i\}.
    \end{equation}
    For any tuple $\nu=(\nu_1,\nu_2,\ldots, \nu_k)\in \RPar_a$, the {\em elementary dot packet (of thickness $a$)} is defined to be 
$\omega_{a, \nu}:= \omega_{a,\nu_1}\omega_{a,\nu_2}\cdots \omega_{a,\nu_k}\in \End_{\qAW}(a)$, and 
drawn as $
\begin{tikzpicture}[baseline = 3pt, scale=0.4, color=\clr]
\draw[-,line width=1.2pt] (0,-.2) to[out=up, in=down] (0,1.2);
\draw(0,0.5) \bdot; \node at (0.6,0.5) {$ \scriptstyle \nu$};
\node at (0,-.4) {$\scriptstyle a$};
\end{tikzpicture}$.

For $\lambda,\mu\in \Lambda_{\text{st}}(n)$, a $\lambda\times \mu$ {\em elementary chicken foot ribbon diagram (or simply elementary ribbon diagram)} is a $\lambda\times \mu$ reduced chicken foot ribbon diagram with an elementary dot packet $\omega_{a,\nu}$, for some tuple $\nu\in \RPar_a$, attached at the bottom of each leg with thickness $a$. 
Denote  
\begin{equation}\label{dottedreduced}
 \RPMat_{\lambda,\mu}:=\{ (A, P))\mid  A=(a_{ij})\in \Mat_{\lambda,\mu}, P=(\nu_{ij}), \nu_{ij}\in \RPar_{a_{ij}} \}.   
\end{equation}
We will identify the set of all elementary chicken foot ribbon diagrams from $\mu$ to $\lambda$ with $\RPMat_{\lambda,\mu}$.

\begin{lemma}
\label{lem:basisofmorphism}
  Let $\lambda\in \Lambda_\st(n)$ and $\mu= (\mu^{(1)}, \ldots, \mu^{(\ell)})$, with $\mu^{(i)}\in \Lambda_\st(n_i)$ for each $i$, and $n=\sum_in_i$. Then 
  $\Hom_{\qAW}(\mu,\lambda)$ has a basis given by 
  \[
  \{A \circ (B_1\otimes B_2\otimes \ldots \otimes B_\ell) \mid A\in C_{\eta^{(1)},\ldots,\eta^{(\ell)}}^\lambda, B_i \in \RPMat_{\eta^{(i)},\mu^{(i)}}, \eta^{(i)}\in \Lambda_\st(n_i), 1\le i\le \ell \}. 
  \]
\end{lemma}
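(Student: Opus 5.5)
We use the basis theorem of $\qAW$ from \cite[Theorem 4.16]{SSW25}. It says that $\Hom_{\qAW}(\mu,\lambda)$ has the basis $\RPMat_{\lambda,\mu}$ of elementary chicken foot ribbon diagrams, where $\mu$ is viewed as the concatenated strict composition $(\mu^{(1)},\ldots,\mu^{(\ell)})$. The plan is a double-coset factorization of each elementary ribbon diagram. The pair $(A,P)$ is factored into a piece $B_1\otimes\cdots\otimes B_\ell$ that acts inside each block $\mu^{(i)}$, followed by a split-free piece $A'\in C^\lambda_{\eta^{(1)},\ldots,\eta^{(\ell)}}$. We then show the resulting set spans and has the right cardinality. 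This is the $q$-web analogue of the factorization $W_\lambda\backslash W/W_\mu$ through the parabolic $W_{n_1}\times\cdots\times W_{n_\ell}$.

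First we build a bijection. Given $(A,P)\in\RPMat_{\lambda,\mu}$, partition the columns of $A$ into the $\ell$ blocks of sizes $l(\mu^{(k)})$. For each $k$, let $A^{(k)}$ be the submatrix of $A$ on block $k$. Delete its zero rows, and let $\eta^{(k)}$ be the composition whose parts are its nonzero row sums, listed in increasing row order. Set $B_k=(\bar A^{(k)},P^{(k)})\in\RPMat_{\eta^{(k)},\mu^{(k)}}$, where $\bar A^{(k)}$ is $A^{(k)}$ with the zero rows removed. Let $A'$ be the matrix sending the $j$th part of $\eta^{(k)}$ to the row of $\lambda$ it came from. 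Then $A'$ has one nonzero entry per column, so it satisfies (1). It has at most one entry per row within each block, giving (2), and it preserves the order within each block, giving (3). Conversely, any triple $(A',B_1,\ldots,B_\ell)$ with these properties recovers a unique pair $(A,P)$. So the proposed index set is in bijection with $\RPMat_{\lambda,\mu}$.

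Next we check that the diagram $A'\circ(B_1\otimes\cdots\otimes B_\ell)$ equals the basis element $(A,P)$ modulo lower terms. The composite is built by first merging within each block and then merging and crossing the thickened strands into $\lambda$. Using associativity \eqref{webassoc}, we slide the merges of $A'$ down through the merges of the $B_k$. Each row $i$ of $A$ then becomes a single merge of the legs $a_{ij}$. Because $A'$ has no splits, the only remaining discrepancy is crossings between strands from different blocks. We move the dot packets on the legs through these crossings using \eqref{dotmovecrossing}. Crossings between thick strands differ from the reduced chicken foot form only by choices of positive versus negative crossing. By the basis theorem, such choices change the diagram only by terms with fewer crossings, i.e. terms of smaller total matrix degree.

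A triangularity argument then completes the proof. We order $\RPMat_{\lambda,\mu}$ by the number of crossings, or equivalently a length function on the matrix $A$, and then by dot-packet degree. With respect to this order, the transition matrix from the proposed set to the basis of \cite[Theorem 4.16]{SSW25} is unitriangular. By the bijection above, the proposed set is therefore also a basis.

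The main obstacle is to make the "lower terms" claim precise. The reduced diagrams in \cite{SSW25} use a fixed crossing convention, and rewriting a composite must not create terms whose dot packets leave $\RPar$ or whose matrix is not smaller. I would handle this by filtering $\Hom_{\qAW}(\mu,\lambda)$ by the degree coming from the Bruhat-type order on $\Mat_{\lambda,\mu}$. I would then observe that the associated graded object is generated independently of dots. In the graded setting, dot-packet products behave as in the polynomial ring \cite{SSW25}. This reduces the check to the leading term, which matches exactly.
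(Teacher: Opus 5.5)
Your proposal is correct, and your bijection is exactly the content of the paper's proof. The difference is what you do with it.

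\textbf{How the paper argues.} The paper observes that each $(C,P)\in\RPMat_{\lambda,\mu}$ splits uniquely as $A\circ(B_1\otimes\cdots\otimes B_\ell)$, with $A\in C^\lambda_{\eta^{(1)},\ldots,\eta^{(\ell)}}$ and $B_i\in\RPMat_{\eta^{(i)},\mu^{(i)}}$. It treats this as an \emph{equality} of morphisms, so the proposed set is literally the basis of \cite[Theorem 4.16]{SSW25}, reindexed.

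\textbf{How you argue.} You claim the composite equals $(C,P)$ only modulo lower terms, and then use unitriangularity. You call the lower-terms claim the main obstacle, and the filtration you sketch to control it is too vague to count as a proof. In fact that step is unnecessary: there are no lower terms.

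\textbf{Why the equality is exact.}
\begin{enumerate}
\item Since $A'$ has no splits, you only need associativity \eqref{webassoc} and the sliding of merges through crossings, which holds in $\qAW$. A thick strand $\eta^{(k)}_r$ crossing another strand equals the bundle of its legs crossing it.
\item After this sliding, each pair of legs crosses at most once. Two legs in the same block cross inside $B_k$ exactly when their target rows of $\lambda$ are reversed, and they never cross in $A'$, by condition (3). Two legs in different blocks cross in $A'$ exactly when their target rows are reversed, and they never cross in the $B_k$'s, since these are tensored.
\item This is precisely the crossing pattern of the reduced chicken foot diagram of $(C,P)$. The merges into each $\lambda_i$ also occur in the same left-to-right order.
\item So if you draw all crossings in $A'$ and in the $B_k$ with the same convention as the reduced diagrams of \cite{SSW25}, the braid relations identify the composite exactly with $(C,P)$.
\item The dot packets already sit at the bottoms of the legs inside the $B_k$, below every crossing of $A'$. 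Nothing needs to be moved through crossings. (Also, \eqref{dotmovecrossing} as stated concerns only full thick dots, not general packets $\omega_{a,r}$.)
\end{enumerate}

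Your triangular route would only pay off if you deliberately used mixed crossing conventions. In that case, controlling the correction terms in the affine setting, where dots do not commute past crossings, would be genuinely nontrivial work. The exact factorization avoids all of this.
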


\begin{proof}
    Recall that $\Hom_{\qAW}(\mu,\lambda)$ has a basis given by 
    $\RPMat_{\lambda,\mu}$ (see \cite[Theorem ~4.16]{SSW25}), For $\mu= (\mu^{(1)}, \ldots, \mu^{(\ell)})$, each $(C,P)\in \RPMat_{\lambda,\mu}$ can be uniquely divided into $\ell+1$ parts, $A, B_1,\ldots, B_\ell$, such that 
    $ (C,P)= A\circ (B_1\otimes B_2\otimes \ldots \otimes B_\ell)$ with  $ A\in C_{\eta^{(1)},\ldots,\eta^{(\ell)}}^\lambda$ and $B_i \in \RPMat_{\eta^{(i)},\mu^{(i)}}$ for some $\eta^{(i)}\in \Lambda_\st(n_i), 1\le i\le \ell $.
This completes the proof.    
\end{proof}

\begin{lemma}
\label{lem:dimofinduction}
Let $\lambda\in\Lambda_{\text{st}}(n)$, and $n_1,\ldots, n_\ell, n \in \N$ with $n=\sum_{i=1}^\ell n_i$. 
\begin{enumerate}
\item 
As right $\hwS_{n_1,\ldots,n_\ell}$-modules, we have
    \[
    1_\lambda \hwS(n)\cong
    \bigoplus_{A\in C^\lambda_{\eta^{(1)},\ldots,\eta^{(\ell)}}, \eta^{(i)}\in \Lambda_{\text{st}}(n_i)} 1_{(\eta^{(1)},\ldots,\eta^{(\ell)})} \hwS_{n_1,\ldots,n_\ell}^{(A)},
    \]
    where $\hwS_{n_1,\ldots,n_\ell}^{(A)}$ is an isomorphic copy of $\hwS_{n_1,\ldots,n_\ell}$ for each $A$.
    \item For any $M\in \hwS_{n_1,\ldots,n_\ell}$-mod, we have 
    \[\dim 1_\lambda \hwS(n)\otimes_{\hwS_{n_1,\ldots,n_\ell}}M= \sum_{\eta^{(i)}\in \Lambda_{\text{st}}(n_i),1\le i\le \ell}|C^\lambda_{\eta^{(1)},\ldots,\eta^{(\ell)}}| \dim 1_{\eta^{(1)},\ldots,\eta^{(\ell)}}M.  \]
\end{enumerate}
\end{lemma}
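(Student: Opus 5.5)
Part (1) follows from the factorization basis of Lemma~\ref{lem:basisofmorphism}, in the same way as Lemma~\ref{Lem:Projective}. Fix $\lambda$. We have
\[
1_\lambda\hwS(n)=\bigoplus_{\mu}\Hom_{\qAW}(\mu,\lambda),
\]
where the sum runs over all objects $\mu=(\mu^{(1)},\ldots,\mu^{(\ell)})$ with $\mu^{(i)}\in\Lambda_\st(n_i)$. These are exactly the idempotents seen by the right action of $\hwS_{n_1,\ldots,n_\ell}$, acting by precomposition with $b_1\otimes\cdots\otimes b_\ell$. Lemma~\ref{lem:basisofmorphism} shows that the elements $A\circ(B_1\otimes\cdots\otimes B_\ell)$ form a basis. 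Here $A$ ranges over $C^\lambda_{\eta^{(1)},\ldots,\eta^{(\ell)}}$, and $B_1\otimes\cdots\otimes B_\ell$ ranges over a basis of $1_{(\eta^{(1)},\ldots,\eta^{(\ell)})}\hwS_{n_1,\ldots,n_\ell}1_\mu$; by the basis theorem of \cite[Theorem~4.16]{SSW25} applied to each tensor factor, the products of the $\RPMat$ bases form such a basis. For each $A$, define the map
\[
\theta_A: 1_{(\eta^{(1)},\ldots,\eta^{(\ell)})}\hwS_{n_1,\ldots,n_\ell}\longrightarrow 1_\lambda\hwS(n),\qquad x\mapsto A\circ x.
\]
This map is clearly a homomorphism of right $\hwS_{n_1,\ldots,n_\ell}$-modules. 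Taking the direct sum of the $\theta_A$ over all $\eta^{(i)}$ and all $A$, the basis statement says precisely that a basis is sent bijectively onto a basis. Hence the sum is an isomorphism, which is (1).

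The one point needing care is that the composition $A\circ(B_1\otimes\cdots\otimes B_\ell)$ appearing in Lemma~\ref{lem:basisofmorphism} really is the image of the basis element $B_1\otimes\cdots\otimes B_\ell$ under $\theta_A$. This holds because the monoidal structure identifies $\hwS_{n_1,\ldots,n_\ell}=\hwS(n_1)\otimes\cdots\otimes\hwS(n_\ell)$ with the span of the tensor products of diagrams. Injectivity then needs no separate argument, since it comes directly from linear independence in the lemma. I expect the main (mild) obstacle to be the bookkeeping that the lemma's basis is indexed exactly by pairs $(A,\text{basis of the parabolic piece})$ with no overlaps across different $\eta$, which is what the uniqueness of the decomposition in its proof provides.

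For (2), tensoring (1) with $M$ over $\hwS_{n_1,\ldots,n_\ell}$ gives
\[
1_\lambda\hwS(n)\otimes M\cong\bigoplus_A 1_{(\eta^{(1)},\ldots,\eta^{(\ell)})}\hwS_{n_1,\ldots,n_\ell}\otimes_{\hwS_{n_1,\ldots,n_\ell}}M\cong\bigoplus_A 1_{(\eta^{(1)},\ldots,\eta^{(\ell)})}M,
\]
using $e\,\mathcal S\otimes_{\mathcal S}M\cong eM$ for an idempotent $e$ of the locally unital algebra $\mathcal S$. Counting dimensions, each $A\in C^\lambda_{\eta^{(1)},\ldots,\eta^{(\ell)}}$ contributes $\dim 1_{\eta^{(1)},\ldots,\eta^{(\ell)}}M$, which yields the stated formula.
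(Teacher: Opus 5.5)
Your argument is correct and is the paper's proof made explicit: the paper likewise gets (1) directly from the factorization basis $A\circ(B_1\otimes\cdots\otimes B_\ell)$ of Lemma~\ref{lem:basisofmorphism} together with the $\RPMat$ bases of the factors $\Hom_{\qAW}(\mu^{(i)},\eta^{(i)})$ from \cite{SSW25}, and it deduces (2) from (1) by tensoring, exactly as you do with the maps $\theta_A$ and the identity $e\,\hwS_{n_1,\ldots,n_\ell}\otimes_{\hwS_{n_1,\ldots,n_\ell}}M\cong eM$. One wording point needs tightening, and the paper glosses over it too: your first display really describes $1_\lambda\hwS(n)\,e$ with $e=\sum 1_{(\mu^{(1)},\ldots,\mu^{(\ell)})}$, because a summand $\Hom_{\qAW}(\mu,\lambda)$ with $\mu$ not of this concatenated form (e.g.\ $\mu=(n)$ when two of the $n_i$ are positive) is annihilated by the right action and vanishes after tensoring with $M$.
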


\begin{proof}
 (1) follows from Lemma \ref{lem:basisofmorphism} since 
 $\Hom_{\qAW}(\mu^{(i)},\eta^{(i)})$ has a basis given by $\RPMat_{\eta^{(i)},\mu^{(i)}}$ by 
  \cite{SSW25}. Part (2) follows from (1).
\end{proof}
Recall $\hwS(n)$-mod is the category of finite-dimensional $\hwS(n)$-modules. 

\begin{corollary}
Let $n=\sum_{i=1}^\ell n_i$.  
  The functor \eqref{eq:Ind} induces an exact functor 
  \[
  \Ind_{n_1,\ldots,n_\ell}^n: \hwS_{n_1,\ldots,n_\ell}\modf \longrightarrow \hwS(n)\modf.
  \]
\end{corollary}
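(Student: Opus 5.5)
The plan is to deduce exactness and finite-dimensionality directly from Lemma \ref{lem:dimofinduction}(1), which exhibits $1_\lambda \hwS(n)$ as a free right $\hwS_{n_1,\ldots,n_\ell}$-module. Throughout, $\hwS(n)$ is regarded as a right $\hwS_{n_1,\ldots,n_\ell}$-module through the embedding that sends $D_1\otimes\cdots\otimes D_\ell$ to the horizontal juxtaposition of the diagrams $D_i$. This embedding is the analogue of $\iota_{a,n}$ in \eqref{equ:embedding}, and its injectivity comes from the basis theorem \cite[Theorem 4.16]{SSW25}, just as in Lemma \ref{lem:basisofmorphism}.

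First, since $\hwS(n)=\bigoplus_{\lambda\in\Lambda_\st(n)}1_\lambda\hwS(n)$ and $\Lambda_\st(n)$ is a finite set, Lemma \ref{lem:dimofinduction}(1) shows that $\hwS(n)$ is a direct sum of right modules of the form $1_{(\eta^{(1)},\ldots,\eta^{(\ell)})}\hwS_{n_1,\ldots,n_\ell}$. Each of these is a direct summand of the regular module, because $1_{(\eta^{(1)},\ldots,\eta^{(\ell)})}$ is an idempotent. The indexing set is the union over $\lambda$ and over tuples $\eta^{(i)}\in\Lambda_\st(n_i)$ of the sets $C^\lambda_{\eta^{(1)},\ldots,\eta^{(\ell)}}$, and these sets are finite because they consist of nonnegative integer matrices with prescribed row and column sums. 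So $\hwS(n)$ is a finitely generated projective right $\hwS_{n_1,\ldots,n_\ell}$-module. Tensoring with a projective module is exact, so $\Ind^n_{n_1,\ldots,n_\ell}$ is exact on all left modules, and in particular on finite-dimensional ones.

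Second, the functor must land in $\hwS(n)\modf$. For a finite-dimensional $M$, Lemma \ref{lem:dimofinduction}(2) gives
\[
\dim \Ind^n_{n_1,\ldots,n_\ell}M=\sum_{\lambda\in\Lambda_\st(n)}\ \sum_{\eta^{(i)}\in\Lambda_\st(n_i)}|C^\lambda_{\eta^{(1)},\ldots,\eta^{(\ell)}}|\dim 1_{(\eta^{(1)},\ldots,\eta^{(\ell)})}M .
\]
This is a finite sum of finite numbers, so the induced module is finite-dimensional.

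The only point needing care is the bookkeeping in the first step: that the decomposition of Lemma \ref{lem:dimofinduction}(1) is an isomorphism of right $\hwS_{n_1,\ldots,n_\ell}$-modules, not just of vector spaces. This holds because right multiplication by $B_1\otimes\cdots\otimes B_\ell$ acts on the factor below the fixed top part $A$. Once the freeness statement is granted, the corollary is formal.
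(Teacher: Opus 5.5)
Your proposal is correct and follows the paper's proof: Lemma~\ref{lem:dimofinduction}(1) shows that $\hwS(n)$ is a projective right $\hwS_{n_1,\ldots,n_\ell}$-module, which gives exactness, and the dimension formula in Lemma~\ref{lem:dimofinduction}(2) shows that finite-dimensional modules go to finite-dimensional ones. One small point, which the paper also glosses over: the unit of $\hwS_{n_1,\ldots,n_\ell}$ maps to the idempotent $e=\sum 1_{(\eta^{(1)},\ldots,\eta^{(\ell)})}$, which in general is not $1$, so the decomposition really describes $\hwS(n)e$; the complement $\hwS(n)(1-e)$ is killed by the right action and contributes nothing to the tensor product, so your conclusion is unaffected.
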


 \begin{proof}
   The functor is well defined since by Lemma \ref{lem:dimofinduction} (2) it sends finite-dimensional modules to finite-dimensional modules. Moreover, by Lemma \ref{lem:dimofinduction} (1), $\hwS(n)$ is a projective right $\hwS_{n_1,\ldots,n_\ell}$-modules, and hence $\text{Ind}_{n_1,\ldots,n_\ell}^n$ is exact. \end{proof}   

The quotient functor  $ \qAW \rightarrow \qW_{\eps^{2\bfs}}$ induces a functor 
\[
\pi_\bfs: \wS_{\eps^{2\bfs}}\modf\longrightarrow \hwS\modf, 
\]
which in turn induces a linear map
$\pi_\bfs^*: [\hwS\modf]^* \rightarrow[\wS_{\eps^{2\bfs}}\modf]^*$. The following result will be instrumental in establishing the web generalization (see Theorem \ref{thm:ind=std}) of the induction theorem of Kazhdan-Lusztig \cite{KL87} and Ariki \cite{Ar96}. 

\begin{theorem} 
\label{thm:induced}
We have an isomorphism of $\hwS(n)$-modules:
\[ 
\Ind_{n_1,\ldots,n_\ell}^n(\pi_{s_1} \Delta^{\mu^{(1)}}\otimes\ldots \otimes \pi_{s_\ell} \Delta^{\mu^{(\ell)}} )\cong \pi_{\bfs} (\Delta^{\mu}),
\]
where $\mu=(\mu^{(1)}, \ldots, \mu^{(\ell)})\in \Par^\ell_{\text{st}}(n)$ with $|\mu^{(i)}|=n_i$ and $n= \sum_i n_i$.    
\end{theorem}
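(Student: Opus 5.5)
We construct a surjection and then compare dimensions. For each $i$, fix the level-one parameter $u=\eps^{2s_i}$. Let $\bS^{(i)}$ be the standard tableau with $[\bS^{(i)}]=1_{\mu^{(i)}}$. Then $\pi_{s_i}\Delta^{\mu^{(i)}}$ is generated by the class $v_i=[\bT^{\mu^{(i)}}]$, viewed as a vector in $1_{\bar\mu^{(i)}}\pi_{s_i}\Delta^{\mu^{(i)}}$. Here $\bar\mu^{(i)}\in\Lambda_\st(n_i)$ is the underlying composition of the partition $\mu^{(i)}$.

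Similarly, $\pi_\bfs(\Delta^\mu)$ is generated by the class $v=[\bT^\mu]$ in $1_{\bar\mu}\Delta^\mu$, where $\bar\mu=(\bar\mu^{(1)},\ldots,\bar\mu^{(\ell)})$. Inside $\qSchu$, this vector is the diagram in which the red strands $u_1,\ldots,u_\ell$ sit to the left of all black strands. The class of $1_{\bar\mu}$ itself is the cell vector, because $\mu$ is the top multipartition with $\bar\mu$-content.

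The first step is to produce a $\hwS_{n_1,\ldots,n_\ell}$-module map
\[
\pi_{s_1}\Delta^{\mu^{(1)}}\otimes\cdots\otimes\pi_{s_\ell}\Delta^{\mu^{(\ell)}}\to\Res\,\pi_\bfs(\Delta^\mu),\qquad v_1\otimes\cdots\otimes v_\ell\mapsto v.
\]
To see that this is well defined, note that the tensor product is cyclic. Each factor $\pi_{s_i}\Delta^{\mu^{(i)}}$ has a presentation as the cyclic module $\hwS(n_i)1_{\bar\mu^{(i)}}$ modulo two kinds of relations. The first kind are the cyclotomic relations $g_r(u_i)$ placed on the leftmost strand. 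The second kind are the dominance (Garnir-type) relations coming from $\wS^{\rhd\mu^{(i)}}$.

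So I must check that these relations, when applied to the $i$th block of $1_{\bar\mu}$, kill $v$ in $\Delta^\mu$. The Garnir-type relations are handled by the cellular structure of Theorem \ref{thm:cellularforwebu} together with the fact that dominance is compatible with concatenation. For the cyclotomic relations, I move the red strands $u_{i+1},\ldots,u_\ell$ leftward past the blocks $1,\ldots,i-1$ and then past the first strand of block $i$, using \eqref{redslider}, \eqref{redcross2} and \eqref{redbraid}. This converts $g_r(u_i)$ on the first strand of block $i$ into a diagram that factors through an object whose multicomposition is strictly more dominant than $\mu$. Such a diagram vanishes in $\Delta^\mu$, in the spirit of the argument in Proposition \ref{prop:restriction}.

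By adjunction, this gives a homomorphism from $\Ind_{n_1,\ldots,n_\ell}^n(\pi_{s_1}\Delta^{\mu^{(1)}}\otimes\cdots\otimes\pi_{s_\ell}\Delta^{\mu^{(\ell)}})$ to $\pi_\bfs(\Delta^\mu)$. It is surjective because $v$ generates $\pi_\bfs(\Delta^\mu)$.

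The second step is the dimension comparison. By Lemma \ref{lem:dimofinduction}(2), for each $\lambda\in\Lambda_\st(n)$ the dimension of $1_\lambda$ applied to the induced module equals
\[
\sum_{\eta^{(i)}\in\Lambda_\st(n_i)}\bigl|C^\lambda_{\eta^{(1)},\ldots,\eta^{(\ell)}}\bigr|\prod_i\bigl|\SST(\mu^{(i)},\eta^{(i)})\bigr|.
\]
On the other side, $\dim 1_\lambda\Delta^\mu=|\SST(\mu,(\emptyset^{\ell-1},\lambda))|$. I will build a combinatorial bijection between $\SST(\mu,(\emptyset^{\ell-1},\lambda))$ and the set of pairs consisting of a matrix $A\in C^\lambda_{\eta^{(1)},\ldots,\eta^{(\ell)}}$ and tableaux $\bT_i\in\SST(\mu^{(i)},\eta^{(i)})$. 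The idea is the following: a semistandard $\mu$-tableau whose entries are all of colour $\ell$ restricts on component $i$ to a row-semistandard filling. Recording which entries $j_\ell$ occur in component $i$ gives, row by row, exactly the data of a matrix $A$ satisfying conditions (1)--(3). Standardizing the labels in component $i$ then gives $\bT_i$. Conditions (2) and (3) encode the order-preserving relabeling, and condition (1) says there are no splits.

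This bijection shows the dimensions agree, so the surjection is an isomorphism.

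I expect the main obstacle to be the well-definedness in the first step, specifically verifying that the cyclotomic relations for the $i$th parameter hold on the $i$th block after the red strands are slid past. For this I would first handle $\ell=2$ explicitly, using \eqref{redcross2} to commute one red strand past a thick strand, and then argue by induction on $\ell$ using transitivity of induction.
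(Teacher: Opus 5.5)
Your overall route is the paper's. You construct the $\hwS_{n_1,\ldots,n_\ell}$-map $v_1\otimes\cdots\otimes v_\ell\mapsto v$, which is the paper's $f$ sending $T_1\otimes\cdots\otimes T_\ell$ to $(\tilde T_1,\ldots,\tilde T_\ell)$. You pass to $\Phi(f)$ by Frobenius reciprocity, and you use the decomposition of $\SST(\mu,(\emptyset^{\ell-1},\lambda))$ into pairs $(A,(T_i))$ with $A\in C^\lambda_{\eta^{(1)},\ldots,\eta^{(\ell)}}$. Your combinatorial bijection is correct. You finish with surjectivity plus a dimension count, whereas the paper checks directly that $\Phi(f)(A\otimes(T_1\otimes\cdots\otimes T_\ell))=A\circ(\tilde T_1,\ldots,\tilde T_\ell)$ is the matching SST basis vector.

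\textbf{Surjectivity.} This step rests on a claim whose stated justification is false. The weight $(\emptyset^{\ell-1},\bar\mu)$ is not extremal for $\Delta^\mu$, and $\mu$ is not the top multipartition for it. For instance, $\Delta^{((n),\emptyset,\ldots,\emptyset)}$ also has a nonzero $(\emptyset^{\ell-1},\bar\mu)$-weight space. Already for $\mu=((1),(1))$ the space $1_{(\emptyset,(1,1))}\Delta^\mu$ is $2$-dimensional. So no highest-weight argument shows that $v$ generates.

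What is true, and what you need, is a diagrammatic factorization. Take $\bT'\in\SST(\mu,(\emptyset^{\ell-1},\lambda))$ and slide all splits above the red crossings using \eqref{redslider}. This gives $[\bT']=A\circ([T_1]\otimes\cdots\otimes[T_\ell])\circ[\bT_v]$. Here $[\bT_v]$ is the diagram of $v$, in which whole rows of $\mu^{(i)}$ cross $u_{i+1},\ldots,u_\ell$. The pair $(A,(T_i))$ is exactly the one your bijection attaches to $\bT'$. This gives generation, and in fact shows that $\Phi(f)$ maps the basis from Lemma \ref{lem:dimofinduction} bijectively onto the SST basis. The dimension count then becomes unnecessary; this is the paper's argument.

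\textbf{Well-definedness.} The paper simply asserts this by calling $f$ natural. The cyclotomic relations of $\pi_{s_i}\Delta^{\mu^{(i)}}$ consist of all $x\circ(g_r(u_i)\otimes 1_\eta)\circ z$ with $z$ an arbitrary web out of $\bar\mu^{(i)}$. So you must show that $(g_r(u_i)\otimes 1_\eta)z$ kills $v$, not just that $g_r(u_i)$ on the first strand of $v$ does; the latter only covers $r=\mu^{(i)}_1$.

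The clean way is to write $v=[\bT_v]\circ v_0$. Here $v_0$ lies in the weight space of the interleaved object $u_1\mu^{(1)}u_2\cdots u_\ell\mu^{(\ell)}$ in the cell module of $\bfS_\bfu(n)$, whose truncation by ${\bf 1}_n$ is $\Delta^\mu$. Then slide block-$i$ webs down through $[\bT_v]$. All black strands there cross the red strands in the same direction, so no correction terms of type \eqref{redbraid} arise. At an interleaved object, $g_r(u_i)$ on the strand just right of the red strand $u_i$ is, by \eqref{redcross2}, a double crossing through an object $\nu$. The partial sum of $\nu$ at the end of component $i-1$ exceeds that of $\mu$. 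Hence $\nu\ntrianglelefteq\mu$ (though not necessarily $\nu\rhd\mu$, as you wrote), and $1_\nu$ kills the cell module because $\SST(\mu,\nu)=\emptyset$.

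This argument works uniformly in $\ell$. Your proposed induction on $\ell$ via transitivity of induction would not reduce to the case $\ell=2$ of the theorem anyway: the intermediate induced module is a cell module of higher level, not a level-one one.
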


\begin{proof}
   By the Frobenius reciprocity, we have an isomorphism
\begin{align*}
\Phi &: \Hom_{\hwS_{n_1,\ldots,n_\ell}}\big(\pi_{s_1} \Delta^{\mu^{(1)}}\otimes\ldots \otimes \pi_{s_\ell} \Delta^{\mu^{(\ell)}}, \pi_\bfs \Delta^{\mu} \big)\\
&\stackrel{\sim}{\longrightarrow} \Hom_{\hwS(n)}\big(\Ind_{n_1,\ldots,n_\ell}^n \pi_{s_1} \Delta^{\mu^{(1)}}\otimes\ldots \otimes \pi_{s_\ell} \Delta^{\mu^{(\ell)}}, \pi_\bfs \Delta^{\mu}\big),
\\
& h \mapsto \Phi(h), \qquad \text{where}\quad \Phi(h)(g\otimes w )= gh(w),
\end{align*} 
for any $w\in \pi_{s_1} \Delta^{\mu^{(1)}}\otimes\ldots \otimes \pi_{s_\ell} \Delta^{\mu^{(\ell)}}$ and $g\in \hwS$. Consider the natural $\hwS_{n_1,\ldots,n_\ell}$-homomorphism 
\begin{align*}
    f: \pi_{s_1} \Delta^{\mu^{(1)}}\otimes\ldots \otimes \pi_{s_\ell} \Delta^{\mu^{(\ell)}}&\longrightarrow  \pi_\bfs \Delta^{\mu}, \\
     T_1\otimes T_2\otimes \ldots\otimes T_\ell &\mapsto (\tilde T_1,\tilde T_2,\ldots,\tilde T_\ell),
   \end{align*} 
   for any $T_i\in \SST(\mu^{(i)},\eta^{(i)})$.
   Here $(\tilde T_1,\tilde T_2,\ldots, \tilde T_\ell)\in \SST(\mu,\eta)$ is the semi-standard tableau
   obtained by moving the endpoints of   $T_i$   through all red lines. The theorem follows from the following.
   \vspace{2mm}
   
{\bf Claim.} The homomorphism 
$\Phi(f)$ sends 
   a basis of $\Ind_{n_1,\ldots,n_\ell}^n (\pi_{s_1} \Delta^{\mu^{(1)}} \otimes\ldots \otimes \pi_{s_\ell} \Delta^{\mu^{(\ell)}})$
   to a basis of $\pi_\bfs \Delta^{\mu}$
(i.e., it is an isomorphism).
   
It remains to prove the Claim. By Lemma \ref{lem:dimofinduction}, for any $\lambda\in \Lambda_\st(n)$, 
$1_\lambda\Ind_{n_1,\ldots,n_\ell}^n (\pi_{s_1} \Delta^{\mu^{(1)}}\otimes\ldots \otimes \pi_{s_\ell} \Delta^{\mu^{(\ell)}})$
has a basis given by 
\[
\big\{ A\otimes (T_1\otimes \ldots\otimes T_\ell) \mid A\in C^\lambda_{\eta^{(1)},\ldots,\eta^{(\ell)}}, \eta^{(i)}\in \Lambda_\st(n_i), T_i\in \SST(\mu^{(i)},\eta^{(i)}), 1\le i\le \ell \big\}.
\]
On the other hand, $1_\lambda \pi_\bfs \Delta^{\mu} $  has a basis given by  $\SST(\mu,\lambda)$. 

 To complete the proof, we note that 
 $\Phi(f)(A\otimes (T_1\otimes \ldots\otimes T_\ell))= A\circ (\tilde T_1,\ldots, \tilde T_\ell)  .$
 Conditions (1)--(3) for $C^\lambda_{\eta^{(1)},\ldots,\eta^{(\ell)}}$ in \S\ref{ssec:induced}
 guarantee that the diagram $A\circ (\tilde T_1,\ldots, \tilde T_\ell)$ is in 
 $\SST(\mu,\lambda)$. Finally,  $\Phi(f)$ is a bijection between these two bases, since any SST diagram in $\SST(\mu,\lambda)$ can be uniquely  divided  into 
 $\ell+1$ parts $ A\circ (\tilde T_1,\ldots, \tilde T_\ell )$ for some $A \in C^\lambda_{\eta^{(1)},\ldots,\eta^{(\ell)}}$ and $ T_i\in \SST(\mu^{(i)},\eta^{(i)})$. The Claim is proved. 
\end{proof}

\subsection{Induced modules for affine Hecke algebras}

By \eqref{equ:affwebhecke}--\eqref{equ:cycwebhecke}, we have two idempotent functors
\begin{align*}
    1_{(1^n)}: \hwS(n)\modf &\longrightarrow \AHe(n)\modf, \qquad M\mapsto 1_{(1^n)} M,
    \\
    1_{(1^n)}: \wS_\bfu(n)\modf &\longrightarrow \bfH_\bfu(n)\modf, \qquad N\mapsto 1_{(1^n)} N.
\end{align*}

Similarly to \eqref{eq:Ind}, we have the induction functor 
\[  \text{Ind}_{n_1, \ldots,n_\ell}^n:= \AHe(n) \otimes _{\AHe_{n_1,\ldots,n_\ell}}? \]
where $\AHe_{n_1,\ldots,n_\ell}= \AHe(n_1)\otimes \ldots \otimes\AHe(n_\ell)$. Recall that we have the cell module  $S^\lambda$ for the cyclotomic Hecke algebra $\bfH_\bfu(n)$ in Remark \ref{rem:cellularforcychecke}.
The following analogous result of Theorem \ref{thm:induced} for $\AHe(n)$ is well known (and can be proved similarly in any case).

\begin{proposition}
\label{prop:induhe}
    We have an isomorphism of $\AHe_n$-modules:
    \[ 
\Ind_{n_1,\ldots,n_\ell}^n(\pi_{s_1}  S^{\mu^{(1)}}\otimes\ldots \otimes \pi_{s_\ell}  S^{\mu^{(\ell)}} )\cong \pi_\bfs  S^{\mu},
\]
where $\mu=(\mu^{(1)}, \ldots, \mu^{(\ell)})\in \Par^\ell(n)$ with $|\mu^{(i)}|=n_i$ and $n= \sum_i n_i$.
\end{proposition}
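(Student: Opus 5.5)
The plan is to deduce Proposition \ref{prop:induhe} from Theorem \ref{thm:induced} by applying the idempotent (Schur) functor $1_{(1^n)}$, with a direct Hecke-algebra argument as a fallback.

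First I check the pieces separately. By \eqref{equ:idemoncell}, $1_{(1^{n_i})}\Delta^{\mu^{(i)}}\cong S^{\mu^{(i)}}$. The functor $1_{(1^n)}$ commutes with the pullbacks $\pi_\bfs$ and $\pi_{s_i}$, since both are induced by the quotient $\qAW\to\qW_{\eps^{2\bfs}}$ restricted to thin strands. Hence
\[
1_{(1^n)}\,\pi_\bfs\Delta^\mu\cong \pi_\bfs S^\mu .
\]

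The key step is to compare $1_{(1^n)}\Ind^n_{n_1,\ldots,n_\ell}(M)$ with $\Ind^n_{n_1,\ldots,n_\ell}(1_{(1^{n_1},\ldots,1^{n_\ell})}M)$ (the latter induction taken for $\AHe$), for a $\hwS_{n_1,\ldots,n_\ell}$-module $M$. There is a natural map
\[
\AHe(n)\otimes_{\AHe_{n_1,\ldots,n_\ell}} eM \longrightarrow 1_{(1^n)}\hwS(n)\otimes_{\hwS_{n_1,\ldots,n_\ell}} M,\qquad e=1_{(1^{n_1})}\otimes\cdots\otimes 1_{(1^{n_\ell})}.
\]
I would prove it is an isomorphism using Lemma \ref{lem:dimofinduction}(1) with $\lambda=(1^n)$. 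Every $A\in C^{(1^n)}_{\eta^{(1)},\ldots,\eta^{(\ell)}}$ has no splits and each row has a single $1$. Every thin leg then comes from a single strand of some $\eta^{(i)}$, which forces $\eta^{(i)}=(1^{n_i})$. The matrices $A$ are thus exactly the shuffle permutations of type $(n_1,\ldots,n_\ell)$, that is, the minimal coset representatives of $\mathfrak S_n/\mathfrak S_{n_1}\times\cdots\times\mathfrak S_{n_\ell}$.

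Lemma \ref{lem:dimofinduction}(1) then gives
\[
1_{(1^n)}\hwS(n)\cong \bigoplus_{w}\, H_w\, e\,\hwS_{n_1,\ldots,n_\ell},
\]
summing over these shuffles $w$. The Bernstein-type basis of $\AHe(n)$ gives the matching free decomposition $\AHe(n)=\bigoplus_w H_w\AHe_{n_1,\ldots,n_\ell}$. Tensoring both decompositions with $M$ (respectively $eM$) gives the same space $\bigoplus_w H_w\otimes eM$, so the comparison map is a bijection.

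Combining these with Theorem \ref{thm:induced} gives
\[
\Ind^n_{n_1,\ldots,n_\ell}\big(\pi_{s_1}S^{\mu^{(1)}}\otimes\cdots\otimes \pi_{s_\ell}S^{\mu^{(\ell)}}\big)\cong 1_{(1^n)}\Ind^n_{n_1,\ldots,n_\ell}\big(\pi_{s_1}\Delta^{\mu^{(1)}}\otimes\cdots\otimes\pi_{s_\ell}\Delta^{\mu^{(\ell)}}\big)\cong 1_{(1^n)}\pi_\bfs\Delta^\mu\cong \pi_\bfs S^\mu .
\]

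The main obstacle is the identification of the shuffle summands: the diagrammatic decomposition in Lemma \ref{lem:basisofmorphism} must match the algebraic coset decomposition of $\AHe(n)$, and the positive-crossing conventions for $H_w$ have to agree. If this becomes delicate, I would fall back on the direct argument mirroring the proof of Theorem \ref{thm:induced}. Frobenius reciprocity applied to the natural inclusion $S^{\mu^{(1)}}\otimes\cdots\otimes S^{\mu^{(\ell)}}\to S^\mu$ (tableaux with entries moved through the red strands) gives a map from the induced module to $\pi_\bfs S^\mu$. Its surjectivity comes from the cellular basis \eqref{doubleSSTheck}, since standard $\mu$-tableaux decompose uniquely as shuffles of standard component tableaux. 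Comparing dimensions,
\[
\binom{n}{n_1,\ldots,n_\ell}\prod_i \dim S^{\mu^{(i)}}=\dim S^\mu,
\]
then finishes the proof.
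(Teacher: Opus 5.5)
Your main argument is correct, but it runs in the opposite direction from the paper. The paper gives no separate proof of Proposition~\ref{prop:induhe}. It calls the result well known and says it can be proved in the same way as Theorem~\ref{thm:induced}, via Frobenius reciprocity plus a bijection of tableau bases, which is essentially your fallback. It then combines it with Theorem~\ref{thm:induced} and \eqref{equ:idemoncell} to obtain Corollary~\ref{cor:idemonindu}.

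You instead show directly from Lemma~\ref{lem:dimofinduction}(1) that $1_{(1^n)}\Ind^n_{n_1,\ldots,n_\ell}(M)\cong\Ind^n_{n_1,\ldots,n_\ell}(eM)$ for every $\hwS_{n_1,\ldots,n_\ell}$-module $M$. This strengthens Corollary~\ref{cor:idemonindu}, and you then read off the Hecke statement from Theorem~\ref{thm:induced}. There is no circularity, since Theorem~\ref{thm:induced} is proved without Proposition~\ref{prop:induhe}.

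Your analysis of $C^{(1^n)}_{\eta^{(1)},\ldots,\eta^{(\ell)}}$ is right. Row sums $1$ and condition (1) force $\eta^{(i)}=(1^{n_i})$, condition (2) is vacuous, and condition (3) leaves exactly the shuffles. The ``main obstacle'' you flag disappears if you also apply Lemma~\ref{lem:basisofmorphism} with $\mu=(1^n)$. That gives $\AHe(n)=\bigoplus_w A_w\,\AHe_{n_1,\ldots,n_\ell}$ with the same shuffle diagrams $A_w$ as a free right basis. The comparison map is then the identity on each summand $A_w\otimes eM$, so no matching with $H_w$ or with crossing conventions is needed.

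As for what each route buys: yours makes the Hecke-side induction theorem a formal consequence of the web-side one, and gives the Schur-functor compatibility for arbitrary modules. The cost is that the whole weight rests on Theorem~\ref{thm:induced}, in particular on its claim that the tableau map $f$ is a homomorphism of $\hwS_{n_1,\ldots,n_\ell}$-modules. The paper's route keeps Proposition~\ref{prop:induhe} as an independent classical input (as in \cite{Ar96}). That input is what feeds into Theorem~\ref{thm:ind=std} together with Ariki's induction theorem.
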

\begin{corollary}\label{cor:idemonindu}
For any $\mu=(\mu^{(1)}, \ldots, \mu^{(\ell)})\in \Par^\ell(n)$ with $|\mu^{(i)}|=n_i$ and $n= \sum_i n_i$, we have 
\[ 1_{(1^n)}\Ind_{n_1,\ldots,n_\ell}^n(\pi_{s_1}   \Delta^{\mu^{(1)}}\otimes\ldots \otimes \pi_{s_\ell}   \Delta^{\mu^{(\ell)}} )\cong \Ind_{n_1,\ldots,n_\ell}^n(\pi_{s_1} S^{\mu^{(1)}}\otimes\ldots \otimes \pi_{s_\ell}  S^{\mu^{(\ell)}} ).
\]
\end{corollary}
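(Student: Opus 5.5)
The plan is to apply the idempotent functor $1_{(1^n)}$ to the left-hand side and commute it past the induction. There are two ingredients: the compatibility of induction with the Schur idempotent, and the identification \eqref{equ:idemoncell} on each tensor factor.

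First, I will identify $1_{(1^n)}\hwS(n)$ with $\AHe(n)\otimes_{\AHe_{n_1,\ldots,n_\ell}} 1_{(1^{n_1},\ldots,1^{n_\ell})}\hwS_{n_1,\ldots,n_\ell}$, as an $(\AHe(n),\hwS_{n_1,\ldots,n_\ell})$-bimodule. Lemma \ref{lem:dimofinduction}(1) with $\lambda=(1^n)$ shows that $1_{(1^n)}\hwS(n)$ is a free right $\hwS_{n_1,\ldots,n_\ell}$-module. Its summands are indexed by $A\in C^{(1^n)}_{\eta^{(1)},\ldots,\eta^{(\ell)}}$, and each summand is generated by $A$. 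Conditions (1)--(3) force every $\eta^{(i)}=(1^{n_i})$, because row sums equal $1$ and there are no splits. They also force $A$ to be a thin permutation diagram which is a minimal-length coset representative of $\mathfrak S_n/(\mathfrak S_{n_1}\times\cdots\times\mathfrak S_{n_\ell})$, realized by positive crossings, i.e. the element $H_w$. So
\[
1_{(1^n)}\hwS(n)=\bigoplus_{w} H_w\, 1_{(1^{n_1},\ldots,1^{n_\ell})}\hwS_{n_1,\ldots,n_\ell}.
\]
This matches the standard decomposition $\AHe(n)=\bigoplus_w H_w\AHe_{n_1,\ldots,n_\ell}$, which follows from the Bernstein basis. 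The multiplication map
\[
\AHe(n)\otimes_{\AHe_{n_1,\ldots,n_\ell}} 1_{(1^{n_1},\ldots)}\hwS_{n_1,\ldots,n_\ell}\to 1_{(1^n)}\hwS(n)
\]
is therefore a surjective bimodule map sending a free basis to a free basis, hence an isomorphism. I expect this identification of the coset representatives $A$ with the $H_w$ to be the main obstacle. It needs care to check that the reduced chicken foot diagrams in $C^{(1^n)}$ are exactly these shortest coset representatives.

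Second, tensoring with $M:=\pi_{s_1}\Delta^{\mu^{(1)}}\otimes\cdots\otimes\pi_{s_\ell}\Delta^{\mu^{(\ell)}}$ gives
\[
1_{(1^n)}\Ind^n_{n_1,\ldots,n_\ell}M\cong \AHe(n)\otimes_{\AHe_{n_1,\ldots,n_\ell}} 1_{(1^{n_1},\ldots,1^{n_\ell})}M .
\]
Here I use that $1_{(1^{n_1},\ldots)}\hwS_{n_1,\ldots,n_\ell}\otimes_{\hwS_{n_1,\ldots,n_\ell}}M=1_{(1^{n_1},\ldots)}M$.

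Third, the idempotent factors through the tensor product:
\[
1_{(1^{n_1},\ldots,1^{n_\ell})}M=\bigotimes_i 1_{(1^{n_i})}\pi_{s_i}\Delta^{\mu^{(i)}} .
\]
The functor $1_{(1^{n_i})}$ commutes with the pullback $\pi_{s_i}$, since both are restrictions along compatible algebra maps. So by \eqref{equ:idemoncell} each factor is $\pi_{s_i}S^{\mu^{(i)}}$ as an $\AHe(n_i)$-module. Substituting these factors yields the claimed isomorphism.
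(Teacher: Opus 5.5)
Your proof is correct, but it follows a different route from the paper's.

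\textbf{The paper's route.} The paper's proof takes two lines. It applies $1_{(1^n)}$ to the isomorphism of Theorem \ref{thm:induced}. It then uses \eqref{equ:idemoncell} to get $1_{(1^n)}\pi_\bfs\Delta^\mu\cong\pi_\bfs S^\mu$. Finally it invokes Proposition \ref{prop:induhe}, which the paper only asserts as well known, to recognize $\pi_\bfs S^\mu$ as the Hecke-side induced module.

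\textbf{Your route.} You prove a functorial statement: $1_{(1^n)}\circ\Ind^n_{n_1,\ldots,n_\ell}\cong\Ind^n_{n_1,\ldots,n_\ell}\circ 1_{(1^{n_1},\ldots,1^{n_\ell})}$ for \emph{arbitrary} $\hwS_{n_1,\ldots,n_\ell}$-modules. This comes from the bimodule isomorphism $1_{(1^n)}\hwS(n)\cong\AHe(n)\otimes_{\AHe_{n_1,\ldots,n_\ell}}1_{(1^{n_1},\ldots,1^{n_\ell})}\hwS_{n_1,\ldots,n_\ell}$. You then use \eqref{equ:idemoncell} only factorwise at level one.

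\textbf{What each buys.} Your route needs neither Theorem \ref{thm:induced} nor Proposition \ref{prop:induhe}. In fact, combining your result with Theorem \ref{thm:induced} and \eqref{equ:idemoncell} recovers Proposition \ref{prop:induhe} as a consequence. The paper's argument is shorter, but it applies only to tensor products of cell modules and relies on the Hecke-side induction theorem.

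\textbf{The step you flagged as the main obstacle.} It is easier than you expect. Your check is right that conditions (1)--(3) force $\eta^{(i)}=(1^{n_i})$. Now apply Lemma \ref{lem:basisofmorphism} with source object $(1^{n_1},\ldots,1^{n_\ell})=(1^n)$. This shows directly that $\AHe(n)=\bigoplus_A A\,\AHe_{n_1,\ldots,n_\ell}$, with the \emph{same} diagrams $A\in C^{(1^n)}_{(1^{n_1}),\ldots,(1^{n_\ell})}$. So you do not need to identify each $A$ with $H_w$ or appeal to the Bernstein basis. The multiplication map then carries the spanning set $\{A\otimes B\}$ bijectively onto the basis of Lemma \ref{lem:basisofmorphism}, and is therefore an isomorphism.

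\textbf{A minor wording point.} $1_{(1^n)}\hwS(n)$ is a direct sum of copies of the projective module $1_{(1^{n_1},\ldots,1^{n_\ell})}\hwS_{n_1,\ldots,n_\ell}$, not literally free. This does not affect the argument.
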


\begin{proof}
  This follows from  Theorem \ref{thm:induced} and Proposition \ref{prop:induhe} by applying the idempotent functor and \eqref{equ:idemoncell}.  
\end{proof}

Let $\tau$ be the automorphism of $\AHe_n$ given on the generators by 
\[ 
H_i^\tau= -H_i^{-1}, \qquad
X_i^\tau= X_{i}^{-1}. 
\]
 This automorphism induces an exact isomorphism $\tau: \AHe_n\modf\rightarrow \AHe_n\modf$
such that $ M^\tau$ is given by the twist action via $\tau$ and $f^\tau=f$ for any homomorphism  $f: M\rightarrow N$.
Then  we have 
\begin{equation}
\label{equ:antiind}
  \big(\text{Ind}_{n_1,\ldots,n_\ell}^n (N_1\otimes \ldots \otimes N_\ell)\big)^\tau \cong \text{Ind}_{n_1,\ldots,n_\ell}^n (N_1^\tau\otimes \ldots\otimes  N_\ell^\tau).  \end{equation}
Moreover, using the shuffle lemma 
we have 
\begin{equation}
\label{equ:shuffle}
    [\text{Ind}_{m,n}^{m+n} (M\otimes N)]  = [\text{Ind}_{n,m}^{m+n} (N\otimes M)]
\end{equation}
in $K_0(\AHe_{m+n}\modf)$. 

\subsection{Affine $q$-Schur algebras as endmorphism algebras}

In the remainder of this section, we revisit two constructions of affine $q$-Schur algebras, one algebraic and one geometric. 

The extended affine Weyl group of type $GL(n)$ can be expressed as a semi-direct product $\ASym_n =\mathfrak S_n \ltimes \Z^n$; alternatively, $\ASym_n$ is generated by the affine Weyl group $\langle s_0, s_1, \ldots, s_{n-1}\rangle$ and $\pi$ such that $\pi s_{i-1} \pi^{-1} =s_i$. A length function on $\ASym_n$ is given such that $\ell(s_i)=1$ and $\ell(\pi)=0$. 
Given $N\in \Z_{>0}$, the group $\ASym_n$ acts on $\Z^n$ from the right by
\begin{align*} 
    \bfi . \mu &= \bfi+N\mu, \qquad \text{ for }\mu \in \Z^n,
    \\
    \bfi . s_a &=(i_1, \ldots, i_{a-1}, i_{a+1}, i_{a}, \ldots, i_n), \quad \text{ for } a \neq 0,
    \\
     \bfi . s_0 &=(i_1-N, i_2,  \ldots, i_{n-1}, i_n+N).   
\end{align*}
The fundamental domain for this action on $\Z^n$ is given by the alcove
\begin{align*}
    \Xi(N,n) =\{ \bfi \in \Z^n \mid 1-N \le i_1\le i_2 \le \ldots \le i_n \le 0 \}.
\end{align*}
Let $\mathfrak S_\bfi$ denote the stabilizer subgroup of $\bfi \in \Xi(N,n)$. 

The affine Hecke algebra $\AHe(n)$ can be presented with generators $H_i$, for $0\le i \le n-1$, and $\pi$. Then $H_\sigma$, for $\sigma \in \ASym_n$, is well defined, in terms of any reduced expression of $\sigma \in \ASym_n$. The stabilizer $\mathfrak S_\bfi$ of $\bfi \in \Xi(N,n)$ is a parabolic subgroup of $\ASym_n$, and we define 
$e_\bfi =\sum_{\sigma \in \mathfrak S_\bfi} q^{-\ell(\sigma)} H_\sigma$. Then $e_\bfi^2 =(\sum_{\sigma\in \mathfrak S_\bfi} q^{-2\ell(\sigma)}) e_\bfi.$
Consider the right $\AHe(n)$-module 
\begin{align} \label{T:module}
    \hatT_{N,n} := \bigoplus_{\bfi \in \Xi(N,n)} e_\bfi \AHe(n).
\end{align}
By definition (cf. \cite{Gre99}) the affine $q$-Schur algebra is the endomorphism algebra
\begin{align} \label{def:affineSchur}
    \ASch(N,n) :=\End_{\AHe(n)} (\hatT_{N,n}).
\end{align}

\subsection{Geometric affine $q$-Schur algebras}

Let us fix a pair $(N, n)$ of positive integers. 
The variety of $N$-step flags in $\C^n$, $\cF =\{0=F_0\subset F_1\subset \cdots \subset F_{N-1} \subset F_N=\C^n\}$, admits a natural action of the general linear group $GL(n)$ over $\C$.
Denote by $\cB$ the variety of complete flags in $\C^n$, $\cB=\big\{0=V_0\subseteq V_1 \subseteq \cdots \subseteq   V_{n-1} \subseteq V_n= \C^n\mid \dim V_i=i, \forall i\big\}$.

Let $GL(n)$ act diagonally on the products $\cF\times \cF$, $\cF\times \cB$ and $\cB\times \cB$. Denote by $\cN \subset \gl_n$ the nilpotent cone. 
The group $G =GL(n)\times \C^\times$ acts on variants of the Steinberg varieties 
\begin{align*}
    T^*\cF \times_{\cN} T^*\cF, \qquad 
    T^*\cF \times_{\cN} T^*\cB, \qquad
    T^*\cB \times_{\cN} T^*\cB, 
\end{align*}
where $GL(n)$ acts diagonally and $z\in \C^\times$ acts by multiplication by $z^{-2}$ along the fibers. We denote by $\KS(N,n)$, $\KBi_{N,n}$, and  $\KH(n)$ the complexified Grothendieck group of $G$-equivariant coherent sheaves on 
these Steinberg varieties:
\begin{align}
    \label{KSchur}
\begin{split}
\KS(N,n) =  K^G(T^*\cF \times_{\cN} T^*\cF), 
 &  \qquad
\KBi_{N,n} =K^G(T^*\cF \times_{\cN} T^*\cB), 
\\
\KH(n) &=K^G(T^*\cB \times_{\cN} T^*\cB).
\end{split}
\end{align}
Then $(\KS(N,n), \star)$ and $(\KH(n), \star)$ are associative algebras endowed with a convolution product \cite{CG97, GV93}, and $\KBi_{N,n}$ becomes a bimodule under the left and right convolution products:
$\KS(N,n) \circlearrowright \KBi_{N,n} \circlearrowleft \KH(n)$; cf. \cite{GRV94}. 

The following identification is folklore, but no proof was available in the literature until \cite[Theorem 5.21]{LXY26} where it was formulated and proved for arbitrary type. 

\begin{proposition} \cite{LXY26}
\label{prop:SameS}
  We have an algebra isomorphism $\KS(N,n) \cong \ASch(N,n)$, for $N\ge n$.
\end{proposition}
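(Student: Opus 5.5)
The plan is to realize both algebras as endomorphism algebras of the same right module over the affine Hecke algebra. On the geometric side this module is $\KBi_{N,n}$ and the algebra is $\KH(n)$; on the algebraic side it is $\hatT_{N,n}$ and $\AHe(n)$.

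First, I would invoke the Kazhdan--Lusztig/Chriss--Ginzburg isomorphism $\KH(n)\cong \AHe(n)$ (with $q$ matched to the loop-rotation $\C^\times$). Next, I would decompose $\cF=\bigsqcup_\mu \cF_\mu$ into connected components indexed by the dimension vectors $\mu$, i.e.\ strict compositions of $n$ into $N$ parts. This gives $T^*\cF_\mu\cong G\times_{P_\mu}\mathfrak n_\mu$ and $\KBi_{N,n}=\bigoplus_\mu K^G(T^*\cF_\mu\times_\cN T^*\cB)$. On the algebraic side, the alcove $\Xi(N,n)$ is in bijection with the same set of compositions: $\bfi\mapsto(\#\{a: i_a=j\})_{j}$. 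Because $N\ge n$ the sequences cannot wrap around, so each stabilizer $\mathfrak S_\bfi$ is the finite parabolic $\mathfrak S_\mu$. I would then show that
\[
K^G(T^*\cF_\mu\times_\cN T^*\cB)\cong e_\mu\AHe(n)
\]
as right $\KH(n)\cong\AHe(n)$-modules. The idea is to write $T^*\cF_\mu\times_\cN T^*\cB$ as a union of pieces over $P_\mu$-orbits on $\cB$ and to identify the class of the structure sheaf of the ``diagonal'' piece $\{(F,V): V \text{ refines } F\}$ with the generator. This class should be an $\mathfrak S_\mu$-symmetrizer: the Demazure--Lusztig operator identity for $T^*(P_\mu/B)$ gives $q^{-\ell(\sigma)}$-weighted symmetrization, which matches $e_\mu$ up to normalization. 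Summing over $\mu$ yields $\KBi_{N,n}\cong\hatT_{N,n}$.

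Convolution then provides an algebra homomorphism $\Theta:\KS(N,n)\to\End_{\KH(n)}(\KBi_{N,n})\cong\End_{\AHe(n)}(\hatT_{N,n})=\ASch(N,n)$. I expect the main obstacle to be proving that $\Theta$ is bijective. My first attempt would be a filtration argument on both sides.
\begin{itemize}
\item On the geometric side, filter $T^*\cF_\mu\times_\cN T^*\cF_\nu$ by the $GL(n)$-orbits $O_w$ on $\cF_\mu\times\cF_\nu$, indexed by double cosets $w\in\mathfrak S_\mu\backslash\mathfrak S_n/\mathfrak S_\nu$. Each piece is a vector bundle over $O_w$, so the cellular fibration lemma makes $\KS(N,n)$ a free module over $R(\C^\times)$. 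Its associated graded pieces are $K^{G}(O_w)\cong R(L_{\mu\cap w\nu})\otimes R(\C^\times)$, which assemble to an ``affine double coset'' basis indexed by $\mathfrak S_\mu\backslash\ASym_n/\mathfrak S_\nu$.
\item On the algebraic side, $\Hom_{\AHe(n)}(e_\nu\AHe(n),e_\mu\AHe(n))=e_\mu\AHe(n)\cap\AHe(n)e_\nu$ has the Green/Du basis of double-coset sums indexed by the same set.
\end{itemize}
I would check that $\Theta$ is compatible with these filtrations and triangular with invertible leading coefficients on the associated graded. This reduces to a computation on a single orbit, where both sides are given by the action of the Levi representation ring, i.e.\ symmetric Laurent polynomials.

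Should the triangularity check prove delicate, a fallback is to establish injectivity of $\Theta$ by localization to $T\times\C^\times$-fixed points (Thomason), where everything becomes explicit and $\KBi_{N,n}$ is visibly faithful. Surjectivity would then follow by comparing ranks over $\C[q^{\pm1}]$ on each filtered piece, both ranks being computed by the matching double-coset index sets above.
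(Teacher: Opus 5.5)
The paper gives no argument of its own for this statement. It is quoted from \cite[Theorem 5.21]{LXY26}, so your outline has to stand on its own.

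Your framework is the natural one: realize both algebras as endomorphisms of $\KBi_{N,n}\cong\hatT_{N,n}$ over $\KH(n)\cong\AHe(n)$, and study the convolution map $\Theta$. The preliminary geometry is fine: the components $\cF_\mu$, the cellular fibration with graded pieces $R(L_{\mu\cap w\nu})[q^{\pm1}]$, and the count $\mathfrak S_\mu\backslash\ASym_n/\mathfrak S_\nu\leftrightarrow\bigsqcup_w\Z^n/\mathfrak S_{\mu\cap w\nu}$. Some smaller points:
\begin{enumerate}
\item Finiteness of $\mathfrak S_\bfi$ has nothing to do with $N\ge n$. The entries of $\bfi\in\Xi(N,n)$ lie in a window of length $N$, so no nonzero translation fixes $\bfi$, for any $N$.
\item Depending on the normalization of the Kazhdan--Lusztig isomorphism, the class of the closed piece $G\times_B\mathfrak n_\mu$ may be a sign-type eigenvector for $\mathbf H(\mathfrak S_\mu)$. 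Then you get $e_\mu\AHe(n)$ only up to the twist $\tau$, which is harmless for endomorphism algebras.
\item Even $K^G(Z_{\mu,B})\cong e_\mu\AHe(n)$, with $Z_{\mu,B}:=T^*\cF_\mu\times_\cN T^*\cB$, needs a unitriangularity check against the $\mathfrak S_\mu\backslash\mathfrak S_n$ orbit filtration. You only assert it, though it is of the same kind as Chriss--Ginzburg's proof of $\KH(n)\cong\AHe(n)$.
\item The natural place for $N\ge n$ is that $\cB$ is then a component of $\cF$. So $\KH(n)$ and $\KBi_{N,n}$ are a corner and a column of $\KS(N,n)$, and $\Theta$ is the double-centralizer map for the idempotent $\mathbf 1_\cB$. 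This idempotent is not full over $\kk$: after specializing $q$ to a root of unity, $\KS(n,n)$ has more simple modules than $\KH(n)$ (all multisegments versus aperiodic ones). So nothing comes for free here.
\end{enumerate}

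\textbf{The gap: bijectivity of $\Theta$.} This is the entire content of the proposition, and you only assert it.

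For the triangularity route, you never specify a filtration on $\ASch(N,n)$ that $\Theta$ respects. Green's double coset basis is built from the Iwahori--Matsumoto basis and adapted to the Bruhat order of $\ASym_n$. Your geometric filtration is by the finite orbits $O_w$ with $R(L_{\mu\cap w\nu})$-coefficients. So matching the index sets is only a count.

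The workable version filters $\Hom_{\KH(n)}(K(Z_{\nu,B}),K(Z_{\mu,B}))$ by the support of the image of the generator $\mathcal E_\nu$ (the class of the closed piece of $Z_{\nu,B}$). Two facts then make the graded comparison nontrivial:
\begin{itemize}
\item $\Hom$ is only left exact, so its graded pieces merely embed into the $\mathbf H(\mathfrak S_\nu)$-eigenvectors in the corresponding graded piece of $K(Z_{\mu,B})$.
\item That graded piece lives over the $P_\nu/B$-bundle over $O_w$. This is a union of $|\mathfrak S_\mu w\mathfrak S_\nu|/|\mathfrak S_\mu|$ orbits in $\cF_\mu\times\cB$, not a single orbit.
\end{itemize}
What must be shown is that every such eigenvector is hit by $R(L_{\mu\cap w\nu})[q^{\pm1}]$ integrally over $\kk$. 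This Demazure--Lusztig computation is exactly where non-units such as $\sum_{\sigma\in\mathfrak S_\mu}q^{-2\ell(\sigma)}$ (the constant in $e_\bfi^2$) threaten, and it is not done.

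The fallback does not close the gap:
\begin{itemize}
\item Each filtered piece has infinite rank over $\C[q^{\pm1}]$, so comparing ranks there is vacuous.
\item Even over the center $\kk[X_1^{\pm1},\ldots,X_n^{\pm1}]^{\mathfrak S_n}$, where everything has finite rank, an injective map between modules of equal rank need not be surjective. Multiplication by $1+q^{-2}$ on $\kk$ is an example.
\item Fixed-point localization plus a rank count can give injectivity and an isomorphism after inverting suitable scalars. But surjectivity at special values of $q$, where for instance $e_\bfi^2$ can vanish, is precisely what has to be proved.
\end{itemize}
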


\section{Affine $q$-web, affine $q$-Schur, and Hall algebras}
\label{sec:affineSchur_Hall}

In this section, we show that the isomorphism between the Grothendieck groups of $\qAW \modf_\eps$ and of the module category of K-theoretic affine Hecke algebras preserves the classes of standard modules.

\subsection{Hall algebra of linear and cyclic quivers}

Recall $\I=\Z$ or $\Z/p\Z$ from \eqref{eq:I}. We denote by $\peni$ and $\penp$ the linear and cyclic quiver with arrow sets $\Omega =\{(i,i+1)\mid i\in \I\}$.
Sometimes we allow $p=\infty$ to treat both cases simultaneously. 

Let $\bbF_{\qq^2}$ be the finite field of $\qq^2$ elements. For an $\I$-graded $\bbF_{\qq^2}$-vector space $V=\oplus_{i\in\I} V_i$, denote by $E_V=\oplus_{(i,j)\in\Omega} \Hom(V_i, V_j)$ the space of nilpotent representations of $\pen_p$. The group $G_V =\prod_{i\in \I} GL(V_i)$ acts on $E_V$ by conjugation. Denote by $S_i$ the 1-dimensional simple module supported at $i\in\I$ with dimension vector denoted by $\epsilon_i$. For each pair $(i,\ell)\in \I\times \N$ there exists a unique indecomposable $\pen_p$-module $S_{i,\ell}$ of length $\ell$ and head $S_i$. In this way, each $\pen_p$-module $M$ is isomorphic to a unique
\[
M \cong \bigoplus_{i,\ell} m_{i,\ell} S_{i,\ell}.
\]
Denote by $[i;\ell)$, for $i\in\I, \ell \in\Z$, the segment $[i,i+\ell-1]$ in case $\I=\Z$ and cyclic segment equal to the modulo $p$ projection of $[i_0,i_0+\ell-1]$ for any preimage $i_0\in \Z$ of $i$ in case $\I=\Z/p\Z$. Then the isoclasses of $\pen$-modules are parametrized by the set of multi-segments
\begin{align}
    \label{cMcS}
    \cM\cS =\Big\{\bfm :=\sum_{i\in\I,\ell\in\Z} m_{i,\ell} [i;\ell) \mid m_{i,\ell} \in \N, \text{all but finitely many are }0 \Big\}.
\end{align}
Denote by $|\bfm|=\sum_{i\in\I,\ell\in\Z} \ell m_{i,\ell}$, $\cM\cS_n=\{\bfm\in \cM\cS\mid |\bfm|=n\}$, and so $\cM\cS=\cup_n\cM\cS_n$.

For an $\I$-graded $\bbF_{\qq^2}$-vector space $V=\oplus_{i\in\I} V_i$, let $\C_G(E_V)$ be the set of $\C$-valued $G_V$-equivariant functions on $E_V$. Fix an $\I$-graded $\bbF_{\qq^2}$-vector space $V_{\bfd}$ of dimension $\bfd \in \N^\I$. Given $\bfd', \bfd'' \in \N^\I$ such that $\bfd'+\bfd''=\bfd$, we have the following diagram
\[
E_{V_\bfd'} \times E_{V_\bfd''} \stackrel{p_1}{\longleftarrow} E \stackrel{p_2}{\longrightarrow} F \stackrel{p_3}{\longrightarrow} E_{V_{\bfd}}.
\]
Here $E$ denotes the set of triples $(x,\phi,\psi)$ with $x\in E_{V_\bfd}$ and
\[
0\longrightarrow V_{\bfd'} \stackrel{\phi}{\longrightarrow} V_{\bfd} \stackrel{\psi}{\longrightarrow} V_{\bfd''} \longrightarrow 0
\]
an exact sequence of $\I$-graded vector spaces such that $\phi(V_{\bfd'})$ is stable by $x$, and $F$ denotes the set of pairs $(x,U)$, where $U\subset V_\bfd$ is an $\I$-graded $x$-stable subspace of dimension $\bfd'$. 

The Hall algebra $\bold{Hall}_{p,\bbF_{\qq^2}} :=\oplus_{\bfd} \C_G(E_{V_\bfd})$ is endowed with the following multiplication \cite{Lus91, Lus98}: for $f \in \C_G(E_{V'_\bfd}), g\in \C_G(E_{V''_\bfd})$,
\begin{align}
    \label{product}
    f \circ g =\qq^{-m(\bfd'',\bfd')} (p_3)_! h \in \C_G(E_{V_\bfd}),
\end{align}
where $h \in \C(F)$ is the unique function such that $p_2^*(h) =p_1^*(f,g)$ and $m(\bfd'',\bfd')= \sum_{i\in\I} d_i'd_i'' +\sum_{i\in \I} d_i''d_{i+1}'$. 

For $\bfm \in \cM\cS$ with $\underline{\dim} (\bfm) =\bfd$, we denote by $\cO_\bfm$ the $G_{V_\bfd}$-orbit in $E_{V_\bfd}$ consisting of representations in the isoclass $\bfm$. Let $f_\bfm$ be $\qq^{-\dim \cO_\bfm}$ times the characteristic function of $\cO_\bfm$.

It is known (due to Jin Yun Guo) that the structure constants for the Hall algebras $\bold{Hall}_{p,\bbF_{\qq^2}}$ with respect to the basis $\{f_\bfm\}$ are polynomials in $\qq$. This allows one to define the generic Hall algebra $\Hall$ over $\cA=\Z[v,v^{-1}]$ for an indeterminate $v$ such that $\Hall|_{v\mapsto \qq^{-1}} =\bold{Hall}_{p,\bbF_{\qq^2}}$. We shall use the same notation $f_\bfm$ to denote the corresponding elements in $\Hall$. For $\bfm \in \cM\cS$, we let
\begin{align*}
    b_\bfm =\sum_{i, \bfn} v^{-i+\dim \cO_\bfm -\dim \cO_\bfn} \dim \cH^i_{\cO_\bfn} (\text{IC}_\bfm)f_\bfn,
\end{align*}
where $\cH^i_{\cO_\bfn} (\text{IC}_\bfm)$ is the stalk over a point of $\cO_\bfn$ of the $i$th IC sheaf of the closure $\overline{\cO_\bfm}$ of $\cO_\bfm$. Then $\bfB :=\{b_\bfm\mid \bfm \in \cM\}$ is the canonical basis of $\Hall$. There is a unique bar involution on $\Hall$ such that $\overline{v}=v^{-1}$, and $\overline{b_\bfm}=b_\bfm$, for all $\bfm$; cf. \cite{Lus98, VV99}. 

\begin{proposition}  \cite{VV99, DF15} 
\label{prop:generators}
    The Hall algebra $\Hall$ is generated by $f_\bfi$, for all $\bfi\in \Z\I$. 
\end{proposition}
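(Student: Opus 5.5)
Here $f_\bfi$, for $\bfi\in\N\I$ viewed as an $\I$-graded dimension vector, denotes the Hall basis element attached to the semisimple multisegment $\sum_{i\in\I} d_i[i;1)$. Up to a power of $v$, it is the characteristic function of the zero representation in $E_{V_\bfi}$. The plan is to show that the $\cA$-subalgebra $\Hall'$ generated by these elements contains every $f_\bfm$. I argue by induction on $|\bfm|$, and within a fixed dimension vector $\bfd$ by induction on orbit dimension with respect to the degeneration order on $\cM\cS$: $\bfn\le\bfm$ iff $\cO_\bfn\subseteq\overline{\cO_\bfm}$. This order refines to a total order with finitely many predecessors, since $\cM\cS_n$ is finite for each $\bfd$.

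\emph{Key step: a triangular product.} For each $\bfm$ with $\underline{\dim}\bfm=\bfd$, I would use the radical (Loewy) filtration of the representation $M_\bfm$. Let $\bfi_1,\dots,\bfi_r$ be the dimension vectors of the layers $\mathrm{rad}^{k-1}M_\bfm/\mathrm{rad}^{k}M_\bfm$. Each layer is semisimple, and $r$ is the maximal length of a segment in $\bfm$. Consider the ordered product
\[
f_{\bfi_1}\circ f_{\bfi_2}\circ\cdots\circ f_{\bfi_r},
\]
arranged so that the top layer is the last quotient, matching the convention in \eqref{product}. By \eqref{product}, its value at a point $x$ counts (with a power of $\qq$) flags of $x$-stable subspaces whose subquotients are semisimple of the prescribed dimensions. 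Such a flag forces $\mathrm{rad}^k$ to lie in the corresponding step, so any representation admitting such a flag degenerates from $M_\bfm$. Conversely, $M_\bfm$ admits exactly one such flag, namely the radical filtration itself, since each layer has maximal possible dimension. Hence
\[
f_{\bfi_1}\circ\cdots\circ f_{\bfi_r}= v^{c}f_\bfm+\sum_{\bfn<\bfm} a_{\bfn}(v)f_\bfn,
\]
with $a_\bfn\in\cA$ by Guo's polynomiality and $c\in\Z$. This is the standard argument of Lusztig, Ringel and Varagnolo--Vasserot; for the cyclic quiver one should use nilpotent representations, so the radical filtration is finite.

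\emph{Conclusion.} Since $v^{c}$ is a unit in $\cA$, induction down the degeneration order gives $f_\bfm\in\Hall'$. The base case is the minimal orbit, the semisimple $\bfm=\bfd$ itself, where $f_\bfm$ is a generator. The case $p=\infty$ is identical, using the linear quiver.

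\emph{Main obstacle.} The delicate point is verifying that the leading coefficient is exactly a power of $v$ and that no other orbit of the same dimension appears. This amounts to showing that the generic point of the image of $p_3$ lies in $\cO_\bfm$, i.e. that $M_\bfm$ is the unique maximal degeneration with this Loewy data, and that its fiber count equals $1$. I would check this by comparing dimensions of Hom spaces, or cite the explicit Du--Fu multiplication formula for $f_\bfi f_\bfm$ from \cite{DF15}, which directly exhibits the leading term.
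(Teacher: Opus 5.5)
The paper gives no argument for this statement; it only cites the literature. Your Loewy-layer triangularity is the standard proof of the fact, and the outline is sound. Since $\mathrm{rad}^kM_\bfm\subseteq U_{r-k}$ and the two have equal dimension vectors, $M_\bfm$ has exactly one admissible flag. So the coefficient of $f_\bfm$ is a power of $\qq$, which is a unit in $\cA$.

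There is one minor slip and one real gap. The slip: in \eqref{product} the first factor is evaluated on the subrepresentation. With $\bfi_1$ the top layer, the product must therefore be $f_{\bfi_r}\circ\cdots\circ f_{\bfi_1}$. The gap: the inference ``such a flag forces $\mathrm{rad}^k$ into the corresponding step, so the representation degenerates from $M_\bfm$'' does not follow from what you wrote. It is exactly your ``main obstacle'', and without it the triangularity, and hence the induction, is unproved.

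It closes in a few lines, without Hom-comparisons or the Du--Fu formula. For nilpotent $x$ one has $\mathrm{rad}^kV=x^kV$ and $(x^kV)_{i+k}=x^k(V_i)$. A flag of the prescribed type with $xU_j\subseteq U_{j-1}$ gives $x^kV\subseteq U_{r-k}$, hence
\[
r_{i,k}(x):=\operatorname{rank}(x^k\colon V_i\to V_{i+k})\le r_{i,k}(M_\bfm)\qquad (i\in\I,\ k\ge 0).
\]
The numbers $r_{i,k}$ determine the multisegment. Indeed, $r_{i,k}-r_{i,k+1}$ is the number of indecomposable summands with socle at $i+k$ and length at least $k+1$.

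Hence every $\bfn\neq\bfm$ occurring in the product satisfies $\sum_{i,k}r_{i,k}(\bfn)<\sum_{i,k}r_{i,k}(\bfm)$. You can induct on this sum, since there are finitely many $\bfn$ of dimension $\bfd$, instead of on the degeneration order.

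If you want to keep your geometric formulation, argue as follows. The image of the iterated $p_3$ is the image, under a proper map, of a vector bundle over a product of partial flag varieties. So it is closed and irreducible, and therefore equals $\overline{\cO_{\bfn'}}$ for a single $\bfn'$. Semicontinuity of rank gives $r(\bfm)\le r(\bfn')$, the bound above gives the reverse inequality, and so $\bfn'=\bfm$.
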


\subsection{Quantum group $\U_v(\widehat\gl_p)$}

Denote by $\U(\widehat\sll_p)$ the quantum group of type $A_{p-1}^{(1)}$, i.e., the $\C(v)$-algebra generated by $E_i, F_i, K_i^{\pm 1}$, for $i\in \I=\Z/p\Z$. Let $\UA(\widehat\sll_p)$ be the $\cA$-subalgebra generated by the divided powers $E_i^{(r)}, F_i^{(r)}, K_i^{\pm 1}$, for $i\in \I=\Z/p\Z$ and $r\ge 1$.
There is an embedding $\UA(\widehat\sll_p) \rightarrow \Hall$ such that $F_i^{(r)} \rightarrow f_{r[i]}$, for all admissible $i, r$. 

We shall need to enlarge the $\U_v(\widehat\sll_p)$ to the $\C(v)$-algebra 
\begin{align} \label{eq:affine_glp}
\U_v(\widehat\gl_p):=\U_v(\widehat\sll_p) \cdot \Heis,
\end{align}
where the Heisenberg algebra $\Heis$, which  commutes with $\U_v(\widehat\sll_p)$, is the $\C(v)$-algebra generated by $B_k$, for $k\in \Z^\times$, subject to the relations \cite{KMS95, LT96, Ugl00} 
\[
[B_k, B_m]= k\delta_{k,-m} \frac{[kp]_v[k\ell]_v}{[k]_v^2},
\]
We write
$\Heis =\Heis^- \otimes \Heis^+$, where $\Heis^+$ and $\Heis^-$ are commutative subalgebras of $\Heis$ generated by $B_r$ for $\pm r>0$, respectively. Denote $\U^-_v(\widehat\gl_p) = \U^-_v(\widehat\sll_p) \cdot\Heis^-$.

There is a $\C(v)$-algebra isomorphism (cf. \cite{X97, Sch00, DF15})
\begin{align}
    \label{eq:isom}
    \xi: \C(v) \otimes_\cA\Hall \stackrel{\cong}{\longrightarrow}  \U_v^-(\widehat\gl_p).
\end{align}
We then define an $\cA$-form $\U_\cA^-(\widehat\gl_p)$ as the image of $\Hall$ under the isomorphism $\xi$:
\[
\xi: \Hall \stackrel{\cong}{\longrightarrow} \U_\cA^-(\widehat\gl_p), 
\]
and it inherits a canonical basis from $\Hall$ this way. We have $\U_\cA^-(\widehat\gl_p)\supset \U_\cA^-(\widehat\sll_p)$ with compatible canonical bases since Lusztig \cite{Lus90, Lus98} defined the canonical basis of $\U_\cA^-(\widehat\sll_p)$ geometrically by identifying it with the composition subalgebra of $\Hall$.

\subsection{Connecting $\Hall$ to $\K^*$}

Recall the geometric (K-theoretic) construction of the affine $q$-Schur algebras from \eqref{KSchur}, which we shall set $N=n$ from now on. We review some well known constructions in geometric representation theory; cf. \cite{CG97, GV93, Lus91, Lus98, AJL}. The simple modules of the convolution algebra $(\KS(N,n), \star)$ are parametrized by $GL(n)$-orbits of pairs $(s,x) \in GL(n) \times \gl_n$, where $s$ is semisimple, $x$ is nilpotent, and $sxs^{-1} =z^{-2}x$. The $GL(n)$-orbits of the nilpotent elements $x$ are parametrized by the Jordan forms or partitions $\lambda \in \Par(n)$. The $GL(n)$-orbits of pairs $(s,x)$, where the eigenvalues of $s$ are assumed to be in $z^{2\Z}$, are in bijection with the isoclasses of the $n$-dimensional nilpotent representations of $\peni$ if $z$ is generic and of $\penp$ if $z =\varepsilon$ (where $\varepsilon^2$ is a primitive $p$th root of $1$); hence the orbits $O_\bfm$ are naturally parameterized by multi-segments $\bfm\in\cM\cS$;  cf. \eqref{cMcS}.  

Associated to each such orbit $O_\bfm$, there exists a standard module $M_\bfm$ of the convolution algebra $(\KS(n,n), \star)$ defined as Borel-Moore homology of the fixed point locus of a Springer fiber $H_*(\cF_x^s)$ \cite{CG97, GV93}. Denote by $[\KS(n,n)\modf]$ the Grothendieck group of finite-dimensional $\KS(n,n)$-modules over $\C$. Then $[\KS(n,n)\modf]$ admits two bases: 
\[
\{[M_\bfm] \mid \bfm\in \cM\cS_n\},
\quad \text{ and } \quad
\{[L_\bfm] \mid \bfm\in \cM\cS_n\}, 
\]
with uni-triangular transition matrices between them; here $L_\bfm$ denotes the simple modules. Denote by $[\KS(n,n)\modf]^*$ the restricted dual, which is the $\Z$-span of the dual basis $[L_\bfm]^*$, for $\bfm\in\cM\cS$, where $[L_\bfm]^* ([L_{\bfm'}]) =\delta_{\bfm,\bfm'}$. Similarly, we denote by $\{[M_\bfm]^*\}$ the $\Z$-basis dual to $\{[M_{\bfm}]\}$. We denote 
\begin{align} \label{eq:KK}
{\K} :=\bigoplus_{n=0}^\infty [\KS(n,n)\modf],
\qquad
{\K}^* :=\bigoplus_{n=0}^\infty [\KS(n,n)\modf]^*.
\end{align}

\begin{theorem} [\text{\cite[Theorems 6.6, 8.4]{GV93}, \cite[\S12.3]{VV99}}]
\label{thm:Hall=Schursimple}
Let $p\le \infty$. 
    The $\Z$-linear isomorphism 
    \begin{align*}
        \imath_p: \HallZ \longrightarrow {\K}^*,  \qquad
        f_{\bfm} \mapsto [M_{\bfm}]^*
        \textrm{ for } \bfm \in \cM\cS,
    \end{align*}
sends the canonical basis to the basis dual to the simple modules, i.e., $\imath_p(b_\bfm) =[L_{\bfm}]^*.$
\end{theorem}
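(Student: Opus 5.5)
The plan is to reduce the statement to a stalk-multiplicity formula from the geometric representation theory of convolution algebras, following \cite{CG97, GV93}, and then read off the dual-basis statement. Since $\{f_\bfm\}$ and $\{[M_\bfm]^*\}$ are both $\Z$-bases indexed by $\cM\cS$, the map $\imath_p$ is by definition a $\Z$-linear isomorphism. The content is therefore the identity $\imath_p(b_\bfm)=[L_\bfm]^*$.

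First I would expand $[L_\bfm]^*$ in the basis $\{[M_\bfn]^*\}$. Pairing with $[M_\bfn]$ gives $[L_\bfm]^*([M_\bfn])=[M_\bfn:L_\bfm]$, so
\[
[L_\bfm]^* =\sum_{\bfn} [M_\bfn:L_\bfm]\,[M_\bfn]^*.
\]
On the other side, specializing $v\mapsto 1$ in the definition of $b_\bfm$ gives
\[
b_\bfm|_{v=1}=\sum_{\bfn}\Big(\sum_i \dim \cH^i_{\cO_\bfn}(\mathrm{IC}_\bfm)\Big) f_\bfn.
\]
It therefore suffices to prove the multiplicity formula
\[
[M_\bfn:L_\bfm]=\sum_i \dim \cH^i_{\cO_\bfn}(\mathrm{IC}_\bfm).
\]

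To prove this formula I would proceed in three steps.

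\begin{enumerate}
\item Fix a central character $s$ and the parameter $z$ (generic, or $z=\eps$). Complete $\KS(n,n)$ at $(s,z)$ and use the bivariant Riemann--Roch / localization theorem \cite[Ch.~5, 8]{CG97}. This identifies the completed algebra with $\mathrm{Ext}^\bullet(\mathcal L,\mathcal L)$, where $\mathcal L=\mu_*\C_{T^*\cF^{s,z}}$ is the pushforward from the $(s,z)$-fixed locus of $T^*\cF$ to the fixed nilpotent locus $\cN^{s,z}=\{x\in\cN\mid sxs^{-1}=z^{-2}x\}$.
\item Identify $\cN^{s,z}$, as a $Z_{GL(n)}(s)$-variety, with the representation space $E_{V_\bfd}$ of $\peni$ or $\penp$. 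Here $\bfd$ records the eigenvalue multiplicities of $s$ in $z^{2\Z}$, and $Z_{GL(n)}(s)=G_{V_\bfd}$. Under this identification the $Z(s)$-orbits are exactly the $\cO_\bfn$. This is Lusztig's standard observation \cite{Lus91, Lus98}.
\item Apply the decomposition theorem to the proper map $\mu$, which gives
\[
\mathcal L=\bigoplus_\bfm L_\bfm\otimes \mathrm{IC}_\bfm[\,\cdot\,].
\]
The multiplicity spaces are the simple modules, and taking $i_x^!$-costalks at $x\in\cO_\bfn$ produces $M_\bfn=H_*(\cF_x^s)$. Hence $[M_\bfn:L_\bfm]=\dim H^\bullet(i_x^!\mathrm{IC}_\bfm)$ \cite[Thm.~8.6.23]{CG97}.
\end{enumerate}

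I expect the main obstacle to be matching this geometric multiplicity with the Hall-algebra normalization of $b_\bfm$ at $v=1$. Two points need care. First, we need $N=n$ so that every orbit $\cO_\bfm$ actually occurs in the image of $\mu$, i.e.\ all $\mathrm{IC}_\bfm$ appear with nonzero multiplicity. With $n$-step flags in $\C^n$ every nilpotent $x$ lies in the image, which handles this. Second, we must pass from costalks to stalks. For this I would use Verdier self-duality of $\mathrm{IC}_\bfm$ together with the smoothness of the orbits $\cO_\bfn$: this converts $i^!$ into $i^*$ up to a shift, and the shift only affects the powers of $v$, which disappear at $v=1$.

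Finally, the orientation convention of the cyclic quiver (the clockwise convention adopted in this paper) must match the condition $sxs^{-1}=z^{-2}x$, so that the parametrization by $\cM\cS$ is the same on both sides. Once this is checked, combining the multiplicity formula with the two expansions above gives $\imath_p(b_\bfm)=[L_\bfm]^*$.
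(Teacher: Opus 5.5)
The paper gives no proof of this statement; it quotes it from \cite{GV93} and \cite[\S12.3]{VV99}. Your outline is the standard argument behind those citations: specialize at $(s,z)$, pass to $\mathrm{Ext}^\bullet(\mathcal L,\mathcal L)$, identify the fixed nilpotent locus with $E_{V_\bfd}$, apply the decomposition theorem, and compare $[M_\bfn:L_\bfm]$ with total stalk dimensions of $\mathrm{IC}_\bfm$. The reduction to the multiplicity formula is correct. The costalk-to-stalk step is also correct, with one small fix: self-duality turns $i_x^!\mathrm{IC}_\bfm$ into the shifted linear dual of $i_x^*\mathrm{IC}_\bfm$, not into $i_x^*\mathrm{IC}_\bfm$ itself. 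Total dimensions still agree, and that is all you need at $v=1$.

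The genuine gap is your treatment of the first ``obstacle''. That every nilpotent $x$ lies in the image of $\mu$ only shows that each $M_\bfn$ is nonzero. It says nothing about whether $\mathrm{IC}_\bfm$ is a summand of $\mathcal L$, i.e.\ whether $L_\bfm\neq 0$, and that is exactly what is needed for the labels $L_\bfm$ to exhaust $\cM\cS$. Your argument applies verbatim to complete flags, i.e.\ to $\KH(n)$, where the conclusion is false. Take $p=2$ and $\bfd=(1,1)$. The nilpotent locus is $\{(a,b)\in\C^2\mid ab=0\}$, with orbits $\{0\}$, $\{a\neq 0=b\}$ and $\{b\neq 0=a\}$. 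The two components of the fixed complete-flag variety map isomorphically onto the two coordinate lines, so $\mu$ is onto. Yet $\mathcal L$ is the sum of the constant sheaves on the two lines, and the skyscraper $\mathrm{IC}$ of the periodic multisegment $[\overline 0;1)+[\overline 1;1)$ never occurs. In general only aperiodic $\bfm$ occur for complete flags.

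The correct reason uses $N=n$ in an essential way. The components of the fixed locus of $T^*\cF$ are Lusztig's varieties of pairs $(x,F_\bullet)$, where $F_\bullet$ is an $\I$-graded flag with $xF_j\subseteq F_{j-1}$. Equivalently, these are flags with semisimple subquotients of arbitrary dimension vectors $\bfi^1,\ldots,\bfi^n\in\N\I$, zero steps allowed. Up to a power of $v$, the pushforward from such a component realizes a product of $f_{\bfi^1},\ldots,f_{\bfi^n}$. The $f_\bfi$ generate $\Hall$ (Proposition~\ref{prop:generators}), so these monomials span every graded piece. Linear independence of the $b_\bfm$ then forces every $\mathrm{IC}_\bfm$ to occur in $\mathcal L$. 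In the example, the component with $F_1=\C^2$ forces $x=0$ and supplies the missing skyscraper. The paper takes the parametrization of simple $\KS(n,n)$-modules by all of $\cM\cS$ as known input, but the justification you give for it does not work and should be replaced by this argument.
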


\subsection{Matching the standard modules}

We now bring back the affine $q$-webs into the consideration. 

\begin{proposition}
\label{prop:Morita}
    For any $N\ge n$, the algebras $\KS(N,n), \ASch(N,n)$ and $\qAW(n)$ are all Morita equivalent. (We only need the case when $N=n$.)
\end{proposition}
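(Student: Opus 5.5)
The proof is a chain of three Morita equivalences. By Proposition \ref{prop:SameS} (from \cite{LXY26}), $\KS(N,n)\cong \ASch(N,n)$ for $N\ge n$, so these two are isomorphic and in particular Morita equivalent. It therefore suffices to show that $\ASch(N,n)$ is Morita equivalent to $\hwS(n)$, the path algebra of $\qAW(n)$, for every $N\ge n$. I would do this in two steps: first show that $\ASch(N,n)$ is Morita equivalent to $\ASch(n,n)$, then use the result of \cite{SSW25} that $\ASch(n,n)$ is Morita equivalent to $\hwS(n)$.

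For the first step, I would work with the module $\hatT_{N,n}=\bigoplus_{\bfi\in\Xi(N,n)} e_\bfi\AHe(n)$ from \eqref{T:module}. Each $\bfi\in\Xi(N,n)$ determines a weakly increasing sequence in $[1-N,0]$. Its stabilizer $\mathfrak S_\bfi$ is, after conjugating by an element of the extended affine Weyl group if necessary, a standard parabolic subgroup $\mathfrak S_\mu$ of $\mathfrak S_n$, where $\mu$ is the composition recording the multiplicities of the values. I would check carefully that for $\bfi\in\Xi(N,n)$ the stabilizer never contains $s_0$ when $N\ge n$, or that when it does, conjugation by $\pi$ still identifies $e_\bfi\AHe(n)$ with some $e_\mu\AHe(n)$. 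With that, $e_\bfi\AHe(n)\cong e_\mu\AHe(n)$ as right $\AHe(n)$-modules, with $\mu$ ranging over compositions of $n$ into at most $N$ parts.

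When $N\ge n$, every composition type $\mu$ of $n$, up to reordering of parts, occurs among the summands of $\hatT_{N,n}$, and also among those of $\hatT_{n,n}$. Permuted compositions give isomorphic modules $e_\mu\AHe(n)$ by the standard intertwiners (conjugation by shortest double coset representatives). Hence $\hatT_{N,n}$ and $\hatT_{n,n}$ have the same indecomposable summands up to multiplicity, i.e. $\mathrm{add}(\hatT_{N,n})=\mathrm{add}(\hatT_{n,n})$. Their endomorphism algebras are then Morita equivalent: each is $\End$ of a progenerator of the additive category $\mathrm{add}(\hatT)$, giving a projective generator of modules over the other.

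For the second step, the same principle applies. $\hwS(n)=\bigoplus_{\lambda,\mu\in\Lambda_\st(n)}\Hom(\mu,\lambda)$, and by the double centralizer property underlying Proposition \ref{prop:center}, $1_\lambda\hwS(n)1_\mu\cong\Hom_{\AHe(n)}(e_\mu\AHe(n),e_\lambda\AHe(n))$. This is the statement from \cite{SSW25} cited in the introduction, so $\hwS(n)$ is (the locally unital version of) the endomorphism algebra of $\bigoplus_{\lambda\in\Lambda_\st(n)}e_\lambda\AHe(n)$. That sum again has the same additive closure, so it is Morita equivalent to $\ASch(n,n)$. Here $\Lambda_\st(n)$ is infinite because of zero parts, but $1_\lambda$ for a strict composition is isomorphic to $1_{\bar\lambda}$, where $\bar\lambda$ deletes the zeros (via identity strands). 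So finitely many idempotents $1_\lambda$, $\lambda\in\Lambda(n)$, already generate, and module categories of finite type agree.

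The main obstacle is the bookkeeping in the first step: identifying the stabilizers $\mathfrak S_\bfi$ inside the extended affine Weyl group with finite parabolics, and checking that all types occur once $N\ge n$. For this I would mainly cite \cite{Gre99} and \cite{LXY26} rather than redo the combinatorics.
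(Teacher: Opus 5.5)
Your proposal is correct and follows the paper's proof. The paper uses the same chain: $\KS(N,n)\cong\ASch(N,n)$ by \cite{LXY26}, then $\ASch(N,n)$ Morita equivalent to $\ASch(n,n)$ by comparing the summands $e_\bfi\AHe(n)$ of $\hatT_{N,n}$ and $\hatT_{n,n}$, then $\ASch(n,n)$ Morita equivalent to $\hwS(n)$ by \cite[Proposition 4.18]{SSW25}.

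Your version is slightly more careful than the paper's in two respects. You check both inclusions $\mathrm{add}(\hatT_{N,n})=\mathrm{add}(\hatT_{n,n})$, whereas the paper only states that summands of $\hatT_{n,n}$ occur in $\hatT_{N,n}$ and leaves fullness implicit. Your hedge about $s_0$ is unnecessary: since $i_n-i_1\le N-1<N$, every stabilizer $\mathfrak S_\bfi$ is already a standard Young subgroup of $\mathfrak S_n$.
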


\begin{proof}
Thanks to $N\ge n$, each summand in $\hatT_{n,n}$ appears as a summand in $\hatT_{N,n}$; cf. \eqref{T:module}. Hence by the definition \eqref{def:affineSchur}, the affine $q$-Schur algebra $\ASch(N,n)$ is Morita equivalent to $\ASch(n,n)$. It follows by Proposition \ref{prop:SameS} that $\KS(N,n)\cong \ASch(N,n)$ is Morita equivalent to $\KS(n,n) \cong \ASch(n,n)$.

On the other hand, the algebra $\ASch(n,n)$ is Morita equivalent to $\qAW(n)$ by \cite[Proposition 4.18]{SSW25}. We are done. 
\end{proof}

\begin{rem}
\label{rem:composition}
    One can define a variant $'\!\qAW$ of $\qAW$ whose objects are compositions instead of strict compositions; cf. \cite{Bru25} in the setting of the $q$-web category. Then the path algebra for $'\!\qAW(n)$ is isomorphic to $\ASch(n,n)$, and hence isomorphic to $\KS(n,n)$. 
\end{rem}

\begin{definition} 
A finite dimensional $\qAW$-module $M$ is called $\eps$-integral  if it factors through the quotient $\qAW\rightarrow\qW_\bfu$ with $\bfu=(\eps^{2s_1}, \ldots, \eps^{2s_\ell})$, for some $s_i\in \I$. Let $\qAW\modf_\eps$ denote the category of $\eps$-integral $\qAW$-modules.  Categories $\hwS(n)\modf_\eps$, $\qAH\modf_\eps$, and $\AHe(n)\modf_\eps$ are defined similarly.
\end{definition}

The study of general finite-dimensional $\hwS(n)$-modules can be reduced through Morita equivalences to 
$\hwS(m)\modf_\eps$, for various $\eps$ and $m$. 

For any $\bfm=\sum_{i\in \I,\ell\in \Z}m_{i,\ell} [i;\ell) $, let $\lambda_{\bfm}$ be the multipartition such that each part is a column partition and for each $i$ there are $m_{i,\ell}$ times of  $(1^{\ell})$ for $i\in \I,\ell\in \Z $. Here we fix some order of the component of $\lambda_\bfm$. 
Set $\bfu^\diamond=((\eps^{2(i-\ell+1)})^{m_{i,\ell}})_{i\in \I,\ell\in \Z}$ such that the order corresponds to the order of component of $\lambda_\bfm$. Denoting $n=|\lambda_{\bfm}| =|\bfm|$, we define the {\em standard modules} of $\hwS(n)$:
 \begin{align}  \label{eq:std}
\Delta_\bfm:= \pi_{\bfu^\diamond}(\Delta^{\lambda_\bfm}).
\end{align}
These are always induced modules by Theorem \ref{thm:induced}. 
Similarly, the $\AHe(n)$-modules  
\[
\texttt S_\bfm:= \pi_{\bfu^\diamond}( S^{\lambda_\bfm})
\]
are induced modules. 

The Morita equivalence in Proposition \ref{prop:Morita} (and Remark \ref{rem:composition}) induces a canonical isomorphism 
\begin{equation}
\label{equ:Moritiso}
    \natural: [\qAW \modf_\eps]\longrightarrow  \K,
\end{equation}
which matches the simples. 
Similarly, we have an isomorphism $\natural: [\qAH\modf_\eps] \rightarrow\bigoplus_{n=0}^\infty[\KH(n)\modf]$. The following is built on the induction theorem from \cite{KL87} and \cite{Ar96}.

\begin{theorem} \label{thm:ind=std}
Let $p\le \infty$. The isomorphism $\natural$ in \eqref{equ:Moritiso} matches the standard modules, i.e., 
$\natural([\Delta_\bfm] )= [M_\bfm]$, for any $\bfm\in\cM\cS$. ($\natural$ matches the simples as well.)
\end{theorem}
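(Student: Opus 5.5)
The plan is to reduce the statement to the affine Hecke algebra case via the Schur functor $1_{(1^n)}$, where the induction theorem of Kazhdan--Lusztig \cite{KL87} (as adapted by Ariki \cite{Ar96}) is available. Concretely, the Morita equivalence of Proposition \ref{prop:Morita} is realized through $\hatT_{n,n}$. Together with the double centralizer property between $\hwS(n)$ (equivalently $\ASch(n,n)$) and $\AHe(n)$, this gives a commutative square. On one side is the Schur functor $1_{(1^n)}:\hwS(n)\modf_\eps\to\AHe(n)\modf_\eps$. On the geometric side is the functor $\KS(n,n)\modf\to\KH(n)\modf$ given by convolution with the bimodule $\KBi_{n,n}$ (equivalently, multiplication by the idempotent corresponding to the complete flag summand $\cB\subset\cF$). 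I will first check that these two functors correspond under $\natural$. This should follow since the identification of Proposition \ref{prop:SameS} is compatible with $\KH(n)\cong\AHe(n)$ and sends $\KBi_{n,n}$ to $\hatT_{n,n}$; I would cite \cite{LXY26, GRV94} here.

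Second, I will compute both sides after the Schur functor. On the web side, Corollary \ref{cor:idemonindu} and \eqref{equ:idemoncell} give $1_{(1^n)}\Delta_\bfm\cong \texttt S_\bfm$. This is the induced module $\Ind(\pi S^{(1^{\ell_1})}\otimes\cdots)$ from one-dimensional modules attached to the segments of $\bfm$ (column partitions give sign-type one-dimensional modules). On the geometric side, $H_*(\cB_x^s)$ is the standard module of $\KH(n)$, and by \cite{KL87, Ar96} it is isomorphic to exactly such an induced module from segment modules. This uses the twist $\tau$ of \eqref{equ:antiind} and the shuffle relation \eqref{equ:shuffle} to fix the conventions (orientation, column versus row, and the order of components of $\lambda_\bfm$). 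Moreover, convolution with $\KBi$ sends $M_\bfm=H_*(\cF_x^s)$ to $H_*(\cB_x^s)$. Hence $1_{(1^n)}\natural^{-1}[M_\bfm]=1_{(1^n)}[\Delta_\bfm]$ in $[\AHe(n)\modf_\eps]$.

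Third, I will lift this equality back to $\hwS(n)$. The Schur functor is not injective on Grothendieck groups in general: simples $L$ with $1_{(1^n)}L=0$ are killed. So I would instead use a family of idempotents $1_\mu$ for all $\mu\in\Lambda_\st(n)$ and compare the characters $\dim 1_\mu(-)$ restricted to each generalized central-character block. The geometric counterpart of $1_\mu$ is the partial flag variety of type $\mu$, and $1_\mu M_\bfm=H_*(\cF^s_{\mu,x})$. On the web side, $1_\mu\Delta_\bfm$ has basis $\SST(\lambda_\bfm,\mu)$ through the induced description of Theorem \ref{thm:induced}. Its structure as a module over the parabolic subalgebra $1_\mu\hwS 1_\mu$ is again induced, by Lemma \ref{lem:dimofinduction}. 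A cleaner route is this: both $[\Delta_\bfm]$ and $[M_\bfm]$ are unitriangular with respect to the simple modules $[L_{\bfm'}]$, in a common geometric order on orbits. If I can show that $\Delta_\bfm$ has a simple head $L_\bfm$ and that its composition factors coincide with those of $M_\bfm$ after every $1_\mu$, then the equality follows, because the functors $1_\mu$ jointly detect all simples of $\hwS(n)$.

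The main obstacle is this last lifting step, together with matching conventions. Concretely, the obstacle is proving $1_\mu\Delta_\bfm$ and $1_\mu M_\bfm$ agree in $[1_\mu\hwS(n)1_\mu\modf]$ for all $\mu$. I would attack it by observing that $1_\mu\hwS(n)1_\mu$ contains the parabolic affine Hecke-type structure and that both modules are induced from tensor products of one-dimensional segment modules. The web side follows from Theorem \ref{thm:induced}. The geometric side follows from the Künneth/induction property of $H_*(\cF_x^s)$ over the Levi $\prod GL(\text{segment})$, in the style of \cite{CG97} Chapter 8. I would therefore reduce the claim to the case of a single segment, where both modules are one-dimensional on each weight space and can be matched directly by central character.
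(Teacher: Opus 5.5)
Your first two steps coincide with the paper's. These are the commutative square of Schur functors $1_{(1^n)}$, the identity $1_{(1^n)}[\Delta_\bfm]=[\texttt S_\bfm]$ from Corollary \ref{cor:idemonindu}, and the geometric identity $1_{(1^n)}[M_\bfm]=[N_\bfm]$ (convolution with $\KBi_{n,n}$; the paper cites Vasserot). The last ingredient is the Kazhdan--Lusztig/Ariki induction theorem giving $\natural[\texttt S_\bfm]=[N_\bfm]$.

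The gap is your third step, which you flag as the main obstacle and leave open. The missing idea is that for $p=\infty$ there is nothing to lift. When $\eps$ is not a root of unity, $[a]_\eps!\neq 0$, so iterating \eqref{splitbinomial} gives $1_\mu=\big(\prod_i[\mu_i]_\eps!\big)^{-1} m_\mu\circ 1_{(1^n)}\circ s_\mu$. Here $s_\mu:\mu\to(1^n)$ and $m_\mu:(1^n)\to\mu$ are iterated splits and merges. Hence $\hwS(n)1_{(1^n)}\hwS(n)=\hwS(n)$, and the Schur functor is a Morita equivalence. Both vertical arrows of your square are then isomorphisms on Grothendieck groups (the right one because the square commutes), so the Hecke-level identity lifts immediately. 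The paper proves this generic case first. It then obtains $p<\infty$ by specializing $q\mapsto\eps$, using that $\Delta_\bfm$ and $M_\bfm$ are defined uniformly in $q$. It never has to lift through a non-injective Schur functor.

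Your substitute argument does not close the gap at roots of unity, for three reasons.
\begin{enumerate}
\item Truncating by $1_\mu$ does not turn $\Ind^n_{n_1,\ldots,n_\ell}$ into an induction functor for the corner algebra $1_\mu\hwS(n)1_\mu$. By Lemma \ref{lem:dimofinduction}, $1_\mu\Ind(\cdots)$ is built from all the pieces $1_{(\eta^{(1)},\ldots,\eta^{(\ell)})}(\cdots)$ of the inducing module, one for each $A\in C^\mu_{\eta^{(1)},\ldots,\eta^{(\ell)}}$, and $1_\mu\hwS(n)1_\mu$ mixes them. So the comparison does not reduce weight by weight to single segments.
\item On the geometric side you would need a Schur-level induction theorem. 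It would say that $H_*(\cF^s_x)$ is obtained from single-segment standard modules by an induction on $\KS$-modules, and that $\natural$ intertwines this induction with $\Ind^n_{n_1,\ldots,n_\ell}$. Your sketch establishes neither part. Moreover, with both in hand, Theorem \ref{thm:induced} would finish the proof directly, so the $1_\mu$-comparison would be superfluous.
\item The ``cleaner route'' is circular. Unitriangularity with respect to simples does not determine a class. Since the $1_\mu$ jointly detect simples, agreement of composition factors after every $1_\mu$ is exactly the identity $\natural[\Delta_\bfm]=[M_\bfm]$ you are trying to prove.
\end{enumerate}
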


\begin{proof}
First assume that $p=\infty$. In this case we have a commutative diagram 
\begin{equation*}  
\begin{tikzcd}
{[}\hwS(n)\modf_\eps{]}\ar[r,"\natural"]\ar[d,"1_{(1^n)}"] & {[}\KS(n,n)\modf{]} \ar[d,"1_{(1^n)}"]
\\
{[}\AHe(n)\modf_\eps{]} \ar[r,"\natural"] & {[}\KH(n)\modf{]}
\end{tikzcd} 
\end{equation*}
where the Schur functors $1_{(1^n)}$ are  isomorphisms. By Corollary \ref{cor:idemonindu}, the left-column Schur functor satisfies that
$1_{(1^n)} ([\Delta_\bfm]) = [\texttt S_\bfm]$, while the right-column Schur functor satisfies that $1_{(1^n)} ([M_\bfm]) = [N_\bfm]$; cf. \cite[Proposition 18]{V98}. Here $N_\bfm$ is the standard $\KH(n)$-module defined as in \cite[\S 6.2]{AJL}, which is related to the standard module used in \cite{Ar96} via an involution; also see Remarks \ref{rem:sign}--\ref{rem:opposite}.  Then we have $\natural([\Delta_\bfm] )= [M_\bfm]$ since $\natural([ {\texttt S}_\bfm] )= [N_\bfm]$ by Proposition \ref{prop:induhe} and \cite[Theorem 3.2]{Ar96} (also see \cite[Theorem~ 6.2]{KL87}).
Finally, the identity in case for $p>0$ follows from that for $p=\infty$ since $\Delta_\bfm$ and $M_\bfm$ are defined over any field and the equality in the Grothdendick group holds in specializations of $q\mapsto \eps$.
 \end{proof}
 
 \begin{rem}
 \label{rem:donotorder}
 It follows from Theorem \ref{thm:ind=std} that $[\Delta_\bfm]$ does not depend on the order of components in $\lambda_\bfm$ in the Grothendieck group since $[M_\bfm]$ does not. 
 \end{rem}
 
\begin{rem} \label{rem:sign}
The induced modules ${\texttt S}_\bfm$ here are induced from the sign representation of the parabolic subalgebra, while the induced modules used in \cite{Ar96} is from the trivial representation. Let ${\texttt S}'_\bfm := \pi_{\bfu'}(  S^{\lambda_\bfm'})$, where  $\bfu'=((q^{2i})^{m_{i,\ell}})_{i\in \I,\ell\in \Z}$ and $\lambda_\bfm'$ is the multipartition such that each component is a row partition and for each $i$ there are $m_{i,\ell}$ times of  $(\ell)$ for $i\in \I,\ell\in \Z$.
 Here, the order of the components of $\lambda_\bfm'$ is chosen to be compatible with that of $\bfu'$. 
 Then by 
\eqref{equ:antiind}, we have 
\begin{equation}
\label{equ:signtriv}
   {\texttt S}_\bfm^\tau\cong  {\texttt S}'_{\bfm^\tau}. 
\end{equation}
where $\bfm^\tau=\sum_{i\in \I, \ell\in \N}m_{i,\ell} (\ell, -i]$.
See also \cite[\S 6.2]{AJL}.
Note that  by \eqref{equ:shuffle}, the isoclasses of $\texttt S'_{\bfm^\tau}$ and $\texttt S_\bfm$ do not depend on the ordering of the tuple $\lambda_\bfm$ and $\lambda_{\bfm^\tau}'$. Then $\texttt [S_{\bfm }]$ also corresponds to $f_\bfm$   since $\tau$ corresponds to the anti-automorphism  $\rho$ of the Hall algebra in \cite[\S 2.3]{AJL} and $\rho (f_\bfm)=f_{\bfm^\tau}$. 
\end{rem}

\begin{rem}
\label{rem:opposite}
The discrepancy between the induced representations from the sign and trivial representations noted in Remark~\ref{rem:sign} has its root in the choice of opposite orientations on the cyclic quiver. In this paper we use the clockwise orientation ($i\mapsto i+1$) following \cite{VV99, DF15}. If we choose to work with the Hall algebra with opposite orientation as in \cite{Ar96, LTV99} and define $f_\bfi$ via  {\em right} multiplication (with multiplication formula derived from \cite{DF15, FL19} accordingly), then we expect the new formulation will match with the induced modules from the trivial representation. 
\end{rem}

\subsection{Affine $q$-web categorifies the Hall algebra}

The $q$-web category has a great advantage over $\K$ as restriction and induction functors are defined. We define linear operators 
\begin{align}
    \label{euq:defoffi}
    \begin{split}
    f_\bfi: [\qAW \modf_\eps]^* &\longrightarrow [\qAW \modf_\eps]^*, \\
    \phi \mapsto f_\bfi \phi &\qquad \text{ such that }
    f_\bfi \phi ([M]):= \phi ([\bfi\text{-Res}(M)]).
    \end{split}
\end{align}
Let $D_\bfm$ be the irreducible module in $[\qAW \modf_\eps]$ such that  
$\natural ([D_\bfm])= L_\bfm$, where $\natural$ is the isomorphism in \eqref{equ:Moritiso}. Denote $\HallZ =\Z\otimes_\cA \Hall$, the specialization at $v=1$. (Similar notations with subscript $\Z$, e.g., $\Uglzhalf$, will be freely used for the specialization at $v=1$ of $\cA$-algebras and $\cA$-modules.) 

The identifications in Proposition \ref{prop:Morita} and Theorem \ref{thm:ind=std} allow us to formulate an improved version of Theorem \ref {thm:Hall=Schursimple} in terms of the affine $q$-web category. It follows from Theorem \ref{thm:ind=std} that  
$\{[\Delta_{\bfm}]~|~ \bfm\in \cM\cS\}$ is a basis of $[\qAW \modf_\eps]$. Denote by $\{[\Delta_{\bfm}]^*\}$ the basis in $[\qAW \modf_\eps]^*$ dual to the basis $\{[\Delta_{\bfm}]\}$. 

 \begin{theorem}
 \label{thm:Cat_affine}
 Let $p\le \infty$.
 \begin{enumerate}
     \item 
     There exists a $\Uglzhalf$-module isomorphism
\begin{align}
\label{eq:simple=dCB:p}
\jmath_p: \HallZ &\longrightarrow [\qAW \modf_\eps]^*,
\qquad
f_\bfm \mapsto [\Delta_{\bfm}]^*,
\end{align}
where $f_\bfi$ acts on $\Hall$ by the multiplication with the corresponding semisimple generator and 
acts on $[\qAW \modf_\eps]^*$ by \eqref{euq:defoffi}.
\item The isomorphism $\jmath_p$ sends the canonical basis to the dual basis of the simple modules, i.e., $\jmath_p(b_\bfm)=[D_\bfm]^*$.
 \end{enumerate}
\end{theorem}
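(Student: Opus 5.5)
The map $\jmath_p$ is defined on the basis by $f_\bfm\mapsto[\Delta_\bfm]^*$. By Theorem \ref{thm:ind=std}, $\{[\Delta_\bfm]\}$ is a $\Z$-basis of $[\qAW\modf_\eps]$, so $\jmath_p$ is a $\Z$-linear isomorphism. Since $\natural([\Delta_\bfm])=[M_\bfm]$, we get $\jmath_p=\natural^*\circ\imath_p$, which is the diagram \eqref{eq:ijk}. Part (2) then follows from Theorem \ref{thm:Hall=Schursimple}. Indeed, $\imath_p(b_\bfm)=[L_\bfm]^*$, and $\natural$ matches simples by the definition of $D_\bfm$, so $\natural^*([L_\bfm]^*)=[D_\bfm]^*$. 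Everything therefore reduces to part (1), the $\Uglzhalf$-equivariance. By Proposition \ref{prop:generators}, $\HallZ$ is generated by the $f_\bfi$, so it suffices to check that $\jmath_p(f_\bfi f_\bfm)=f_\bfi\,\jmath_p(f_\bfm)$ for all $\bfi\in\N\I$ and $\bfm\in\cM\cS$.

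Evaluating both sides on a basis vector $[\Delta_\bfn]$, the required identity becomes
\[
[\bfi\text{-Res}(\Delta_\bfn)]=\sum_{\bfm} c^{\bfn}_{\bfi,\bfm}\,[\Delta_\bfm],
\]
where $c^{\bfn}_{\bfi,\bfm}$ is the coefficient of $f_\bfn$ in $f_\bfi f_\bfm$ at $v=1$. The left side is computed as follows. First, $\Delta_\bfn=\pi_{\bfu^\diamond}(\Delta^{\lambda_\bfn})$, and the quotient $\qAW\to\qW_{\bfu^\diamond}$ is compatible with $\iota_{a,n}$, so $\bfi$-Res commutes with $\pi_{\bfu^\diamond}$. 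This compatibility also needs the central characters to match, which holds because the center acts through residues. Then Proposition \ref{prop:IndRes}(1) gives
\[
[\bfi\text{-Res}\,\Delta_\bfn]=\sum_{\mu\in\bar{\mathcal R}(\lambda_\bfn)_\bfi}[\pi_{\bfu^\diamond}\Delta^\mu].
\]

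Next, each $\lambda_\bfn$ is a multipartition of single columns $(1^\ell)$ with charge $i-\ell+1$. Removing nodes in distinct columns means removing at most one node from the bottom of each column. Removing the bottom node of the column for $[i;\ell)$ leaves a column $(1^{\ell-1})$ whose bottom residue corresponds to $\eps^{-2(i-\ell+1)}$ up to the fixed convention, with the same charge. So each $\mu$ is again of the form $\lambda_{\bfm}$ with charge vector $\bfu^\diamond$, where $\bfm$ is obtained from $\bfn$ by shortening some segments by one at their tail end. The multiset of removed residues must equal $\bfi$. Strictly, $\pi_{\bfu^\diamond}\Delta^{\mu}$ should be identified with $\Delta_\bfm$, including when empty columns are deleted. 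I would do this with Theorem \ref{thm:induced}, which writes it as an induced product of column modules, together with Remark \ref{rem:donotorder} and \eqref{equ:shuffle} to remove dependence on the ordering. This yields
\[
[\bfi\text{-Res}\,\Delta_\bfn]=\sum_{\bfm} N_{\bfn,\bfm}[\Delta_\bfm],
\]
where $N_{\bfn,\bfm}$ counts the ways to choose, for each $[j;k)$ in $\bfn$, whether to shorten one copy, such that the result is $\bfm$ and the removed residues equal $\bfi$.

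The main obstacle is the last step: matching $N_{\bfn,\bfm}$ with the $v=1$ specialization of the Du--Fu multiplication formula for $f_\bfi f_\bfm$ in \cite{DF15}. At $v=1$, that formula expresses $f_\bfi f_\bfm$ as a sum over ways of extending segments of $\bfm$ by one node, or adding new length-one segments, whose new residues form $\bfi$. The coefficients are products of binomial coefficients $\binom{n_{j,k}}{\cdot}$ counting which copies of isomorphic segments are extended. I expect, but have not checked, that these binomial coefficients agree with the choice of copies in the tableau count $N_{\bfn,\bfm}$. I would check the conventions carefully: the clockwise orientation, which end of a segment is extended, and the sign-versus-trivial induction issue of Remarks \ref{rem:sign}--\ref{rem:opposite}. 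I would first verify the formula for $\bfi=r[i]$, a single residue with multiplicity, where both sides are elementary, and then treat a general semisimple $\bfi$ by the same bookkeeping done residue by residue. This residue-by-residue reduction is legitimate because removals in distinct columns are independent across columns.
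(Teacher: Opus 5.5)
Your reduction of (2) to (1) via $\jmath_p=\natural^*\circ\imath_p$ matches the paper. So does your reduction of (1) to the identity $[\bfi\text{-Res}\,\Delta_\bfn]=\sum_\bfm c^{\bfn}_{\bfi,\bfm}[\Delta_\bfm]$, computed via Proposition~\ref{prop:IndRes}, Theorem~\ref{thm:induced} and Remark~\ref{rem:donotorder}. For $p=\infty$ and $\bfi=\lambda_i\alpha_i$ your comparison with Du--Fu is the paper's comparison. The routes differ in how general $\bfi$ and finite $p$ are treated. The paper does general $\bfi$ at $p=\infty$ through the factorization $f_\bfi=\prod_i f_{\lambda_i\alpha_i}$, ordered from $\infty$ to $-\infty$ as in \cite[Remark 6.1]{VV99}; this factorization exists only for the linear quiver. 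For $p<\infty$ the paper never compares with Du--Fu directly. It maps $\bfW_\infty\to\bfW$ by $[\Delta_\bfm]\mapsto[\Delta_{\bar\bfm}]$ and shows that $\bfi$-Res corresponds to $\sum_{\bar\bfj=\bfi}\bfj$-Res. It then embeds $\HallZ$ into the $p=\infty$ Hall algebra via $f_\bfi\mapsto\sum_{\bar\bfj=\bfi}f_\bfj$ and finishes with a graded rank count.

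Your uniform route is valid, but only if you apply the Du--Fu formula for the cyclic quiver with an arbitrary semisimple $\bfi$ all at once. Your ``residue-by-residue'' step must mean the product structure of that coefficient, not a factorization into single-residue generators or single-residue restrictions. For $p<\infty$, $f_\bfi$ is in general not such a product. On the module side, composites of single-residue restrictions can remove two nodes from one column, which $\bfi$-Res never does.

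Read this way, the check you left open does go through:
\begin{itemize}
\item The coefficient $\binom{a_{ij}+t_{ij}-t_{i-1,j}}{t_{ij}}$ equals $\binom{a'_{ij}}{t_{ij}}$, where $a'_{ij}$ is the multiplicity in the larger multisegment.
\item $T$ is forced by the pair of multisegments, via $t_{ij}=\sum_{k\le i}(a'_{kj}-a_{kj})$, so no two terms in the sum over $T$ collide.
\item The $a'_{ij}$ copies sit in distinct components of $\lambda_\bfn$. Choosing which $t_{ij}$ of them lose their removable node gives exactly $\binom{a'_{ij}}{t_{ij}}$ elements of $\bar{\mathcal R}(\lambda_\bfn)_\bfi$, each contributing one $[\Delta_\bfm]$.
\end{itemize}

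Your route gains uniformity in $p$. It also avoids embedding $\HallZ$ into the $p=\infty$ Hall algebra, which involves infinite sums $\sum_{\bar\bfj=\bfi}f_\bfj$ and so tacitly a completion. The paper's route only needs the elementary single-residue comparison at $p=\infty$. Either way, the orientation and sign-versus-trivial conventions you flag must be fixed as in Remarks~\ref{rem:sign}--\ref{rem:opposite}, so that the end removed in $\lambda_\bfn$ is the socle end extended by left multiplication in $\HallZ$.
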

 
\begin{proof}
(1) Clearly $\jmath_p$ defined by \eqref{eq:simple=dCB:p} is a $\Z$-linear isomorphism as it matches bases. Then it remains to show that 
    \begin{equation}
    \label{equ:isomodule}
        \jmath_p(f_{\mathbf i}f_\bfm) = f_{\mathbf i} ([\Delta_{\bfm}]^*)
    \end{equation} 
for all semisimple generators $ f_{\mathbf i}$ and all multi-segments $\bfm$.

We assume $p=\infty$ first.
Write $f_{\mathbf i}f_\bfm=\sum c_{\bfm',\bfm}f_\bfm'$
and $f_{\mathbf i} [\Delta_{\bfm}]^* =\sum d_{\bfm',\bfm}  [\Delta_{\bfm'}]^*$.
Write a multi-segment as $\bfm_A=\sum_{i<j\in \Z}a_{i,j}[i,j)$ for some  upper integral  triangular  matrix $A=(a_{ij})$. Let $\mathbf T_{\bfi}$
be the set of all upper triangular integral matrices $T$ such that $\sum_{j}t_{kj}=i_k$ for all $k$. Then by \cite[Proposition 4.2]{DF15}
we have 
\[
f_\bfi f_{\bfm_A}= \sum_{ T\in \mathbf T_\bfi} \prod_{i,j\in\Z} \binom{a_{ij}+t_{ij}-t_{i-1,j}}{t_{ij}} f_{\bfm_{A+T-\tilde T^+} }   
\]
where 
$ \tilde T$ is obtained by descending each row of $T$  to the next row and $\tilde T^+$ is the upper triangular part of $\tilde T$.

On the other hand, $d_{\bfm',\bfm} $ is determined by the $\mathbf i$-Res such that 
\begin{equation}
\label{equ:i-RES}
  [\iRes(\Delta_{\bfm'})]= \sum d_{\bfm',\bfm}[\Delta_{\bfm}].  
\end{equation}
Using Proposition \ref{prop:IndRes} and comparing the coefficients (where induced modules in the Grothdendick group for different orderings are identified, see Remark \ref{rem:donotorder}) we  see that 
\[d_{\bfm_{A+T-\tilde T^+},\bfm_A}= \prod_{i,j\in\Z} \binom{a_{ij}+t_{ij}-t_{i-1,j}}{t_{ij}}.\]
For example, if $\bfi=\lambda_i\alpha_i$, then by Proposition \ref{prop:IndRes} and Remark \ref{rem:donotorder}, 
any $\bfm'$ appearing in \eqref{equ:i-RES} is obtained from $\bfm=\sum_{i,j}a_{i,j}[i,j)$ by deleting $\lambda_i$ times of $i$ from the segments starting with $i$, i.e., 
$\bfm'=m_{A+T-\tilde T^+}$ for some $T\in \mathbf T_\bfi$, if $m=m_A$.
Furthermore, we have 
\[d_{\bfm',\bfm}=\prod_{j}\binom{a'_{ij}}{t_{ij}} =\prod_{j\in \Z} \binom{a_{ij}+t_{ij}-t_{i-1,j}}{t_{ij}}=c_{\bfm',\bfm}.\]
For general $\bfi=\sum_{i}\lambda_i\alpha_i$, the equality $d_{\bfm',\bfm}=c_{\bfm',\bfm}$ follows from the case $\bfi=\lambda_i\alpha_i$ and the induction since 
$f_\bfi=\prod_{i}f_{\lambda_i\alpha_i}$ by \cite[Remark 6.1]{VV99}, where the order of the product is from $\infty$ to $-\infty$. This proves \eqref{equ:isomodule} for  $p=\infty$. 

Below we shall prove the result for $p>0$ by using the result proved above for $p=\infty$. 




For $p>0$, we let $\bfW(n)$ be the $\Z$-submodule of 
$ [\qAW(n)\text{-mod}_\eps]$ spanned by $\{[\Delta_\bfm] \mid \bfm\in \cM\cS_n\}$. Let $\bfW=\oplus_{n\in \N}\bfW(n)$ and let $\bfW^*=\oplus_{n\in \N}\Hom(\bfW(n),\C)$ be the graded dual.
For $p=\infty$, we add a subscript $\infty$ to denote the counterparts by $\bfW_\infty$ and $ \bfW_\infty^*$. 

We have shown above that $\bfW_\infty =[\qAW \modf_\eps]$ and that $\{[\Delta_\bfm]\mid \bfm\in\cM\cS_\infty\}$ is a basis of $\bfW_\infty$.
Clearly we have a surjective $\Z$-linear  map 
\[
\varpi: \bfW_\infty \longrightarrow \bfW,
\qquad
\varpi([\Delta_\bfm])=[\Delta_{\bar\bfm}], 
\]
where $\bar\bfm$ is the multi-segment obtaned from $\bfm$ modulo $p$. Dualizing this gives us an injective $\Z$-linear map 
\begin{align*}
  \varpi_*:  \bfW^* &\longrightarrow \bfW_\infty^*, 
  \qquad
    \phi \mapsto \phi \circ \varpi. 
\end{align*}

 Thanks to Proposition \ref{prop:IndRes} and Theorem \ref{thm:induced}, we have $\varpi_*(f_\bfi \phi) ([\Delta_\bfm]) =\sum _{\bfj:\bar\bfj=\bfi } f_\bfj \varpi_* (\phi) ([\Delta_\bfm])$ for $[\Delta_\bfm] \in \bfW_\infty$,  i.e., $\bfi\text{-Res}$ on $\bfW^*$ corresponds to  $\oplus_{ \bar\bfj=\bfi } \bfj\text{-Res}$ on $\bfW^*_\infty$ under the injection $ \varpi_*$.   
That is, we have a commutative diagram 
\begin{equation}  \label{CD:WWp}
\begin{tikzcd}
\bfW^* \ar[r,"\varpi_*"]\ar[d,"f_\bfi"] &\bfW^*_\infty= \bold{Hall}_{\infty,\Z} \ar[d,"\sum\limits_{\bfj:\bar \bfj=\bfi} f_\bfj"]
\\
\bfW^* \ar[r,"\varpi_*"] &\bfW^*_\infty= \bold{Hall}_{\infty,\Z}
\end{tikzcd} 
\end{equation}
where we identify $ \bfW^*_\infty =\bold{Hall}_{\infty,\Z}$ by what we have proved above for $p=\infty$. 
We view $\HallZ$ as a subspace of $\bold{Hall}_{\infty,\Z}$ via the injection from $\HallZ$ to $\bold{Hall}_{\infty,\Z}$ (i.e., from $\U_{p,\Z}^-$ to $\U_{\infty,\Z}^-$) which sends $f_\bfi$ to $\sum_{\bar \bfj=\bfi} f_\bfj$. Applying the generators $f_\bfi$ repeatedly to the cyclic vector $1\in \bfW^*$ gives us $\HallZ\subset \varpi_* ( \bfW^*)$. By comparing the graded ranks of $\HallZ$ and $ \bfW^*$ (for each $n$th component) we see that $\HallZ = \varpi_*(\bfW^*)$. Plugging this back into the diagram \eqref{CD:WWp} proves \eqref{equ:isomodule} for $p>0$, whence (1).

(2) Denote by $\natural^*$ the dual of the  isomorphism  $\natural$ in \eqref{equ:Moritiso}. By Theorem \ref{thm:Hall=Schursimple} and Part (1), we have $\natural^* \imath_p (f_\bfm) = \natural^*([M_\bfm]^*) =[\Delta_\bfm]^* = \jmath_p(f_\bfm)$, i.e., the
following commutative diagram holds:
\begin{equation*}  
\begin{tikzcd}
\HallZ\ar[r,"\imath_p"]\ar[dr,"\jmath_p"] &\K^*\ar[d,"\natural^*"]
\\
 &\bfW^*
\end{tikzcd} 
\end{equation*}
Then Part (2) follows from Theorems~ \ref{thm:Hall=Schursimple} and \ref{thm:ind=std}.
\end{proof}
\section{Categorification of integrable highest weight $\U({\widehat{\gl}_p})$-modules}
\label{sec:Cat}

\subsection{Higher level Fock spaces}

Denote by $\Par$ the set of all partitions. Given $\ell\ge 1$ and $s\in \Z$, let 
\begin{align} \label{eq:Zls}
\Z^\ell(s) =\big\{\bfs =(s_1, \ldots, s_\ell) \in \Z^\ell \mid s_1+ \ldots+ s_\ell =s \big\}.
\end{align}
One has a Fock space $\cF^{s+\infty}$ of level $\ell, p$ and charge $s$, which is equipped with a natural basis $\{|\bfla, \bfs \rangle \}$, where $\bfla =(\lambda^{(1)}, \ldots, \lambda^{(\ell)}) \in \Par^\ell$ and $\bfs \in \Z^\ell(s)$. It admits an action of $\U_v(\widehat\gl_p)$ from \eqref{eq:affine_glp} (and also a commuting action of $\U_v(\widehat\sll_\ell)$ which will be ignored here), cf. \cite{JMMO91, Ugl00}. 

We have a decomposition as $\U(\widehat\gl_p)$-modules
\[
\cF^{s+\infty} = \bigoplus_{\bfs \in \Z^\ell(s)} \cF_\bfs, 
\]
where $\cF_\bfs$ is spanned by $|\bfla, \bfs \rangle$, for all $\bfla \in \Par^\ell$. 

Uglov \cite{Ugl00} defined a bar involution on $\cF_\bfs$, for each $\bfs$ (and hence on $\cF^{s+\infty}$) such that
$\overline{v}=v^{-1}, \overline{|\emptyset, \bfs \rangle} =|\emptyset, \bfs \rangle$, $\overline{ux} =\overline{u}\,\overline{x}$, and $\overline{B_{-r}x} =\overline{B_{-r}}\overline{x}$, for all $x\in \cF_\bfs$, $u \in \U_v(\widehat\sll_p)$ and $r>0$. 

\begin{theorem} \cite{Ugl00}
Let $\bfs\in \Z^\ell(s)$ for $s\in \Z$. 
Then there exists a canonical basis $\bfB_{\bfs} =\{b_{\bfla,\bfs} \mid \bfla \in \Par^\ell \}$ 
on $\cF_\bfs$ which are characterized by 
\begin{enumerate}
    \item 
$\overline{b_{\bfla,\bfs}} =b_{\bfla,\bfs}$,
    \item
$b_{\bfla,\bfs} \in |\bfla, \bfs \rangle + \sum\limits_{\bfmu \in \Par^\ell,\bfmu \lhd \bfla} v\Z[v] |\bfmu, \bfs \rangle$.
\end{enumerate}
\end{theorem}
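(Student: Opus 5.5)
This is Uglov's theorem on the existence of a canonical basis on $\cF_\bfs$. I would prove it by the standard Lusztig-type lemma: an involution that is unitriangular with respect to a partial order admits a unique invariant basis.

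First I would establish that the bar involution is unitriangular on the standard basis:
\[
\overline{|\bfla,\bfs\rangle} \in |\bfla,\bfs\rangle + \sum_{\bfmu\lhd\bfla} \cA\, |\bfmu,\bfs\rangle .
\]
For this I would use Uglov's realization of $\cF_\bfs$ as a semi-infinite $v$-wedge space, in which $|\bfla,\bfs\rangle$ corresponds to an ordered semi-infinite wedge $u_{k_1}\wedge u_{k_2}\wedge\cdots$. The bar involution is then described on finite wedges via the $v$-Hecke algebra action (Kazhdan--Lusztig type) and extended to semi-infinite wedges by a stabilization argument. Straightening a reordered wedge by the normal-ordering rules only produces wedges that are smaller in the dominance-type order, and this ordering agrees with the restriction of $\lhd$ on $\Par^\ell$ after the abacus translation of $\bfs$-charged multipartitions. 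Compatibility of this combinatorial bar involution with the characterization in the text, namely $\overline{|\emptyset,\bfs\rangle}=|\emptyset,\bfs\rangle$ together with semilinearity for $\U_v(\widehat\sll_p)$ and $B_{-r}$, would follow because $\cF_\bfs$ is generated from $|\emptyset,\bfs\rangle$ under $\U_v(\widehat\sll_p)\cdot\Heis^-$. Once the normalization is fixed this gives uniqueness, and existence comes from the wedge model.

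Second, I would note that each weight space of $\cF_\bfs$ is finite dimensional, since only finitely many $\bfmu$ have given size. Moreover, the intervals $\{\bfmu \mid \bfmu\lhd\bfla\}$ inside a weight space are finite.

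Third, I would construct $b_{\bfla,\bfs}$ by induction along $\lhd$, solving $\overline{b}=b$ with coefficients $c_{\bfmu\bfla}\in v\Z[v]$. Writing $\overline{|\bfla\rangle}=\sum_{\bfmu} r_{\bfmu\bfla}|\bfmu\rangle$ and using $\overline{\overline{x}}=x$, one solves recursively: each unknown is determined by the condition that $c-\overline{c}$ equals a known Laurent polynomial that is antisymmetric under $v\mapsto v^{-1}$, and such an equation has a unique solution in $v\Z[v]$. Uniqueness of the whole basis follows because a bar-invariant element of $\sum v\Z[v]|\bfmu\rangle$ must vanish.

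The main obstacle is the first step: proving that the bar involution is well defined on semi-infinite wedges and unitriangular. The delicate part is the stabilization, i.e. showing that the involution on finite wedges of length $r$ is compatible with $r\to\infty$ up to a normalization constant, and that the resulting order matches $\lhd$ for $\ell>1$. For this I would follow Uglov's argument via the Takemura--Uglov $q$-wedge straightening relations.
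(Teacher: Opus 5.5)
\textbf{There is a genuine gap, at the identification of orders.} The paper gives no argument for this statement; it quotes it from \cite{Ugl00}. Your outline (semi-infinite $v$-wedges, an involution built from the affine Hecke algebra on finite wedges, stabilization, then Lusztig's lemma) is Uglov's route.

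The step that fails is your claim that the straightening order ``agrees with the restriction of $\lhd$ on $\Par^\ell$''. Transported to $\ell$-multipartitions through the abacus bijection, the order for which Uglov's involution is unitriangular depends on $\bfs$. On nodes of equal residue it compares charged contents $s_b+\mathrm{col}-\mathrm{row}$, not component indices, and for general $\bfs$ it is not compatible with the usual dominance order.

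Here is a concrete counterexample. Take $\ell=2$, $p\ge 2$ and $s_1\equiv s_2\equiv i \pmod p$, and put $A=|((1),\emptyset),\bfs\rangle$, $B=|(\emptyset,(1)),\bfs\rangle$.
\begin{enumerate}
\item The vector $f_i|\emptyset,\bfs\rangle$ is bar-invariant. In Uglov's Fock space it equals $A+vB$ or $vA+B$, according to which of the two addable $i$-nodes has the larger charged content. So $\bfs=(p,0)$ and $\bfs=(0,p)$ give opposite answers.
\item In the case $vA+B$, property (2) with $\lhd$ is impossible. Since $(\emptyset,(1))$ is $\lhd$-minimal in $\Par^2(1)$, (2) would force $\overline{B}=B$ and $\overline{A}\in A+\Z[v,v^{-1}]B$.
\item Then the $A$-coefficient of $\overline{vA+B}$ is $v^{-1}\neq v$, contradicting bar-invariance. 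In fact there $b_{(\emptyset,(1)),\bfs}=B+vA$, although $((1),\emptyset)\rhd(\emptyset,(1))$.
\end{enumerate}

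Your third step is fine, because Lusztig's lemma only needs some partial order with finite lower sets in each weight space. What it yields, however, is (2) with $\lhd$ replaced by Uglov's $\bfs$-dependent order. That order is compatible with $\lhd$ only for $\bfs$ far inside a suitable chamber ($|s_j-s_{j+1}|$ large compared with $|\bfla|$), where $\cF_\bfs$ behaves like a tensor product of level-one Fock spaces. The same imprecision is present in the statement as quoted. Your argument breaks exactly at the identification of orders, and it cannot be repaired without changing the order in (2) or restricting $\bfs$.

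\textbf{A second, smaller error.} For $\ell\ge 2$, $\cF_\bfs$ is \emph{not} generated by $|\emptyset,\bfs\rangle$ under $\U_v(\widehat\sll_p)\cdot\Heis^-$. That submodule is $\Lambda(\omega_\bfs)$, which is proper: in the example above it meets the span of $A,B$ only in the line through $f_i|\emptyset,\bfs\rangle$. So the listed properties ($\overline{|\emptyset,\bfs\rangle}=|\emptyset,\bfs\rangle$ and semilinearity for $\U_v(\widehat\sll_p)$ and the $B_{-r}$) do not determine the involution on $\cF_\bfs$, and compatibility cannot be propagated from the vacuum. It has to be checked directly in the wedge model, as Uglov does, and your uniqueness claim for the involution should be dropped.
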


Let $\omega_k$ be the $k$th fundamental weight, and denote $\omega_\bfs =\omega_{s_1} +\ldots +\omega_{s_\ell}$.
Denote by $\Lambda (\omega_\bfs)$ the integrable $\U(\widehat{\mathfrak {gl}}_p)$-module of highest weight $\omega_\bfs$ with highest weight vector $v^+_{\omega_\bfs}$. Denote by $\Lambda (\omega_\bfs)\!_\cA :=\UA^-(\widehat{\mathfrak {gl}}_p)\,  v^+_{\omega_\bfs}$ the $\cA$-form and by $\Lambda (\omega_\bfs)_\Z$ the specialization at $v=1$. 
Recall the canonical basis $\bfB$ on the generic Hall algebra $\Hall$ and the $\U^-_\cA(\widehat{\mathfrak {gl}}_p)$-module surjection $\varpi_\bfs: \Hall \rightarrow \Lambda(\omega_\bfs)\!_\cA$ thanks to the identification $\U^-_\cA(\widehat\gl_p) \cong \Hall$; see \eqref{eq:isom}.

Denote by $\mathcal C (\omega_\bfs)$ the subgraph for $\Lambda (\omega_\bfs)$ of the crystal graph for $\cF_\bfs$ with vertex set in $\Par^\ell$; cf. \cite{Kas91, Sch00}. 

\begin{theorem} \cite[Theorem 4.2]{Sch00}
\label{thm:Sch00}
    We have $\bfB \cdot |\emptyset, \bfs \rangle \subset \bfB_{\bfs} \cup \{0\}.$ Moreover, for $\bfla\in \mathcal C (\omega_\bfs)$, we have $b_{\bfla,\bfs} \in |\bfla, \bfs \rangle + \sum\limits_{\bfmu \in \Par^\ell, \bfmu\lhd\bfla} v\N[v] |\bfmu, \bfs \rangle$.
\end{theorem}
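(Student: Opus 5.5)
The plan is geometric: realize the action of $\Hall$ on the highest weight vector $|\emptyset,\bfs\rangle$ sheaf-theoretically, so that $b_\bfm\cdot|\emptyset,\bfs\rangle$ is the class of a restricted IC complex. Both the basis property and positivity should then follow from the bar-invariance and triangularity characterization of Uglov's basis $\bfB_\bfs$ in the theorem of \cite{Ugl00} above.

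\textbf{Step 1 (bar compatibility).} Consider the $\Hall$-module map $\varpi\colon\Hall\to\cF_\bfs$, $x\mapsto x\cdot|\emptyset,\bfs\rangle$. By Proposition~\ref{prop:generators}, $\Hall$ is generated by the semisimple generators $f_\bfi$. Each $f_\bfi$ is the constant function on a closed orbit, hence bar-invariant. Via \eqref{eq:isom}, $f_\bfi$ is a polynomial in the Chevalley generators $F_i$ and the Heisenberg generators $B_{-r}$. Uglov's involution is compatible with both kinds of generators, and $|\emptyset,\bfs\rangle$ is bar-invariant, so $\varpi$ intertwines Lusztig's bar involution on $\Hall$ with Uglov's bar involution on $\cF_\bfs$. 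In particular each $\varpi(b_\bfm)$ is bar-invariant.

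\textbf{Step 2 (geometric model of $\varpi$).} Next I would identify $\cF_\bfs$ (or at least $\Lambda(\omega_\bfs)$) with a space of functions on representations of the framed cyclic quiver, framed by $W$ with $\dim W_i=\#\{k: s_k\equiv i\}$, in the style of Lusztig/Nakajima. Under this identification $\varpi$ becomes restriction of functions to the open \emph{stable} locus $E_V^{\rm st}\subset E_V\times\Hom(W,V)$, followed by descent to the quotient by $G_V$. The natural basis vector $|\bfla,\bfs\rangle$ corresponds to the characteristic function of the stable stratum labelled by $\bfla$, up to a normalizing power of $v$.

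\textbf{Step 3 (IC restriction and positivity).} The restriction of $\mathrm{IC}(\overline{\cO_\bfm})$ to an open set is either $0$ or the IC complex of the restricted orbit closure. Hence $\varpi(b_\bfm)$ is $0$ or the function of an IC sheaf on the quotient, whose support is the closure of a single stratum $\bfla$. Purity of IC stalks, with the normalization from the definition of $b_\bfm$, gives
\[
\varpi(b_\bfm)\in |\bfla,\bfs\rangle+\sum_{\bfmu}v\N[v]\,|\bfmu,\bfs\rangle ,
\]
where the sum runs over strata $\bfmu$ in the closure of stratum $\bfla$. Combined with Step~1 and the uniqueness part of Uglov's characterization, this yields $\varpi(b_\bfm)=b_{\bfla,\bfs}$ and the positivity claimed for $\bfla\in\mathcal C(\omega_\bfs)$.

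\textbf{Main obstacle.} The hardest step is the comparison of orders hidden in Step~3: I must show that geometric closure order on stable strata implies $\bfmu\lhd\bfla$ in the order used in Uglov's theorem. I must also show that the normalized characteristic functions match $|\bfla,\bfs\rangle$ exactly, rather than only up to unitriangular corrections. My first attempt would be to check both statements on the generators $f_\bfi$: compare the action of the semisimple generators on characteristic functions of strata with the Fock-space action, via the Du--Fu-type formula used in Theorem~\ref{thm:Cat_affine}. I would then argue inductively on $|\bfla|$ that the resulting triangularity is compatible with dominance.
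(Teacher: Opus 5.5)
The paper gives no proof of this statement; it is quoted from \cite{Sch00}, which extends the level-one case of \cite{VV99}. Judged on its own, your outline has the right overall shape: bar-invariance of $\varpi(b_\bfm)$, unitriangularity with coefficients in $v\N[v]$, then Uglov's uniqueness. But Step~2, which supports Step~3 and the positivity, is false, and the problem goes beyond the order and normalization issues you flag.

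\textbf{Why Step~2 fails.} No model in which $\varpi$ is restriction of functions to an open $G_V$-stable locus can work.

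For nilpotent $x\in E_V$, a grading adapted to the radical filtration gives a one-parameter subgroup of $G_V$ contracting $x$ to $0$, so $0$ lies in every orbit closure. Hence an open $G_V$-stable $U\subset E_{V_\bfd}$ has only two options:
\begin{enumerate}
\item $U$ is all of $E_{V_\bfd}$, so restriction is injective in degree $\bfd$;
\item $U$ misses $0$, so restriction kills the semisimple generator $f_\bfd$, a multiple of the characteristic function of $\{0\}$.
\end{enumerate}

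Now take $\ell=1$ and $\bfd=\delta=\sum_{i\in\I}\epsilon_i$.
\begin{enumerate}
\item The degree-$\delta$ component of $\Hall$ has dimension at least $p+1$: the semisimple module and the $p$ uniserials $[i;p)$ each give an orbit.
\item The weight space $\Lambda(\omega_s)_{\omega_s-\delta}$ has dimension $p$, spanned by the hooks of size $p$. So $\varpi$ is not injective in this degree.
\item Yet $f_\delta|\emptyset,s\rangle\neq 0$. Dually to Proposition~\ref{prop:IndRes}, $f_\bfi$ adds nodes of residues $\bfi$ in distinct columns, and the one-row partition $(p)$ occurs with coefficient $1$ at $v=1$.
\end{enumerate}

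Your framed model falls under this. Pulling back to $E_V\times\Hom(W,V)$ and restricting to the locus where $V$ is generated by $W$ has the same kernel as restricting to the open image of that locus in $E_V$. Concretely, take $\ell=1$ and $W=\C$ at the vertex $s$. The stable points of dimension $\delta$ form a single $G_V$-orbit: the uniserial module with head $S_s$ together with a generator. So your space is $1$-dimensional where $p$ dimensions are needed.

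For $\ell>1$ there is a further problem. The map $\varpi$ lands in the proper submodule $\Lambda(\omega_\bfs)\subsetneq\cF_\bfs$, so the $|\bfla,\bfs\rangle$ cannot all be characteristic functions in the image of a restriction map.

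Consequently, whether $\varpi(b_\bfm)$ vanishes is not governed by supports, and ``an IC complex restricted to an open set is IC or $0$'' gives you nothing. The missing ingredient is a genuinely different realization of $\cF_\bfs$ as an $\Hall$-module in which $b_\bfm|\emptyset,\bfs\rangle$ can be controlled; this is what \cite{Sch00} supplies.

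\textbf{A hidden input in Step~1.} To get $\overline{\varpi(x)}=\varpi(\overline{x})$, you need $\xi$ to carry Lusztig's bar involution on $\Hall$ to the involution of $\U^-_v(\widehat\gl_p)$ for which Uglov's bar is semilinear. This matters in particular on the central part of $\Hall$ versus the $B_{-r}$. Knowing that $f_\bfi$ is bar-invariant and is some $\C(v)$-polynomial in the $F_i$ and $B_{-r}$ does not give this for free.
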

Thus $\bfB \cdot |\emptyset, \bfs \rangle \big\backslash \{0\}$ forms a $\C(v)$-basis for $\Lambda (\omega_\bfs)$ and an $\cA$-basis for $\Lambda (\omega_\bfs)_\cA$ (called a canonical basis). For $\bfla\in \mathcal C (\omega_\bfs)$, we write
\begin{align} \label{eq:decomposition}
    b_{\bfla,\bfs} = \sum_{\bfmu \unlhd \bfla} d_{\bfmu,\bfla}(v) |\bfmu, \bfs \rangle,
\end{align}
where $d_{\bfla,\bfla}(v)=1$, and $d_{\bfmu,\bfla}(v) \in v\N[v]$, for $\bfmu\lhd\bfla$. Also set $d_{\bfmu,\bfla}(v)=0$, for $\bfmu\ntrianglelefteq\bfla$. We refer to $\bold{D}(v) =\big[d_{\bfu,\bfla}(v)\big]_{\bfmu,\bfla}$ or $\bold{D}_n(v) =\big[d_{\bfmu,\bfla}(v)\big]_{|\bfmu|=\bfla|=n}$ as the $v$-decomposition matrix for $\qW_{\eps^{2\bfs}}$ or for $\wS_{\eps^{2\bfs}}(n)$.

\subsection{Cyclotomic categorification}

We continue to use the notation $\bfs \in \Z^\ell$ from \eqref{eq:Zls}, which differs from the notation used in $\eps^{2\bfs}$ (with $\bfs \in \I^\ell$) in earlier sections, but $\eps^{2\bfs}$ still makes sense. 
Thanks to Proposition \ref{prop:IndRes} we have the following. 
  
\begin{proposition} 
  \label{prop:lieaction}
There is a $\U_\Z(\widehat{\mathfrak {gl}}_p)$-module structure on $[\qW_{\eps^{2\bfs}}\modf]$ such that $e_{\bfi}$ and $f_{\bfi}$ act via  $[\iRes]$ and $[\iInd]$, respectively.
In particular, $[\Delta^\emptyset]$ is of weight $\omega_\bfs$.  This induces a $\U_\Z(\widehat{\mathfrak {gl}}_p)$-module structure on $[\qW_{\eps^{2\bfs}}\modf]^*$ such that 
$\phi\mapsto e_\bfi\phi, f_\bfi\phi$ with
\[ 
(e_\bfi \phi) ([M])=\phi ([\iInd(M)]), \qquad
(f_\bfi\phi) ([M])=\phi([\iRes(M)]). 
\]
\end{proposition}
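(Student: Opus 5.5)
We compare the action of $[\iRes]$ and $[\iInd]$ on the classes of cell modules with the action of $\U_v(\widehat\gl_p)$ on the Fock space $\cF_\bfs$ specialized at $v=1$. Consider the $\Z$-linear map
\[
\Theta: \cF_{\bfs,\Z}\longrightarrow [\qW_{\eps^{2\bfs}}\modf],\qquad |\la,\bfs\rangle\mapsto [\Delta^\la],
\]
where $\cF_{\bfs,\Z}$ is the $\Z$-span of the natural basis at $v=1$. By the cellular structure (Theorem~\ref{thm:cellularforwebu}), the classes $[\Delta^\la]$, $\la\in\Par^\ell(n)$, span $[\wS_{\eps^{2\bfs}}(n)\modf]$, so $\Theta$ is surjective. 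The plan is to show that $\Theta$ intertwines the operators $e_\bfi, f_\bfi$ (acting by $[\iRes]$, $[\iInd]$ on the right-hand side) with the corresponding Chevalley/semisimple-generator operators on $\cF_{\bfs,\Z}$. The relations of $\U_\Z(\widehat\gl_p)$ then transport to the image, because a relation holding on $\cF_{\bfs,\Z}$ holds on any quotient through which the operators descend.

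\textbf{Step 1: matching the operators on the natural basis.} At $v=1$, the action of a divided power $F_i^{(a)}$ (and more generally the semisimple generator $f_\bfi$ corresponding to $\bfi\in\N\I$) on $|\la,\bfs\rangle$ is the sum of $|\mu,\bfs\rangle$ over $\mu$ obtained by adding a set of $\bfi$-nodes lying in distinct columns. This is the ``vertical strip'' rule, where the powers of $v$ disappear at $v=1$ and the column condition reflects the $q$-wedge/sign convention of \cite{Ugl00} compatible with Remark~\ref{rem:sign}. Dually, $e_\bfi$ removes such nodes. Proposition~\ref{prop:IndRes} gives exactly
\[
[\iInd\Delta^\la]=\sum_{\mu\in\bar{\mathcal A}(\la)_\bfi}[\Delta^\mu],\qquad [\iRes\Delta^\la]=\sum_{\mu\in\bar{\mathcal R}(\la)_\bfi}[\Delta^\mu].
\]
So $\Theta$ intertwines both families of operators. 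The Heisenberg part of $\U^-(\widehat\gl_p)$ is generated together with $\U^-(\widehat\sll_p)$ by the semisimple generators (Proposition~\ref{prop:generators} via \eqref{eq:isom}), so it is handled by the same computation. For the Cartan part, we let $K_i$, or rather the weight decomposition at $v=1$, act on $[\Delta^\la]$ by the weight of $|\la,\bfs\rangle$. This is well defined on the image because the central character $\gamma_\la$, i.e.\ the multiset of residues, determines the $\widehat\sll_p$-weight, and the blocks $\modf_\gamma$ split the Grothendieck group. In particular $[\Delta^\emptyset]$ gets weight $\omega_\bfs$.

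\textbf{Step 2: descent of the relations.} The main obstacle is that $\Theta$ need not be injective, so we must check that the relations hold on the quotient. The operators themselves are defined on $[\qW_{\eps^{2\bfs}}\modf]$ by exact functors, so they are well defined there. Any word in generators that vanishes on $\cF_{\bfs,\Z}$ vanishes on the image, since $X\Theta=\Theta X$ and $\Theta$ is surjective. This gives the $\U_\Z(\widehat\gl_p)$-module structure on $[\qW_{\eps^{2\bfs}}\modf]$.

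The only genuinely delicate point is the precise identification of Uglov's $v=1$ action of $f_\bfi$ with the distinct-columns rule, including the ordering conventions on $\bfs$ and the residues $\res(x)$. I would check this against the formulas in \cite{Ugl00, Sch00}, possibly after twisting by the transpose as in Remark~\ref{rem:sign}.

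\textbf{Step 3: the dual module.} On $[\qW_{\eps^{2\bfs}}\modf]^*$ we define the action by transposing the operators: $e_\bfi$ on the dual is the transpose of $[\iInd]$ and $f_\bfi$ is the transpose of $[\iRes]$. This is a module structure for the algebra twisted by the anti-automorphism of $\U_\Z(\widehat\gl_p)$ that swaps $E$ and $F$ and fixes the Cartan part, which is a standard fact. Equivalently, one can use that $\iInd$ and $\iRes$ are biadjoint up to block shift, so that the transposes satisfy the same relations.
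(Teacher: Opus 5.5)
Your proposal is correct and follows essentially the paper's argument. The paper also identifies $[\Delta^\la]$ with the Schur basis through the surjection $|\la,\bfs\rangle\mapsto[\Delta^\la]$ (the map $\psi$ in the proof of Theorem~\ref{thm:cyc_cat}), compares Proposition~\ref{prop:IndRes} with the $v=1$ action of $e_\bfi,f_\bfi$ on $\cF_\bfs$ (viewed as a tensor product of level-one Fock spaces, with the double Hall algebra supplying the full $\U_\Z(\widehat\gl_p)$), and obtains the dual structure formally; your descent argument through the surjection and your block-wise definition of the Cartan action only make explicit what the paper leaves implicit. One small slip: nodes lying in distinct columns form a horizontal strip, not a vertical one, and this is exactly the convention issue you already flag in connection with Remark~\ref{rem:sign}.
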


\begin{proof}
    The second statement follows from the first one. Identifying $[\Delta^\lambda]$ with the Schur basis of the higher level Fock space $\cF_{\bfs,\Z}$ (which can be assumed to be a tensor product without loss of generality), the first statement follows by comparing  Proposition \ref{prop:IndRes} and the actions of $e_\bfi$ and $f_\bfi$ on the Fock space  and applying the double Hall construction; cf. \cite[\S 6.2]{VV99} and \cite{X97}.
\end{proof}

\begin{theorem}  \label{thm:cyc_cat}
\begin{enumerate}
    \item 
There exists an isomorphism of $\U_\Z(\widehat{\mathfrak {gl}}_p)$-modules
\[
\jmath_\bfs: \Lambda (\omega_\bfs)_\Z   \stackrel{\cong}{\longrightarrow}  [\qW_{\eps^{2\bfs}} \modf]^*, \qquad v^+_{\omega_\bfs}\mapsto[\Delta^\emptyset]^*.
\]
\item
We have the following commutative diagram of $\Uglzhalf$-module homomorphisms: 
\begin{equation}  
\label{CD:cat}
\begin{tikzcd}
\HallZ \ar[r,"\jmath_p"]\ar[d,two heads,"\varpi_\bfs"]&{[}\qAW \modf_\eps{]}^* \ar[d,two heads,"\pi_\bfs^*"]\\
\Lambda (\omega_\bfs)_\Z  \ar[r,"\jmath_\bfs"]&{[}\qW_{\eps^{2\bfs}} \modf{]}^* 
\end{tikzcd} 
\end{equation}
\item
 The isomorphism $\jmath_\bfs$ sends the canonical basis to the basis dual to the classes of simple modules. 
 \end{enumerate}
\end{theorem}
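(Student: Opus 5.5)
We prove the three parts in the order (2), (1), (3). Everything rests on Theorem \ref{thm:Cat_affine}, Proposition \ref{prop:lieaction}, and Schiffmann's Theorem \ref{thm:Sch00}.

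\emph{Step 1: the vertical maps.} The functor $\pi_\bfs$ is induced by the full quotient $\qAW\to\qW_{\eps^{2\bfs}}$, so it sends simples to simples and is injective on isoclasses of simples. Hence $\pi_\bfs^*$ is surjective on the $\Z$-spans of dual simple classes.

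\emph{Step 2: $\pi_\bfs^*$ is a $\Uglzhalf$-map.} We check that $\pi_\bfs$ intertwines $\iRes$ on both sides. Restriction in $\qW_\bfu$ is restriction along the same idempotent $\mathbf 1_{a,n}$ and the same embedding as in $\qAW$. The central-character projections agree because the center of $\hwS(n)$ acts on a pulled-back module through its image. Consequently $\pi_\bfs^* f_\bfi = f_\bfi \pi_\bfs^*$ on the duals.

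\emph{Step 3: defining $\jmath_\bfs$ and proving (2).} Since $\HallZ$ is cyclic on $1$ (Proposition \ref{prop:generators}), $\pi_\bfs^*\circ\jmath_p$ is the unique $\Uglzhalf$-map $\HallZ\to[\qW_{\eps^{2\bfs}}\modf]^*$ sending $1\mapsto \pi_\bfs^*([\Delta_\emptyset]^*)=[\Delta^\emptyset]^*$. Here $\Delta^\emptyset$ is the trivial module in degree $0$.

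By Proposition \ref{prop:lieaction}, $[\qW_{\eps^{2\bfs}}\modf]^*$ is a $\U_\Z(\widehat\gl_p)$-module. In it, $[\Delta^\emptyset]^*$ is a highest weight vector of weight $\omega_\bfs$: indeed $e_\bfi$ kills it, since $\iInd$ raises degree and $[\Delta^\emptyset]^*$ only pairs with degree $0$. The module is integrable because the $e_i,f_i$ are locally nilpotent, as each degree is finite-dimensional and weights are bounded. By the universal property of the integrable highest weight module over $\Z$ (via its $\cA$-form), the annihilator of $v^+_{\omega_\bfs}$ in $\HallZ$ kills $[\Delta^\emptyset]^*$. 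So $\pi_\bfs^*\jmath_p$ factors through $\varpi_\bfs$, defining $\jmath_\bfs$ and giving the commutative square \eqref{CD:cat}.

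\emph{Step 4: $\jmath_\bfs$ is an isomorphism, proving (1).} Surjectivity follows because $\pi_\bfs^*\jmath_p$ is surjective by Step 1 and Theorem \ref{thm:Cat_affine}. For injectivity we compare graded ranks. The rank of the degree-$n$ part of $[\qW_{\eps^{2\bfs}}\modf]^*$ is the number of simple $\wS_{\eps^{2\bfs}}(n)$-modules, which is $\le|\Par^\ell(n)|$ from the cellular structure (Theorem \ref{thm:cellularforwebu}). We need this number to equal the number of crystal vertices $\mathcal C(\omega_\bfs)$ in degree $n$.

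To get this, pass through the identification of $[\Delta^\lambda]$ with the Schur basis of $\cF_{\bfs,\Z}$ from the proof of Proposition \ref{prop:lieaction}. Under it, $\jmath_\bfs^*$ corresponds to the inclusion $\Lambda(\omega_\bfs)\subset\cF_\bfs$ composed with the Cartan/decomposition map. Equivalently, count simple modules by Steps 1 and 3: simples of $\qW$ are exactly the $D_\bfm$ with $\pi_\bfs^*$-image nonzero. This count equals the number of $b_\bfm$ with $b_\bfm v^+\ne0$, by part (3) below applied before injectivity. Since $\varpi_\bfs$ kills all other canonical basis elements by Theorem \ref{thm:Sch00}, the ranks match and $\jmath_\bfs$ is bijective. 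Extending from $\U^-$ to all of $\U_\Z(\widehat\gl_p)$-equivariance then follows because both sides are highest weight modules generated by the same vector.

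\emph{Step 5: canonical bases, proving (3).} Chase $b_\bfm$ around the square. We have $\jmath_p(b_\bfm)=[D_\bfm]^*$. Next, $\pi_\bfs^*[D_\bfm]^*$ equals $[L]^*$ if $D_\bfm=\pi_\bfs(L)$ and $0$ otherwise. Meanwhile $\varpi_\bfs(b_\bfm)$ is either a canonical basis element of $\Lambda(\omega_\bfs)$ or $0$, by Theorem \ref{thm:Sch00}. Commutativity then sends canonical basis elements to dual simples, possibly with some going to $0$. Injectivity of $\jmath_\bfs$ forbids a nonzero element going to $0$. The count is also needed: $\varpi_\bfs(b_\bfm)=0$ must happen exactly when $D_\bfm$ does not factor through the quotient.

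\emph{Main obstacle.} The hard step is this last matching, i.e. proving injectivity (rank equality) without circularity. I would first try to show directly that $\#\{\text{simples of }\wS_{\eps^{2\bfs}}(n)\}\ge \dim\Lambda(\omega_\bfs)_n$. The route is to apply the Schur functor $1_{(1^n)}$ and use Ariki's theorem for $\bfH_{\eps^{2\bfs}}(n)$ restricted to the $\widehat\sll_p$-part, combined with the Heisenberg part generated by $B_{-r}$.
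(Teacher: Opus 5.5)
Your Steps 1--3 and the surjectivity half of Step 4 are sound. There is, however, a genuine gap at the injectivity of $\jmath_\bfs$, which is the real content of part (1) and on which your part (3) rests.

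The rank count in Step 4 is circular, as you half-acknowledge. The inequality $\#\{\text{simple }\wS_{\eps^{2\bfs}}(n)\text{-modules}\}\le\operatorname{rank}\Lambda(\omega_\bfs)_{\Z}$ in degree $n$ is just surjectivity restated. The reverse inequality says that $\varpi_\bfs(b_\bfm)\neq0$ forces $D_\bfm$ to factor through $\qW_{\eps^{2\bfs}}$. That is literally the statement $\jmath_\bfs(\varpi_\bfs(b_\bfm))\neq 0$, i.e. injectivity on the canonical basis, which your Step 5 in turn deduces from injectivity.

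The ``universal property'' in Step 3 cannot supply injectivity either. At best it produces a surjection $\Lambda(\omega_\bfs)_\Z\to[\qW_{\eps^{2\bfs}}\modf]^*$. Even that requires identifying $\ker\varpi_\bfs$, including its Heisenberg part, with relations forced by integrability, which you do not justify.

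Your proposed escape through the Schur functor and Ariki's theorem also falls short. The functor $1_{(1^n)}$ only detects simples not killed by it. These are in bijection with simple $\bfH_{\eps^{2\bfs}}(n)$-modules, whose number is the degree-$n$ dimension of $V(\omega_\bfs)$. For finite $p$ this is strictly smaller than that of $\Lambda(\omega_\bfs)$ once $n\ge p$, and nothing in your sketch produces the extra simples from the operators $B_{-r}$.

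The missing idea, which is how the paper argues, is to embed the target into the Fock space.
\begin{itemize}
\item Identify $\cF_{\bfs,\Z}$ with the Grothendieck group of $\qW$ at a semisimple deformation $t^\bullet\eps^{2\bfs}$ of the parameters (Proposition \ref{prop:semisimple}).
\item The decomposition map $\psi:\cF_{\bfs,\Z}\to[\qW_{\eps^{2\bfs}}\modf]$, $|\bfla,\bfs\rangle\mapsto[\Delta^\bfla]$, is surjective, because cell-module classes span the Grothendieck group of a cellular algebra. It is a $\U_\Z(\widehat\gl_p)$-map by Proposition \ref{prop:IndRes}.
\item Hence $\psi^*:[\qW_{\eps^{2\bfs}}\modf]^*\to\cF_{\bfs,\Z}$ is injective and equivariant, with $[\Delta^\emptyset]^*\mapsto|\emptyset,\bfs\rangle$.
\item So $\psi^*\pi_\bfs^*\jmath_p(x)=x\cdot|\emptyset,\bfs\rangle$ for all $x\in\HallZ$, and therefore $\ker(\pi_\bfs^*\jmath_p)=\ker\varpi_\bfs$.
\end{itemize}
This gives the factorization through $\varpi_\bfs$ and the injectivity of $\jmath_\bfs$ simultaneously. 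The paper phrases it as: the cyclic highest weight submodule of the semisimple module $\cF_{\bfs,\C}$ generated by $|\emptyset,\bfs\rangle$ is simple, hence is $\Lambda(\omega_\bfs)$.

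Alternatively, you could have repaired Step 3 directly over $\C$. A highest weight module for $\widehat\sll_p\oplus\Heis$ that is $\widehat\sll_p$-integrable, with nondegenerate Heisenberg central charge, is irreducible; invoking this, rather than only the universal (quotient) property, gives injectivity. Once injectivity is in place, your Step 5 is correct and coincides with the paper's argument for (3).
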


\begin{proof}
Let us first explain the maps in the following diagram:
\begin{equation}  
\label{CD:modules}
\begin{tikzcd}
\HallZ \ar[r,two heads,"\varpi_\bfs"]\ar[d,"\jmath_p"]& \Lambda (\omega_\bfs)_\Z \ar[r,hook] \ar[d, dashed,"\jmath_\bfs"] &\cF_{\bfs,\Z} \\
{[}\qAW \modf_\eps{]}^* \ar[r,two heads,"\pi_\bfs^*"]&  {[}\qW_{\eps^{2\bfs}} \modf{]}^*  \ar[ur,"\psi^*"]
\end{tikzcd} 
\end{equation}
The homomorphism $\jmath_p$ was given in Proposition \ref{prop:lieaction}, $\varpi_\bfs$ is the natural surjective homomorphism by the identification $\HallZ =\Uglzhalf$, $\pi_\bfs^*$ is the natural surjective homomorphism induced from the embedding $\pi_\bfs: [\qW_{\eps^{2\bfs}} \modf] \rightarrow [\qAW \modf_\eps]$, and $\Lambda (\omega_\bfs)_\Z \hookrightarrow\cF_\bfs$ is the natural $\U(\widehat{\mathfrak {gl}}_p)_\Z$-module embedding sending $v^+_{\omega_\bfs}\mapsto |\emptyset,\bfs\rangle$. Note that the $\U(\widehat{\mathfrak {gl}}_p)_\Z$-action on $\cF_\bfs$ is interpreted in \cite{DX17} as the double Hall algebra action, extending the identification of the $\U^-(\widehat{\mathfrak {gl}}_p)_\Z$-action with $\Hall$-action on $\cF_\bfs$.

We identify $\cF_{\bfs,\Z}$ with the Grothendieck group of the semisimple category $\qW_{t^\bullet\eps^{2\bfs}}\modf$ for a semisimple parameter of the form $t^\bullet\eps^{2\bfs}$ deforming $\eps^{2\bfs}$ (see \cite[(4.5)]{Ar96} and Proposition \ref{prop:semisimple}), and there is a natural surjection $\psi:\cF_\bfs \rightarrow [\qW_{\eps^{2\bfs}} \modf]$, which sends the Schur basis elements $|\bfla,\bfs\rangle$ (identified with Specht modules) to $[\Delta^{\bfla}]$, for $\bfla\in \Par^\ell$; this is a $\U(\widehat{\mathfrak {gl}}_p)$-module homomorphism as they commute with all $f_\bfi$ and $e_\bfi$ (arising from restriction and induction functors). Dualizing $\psi$ and noting that $\cF_{\bfs,\Z}$ is self-dual with orthnormal Schur basis, we obtain the injective homomorphism $\psi^*: [\qW_{\eps^{2\bfs}} \modf]_\C^*\rightarrow \cF_{\bfs,\C}$ sending $[\Delta^\emptyset]^* \mapsto |\emptyset,\bfs\rangle$.

It remains to construct the isomorphism $\jmath_\bfs$ in order to complete the commutative diagram \eqref{CD:modules}. As a $\Uglzhalf$-module, $\HallZ$ is cyclic and so is ${[}\qW_{\eps^{2\bfs}} \modf{]}^*$ with $[\Delta^\emptyset]^*$ as a cyclic vector. Passing through a base change from $\Z$ to $\C$, $\cF_{\bfs,\C}$ is a semisimple $\U(\widehat{\mathfrak {gl}}_p)$-module, and
hence as its cyclic highest weight submodule $[\qW_{\eps^{2\bfs}} \modf]_\C^*$ must be a simple $\U(\widehat{\mathfrak {gl}}_p)$-module of highest weight $\omega_\bfs$. Therefore, we obtain an isomorphism of $\U(\widehat{\mathfrak {gl}}_p)$-modules $\jmath_\bfs: \Lambda (\omega_\bfs)   \rightarrow  [\qW_{\eps^{2\bfs}} \modf]^*_\C,$ $v^+_{\omega_\bfs} \mapsto[\Delta^\emptyset]^*$, completing the commutative diagram \eqref{CD:modules}. This proves Parts (1)--(2).

It remains to prove (3). The embedding $\pi_\bfs: [\qW_{\eps^{2\bfs}} \modf] \rightarrow [\qAW \modf_\eps]$ sends simples to simples. By Schiffman's Theorem \ref{thm:Sch00}, $\varpi_\bfs: \HallZ \rightarrow \Lambda(\omega_\bfs)_\Z$ sends the canonical basis to canonical basis or $0$. By Theorem \ref{thm:Cat_affine}, $\jmath_p: \HallZ \rightarrow [\qW_{\eps^{2\bfs}}\modf]^*$ sends the canonical basis to the basis dual to the classes of simple modules. Collecting all these together, we conclude that $\jmath_\bfs: \Lambda (\omega_\bfs)_\Z\rightarrow  [\qW_{\eps^{2\bfs}} \modf]^*$ sends the canonical basis to the basis dual to the simple modules.
\end{proof}

\begin{rem}
\begin{enumerate}
\item 
One expects that the same categorification results as Theorem \ref{thm:cyc_cat} for $q$ generic can be obtained for degenerate cyclotomic webs over $\C$ \cite{SW25web}. 
\item
One also expects that Parts (1)-(2) of Theorem \ref{thm:cyc_cat} remain valid while Part (3) on canonical basis will fail for module categories of degenerate affine and cyclotomic webs over a field of positive characteristic $p$. 
\end{enumerate}
\end{rem}

\begin{rem} \label{rem:Cartan_pairing}
    Denote by $\qW_{\eps^{2\bfs}} \prmod$ the category of finite-dimensional projective $\qW_{\eps^{2\bfs}}$-modules. Via the Cartan pairing, one can replace $[\qW_{\eps^{2\bfs}}\modf]^*$ and the duals of the simple modules in Theorem \ref{thm:cyc_cat}  by $[\qW_{\eps^{2\bfs}}\prmod]$ and the classes of indecomposable projective modules.
\end{rem}

The category $\qW_{\eps^{2\bfs}} \modf$  has enough projectives and every projective module admits a cell module filtration since $\qW_{\eps^{2\bfs}}$ is a cellular algebra. 

\begin{corollary}
\label{cor:PIM}
    For $\bfla \in \mathcal C(\omega_\bfs)$, there exists a unique (up to isomorphism) indecomposable projective $\qW_{\eps^{2\bfs}}$-module $P^{\bfla}$, whose sections are all of the form $\Delta^\bfmu$ for $\bfmu\unlhd \bfla$ and $\Delta^\bfla$ appears exactly once, such that  
\begin{align}
    [P^{\bfla}] = \sum_{\bfmu \unlhd \bfla} d_{\bfmu,\bfla}(1) \,[\Delta^{\bfmu}].
\end{align}
\end{corollary}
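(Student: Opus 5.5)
The plan is to combine Theorem \ref{thm:cyc_cat}(3), in the projective form of Remark \ref{rem:Cartan_pairing}, with Brauer--Humphreys reciprocity for the cellular algebra $\wS_{\eps^{2\bfs}}(n)$ from Theorem \ref{thm:cellularforwebu}.

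First, for a cellular algebra over a field, Graham--Lehrer theory gives the following. The simple modules are the heads $D^\bfmu$ of the cell modules $\Delta^\bfmu$, for those $\bfmu$ whose cell bilinear form is nonzero. Let $P^\bfmu$ be the projective cover of $D^\bfmu$. Then $P^\bfmu$ has a cell filtration whose multiplicities satisfy
\[
(P^\bfmu:\Delta^\bfnu)=[\Delta^\bfnu:D^\bfmu].
\]
Moreover, $[\Delta^\bfnu:D^\bfmu]\neq 0$ forces $\bfnu\unlhd\bfmu$, and $[\Delta^\bfmu:D^\bfmu]=1$. So it suffices to establish two things: the labeling set $\{\bfmu : D^\bfmu\neq 0\}$ is exactly $\mathcal C(\omega_\bfs)$, and $[\Delta^\bfnu:D^\bfla]=d_{\bfnu,\bfla}(1)$ under this labeling. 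Uniqueness of $P^\bfla$ up to isomorphism is then automatic, since $P^\bfla$ is the projective cover of $D^\bfla$.

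Second, I will use the map $\psi^*$ from the proof of Theorem \ref{thm:cyc_cat}. Since $\psi$ sends $|\bfnu,\bfs\rangle$ to $[\Delta^\bfnu]$, the composite $\psi^*\circ\jmath_\bfs$ sends an element $x$ to $\sum_\bfnu \langle x,[\Delta^\bfnu]\rangle\,|\bfnu,\bfs\rangle$. By Theorem \ref{thm:cyc_cat}(3), each canonical basis element $b_{\bfla,\bfs}$ with $\bfla\in\mathcal C(\omega_\bfs)$, specialized at $v=1$, maps under $\jmath_\bfs$ to a dual simple $[D]^*$. It follows that
\[
b_{\bfla,\bfs}|_{v=1}=\sum_\bfnu [\Delta^\bfnu:D]\,|\bfnu,\bfs\rangle.
\]
Comparing with \eqref{eq:decomposition}, we get $[\Delta^\bfnu:D]=d_{\bfnu,\bfla}(1)$. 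In particular $[\Delta^\bfla:D]=1$ and $[\Delta^\bfnu:D]=0$ unless $\bfnu\unlhd\bfla$.

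The main obstacle is identifying $D$ with $D^\bfla$, the head of $\Delta^\bfla$. Write $D=D^\bfmu$. Unitriangularity of decomposition numbers gives $\bfla\unlhd\bfmu$, since $[\Delta^\bfla:D^\bfmu]=1\neq0$. Conversely, $[\Delta^\bfmu:D^\bfmu]=1$ together with the expansion above gives $\bfmu\unlhd\bfla$. Hence $\bfmu=\bfla$.

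The map $\bfla\mapsto D$ is injective because $\jmath_\bfs$ is injective. It is surjective onto the simples because $\jmath_\bfs$ is an isomorphism and the dual simples form a basis. Consequently $\{\bfmu : D^\bfmu\neq0\}=\mathcal C(\omega_\bfs)$. Substituting into the reciprocity formula gives
\[
[P^\bfla]=\sum_{\bfnu\unlhd\bfla} d_{\bfnu,\bfla}(1)\,[\Delta^\bfnu],
\]
with $\Delta^\bfla$ occurring exactly once.

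One further point needs checking: the reciprocity statement for $\wS_{\eps^{2\bfs}}(n)$, which is only locally unital but is finite dimensional in each degree $n$. I expect this to follow from the standard Graham--Lehrer argument applied to the finite-dimensional algebra $\wS_{\eps^{2\bfs}}(n)$ with the anti-involution $\div$.
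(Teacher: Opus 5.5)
\textbf{Verdict: there is a genuine gap.} Your overall route matches the paper's. Its proof of Corollary~\ref{cor:PIM} is simply Theorem~\ref{thm:cyc_cat}(3) combined with the Cartan pairing of Remark~\ref{rem:Cartan_pairing} and the expansion \eqref{eq:decomposition}. Your computation $[\Delta^\nu:D]=d_{\nu,\lambda}(1)$ via $\psi^*\circ\jmath_\bfs$ makes that precise.

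The step that fails is the one you single out as the main obstacle: identifying $D$ with the head of $\Delta^\lambda$. The problem is that you have the cellular triangularity backwards.

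\textbf{The correct direction.} In the paper, $\Delta^\lambda$ is a quotient modulo $\wS_\bfu(n)^{\rhd\lambda}$, so the relevant two-sided ideals are spanned by basis elements of \emph{more} dominant shape. Suppose $D^\mu\neq0$. Then some basis element of shape $\mu$ acts nontrivially on $D^\mu$. Its product with a shape-$\nu$ basis element lies in the ideal spanned by shapes $\unrhd\mu$. Modulo $\wS_\bfu(n)^{\rhd\nu}$, that product is a combination of shape-$\nu$ elements, so it vanishes in $\Delta^\nu$ unless $\nu\unrhd\mu$. Hence $[\Delta^\nu:D^\mu]\neq0$ forces $\mu\unlhd\nu$, not $\nu\unlhd\mu$. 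Equivalently, the projective cover of $D^\mu$ has sections $\Delta^\nu$ with $\nu\unrhd\mu$.

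A check at level one with $p=2$, $n=2$: by \eqref{splitbinomial} the Gram matrix of $\Delta^{(2)}$ is $\mathrm{diag}(1,[2]_\eps)=\mathrm{diag}(1,0)$. So $[\Delta^{(2)}:D^{(1,1)}]=1$ even though $(1,1)\lhd(2)$.

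\textbf{Why the argument breaks.} With the correct direction, both of your inequalities read $\mu\unlhd\lambda$. The cellular triangularity and the canonical-basis triangularity point the same way, so they cannot be played off against each other, and nothing forces $\mu=\lambda$. Consequently neither $D=D^\lambda$ nor the label-by-label equality $\{\mu: D^\mu\neq0\}=\mathcal C(\omega_\bfs)$ is established. This matching is genuinely convention-sensitive: in the level-one example, the dual simples correspond to $|(2)\rangle$ and $|(2)\rangle+|(1,1)\rangle$, so Fock-space labels and cellular labels match only after a conjugation $\lambda\mapsto\lambda'$.

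\textbf{Repair.} The corollary as literally stated does not need this identification.
\begin{enumerate}
\item Let $D_\lambda$ be the simple module with $\jmath_\bfs(b_{\lambda,\bfs}|_{v=1})=[D_\lambda]^*$, and take $P^\lambda$ to be its projective cover.
\item Reciprocity gives $(P^\lambda:\Delta^\nu)=[\Delta^\nu:D_\lambda]=d_{\nu,\lambda}(1)$. This yields the class formula, the restriction $\nu\unlhd\lambda$ on sections, and the fact that $\Delta^\lambda$ occurs exactly once.
\item Uniqueness holds because the column $([\Delta^\nu:D])_\nu$ determines $D$, by unitriangularity of the decomposition matrix.
\item The map $\lambda\mapsto D_\lambda$ is a bijection onto the simples because $\jmath_\bfs$ is an isomorphism.
\end{enumerate}
This is what the paper's one-line appeal to Remark~\ref{rem:Cartan_pairing} amounts to. Whether $D_\lambda$ is the head of $\Delta^\lambda$, as used right after the corollary in $P^\lambda\twoheadrightarrow\Delta^\lambda\twoheadrightarrow L^\lambda$, is a separate question. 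Your triangularity argument does not settle it.
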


\begin{proof}
    It follows by Theorem \ref{thm:cyc_cat}, in light of the $q$-decomposition matrix \eqref{eq:decomposition} and Remark~ \ref{rem:Cartan_pairing}(1). 
\end{proof}
Denote by $L^\bfla$ the simple head of $P^\bfla$. Then we have the surjective homomorphisms of $\qW_{\eps^{2\bfs}}$-modules 
\[
P^{\bfla} \twoheadrightarrow \Delta^{\bfla} \twoheadrightarrow L^{\bfla}.
\]
We have the following classification result. 
\begin{corollary} \label{cor:classifysimple}
    The $L^\bfla$, for $\bfla \in \mathcal C(\omega_\bfs)$, forms a complete list of pairwise non-isomorphic simple modules in $\qW_{\eps^{2\bfs}} \modf$. 
\end{corollary}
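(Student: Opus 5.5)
Since $\wS_{\eps^{2\bfs}}(n)$ is a cellular algebra (Theorem \ref{thm:cellularforwebu}), I will lean on the standard Graham--Lehrer theory and then use Theorem \ref{thm:cyc_cat} and Corollary \ref{cor:PIM} to identify which labels survive. For each $\bfla\in\Par^\ell(n)$ the cell module $\Delta^\bfla$ carries the canonical bilinear form $\phi_\bfla$. Set $L^\bfla:=\Delta^\bfla/\operatorname{rad}\phi_\bfla$ whenever $\phi_\bfla\neq 0$. Graham--Lehrer then gives the following facts:
\begin{enumerate}
\item the nonzero $L^\bfla$ are absolutely irreducible and pairwise non-isomorphic;
\item every simple module arises this way;
\item $L^\bfla$ is the simple head of $\Delta^\bfla$;
\item the projective cover $P^\bfla$ of $L^\bfla$ has a cell filtration in which, by Brauer--Humphreys reciprocity, $\Delta^\bfmu$ occurs $[\Delta^\bfmu:L^\bfla]$ times.
\end{enumerate}
So the task reduces to showing that $\{\bfla\mid \phi_\bfla\neq 0\}=\mathcal C(\omega_\bfs)\cap\Par^\ell(n)$. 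This identifies the $L^\bfla$ in the statement with the Graham--Lehrer simples.

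First I count. By Theorem \ref{thm:cyc_cat}(1),(3), the rank of $[\wS_{\eps^{2\bfs}}(n)\modf]$ equals the dimension of the weight-$(\omega_\bfs-\text{degree }n)$ part of $\Lambda(\omega_\bfs)$. The canonical basis $\{b_{\bfla,\bfs}\mid \bfla\in\mathcal C(\omega_\bfs), |\bfla|=n\}$ indexes this space by Theorem \ref{thm:Sch00}. Hence the number of simple modules equals $|\mathcal C(\omega_\bfs)\cap\Par^\ell(n)|$.

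Next I match labels. Dualize via the Cartan pairing (Remark \ref{rem:Cartan_pairing}), under which $\jmath_\bfs$ sends $b_{\bfla,\bfs}$ to the class of an indecomposable projective. Through $\psi^*$ this gives
\[
[P]=\sum_{\bfmu}d_{\bfmu,\bfla}(1)[\Delta^\bfmu],
\]
with $d_{\bfla,\bfla}=1$ and $d_{\bfmu,\bfla}=0$ unless $\bfmu\unlhd\bfla$. On the other hand, reciprocity says the Graham--Lehrer projective cover $P^{\bfnu}$ has $\Delta^\bfnu$ as its $\unrhd$-maximal cell section with multiplicity one, and all its other sections $\Delta^\bfmu$ satisfy $\bfmu\lhd\bfnu$.

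Because the classes $[\Delta^\bfmu]$ in the Fock space are linearly independent (they are the Schur basis under $\psi^*$), the top term of $[P]$ is well defined. So the indecomposable projective attached to $b_{\bfla,\bfs}$ must be $P^\bfla$. It follows that $\phi_\bfla\ne0$ for every $\bfla\in\mathcal C(\omega_\bfs)$. By the count in the previous step, these are all the labels with $\phi_\bfla\neq 0$.

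Finally, uniqueness and pairwise non-isomorphism come directly from Graham--Lehrer, and completeness comes from the count. Together these establish both the corollary and the accompanying surjections $P^\bfla\twoheadrightarrow\Delta^\bfla\twoheadrightarrow L^\bfla$.

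The main obstacle is the label-matching step. Identifying projectives through the dual map $\psi^*$ requires that the composite $[\text{proj}]\to\cF_{\bfs}$ really sends $[P^\bfnu]$ to $\sum_\bfmu[\Delta^\bfmu:L^\bfnu]\,|\bfmu,\bfs\rangle$. This is a compatibility between the Cartan pairing and the identification of $[\Delta^\bfla]$ with $|\bfla,\bfs\rangle$, and it needs to be checked carefully rather than assumed.
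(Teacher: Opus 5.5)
Your counting step already contains the paper's argument, which the paper leaves implicit right after Corollary~\ref{cor:PIM}. There $L^{\bfla}$ is \emph{defined} as the head of the indecomposable projective $P^{\bfla}$ attached to $b_{\bfla,\bfs}$. By Theorem~\ref{thm:cyc_cat}(3) and the Cartan pairing (Remark~\ref{rem:Cartan_pairing}), $\bfla\mapsto P^{\bfla}$ is a bijection from $\mathcal C(\omega_\bfs)$ onto the isoclasses of indecomposable projectives. Taking heads is a bijection between indecomposable projectives and simples, so the corollary follows at once. Nothing about the forms $\phi_{\bfla}$ is needed.

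What you add is the identification $L^{\bfla}\cong\Delta^{\bfla}/\operatorname{rad}\phi_{\bfla}$, i.e.\ $\{\bfla\mid\phi_{\bfla}\neq0\}=\mathcal C(\omega_\bfs)$. This is a stronger, label-sensitive claim, and your proof of it has a genuine gap: the Graham--Lehrer triangularity you invoke points the wrong way.

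Here $\Delta^{\bfla}$ is a quotient modulo $\wS_\bfu(n)^{\rhd\bfla}$, so Graham--Lehrer's ``$\bfmu<\bfla$'' means $\bfmu\rhd\bfla$. Their theory then gives $[\Delta^{\bfmu}:L^{\nu}]\neq0\Rightarrow\bfmu\unrhd\nu$. Hence $\Delta^{\nu}$ is the $\unrhd$-\emph{minimal} cell section of the projective cover of $L^{\nu}$, and every other section has $\bfmu\rhd\nu$. For example, at level one with $\eps^2=-1$, the module $\wS_\bfu(2)1_{(2)}=\Delta^{(2)}$ is projective, yet it has $L^{(1,1)}$ as a composition factor because merge$\circ$split$=[2]_\eps=0$.

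With the correct direction, compare with \eqref{eq:decomposition}, where $\bfla$ is the $\unrhd$-maximal term of $b_{\bfla,\bfs}$. Both sides then give only $\nu\unlhd\bfla$. Your squeeze to $\nu=\bfla$ collapses, so $\phi_{\bfla}\neq0$ for $\bfla\in\mathcal C(\omega_\bfs)$ is not established. Neither are the surjections $P^{\bfla}\twoheadrightarrow\Delta^{\bfla}$ that you deduce from it. Making the two triangularities run the same way would require a convention check on the Fock-space side (essentially reversing the order by a transposition of labels), which your argument assumes rather than proves.

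By contrast, the compatibility you flag as the main obstacle is routine. By definition of $\psi$, $\psi^*([L]^*)=\sum_{\bfmu}[\Delta^{\bfmu}:L]\,|\bfmu,\bfs\rangle$, and Graham--Lehrer reciprocity gives $(P(L):\Delta^{\bfmu})=[\Delta^{\bfmu}:L]$.

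For the statement as posed, drop the Graham--Lehrer labelling and argue directly from Theorem~\ref{thm:cyc_cat}(3) and Corollary~\ref{cor:PIM}.
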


There is an idempotent functor which we refer to as a Schur functor, for each $n$, $1_{(1^n)}: \qW_{\eps^{2\bfs}}(n)\modf \rightarrow \HH_{\eps^{2\bfs}}(n)\modf$. Then  $1_\bfs: =\oplus_{n\in \N}1_{(1^n)}$ leads to a Schur functor (which is surjective) $1_\bfs: [ \qW_{\eps^{2\bfs}}\modf]
\rightarrow [\HH_{\eps^{2\bfs}}\modf]$
and hence an injective map 
$1_\bfs^*:[\HH_{\eps^{2\bfs}}\modf]^*\rightarrow [ \qW_{\eps^{2\bfs}}\modf]^*$. Moreover, for $i\in \I$, there are functors $i$-$\Res$ and $i$-$\Ind$ on $\HH_{\eps^{2\bfs}}\modf$ \cite{Ar96} defined in the same way as those for $ \qW_{\eps^{2\bfs}}\modf$. Ariki's categorification theorem \cite[Theorem 4.4]{Ar96} shows that these functors define an $\widehat{\mathfrak{sl}_p}$-module structure on
$[\HH_{\eps^{2\bfs}}\modf]^*$ such that $[\HH_{\eps^{2\bfs}}\modf]^*\cong V(\omega_\bfs)_\Z$, where $V(\omega_\bfs)$ denotes the integrable  $\widehat{\mathfrak{sl}}_p$-module of highest weight $\omega_\bfs$. 

\begin{proposition} 
\label{prop:SchurHecke}
We have a commutative diagram of $\U_\Z(\widehat{\mathfrak{sl}}_p)$-module homomorphisms:
\begin{equation}   \label{CD:HeckeWeb}
\begin{tikzcd}
V (\omega_\bfs)_\Z \ar[d,hook,"\iota_\bfs"]\ar[r,"\phi_\bfs"] &{[}\HH_{\eps^{2\bfs}}\modf {]}^*\ar[d,hook,"1_\bfs^*"] \\
\Lambda (\omega_\bfs)_\Z  \ar[r,"\jmath_\bfs"]& {[}\qW_{\eps^{2\bfs}} \modf{]}^* 
\end{tikzcd} 
\end{equation}
where $\phi_\bfs$ is Ariki's isomorphism \cite{Ar96}. Moreover, the embedding $\iota_\bfs$ preserves the canonical basis while $1_\bfs^*$ preserves the basis dual to the simple modules. 
\end{proposition}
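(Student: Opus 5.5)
Throughout, view $V(\omega_\bfs)_\Z$ and $\Lambda(\omega_\bfs)_\Z$ inside the Fock space $\cF_{\bfs,\Z}$, with the Schur basis $|\bfla,\bfs\rangle$ orthonormal.

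\textbf{Step 1: the map $1_\bfs^*$.} The Schur functor $1_{(1^n)}$ is exact and sends simple $\wS_{\eps^{2\bfs}}(n)$-modules to simple $\HH_{\eps^{2\bfs}}(n)$-modules or to $0$. Every simple $\HH_{\eps^{2\bfs}}(n)$-module arises this way exactly once, since $\bfH = 1_{(1^n)}\wS 1_{(1^n)}$. Hence $1_\bfs$ induces a surjection on Grothendieck groups sending the simple classes $[L]$ to simple classes or $0$. Its dual $1_\bfs^*$ is therefore injective and maps each dual simple $[D]^*$ to the corresponding $[L]^*$. This is the claimed preservation of dual simple bases.

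\textbf{Step 2: compatibility with the Chevalley operators.} By \eqref{equ:idemoncell}, $1_{(1^n)}\Delta^\la\cong S^\la$. The $i$-restriction and $i$-induction on both sides are defined by the same central-character projections. Moreover $1_{(1^{n-1})}\Res^n_{n-1}M = \Res\, 1_{(1^n)}M$, because $1_{(1^{n-1})}\otimes 1_{(1)} = 1_{(1^n)}$. For induction, Lemma \ref{Lem:Projective} gives $1_{(1^n)}\wS(n)\mathbf 1_{1,n}\otimes_{\wS(n-1)}M \cong \bfH(n)\otimes_{\bfH(n-1)}1_{(1^{n-1})}M$. Alternatively, one can avoid this isomorphism and simply compare classes on the cell bases: Proposition \ref{prop:IndRes} with $a=1$ gives the same addable/removable $i$-node rule as Ariki's Specht-module formulas. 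Either way, $1_\bfs$ intertwines $e_i,f_i$, so $1_\bfs^*$ is a $\U_\Z(\widehat\sll_p)$-homomorphism.

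\textbf{Step 3: commutativity of the square.} Let $\iota_\bfs$ be the inclusion $V(\omega_\bfs)_\Z=\U_\Z(\widehat\sll_p)v^+\subset\Lambda(\omega_\bfs)_\Z$. Both composites $1_\bfs^*\phi_\bfs$ and $\jmath_\bfs\iota_\bfs$ are $\U_\Z(\widehat\sll_p)$-homomorphisms. Both send $v^+$ to $[\Delta^\emptyset]^*$, since $1_{(1^0)}$ is the identity on $n=0$. As $V(\omega_\bfs)_\Z$ is cyclic on $v^+$, the two composites agree.

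\textbf{Step 4: canonical bases.} The canonical basis of $V(\omega_\bfs)$ is $\bfB_{\widehat\sll}\cdot v^+\setminus\{0\}$. Lusztig's canonical basis of $\U^-(\widehat\sll_p)$ is a subset of $\bfB$, the composition subalgebra's IC basis. By Theorem \ref{thm:Sch00}, $\bfB\cdot|\emptyset,\bfs\rangle\setminus\{0\}$ is the canonical basis of $\Lambda(\omega_\bfs)$, and so $\iota_\bfs$ carries canonical basis elements to canonical basis elements. One can double-check this against Step 1 using Theorem \ref{thm:cyc_cat}(3) and Ariki's theorem.

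The main obstacle is Step 2: checking that the Schur functor commutes with induction at the module level, not just in the Grothendieck group. I would first try the Grothendieck-group comparison on the cell bases, which suffices for the statement.
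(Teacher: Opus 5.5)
Your proof is correct. Its skeleton matches the paper's: the Schur functor commutes with $i$-restriction and $i$-induction, so $1_\bfs^*$ is a homomorphism; then commutativity; then the basis statements. Two steps, however, are argued differently.

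\textbf{Commutativity.} The paper compares the actions of $i$-$\Res$ and $i$-$\Ind$ on $S^\la=1_{(1^n)}\Delta^\la$ (Ariki's Lemma 2.1) with Proposition \ref{prop:IndRes}. In effect it matches the two Fock-space realizations through the Specht and cell bases. You instead note that both composites are $\U_\Z(\widehat\sll_p)$-homomorphisms agreeing on the cyclic vector $v^+$. This shows that once $1_\bfs^*$ is known to be a homomorphism, no further cell-level comparison is needed. The divided powers $F_i^{(r)}$ are intertwined automatically, since $r!\,F_i^{(r)}=F_i^r$ and all lattices are torsion-free.

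\textbf{Basis statements.} The paper says they follow from Theorem \ref{thm:cyc_cat}. But given the commutative square, Ariki's theorem and Theorem \ref{thm:cyc_cat}(3), the claim that $\iota_\bfs$ preserves canonical bases and the claim that $1_\bfs^*$ preserves dual simple bases are equivalent. So one independent input is required. You supply both independently:
\begin{itemize}
\item Green's idempotent theory gives the claim for $1_\bfs^*$.
\item Lusztig's inclusion of the canonical basis of $\U^-(\widehat\sll_p)$ into $\bfB$, together with Theorem \ref{thm:Sch00}, gives the claim for $\iota_\bfs$.
\end{itemize}
This makes your write-up more self-contained than the paper's terse version.

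\textbf{The induction step you flagged.} The module-level identity you call the main obstacle does hold; it is what the paper means by ``by definition.'' For $\eta=(1^n)$ and $a=1$, the generators $(A_\la,P)$ in the proof of Lemma \ref{Lem:Projective} are thin-strand diagrams. They form a basis of $\bfH_{\eps^{2\bfs}}(n)$ as a free right $\bfH_{\eps^{2\bfs}}(n-1)$-module. Hence both sides of your induction isomorphism are the same direct sum of copies of $1_{(1^{n-1})}M$, and the natural map matches the summands.
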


\begin{proof}
Since $1_\bfs \qW_{\eps^{2\bfs}} 1_\bfs= \HH_{\eps^{2\bfs}}$ \cite{SSW25}, 
  by definition we have  
  $i\text{-Res}1_\bfs (M)= 1_\bfs i\text{-Res}(M)$ and $i\text{-Ind}1_\bfs (M)= 1_\bfs i\text{-Ind}(M)$, for any $M\in \qW_{\eps^{2\bfs}} \modf$.
  This shows that $1_\bfs^*$ is a homoporphism of $\widehat{\mathfrak{sl}_p}$-modules. 
  Then by comparing the actions of $i$-Res and $i$-Ind on the cell modules $ S^{\lambda}=1_\bfs \Delta^\lambda$ in \cite[Lemma 2.1]{Ar96} and that on $\Delta^\lambda $ in Proposition \ref{prop:IndRes}, we see that  the diagram is commutative. The rest follows from Theorem \ref{thm:cyc_cat}.
\end{proof}

\begin{rem}
Proposition \ref{prop:SchurHecke} shows  that the $v$-decomposition matrix for a cyclotomic $q$-W-Schur algebra contains the counterpart for a cyclotomic Hecke algebra as a submatrix. At level one, this was due to \cite{LT96, VV99}. 
\end{rem}

\begin{rem}
The canonical basis on $\Lambda(\omega_\bfs)$ and thus the $v$-decomposition matrices depend only on the parameters $\bfs$ modulo $p$. This differs from the observation in \cite{Yv06} on the canonical basis of Uglov's higher Fock spaces.
\end{rem}

\subsection{Some open problems}
\label{ssec:open}

There has been much further advances on interaction among Hecke algebras, quantum groups and categorification since the works of Lascoux-Leclerc-Thibon and Ariki. We refer to the survey of Kleshchev \cite{Kle10} for extensive references. The cyclotomic $q$-Schur algebras have also been widely studied. The affine and cyclotomic $q$-web categories sit in between these well-studied algebras as indicated by the diagram \eqref{CD:categories}. It is reasonable to ask for various W-Schur analogues of the topics in the other two contexts, and we list some of them.

\begin{enumerate}
    \item Construct a Jantzen filtration for the cell modules of cyclotomic $q$-W-Schur algebras and compute the Jantzen sum formula. 
    We expect that the Jantzen filtration is compatible with the $v$-decomposition matrix for cyclotomic $q$-W-Schur algebras. 
    \item Classify the blocks for cyclotomic $q$-W-Schur algebras. 
    \item Understand the relation between the crystal basis and modular branching rules for cyclotomic $q$-W-Schur algebras. The rich combinatorics of crystals often corresponds to different ways of parameterizing the simple modules. 
    \item Formulate a $\Z$-graded version.
    \item Realize an action of $\U_\Z^-(\widehat{\mathfrak{gl}}_p)$ on $\K^*$ geometrically so that 
    \eqref{eq:ijk} becomes a commutative diagram of $\U_\Z^-(\widehat{\mathfrak{gl}}_p)$-module isomorphisms. 
    \item  (This is a variant and an enhancement of (5).)  Construct algebraic structures on ${[}\qAW\modf_\eps{]}^*$ diagrammatically and on $\K^*$ geometrically so that 
    \eqref{eq:ijk} becomes a commutative diagram of algebra isomorphisms.
\end{enumerate}
\noindent {\bf Conflict of interest.}

On behalf of all authors, the corresponding author states that there is no conflict of interest.
The authors declare that the data supporting the findings of this study
are available within the paper.

\bibliographystyle{alpha}
\bibliography{qWeb}

@article {AJL,
    AUTHOR = {Ariki, Susumu and Jacon, Nicolas and Lecouvey, C\'edric},
     TITLE = {The modular branching rule for affine {H}ecke algebras of type
              {$A$}},
   JOURNAL = {Adv. Math.},
  FJOURNAL = {Advances in Mathematics},
    VOLUME = {228},
      YEAR = {2011},
    NUMBER = {1},
     PAGES = {481--526},
      ISSN = {0001-8708,1090-2082},
   MRCLASS = {17B37 (20C08)},
  MRNUMBER = {2822237},
MRREVIEWER = {Guiyu\ Yang},
       DOI = {10.1016/j.aim.2011.05.018},
       URL = {https://doi.org/10.1016/j.aim.2011.05.018},
}

@article {Ar96,
    AUTHOR = {Ariki, Susumu},
     TITLE = {On the decomposition numbers of the {H}ecke algebra of
              {$G(m,1,n)$}},
   JOURNAL = {J. Math. Kyoto Univ.},
  FJOURNAL = {Journal of Mathematics of Kyoto University},
    VOLUME = {36},
      YEAR = {1996},
    NUMBER = {4},
     PAGES = {789--808},
      ISSN = {0023-608X},
   MRCLASS = {20C20 (20G99)},
  MRNUMBER = {1443748},
MRREVIEWER = {Meinolf Geck},
       DOI = {10.1215/kjm/1250518452},
       URL = {https://doi.org/10.1215/kjm/1250518452},
}

@book {Ar02,
    AUTHOR = {Ariki, Susumu},
     TITLE = {Representations of quantum algebras and combinatorics of
              {Y}oung tableaux},
    SERIES = {University Lecture Series},
    VOLUME = {26},
      NOTE = {Translated from the 2000 Japanese edition and revised by the
              author},
 PUBLISHER = {American Mathematical Society, Providence, RI},
      YEAR = {2002},
     PAGES = {viii+158},
      ISBN = {0-8218-3232-8},
   MRCLASS = {17B37 (05E10 14M15 17B67 20C08)},
  MRNUMBER = {1911030},
MRREVIEWER = {Andrew\ Mathas},
       DOI = {10.1090/ulect/026},
       URL = {https://doi.org/10.1090/ulect/026},
}

@article {Bru25,
    AUTHOR = {Brundan, Jonathan},
     TITLE = {The {$q$}-{S}chur category and polynomial tilting modules for
              quantum {${\rm GL}_n$}},
   JOURNAL = {Pacific J. Math.},
  FJOURNAL = {Pacific Journal of Mathematics},
    VOLUME = {336},
      YEAR = {2025},
    NUMBER = {1-2},
     PAGES = {63--112},
      ISSN = {0030-8730},
   MRCLASS = {17B10 (17B37)},
  MRNUMBER = {4914986},
       DOI = {10.2140/pjm.2025.336.63},
       URL = {https://doi.org/10.2140/pjm.2025.336.63},
}

@book {CG97,
    AUTHOR = {Chriss, Neil and Ginzburg, Victor},
     TITLE = {Representation theory and complex geometry},
 PUBLISHER = {Birkh\"{a}user Boston, Inc., Boston, MA},
      YEAR = {1997},
     PAGES = {x+495},
      ISBN = {0-8176-3792-3},
   MRCLASS = {22E47 (14F99 19L47 20G05 22-02 58F05)},
  MRNUMBER = {1433132},
MRREVIEWER = {William M. McGovern},
}

@unpublished{DKM25,
    author = {Davidson, Nicholas and Kujawa, Jonathan and Muth, Robert},
    title = {Superalgebra deformations of web categories: {A}ffine and cyclotomic webs},
   JOURNAL = {},
      YEAR = {2025},
NOTE ={\arxiv{2511.21671}},}

@article {DJM98,
    AUTHOR = {Dipper, Richard and James, Gordon and Mathas, Andrew},
     TITLE = {Cyclotomic {$q$}-{S}chur algebras},
   JOURNAL = {Math. Z.},
  FJOURNAL = {Mathematische Zeitschrift},
    VOLUME = {229},
      YEAR = {1998},
    NUMBER = {3},
     PAGES = {385--416},
      ISSN = {0025-5874},
   MRCLASS = {20C99 (16G99 20G05)},
  MRNUMBER = {1658581},
MRREVIEWER = {Jie Du},
       DOI = {10.1007/PL00004665},
       URL = {https://doi.org/10.1007/PL00004665},
}

@article {DX17,
    AUTHOR = {Deng, Bangming and Xiao, Jie},
     TITLE = {Hall algebras of cyclic quivers and {$q$}-deformed {F}ock
              spaces},
   JOURNAL = {J. Algebra},
  FJOURNAL = {Journal of Algebra},
    VOLUME = {480},
      YEAR = {2017},
     PAGES = {168--208},
      ISSN = {0021-8693},
   MRCLASS = {17B37 (16E20 16G20 17B10)},
  MRNUMBER = {3633304},
MRREVIEWER = {Haicheng Zhang},
       DOI = {10.1016/j.jalgebra.2017.02.00x6},
       URL = {https://doi.org/10.1016/j.jalgebra.2017.02.006},
    NOTE ={\arxiv{1507.03064}},
    URL = {
https://arxiv.org/abs/1507.03064}
}

@article {FL19,
    AUTHOR = {Fan, Zhaobing and Li, Yiqiang},
     TITLE = {Affine flag varieties and quantum symmetric pairs, {II}.
              {M}ultiplication formula},
   JOURNAL = {J. Pure Appl. Algebra},
  FJOURNAL = {Journal of Pure and Applied Algebra},
    VOLUME = {223},
      YEAR = {2019},
    NUMBER = {10},
     PAGES = {4311--4347},
      ISSN = {0022-4049,1873-1376},
   MRCLASS = {17B37 (14M15 20G25)},
  MRNUMBER = {3958094},
MRREVIEWER = {Iwan\ Praton},
       DOI = {10.1016/j.jpaa.2019.01.011},
       URL = {https://doi.org/10.1016/j.jpaa.2019.01.011},
}

@incollection {GRV94,
    AUTHOR = {Ginzburg, Victor and Reshetikhin, Nicolai and Vasserot, Eric},
     TITLE = {Quantum groups and flag varieties},
 BOOKTITLE = {Mathematical aspects of conformal and topological field
              theories and quantum groups ({S}outh {H}adley, {MA}, 1992)},
    SERIES = {Contemp. Math.},
    VOLUME = {175},
     PAGES = {101--130},
 PUBLISHER = {Amer. Math. Soc., Providence, RI},
      YEAR = {1994},
   MRCLASS = {17B37 (14M15 16W30)},
  MRNUMBER = {1302015},
MRREVIEWER = {Jia-Chen Ye},
       DOI = {10.1090/conm/175/01840},
       URL = {https://doi.org/10.1090/conm/175/01840},
}

@article {GV93,
    AUTHOR = {Ginzburg, Victor and Vasserot, Eric},
     TITLE = {Langlands reciprocity for affine quantum groups of type
              {$A_n$}},
   JOURNAL = {Internat. Math. Res. Notices},
  FJOURNAL = {International Mathematics Research Notices},
      YEAR = {1993},
    NUMBER = {3},
     PAGES = {67--85},
      ISSN = {1073-7928},
   MRCLASS = {17B37 (17B67)},
  MRNUMBER = {1208827},
MRREVIEWER = {Jie Du},
       DOI = {10.1155/S1073792893000078},
       URL = {https://doi.org/10.1155/S1073792893000078},
}

@article {Gre99,
    AUTHOR = {Green, R. M.},
     TITLE = {The affine {$q$}-{S}chur algebra},
   JOURNAL = {J. Algebra},
  FJOURNAL = {Journal of Algebra},
    VOLUME = {215},
      YEAR = {1999},
    NUMBER = {2},
     PAGES = {379--411},
      ISSN = {0021-8693},
   MRCLASS = {20C20 (17B45)},
  MRNUMBER = {1686197},
MRREVIEWER = {Stuart Martin},
       DOI = {10.1006/jabr.1998.7753},
       URL = {https://doi.org/10.1006/jabr.1998.7753},
}

@article {JMMO91,
    AUTHOR = {Jimbo, Michio and Misra, Kailash C. and Miwa, Tetsuji and
              Okado, Masato},
     TITLE = {Combinatorics of representations of {$U_q(\widehat{{\germ
              s}{\germ l}}(n))$} at {$q=0$}},
   JOURNAL = {Comm. Math. Phys.},
  FJOURNAL = {Communications in Mathematical Physics},
    VOLUME = {136},
      YEAR = {1991},
    NUMBER = {3},
     PAGES = {543--566},
      ISSN = {0010-3616},
   MRCLASS = {17B37 (05E15 82B23)},
  MRNUMBER = {1099695},
       URL = {http://projecteuclid.org/euclid.cmp/1104202436},
}

@article {KL87,
    AUTHOR = {Kazhdan, David and Lusztig, George},
     TITLE = {Proof of the {D}eligne-{L}anglands conjecture for {H}ecke
              algebras},
   JOURNAL = {Invent. Math.},
  FJOURNAL = {Inventiones Mathematicae},
    VOLUME = {87},
      YEAR = {1987},
    NUMBER = {1},
     PAGES = {153--215},
      ISSN = {0020-9910,1432-1297},
   MRCLASS = {11S37 (18F25 19K33 22E50)},
  MRNUMBER = {862716},
MRREVIEWER = {Joe\ Repka},
       DOI = {10.1007/BF01389157},
       URL = {https://doi.org/10.1007/BF01389157},
}

@article {DF15,
    AUTHOR = {Du, Jie and Fu, Qiang},
     TITLE = {Quantum affine {$\germ {gl}_n$} via {H}ecke algebras},
   JOURNAL = {Adv. Math.},
  FJOURNAL = {Advances in Mathematics},
    VOLUME = {282},
      YEAR = {2015},
     PAGES = {23--46},
      ISSN = {0001-8708},
   MRCLASS = {17B37 (17B67 20G43)},
  MRNUMBER = {3374521},
MRREVIEWER = {H. H. Andersen},
       DOI = {10.1016/j.aim.2015.06.007},
       URL = {https://doi.org/10.1016/j.aim.2015.06.007},
}

@article {DM02,
    AUTHOR = {Dipper, Richard and Mathas, Andrew},
     TITLE = {Morita equivalences of {A}riki-{K}oike algebras},
   JOURNAL = {Math. Z.},
  FJOURNAL = {Mathematische Zeitschrift},
    VOLUME = {240},
      YEAR = {2002},
    NUMBER = {3},
     PAGES = {579--610},
      ISSN = {0025-5874,1432-1823},
   MRCLASS = {20C08},
  MRNUMBER = {1924022},
MRREVIEWER = {David\ Hemmer},
       DOI = {10.1007/s002090100371},
       URL = {https://doi.org/10.1007/s002090100371},
}

@article {GL96,
    AUTHOR = {Graham, J. J. and Lehrer, G. I.},
     TITLE = {Cellular algebras},
   JOURNAL = {Invent. Math.},
  FJOURNAL = {Inventiones Mathematicae},
    VOLUME = {123},
      YEAR = {1996},
    NUMBER = {1},
     PAGES = {1--34},
      ISSN = {0020-9910,1432-1297},
   MRCLASS = {20C99},
  MRNUMBER = {1376244},
       DOI = {10.1007/BF01232365},
       URL = {https://doi.org/10.1007/BF01232365},
}

@article {KMS95,
    AUTHOR = {Kashiwara, M. and Miwa, T. and Stern, E.},
     TITLE = {Decomposition of {$q$}-deformed {F}ock spaces},
   JOURNAL = {Selecta Math. (N.S.)},
  FJOURNAL = {Selecta Mathematica. New Series},
    VOLUME = {1},
      YEAR = {1995},
    NUMBER = {4},
     PAGES = {787--805},
      ISSN = {1022-1824},
   MRCLASS = {17B37 (81R50)},
  MRNUMBER = {1383585},
MRREVIEWER = {Anatol Nowicki},
       DOI = {10.1007/BF01587910},
       URL = {https://doi.org/10.1007/BF01587910},
}

@article {LXY26,
    AUTHOR = {Luo, Li and Xu, Zheming and Yang, Yang},
     TITLE = {Geometric construction of {S}chur algebras},
   JOURNAL = {J. Lond. Math. Soc. (2)},
  FJOURNAL = {Journal of the London Mathematical Society. Second Series},
    VOLUME = {113},
      YEAR = {2026},
    NUMBER = {5},
     PAGES = {Paper No. e70558},
      ISSN = {0024-6107,1469-7750},
   MRCLASS = {Prelim},
  MRNUMBER = {5068507},
       DOI = {10.1112/jlms.70558},
       URL = {https://doi.org/10.1112/jlms.70558},
}

@article {LLT96,
    AUTHOR = {Lascoux, Alain and Leclerc, Bernard and Thibon, Jean-Yves},
     TITLE = {Hecke algebras at roots of unity and crystal bases of quantum
              affine algebras},
   JOURNAL = {Comm. Math. Phys.},
  FJOURNAL = {Communications in Mathematical Physics},
    VOLUME = {181},
      YEAR = {1996},
    NUMBER = {1},
     PAGES = {205--263},
      ISSN = {0010-3616,1432-0916},
   MRCLASS = {17B37 (20C20)},
  MRNUMBER = {1410572},
MRREVIEWER = {Thomas\ M.\ Halverson},
       URL = {http://projecteuclid.org/euclid.cmp/1104287629},
}

@article {LT96,
    AUTHOR = {Leclerc, Bernard and Thibon, Jean-Yves},
     TITLE = {Canonical bases of {$q$}-deformed {F}ock spaces},
   JOURNAL = {Internat. Math. Res. Notices},
  FJOURNAL = {International Mathematics Research Notices},
      YEAR = {1996},
    NUMBER = {9},
     PAGES = {447--456},
      ISSN = {1073-7928},
   MRCLASS = {17B67 (05A30 05E10 17B10 17B37)},
  MRNUMBER = {1399410},
MRREVIEWER = {Yvette Kosmann-Schwarzbach},
       DOI = {10.1155/S1073792896000293},
       URL = {https://doi.org/10.1155/S1073792896000293},
}

@article {LTV99,
    AUTHOR = {Leclerc, Bernard and Thibon, Jean-Yves and Vasserot, Eric},
     TITLE = {Zelevinsky's involution at roots of unity},
   JOURNAL = {J. Reine Angew. Math.},
  FJOURNAL = {Journal f\"{u}r die Reine und Angewandte Mathematik. [Crelle's
              Journal]},
    VOLUME = {513},
      YEAR = {1999},
     PAGES = {33--51},
      ISSN = {0075-4102},
   MRCLASS = {20C08 (17B67 20C30 20G42 22E50)},
  MRNUMBER = {1713318},
MRREVIEWER = {Alexander Kleshchev},
       DOI = {10.1515/crll.1999.062},
       URL = {https://doi.org/10.1515/crll.1999.062},
}

@article {Lus90,
    AUTHOR = {Lusztig, G.},
     TITLE = {Canonical bases arising from quantized enveloping algebras},
   JOURNAL = {J. Amer. Math. Soc.},
  FJOURNAL = {Journal of the American Mathematical Society},
    VOLUME = {3},
      YEAR = {1990},
    NUMBER = {2},
     PAGES = {447--498},
      ISSN = {0894-0347},
   MRCLASS = {17B35 (16A64)},
  MRNUMBER = {1035415},
MRREVIEWER = {Ya. S. So\u{\i}bel\cprime man},
       DOI = {10.2307/1990961},
       URL = {https://doi.org/10.2307/1990961},
}

@article {Lus91,
    AUTHOR = {Lusztig, G.},
     TITLE = {Quivers, perverse sheaves, and quantized enveloping algebras},
   JOURNAL = {J. Amer. Math. Soc.},
  FJOURNAL = {Journal of the American Mathematical Society},
    VOLUME = {4},
      YEAR = {1991},
    NUMBER = {2},
     PAGES = {365--421},
      ISSN = {0894-0347},
   MRCLASS = {17B37 (17B67 20G05)},
  MRNUMBER = {1088333},
MRREVIEWER = {H. H. Andersen},
       DOI = {10.2307/2939279},
       URL = {https://doi.org/10.2307/2939279},
}

@incollection {Lus98,
    AUTHOR = {Lusztig, George},
     TITLE = {Canonical bases and {H}all algebras},
 BOOKTITLE = {Representation theories and algebraic geometry ({M}ontreal,
              {PQ}, 1997)},
    SERIES = {NATO Adv. Sci. Inst. Ser. C: Math. Phys. Sci.},
    VOLUME = {514},
     PAGES = {365--399},
 PUBLISHER = {Kluwer Acad. Publ., Dordrecht},
      YEAR = {1998},
   MRCLASS = {17B37 (16G20 17-02)},
  MRNUMBER = {1653038},
MRREVIEWER = {Kailash C. Misra},
}

@article {RSVV,
    AUTHOR = {Rouquier, Raphael and Shan, Peng and Varagnolo, Michela
              and Vasserot, Eric},
     TITLE = {Categorifications and cyclotomic rational double affine
              {H}ecke algebras},
   JOURNAL = {Invent. Math.},
  FJOURNAL = {Inventiones Mathematicae},
    VOLUME = {204},
      YEAR = {2016},
    NUMBER = {3},
     PAGES = {671--786},
      ISSN = {0020-9910,1432-1297},
   MRCLASS = {20C08 (18D10 20G43)},
  MRNUMBER = {3502064},
MRREVIEWER = {Nicolas\ Jacon},
       DOI = {10.1007/s00222-015-0623-7},
       URL = {https://doi.org/10.1007/s00222-015-0623-7},
}

@article {Sch00,
    AUTHOR = {Schiffmann, Olivier},
     TITLE = {The {H}all algebra of a cyclic quiver and canonical bases of
              {F}ock spaces},
   JOURNAL = {Internat. Math. Res. Notices},
  FJOURNAL = {International Mathematics Research Notices},
      YEAR = {2000},
    NUMBER = {8},
     PAGES = {413--440},
      ISSN = {1073-7928},
   MRCLASS = {16G20 (17B37 81R10)},
  MRNUMBER = {1753691},
MRREVIEWER = {Jin Yun Guo},
       DOI = {10.1155/S1073792800000234},
       URL = {https://doi.org/10.1155/S1073792800000234},
}

@article {SW25web,
    AUTHOR = {Song, Linliang and Wang, Weiqiang},
     TITLE = {Affine and cyclotomic webs},
   JOURNAL = {J. Lond. Math. Soc. (2)},
  FJOURNAL = {Journal of the London Mathematical Society. Second Series},
    VOLUME = {112},
      YEAR = {2025},
    NUMBER = {3},
     PAGES = {Paper No. e70278, 43},
      ISSN = {0024-6107},
   MRCLASS = {18M05 (20C08)},
  MRNUMBER = {4955711},
       DOI = {10.1112/jlms.70278},
       URL = {https://doi.org/10.1112/jlms.70278},
}

@unpublished{SW24Schur,
    author = {Song, Linliang and Wang, Weiqiang},
    title = {Affine and cyclotomic {S}chur categories},
   JOURNAL = {},
      YEAR = {2024},
    NOTE ={\arxiv{2407.10119}},
    URL = {
https://doi.org/ }
}

@unpublished{SSW25,
    author = {Shen, Yaolong and Song, Linliang and Wang, Weiqiang},
    title = {Affine {and } cyclotomic $q$-{S}chur categories via webs},
   JOURNAL = {},
      YEAR = {2025},
NOTE ={\arxiv{2504.10270}},
    URL = {
https://doi.org/abs/2504.10270}
}

@incollection {Ugl00,
    AUTHOR = {Uglov, Denis},
     TITLE = {Canonical bases of higher-level {$q$}-deformed {F}ock spaces
              and {K}azhdan-{L}usztig polynomials},
 BOOKTITLE = {Physical combinatorics ({K}yoto, 1999)},
    SERIES = {Progr. Math.},
    VOLUME = {191},
     PAGES = {249--299},
 PUBLISHER = {Birkh\"{a}user Boston, Boston, MA},
      YEAR = {2000},
   MRCLASS = {17B37 (17B67)},
  MRNUMBER = {1768086},
MRREVIEWER = {Nicol\'{a}s Andruskiewitsch},
NOTE ={\arxiv{math/9905196}},
    URL = {https://arxiv.org/abs/math/9905196}
}

@article {V98,
    AUTHOR = {Vasserot, Eric},
     TITLE = {Affine quantum groups and equivariant {$K$}-theory},
   JOURNAL = {Transform. Groups},
  FJOURNAL = {Transformation Groups},
    VOLUME = {3},
      YEAR = {1998},
    NUMBER = {3},
     PAGES = {269--299},
      ISSN = {1083-4362,1531-586X},
   MRCLASS = {19L47 (17B37)},
  MRNUMBER = {1640675},
MRREVIEWER = {Teimuraz\ Pirashvili},
       DOI = {10.1007/BF01236876},
       URL = {https://doi.org/10.1007/BF01236876},
}

@article {VV99,
    AUTHOR = {Varagnolo, Michela and Vasserot, Eric},
     TITLE = {On the decomposition matrices of the quantized {S}chur
              algebra},
   JOURNAL = {Duke Math. J.},
  FJOURNAL = {Duke Mathematical Journal},
    VOLUME = {100},
      YEAR = {1999},
    NUMBER = {2},
     PAGES = {267--297},
      ISSN = {0012-7094},
   MRCLASS = {17B37 (20C08)},
  MRNUMBER = {1722955},
MRREVIEWER = {Alexander N. Rudy\u{\i}},
       DOI = {10.1215/S0012-7094-99-10010-X},
       URL = {https://doi.org/10.1215/S0012-7094-99-10010-X},
}

@article {Wad14,
    AUTHOR = {Wada, Kentaro},
     TITLE = {Induction and restriction functors for cyclotomic
              {$q$}-{S}chur algebras},
   JOURNAL = {Osaka J. Math.},
  FJOURNAL = {Osaka Journal of Mathematics},
    VOLUME = {51},
      YEAR = {2014},
    NUMBER = {3},
     PAGES = {785--822},
      ISSN = {0030-6126},
   MRCLASS = {20G43 (20C08)},
  MRNUMBER = {3272617},
MRREVIEWER = {Guiyu\ Yang},
       URL = {http://projecteuclid.org/euclid.ojm/1414090803},
}

@article {X97,
    AUTHOR = {Xiao, Jie},
     TITLE = {Drinfeld double and {R}ingel-{G}reen theory of {H}all
              algebras},
   JOURNAL = {J. Algebra},
  FJOURNAL = {Journal of Algebra},
    VOLUME = {190},
      YEAR = {1997},
    NUMBER = {1},
     PAGES = {100--144},
      ISSN = {0021-8693},
   MRCLASS = {16G20 (16W30 17B37 81R50)},
  MRNUMBER = {1442148},
MRREVIEWER = {Steffen K\"{o}nig},
       DOI = {10.1006/jabr.1996.6887},
       URL = {https://doi.org/10.1006/jabr.1996.6887},
}

@article {Kas91,
    AUTHOR = {Kashiwara, M.},
     TITLE = {On crystal bases of the {$Q$}-analogue of universal enveloping
              algebras},
   JOURNAL = {Duke Math. J.},
  FJOURNAL = {Duke Mathematical Journal},
    VOLUME = {63},
      YEAR = {1991},
    NUMBER = {2},
     PAGES = {465--516},
      ISSN = {0012-7094,1547-7398},
   MRCLASS = {17B37 (17B10 17B67)},
  MRNUMBER = {1115118},
MRREVIEWER = {Kailash\ C.\ Misra},
       DOI = {10.1215/S0012-7094-91-06321-0},
       URL = {https://doi.org/10.1215/S0012-7094-91-06321-0},
}

@article {Kle10,
    AUTHOR = {Kleshchev, Alexander},
     TITLE = {Representation theory of symmetric groups and related {H}ecke
              algebras},
   JOURNAL = {Bull. Amer. Math. Soc. (N.S.)},
  FJOURNAL = {American Mathematical Society. Bulletin. New Series},
    VOLUME = {47},
      YEAR = {2010},
    NUMBER = {3},
     PAGES = {419--481},
      ISSN = {0273-0979,1088-9485},
   MRCLASS = {20C30 (17B37 17B67 20C08)},
  MRNUMBER = {2651085},
MRREVIEWER = {Andrew\ Mathas},
       DOI = {10.1090/S0273-0979-09-01277-4},
       URL = {https://doi.org/10.1090/S0273-0979-09-01277-4},
}

@article {Yv06,
    AUTHOR = {Yvonne, Xavier},
     TITLE = {A conjecture for {$q$}-decomposition matrices of cyclotomic
              {$v$}-{S}chur algebras},
   JOURNAL = {J. Algebra},
  FJOURNAL = {Journal of Algebra},
    VOLUME = {304},
      YEAR = {2006},
    NUMBER = {1},
     PAGES = {419--456},
      ISSN = {0021-8693},
   MRCLASS = {16S99 (20C08)},
  MRNUMBER = {2256400},
       DOI = {10.1016/j.jalgebra.2006.03.048},
       URL = {https://doi.org/10.1016/j.jalgebra.2006.03.048},
}

\end{document}